\renewcommand{\a}{\alpha}
\renewcommand{\b }{\beta}
\renewcommand{\d}{\delta}
\newcommand{\D }{\Delta}
\newcommand{\e}{\varepsilon}
\newcommand{\g}{\gamma}
\renewcommand{\l}{\lambda}
\renewcommand{\L}{\Lambda}
\newcommand{\n}{\nabla}
\newcommand{\var}{\varphi}
\newcommand{\s}{\sigma}
\newcommand{\ov}{\overline}
\newcommand{\intbar}{\mathop{\int\makebox(-13.5,0){\rule[4pt]{.7em}{0.3pt}}%
\kern-6pt}\nolimits}
\newcommand{\be}{\begin{equation}}
\newcommand{\ee}{\end{equation}}
\newenvironment{pf}{\noindent{\sc Proof}.\enspace}{\rule{2mm}{2mm}\medskip}
\newenvironment{pfn}{\noindent{\sc Proof}}{\rule{2mm}{2mm}\medskip}
\newcommand{\R}{\mathbb{R}}
\newcommand{\N}{\mathbb{N}}
\newcommand{\pa}{\partial}
\author{Pierpaolo Esposito}
\address{Pierpaolo Esposito, Dipartimento di Matematica e Fisica, Universit\`a degli Studi Roma Tre,\,Largo S. Leonardo Murialdo 1, 00146 Roma, Italy}
\email{esposito@mat.uniroma3.it}
\author{Andrea Malchiodi}
\address{Andrea Malchiodi, Scuola Normale Superiore, \,Piazza dei Cavalieri , 56126 Pisa, Italy}
\email{andrea.malchiodi@sns.it}
\date{}
\title{Critical metrics for log-determinant functionals  in conformal geometry}
\begin{document}

\newtheorem{lem}{Lemma}[section]
\newtheorem{pro}[lem]{Proposition}
\newtheorem{thm}[lem]{Theorem}
\newtheorem{rem}[lem]{Remark}
\newtheorem{cor}[lem]{Corollary}
\newtheorem{df}[lem]{Definition}

\def\blue{\color{blue}}
\def\red{\color{orange}}

\renewcommand{\theequation}{\arabic{section}.\arabic{equation}}

\maketitle

\begin{center}

\bigskip

\noindent{\it Key Words: functional determinants, singular solutions, 
blow-up analysis, min-max theory}

\bigskip

\centerline{\bf AMS subject classification: 35G20, 35B44, 35J35, 58J50}

\end{center}

\begin{abstract} We consider critical points of a class of functionals 
on compact four-dimensional manifolds arising from 
 {\em Regularized Determinants} for conformally covariant operators,  whose explicit form was derived in \cite{BO}, extending Polyakov's formula. These correspond 
to solutions of elliptic equations of Liouville type that are  quasilinear, of mixed orders and of critical type. After 
studying existence, asymptotic behaviour and uniqueness of {\em fundamental solutions}, we prove a quantization property 
under blow-up, and then derive existence results via critical point theory. 
\end{abstract}

\section{Introduction}\label{s:in}

Consider a compact Riemannian manifold $(M,g)$  without boundary of dimension $n$, 
with Laplace-Beltrami operator $\Delta_g$.  
By {\em Weyl's asymptotic formula} it is known that the
eigenvalues $\l_j$ of $-\Delta_g$ obey the limiting law 
$\lambda_j \sim j^{2/n}$ as $j \rightarrow \infty$.  
The {\em determinant} of  $-\Delta_g$ is formally  
 the product of all its eigenvalues, with a rigorous 
definition that can be obtained via holomorphic extension 
of the {\em zeta function} 
\begin{align*} 
\zeta(s) = \sum_{j=1}^{\infty} \lambda_j^{-s}.
\end{align*} 
The behaviour of the $\l_j$'s implies that $\zeta(s)$ is analytic for $\mbox{Re}(s) > n/2$: 
it is possible anyway to meromorphically extend $\zeta$ 
so that  it becomes regular near $s = 0$ (see
\cite{RS}).  From the formal calculation 
$\zeta^{\prime}(0) = - \displaystyle \sum_{j = 1}^{\infty} \log \lambda_j = - \log \det (-\Delta_g)$ 
one then defines  
\begin{align*} 
\det(-\Delta_g) = e^{- \zeta^{\prime}(0)}.
\end{align*} 

Recall that in two dimensions the Laplace-Beltrami operator is conformally covariant in the 
sense that 
\begin{equation}\label{eq:cov-lapl}
\Delta_{\tilde{g}} = e^{-2w}\Delta_g, \quad \qquad 
\tilde{g} = e^{2w}g. 
\end{equation}
This property, as well as the transformation law for the Gaussian curvature 
\begin{align} \label{GCE}
- \Delta_g w + K_g = K_{\tilde{g}} e^{2w},
\end{align}
allowed Polyakov in \cite{Polyakov} to determine the logarithm of the ratio of 
 regularized determinants of two conformally-equivalent metrics with the 
 same area on a compact surface: 
\begin{align} \label{Polyform}
\log \frac{\det (-\Delta_{\tilde{g}})}{\det (-\Delta_g)} =
-\frac{1}{12\pi} \int_{\Sigma} (|\nabla w|_g^2 + 2K_g w)\ dv_g. 
\end{align}
The  Gaussian curvature $K_g$ appears in the above formula since it is 
possible to rewrite the zeta function as an integral of a trace 
$$
  \zeta(s) = \frac{1}{\Gamma(s)} \int_0^\infty Tr \left( e^{\Delta_g \, t} - \frac{1}{{ \hbox{Area}_g(\Sigma)}} \right) dt, 
$$
where $\Gamma(s)$ is Euler's Gamma function and $e^{\Delta_g \, t}$ is the heat kernel 
on $(\Sigma, g)$. The latter kernel, for $t$ small, has the asymptotic profile of the  
Euclidean one, with next-order corrections involving the Gaussian curvature and its 
covariant derivatives, as shown in \cite{MP}. 

Using \eqref{GCE} and Polyakov's formula it is easy to show that critical points of the 
regularized determinant in a given conformal class give rise to 
constant Gaussian curvature metrics. In \cite{OPS2,OPS1} Osgood, Phillips and Sarnak 
proved existence of extremals for all given topologies: uniqueness holds for non-positive 
Euler characteristic, while in the positive case there are as many solutions as M\"obius maps. 
The M\"obius action is indeed employed to  fix a {\em center of mass} gauge, in the 
spirit of \cite{Aubin}, to  exploit an {\em improved} Moser-Trudinger 
type inequality. Still in \cite{OPS2,OPS1} the authors used 
formula \eqref{Polyform} in order to derive compactness of isospectral metrics 
on closed surfaces with a given topology. This result was then extended to the 
three-dimensional case in \cite{CY90}, for metrics within a fixed conformal class.

\

In four dimension formulas similar to \eqref{Polyform} were obtained for regularized 
determinants of operators enjoying covariance properties analogous to \eqref{eq:cov-lapl}. 
More precisely, a differential operator $A_g$ (depending on the metric) 
 is said to be {\em conformally covariant of
bi-degree $(a,b)$} if
\begin{align} \label{Ccovar}
A_{\tilde{g}}\psi = e^{-bw} A_g (e^{aw}\psi), \quad \tilde{g} = e^{2w} g,
\end{align}
for each smooth function $\psi$ (or even for a smooth section of a vector bundle).  
One such example is the {\em conformal Laplacian} in dimension $n \geq 3$ 
\begin{align*} 
L_g = -\Delta_g + \frac{(n-2)}{4(n-1)}R_g,
\end{align*}
where $R_g$ is the scalar curvature: this operator satisfies 
\eqref{Ccovar} with $a = \frac{n-2}{2}$ and $b = \frac{n+2}{2}$. 
Other examples include the {\em Dirac operator} 
$\slash \!\!\!\! D_g$, which satisfies \eqref{Ccovar} with $a = \frac{n-1}{2}$, $b = \frac{n+1}{2}$, 
and  the  {\em Paneitz operator} in four dimensions 
\begin{equation} \label{eq:Paneitz}
P_g \psi = \Delta_g^2 \psi - {\rm div} \left( \frac{2}{3}R_g \n \psi - 2 Ric_g(\cdot, \n \psi) \right), 
\end{equation}
that satisfies \eqref{Ccovar} with $a = 0$ and $b = 4$.

Branson and {\O}rsted  generalized in \cite{BO} Polyakov's formula to four-dimensional 
manifolds $(M,g)$, proving the following result: the logarithmic ratio of 
two regularized determinants is the linear  combination of three universal functionals, with 
coefficients depending on the specific operator. More precisely, if $A = A_g$ 
is conformally covariant and has no kernel (otherwise, see Remark \ref{r:ker}), then one has  
\begin{align} \label{FAdef}
F_{A}[w] = \log \frac{\det A_{\tilde{g}}}{\det A_g} = \gamma_1(A) I[w]
+ \gamma_2(A) II[w] + \gamma_3(A) III[w], \quad \tilde{g}=e^{2w}g,
\end{align}
where $(\gamma_1,\gamma_2,\gamma_3) \in \R^3$ 
and $I,II,III$ are defined as 
\begin{eqnarray*} 
&& I[w] = 4 \int_M w| W_g|_g^2\ dv_g - \big( \int_M |W_g|_g^2\ dv_g \big) \log \fint_M
e^{4w}\ dv_g\\
&& II[w] = \int_M w P_g w\ dv_g +4 \int_M Q_g w \ dv_g- \big( \int_M Q_g \ dv_g \big) \log
\fint_M e^{4w}\ dv_g \\
&& III[w] = 12 \int_M (\Delta_g w + |\nabla w|_g^2 )^2\ dv_g - 4 \int_M ( w
\Delta_g R_g + R_g |\nabla w|_g^2 )\ dv_g.
\end{eqnarray*}
Here  $W_g$ is the Weyl curvature tensor, and $Q_g$ the {\em Q-curvature} of $(M,g)$
\begin{align*}
Q_g = \frac{1}{12}( -\Delta_g R_g + R_g^2 - 3|Ric_g|_g^2).
\end{align*}
The latter quantity is a natural higher-order counterpart of the Gaussian 
curvature, and transforms conformally via the Paneitz operator by the law 
\begin{align*}
P_g w + 2 Q_g = 2Q_{\tilde{g} }e^{4w}, \quad \tilde{g}=e^{2w}g,
\end{align*}
totally analogous to \eqref{GCE}.
The above three functionals are geometrically natural as their critical 
points can be characterized by the conditions 
\begin{eqnarray*} 
\tilde{g} = e^{2w}g\ \mbox{is a critical point of }I\
&\Longleftrightarrow& |W_{\tilde{g}}|_{\tilde{g}}^2 = const.\\
\tilde{g} = e^{2w}g\ \mbox{is a critical point of }II\
& \Longleftrightarrow&  Q_{\tilde{g}} = const.\\
\tilde{g} = e^{2w}g\ \mbox{is a critical point of }III\
&\Longleftrightarrow& \Delta_{\tilde{g}} R_{\tilde{g}} = 0.
\end{eqnarray*}
Notice that, since $M$ is compact, the last condition yields a 
Yamabe metric,  with constant scalar curvature.

The Euler-Lagrange equation for $F_A$ implies constancy of a scalar quantity $U_g$, which we 
call {\em $U$-curvature}, defined as 
\begin{align} \label{Udef}
U_g= \gamma_1 |W_g|_g^2 +\gamma_2 Q_g - \gamma_3 \Delta_g R_g.
\end{align}
In terms of the conformal factor the stationarity equation is 
\begin{eqnarray} \label{EL1} 
&&\mathcal{N}_g(w) +U_g = \mu e^{4w}; \\
&&\mathcal{N}(w)=\frac{\gamma_2}{2} P_g w+6\gamma_3\Delta_g (\Delta_g w +
|\nabla w|_g^2) - 12\gamma_3 {\rm div} \big[ (\Delta_g w
+ |\nabla w|_g^2) \nabla w \big] + 2 \gamma_3 {\rm div} (R_g \nabla w), \label{linearL}
\end{eqnarray}
where
\begin{align*} 
\mu = -\frac{\kappa_A}{\int_M e^{4w} dv_g}; \qquad \qquad 
\kappa_A = -\gamma_1 \int_M |W_g|_g^2\ dv_g - \gamma_2 \int_M Q_g \ dv_g.
\end{align*}
We note that $k_A$ is a conformal invariant, since $\int_M Q_g \ dv_g$ is, and that the above equation \eqref{EL1}
corresponds to solving $U_{\tilde g} \equiv \mu$. 

 For example, one has
\begin{align*} 
\gamma_1(L_g) = 1, \qquad \gamma_2(L_g)= -4, \qquad \gamma_3(L_g) = -2/3
\end{align*}
for the conformal Laplacian and
\begin{align*} 
\gamma_1(\slash \!\!\!\! D_g^2) = -7, \qquad \gamma_2(\slash \!\!\!\! D_g^2)= -88, \qquad \gamma_3(\slash \!\!\!\! D_g^2) = -\frac{14}{3}
\end{align*}
for the square of the Dirac operator $\slash \!\!\!\! D_g$. For the Paneitz operator, instead, one has 
\begin{align*}
\gamma_1(P_g) = -\frac{1}{4}, \qquad \gamma_2(P_g)= -14, \qquad \gamma_3(P_g) = 8/3.
\end{align*}

\

Concerning extremality of functionals that are linear combinations of $I, II$ and 
$III$, as in \eqref{FAdef}, Chang and Yang \cite{ChangYangAnnals} proved an existence result (with a sign-reverse notation)
under the conditions  $\gamma_2, \gamma_3 > 0$ and  $\kappa_A < 8 \pi^2 \gamma_2$.

The latter inequality (showed in \cite{Gursky1} to hold in positive Yamabe class, 
except for manifolds conformal to the round sphere) was used with a geometric version of a Moser-Trudinger type inequality: in \cite{Adams} an estimate on the (logarithmic) integral of the exponential 
of the conformal factor was derived in terms of the squared norm of the Laplacian, while in \cite{ChangYangAnnals} 
in terms of the quadratic form induced by the Paneitz operator, which 
is conformally covariant. Uniqueness was also proved for the case $k_A < 0$, using the 
 convexity of the functional $F_A$; see also \cite{BCY} for the case of the round sphere, 
where extremals were classified as M\"obius maps (and as unique critical points in 
\cite{GurClass}).  Extremal properties of the round metric on $S^n$ in general 
even dimension were studied in \cite{Mol12}.   Regularity of arbitrary extremals was proved in 
\cite{CGYAJM}, and extended in \cite{UVMRL} to other critical points. The existence result in 
\cite{ChangYangAnnals} was used in \cite{Gur98} to derive optimal bounds on the Weyl 
functional and to prove some rigidity results for K\"ahler-Einstein metrics.

\

Due to the above results, one has a satisfactory existence theory on manifolds of positive Yamabe 
class. It is the aim of this paper to derive it also for manifolds of more general type. One fact that distinguishes two and four dimensions from the conformal point of view is that in the latter case Gauss-Bonnet 
integrals can be larger than those on the round sphere of equal dimension. For example, the total integral of $Q$-curvature 
on four-manifolds of negative Yamabe class can be arbitrarily large. This fact causes the lack of 
one-side control on the functional $II$ in terms of the Moser-Trudinger inequality, which was available in 
\cite{ChangYangAnnals}. Nevertheless, in \cite{DM08} conformal metrics with constant $Q$-curvature 
were found as saddle-type critical points of $II$. The main tool to produce these was a 
variational min-max scheme that used suitable improvements of the Moser-Trudinger inequality 
for conformal factors whose volume is macroscopically {\em spread} over the underlying manifold $M$. 
Such kind of improvement  was derived in two dimensions in \cite{Aubin} for the case of the 
round sphere (see also \cite{Moser}) and in \cite{CL91} for general surfaces. With improved inequalities at hand, 
it was then possible in \cite{DM08} to characterize low-sublevels of the functional $II$, showing that if 
$\int_M Q_g dv_g < 8 (k+1) \pi^2$ for some $k \in \N$, and if $II(w)$ is sufficiently low, then 
the {\em conformal volume} $e^{4w}$ approaches distributionally a measure supported on 
at most $k$ points of $M$. This geometric characterization of the Euler-Lagrange functional $II$ 
allowed to produce {\em Palais-Smale sequences}, namely approximate solutions to the prescribed 
$Q$-curvature equation. Using also a {\em monotonicity argument} from \cite{str} one can 
replace Palais-Smale sequences by sequences of solutions to approximate equations, which might carry more 
information than general Palais-Smale sequences. 

Here comes the other main aspect of the prescribed $Q$-curvature equation: compactness. 
One would like to show that the latter solutions converge to a solution of the original 
problem. This is actually the result of the two independent papers \cite{DR06}
and \cite{Mal06}:  there it is proved that non-compact sequences of solutions 
develop after rescaling a finite number of {\em bubbles}, the conformal factors 
of the stereographic projection from $\mathbb{S}^4$ to $\R^4$. Each of them carries $8 \pi^2$ 
in $Q$-curvature, and in the latter  work it is shown that no other residual volume can 
occur. A contradiction to loss of compactness is then reached assuming that the 
initial total $Q$-curvature $\int_M Q_g dv_g$ is not a integer multiple of $8 \pi^2$.

\

The first among our results is an analogous compactness property for log-determinant functionals.

\begin{thm}\label{p:minimal-blow-up}
Suppose $M$ is a compact  four-manifold and that $\gamma_2, \gamma_3 \neq 0$, with $\frac{\gamma_2}{\gamma_3} \geq 6$. 
Suppose also that $(w_n)_n$ is a sequence of smooth solutions of  
$$ \mathcal{N}_g (w_n) + \tilde  U_n = \mu_n e^{4 w_n}\qquad \hbox{in }M,$$ 
where $\mathcal{N}_g$ is given by \eqref{linearL}. Assume that $\int_M e^{4 w_n} dv_g =1$, $\mu_n=\int_M \tilde U_n dv_g$ and $\tilde U_n \to U_g$ $C^1-$uniformly in $M$ as $n \to +\infty$. Up to a subsequence, we have one of the following two alternatives:

\begin{itemize}
\item[i)] $(w_n-\fint_M w_n \ dv_g)_n$ is uniformly bounded in $C^{4,\a}(M)$-norm; 

\item[ii)] $(w_n)_n$ blows up, i.e. $\max_M w_n \to +\infty$, and one has that $\fint_M w_n \ dv_g \to - \infty$ and 
$$ \mu_n e^{4 w_n} \rightharpoonup  \sum_{i=1}^l 8 \pi^2 \gamma_2 \d_{p_i}
$$
in the weak sense of distributions for distinct points $p_1,\dots,p_l \in M$.
\end{itemize}
As a consequence, solutions stay compact if $\int_M U_g dv_g \notin 8 \pi^2 \gamma_2  \mathbb{N}$. 
\end{thm}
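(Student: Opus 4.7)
My plan follows the blow-up analysis developed for the prescribed $Q$-curvature problem in \cite{DR06,Mal06}, adapted to the quasilinear fourth-order operator $\mathcal{N}_g$. Let $\bar w_n=\fint_M w_n\,dv_g$ and $v_n=w_n-\bar w_n$. I would distinguish cases according to whether $\|v_n\|_{L^\infty(M)}$ stays bounded or not. In the bounded regime, the volume normalization $\int_M e^{4w_n}\,dv_g=1$ and Jensen's inequality force $\bar w_n$ to stay bounded, so $(w_n)_n$ itself is bounded in $L^\infty$. Under the hypothesis $\gamma_2/\gamma_3\geq 6$, the fourth-order coefficient $\tfrac{\gamma_2}{2}+6\gamma_3$ extracted from $\mathcal{N}_g$ has fixed sign, which makes \eqref{EL1} uniformly elliptic; treating the quasilinear cubic terms in \eqref{linearL} as subprincipal perturbations, $L^p$ and Schauder bootstrap yield uniform $C^{4,\alpha}$ bounds on $w_n$, giving alternative (i).

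Suppose now $\max_M v_n\to+\infty$. I would run the usual concentration procedure: pick a sequence $(x_n)_n$ of maximum points of $w_n$, introduce the rescaling length $\varepsilon_n:=\mu_n^{-1/4}e^{-w_n(x_n)}\to 0$, and look at $\tilde w_n(y)=w_n(\exp_{x_n}(\varepsilon_n y))-w_n(x_n)$ on expanding balls $B_{\varepsilon_n^{-1}r_0}(0)\subset\R^4$. The rescaled equation kills the geometric $\tilde U_n$ and lower-order terms in the limit, the leading operator reducing to its flat Euclidean analogue; using $\tilde w_n\leq 0$ and $\mu_n\varepsilon_n^4 e^{4w_n(x_n)}=1$, the same ellipticity as above gives $C^{4}_{\mathrm{loc}}$ compactness and a limit $\tilde w_\infty$ on $\R^4$ solving the flat equation
\begin{equation*}
\tfrac{\gamma_2}{2}\Delta^2\tilde w_\infty+6\gamma_3\,\Delta(\Delta\tilde w_\infty+|\nabla\tilde w_\infty|^2)
-12\gamma_3\,\mathrm{div}\bigl((\Delta\tilde w_\infty+|\nabla\tilde w_\infty|^2)\nabla\tilde w_\infty\bigr)
=e^{4\tilde w_\infty}
\end{equation*}
with $\int_{\R^4}e^{4\tilde w_\infty}\,dy<+\infty$. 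The classification of fundamental solutions proved earlier in the paper then identifies $\tilde w_\infty$ as a standard bubble of mass exactly $8\pi^2\gamma_2$.

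The procedure is iterated at successive scales, and since the total mass $\mu_n=\int_M\tilde U_n\,dv_g$ stays bounded, the selection terminates after a finite number $\ell$ of concentration points $p_1,\dots,p_\ell\in M$. The main obstacle, which I expect to be the heart of the proof, is the \emph{no-residual-volume} property: one must rule out any mass of $\mu_n e^{4w_n}$ escaping from $M\setminus\{p_1,\dots,p_\ell\}$ in the limit. This requires sharp pointwise decay estimates on $w_n$ in the neck regions, obtained by testing the equation against derivatives of the Green function of the linear part of $\mathcal{N}_g$ together with a Pohozaev-type identity. The condition $\gamma_2/\gamma_3\geq 6$ is used to secure the correct sign of the cubic contribution coming from $\mathrm{div}\bigl((\Delta w+|\nabla w|^2)\nabla w\bigr)$, whose divergence form is crucial for the integration by parts. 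Granted this, one obtains $\mu_n e^{4w_n}\rightharpoonup\sum_{i=1}^\ell 8\pi^2\gamma_2\,\delta_{p_i}$, and the divergence $\bar w_n\to-\infty$ follows since outside the concentration points $w_n\to-\infty$ locally uniformly by elliptic estimates, pulling down the mean.

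For the final claim, integrate the equation over $M$: since $\mathcal{N}_g$ is in divergence form one has $\mu_n=\int_M\tilde U_n\,dv_g\to\int_M U_g\,dv_g$ by $C^1$-convergence; on the other hand, case (ii) would yield $\mu_n=\mu_n\int_M e^{4w_n}\,dv_g\to 8\pi^2\gamma_2\ell$. If $\int_M U_g\,dv_g\notin 8\pi^2\gamma_2\,\N$ this equality is impossible, so alternative (ii) is ruled out and compactness holds.
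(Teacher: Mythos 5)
Your scheme is the one from the prescribed $Q$-curvature literature, and the two places where you lean on it hardest are precisely where it is unavailable here. First, after rescaling at maximum points you assert that ``the classification of fundamental solutions proved earlier in the paper'' identifies the limit $\tilde w_\infty$ as a standard bubble of mass $8\pi^2\gamma_2$. No such classification exists in the paper: the \emph{fundamental solutions} of Sections \ref{linear}--\ref{s:fund-sol} are singular solutions on $M$ (or on balls) with Dirac data, and Theorem \ref{p:ex-singg} only gives their logarithmic behaviour near the singular points; a Liouville-type classification of entire solutions of the flat quasilinear equation on $\R^4$ is explicitly \emph{not} proved and not needed (see Remark \ref{r:gur}). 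The actual quantization in Lemma \ref{l:quant} is obtained without knowing the bubble profile, by a Pohozaev identity applied on annuli and on clusters of bubbling points (Steps 1--3), combined with the expansion \eqref{12292} for fundamental solutions and the algebraic relation \eqref{eq:alpha-beta}, which forces $\alpha=-2$ and hence $\beta=8\pi^2\gamma_2$. A single rescaling at maximum points, even granted smooth convergence, gives neither the value of the mass at each blow-up point nor the absence of intermediate-scale contributions; the clustering argument is essential and is missing from your proposal. Relatedly, your ``bounded regime'' bootstrap treats the cubic gradient terms as subprincipal, but $(\Delta w)^2$ and $|\nabla w|^4$ scale identically: regularity requires the $\varepsilon$-regularity mechanism of Proposition \ref{bound} (small local volume plus Adams' inequality, then \cite{UVMRL}), not a direct $L^p$--Schauder iteration.

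Second, for the no-residual-volume step you propose neck decay estimates obtained ``by testing the equation against derivatives of the Green function of the linear part of $\mathcal{N}_g$''. This is exactly the tool the paper points out is unavailable: $\mathcal{N}$ is quasilinear and of mixed order, so there is no Green representation and no maximum principle, and the statement that $w_n\to-\infty$ locally uniformly away from the concentration points is equivalent to $\ov{w}_n\to-\infty$, not a consequence of it --- as written your argument is circular. The paper's replacement (Lemma \ref{c}) is a comparison argument: assuming $\ov{w}_n\to c>-\infty$, one rescales to a purely concentrated situation, applies Lemma \ref{l:quant} to get $\alpha=-2$, and then compares the limit $w_0$ with a fundamental solution $w_s$ on small balls via the uniform stability estimate \eqref{11411} in the grand Sobolev space $W^{1,2,2)}$; this yields high exponential integrability of $|w_0-w_s|$, contradicting $\int e^{\gamma w_s}dv=+\infty$ for $\gamma>2$ forced by $w_s\sim -2\log|x|$, while $\int e^{4w_0}dv<+\infty$. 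Without this (or some genuinely different substitute compatible with the quasilinear structure), the key assertions $\ov{w}_n\to-\infty$ and the purely atomic form of the limit measure remain unproved, so the proposal has a genuine gap at the heart of the theorem.
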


 \begin{rem}\label{r:Ug}
 In Theorem \ref{p:minimal-blow-up}, it is possible to replace the limit of $\tilde U_n$ by any smooth function $\tilde{U}$.
\end{rem} 

\medskip Well-known results of the above type were proved for second-order Liouville equations in 
\cite{BrMe,ChLi,LiSha}, in presence of singular sources  in \cite{BaTa} and in the fourth-order case \cite{ARS,LiWe,LWW,Rob,RoWe,Wei}. 
The counterpart of Theorem \ref{p:minimal-blow-up} for $Q$-curvature in \cite{DR06,Mal06} 
relied extensively on the Green's representation formula for the Paneitz operator, which is linear. A related quantization 
result was proved in \cite{EsMo} for a Liouville-type $n$-Laplace equation in $n-$dimensional euclidean domains, the equation there of second order allowing truncation techniques towards a-priori estimates (see also \cite{Esp2018} for a classification result of entire solutions). Here, being our operator quasi-linear and of mixed type, none of these 
arguments can be applied and we need to devise new arguments.

 In Section \ref{s:gen-est} we  derive some uniform control of subcritical type on blowing-up 
 solutions, followed by a Caccioppoli-type inequality and a uniform BMO estimate, which  is a 
 natural one since blow-up  is expected to occur with a logarithmic profile. 
 In Section \ref{linear} we develop a general {\em linear} theory for the operator $\mathcal{N}$ 
  in \eqref{linearL}, solving for arbitrary measures in the R.H.S.. Solutions will be found by 
  a limiting procedure with smooth approximations (SOLA: see the terminology there), and  the solvability theory will exploit in a crucial way a nonlinear Hodge decomposition technique. For a R.H.S. given as a linear combination of Dirac masses, a corresponding SOLA is referred to as a {\em fundamental solution} and uniqueness in general fails unless $\gamma_2=6\gamma_3$.
 
In Section \ref{s:fund-sol} we show however that any {\em fundamental solution} satisfies weighted $W^{2,2}-$estimates, allowing via techniques developed in 
 \cite{UVMRL} to prove its 
 logarithmic behaviour near the singularities. 

   There is a vast literature concerning existence and uniqueness issues 
 for problems involving the $p-$Laplace operator, let us  just quote \cite{BBGGPV,BoccGa,DHM2,GIS} and references therein. 
While for the latter both maximum principles and truncation arguments are available, it is not the case for our 
problem, and we had therefore to rely on different arguments.

 With  the asymptotics  of fundamental solutions at hand, we can finally pass to the blow-up analysis of \eqref{EL1}. First, 
 via a Pohozaev type identity, scaling arguments and an epsilon-regularity result we prove a 
 quantization for the volume accumulation at blow-up points. After this, we can then determine that 
 there is no absolutely continuous part in the limit volume measure, after blow-up, leading to Theorem \ref{p:minimal-blow-up}. 
 We collect in an appendix some useful auxiliary results.

\medskip

\noindent As an application of Theorem \ref{p:minimal-blow-up} we have the following existence theorem.

\begin{thm}\label{t:ex}
Assume $\gamma_2, \gamma_3 \neq 0$ and $\frac{\gamma_2}{\gamma_3} \geq 6$.  Suppose $M$ is a compact  four-manifold such that $\int_M U_g dv_g \notin   8 \pi^2 \gamma_2 \mathbb{N}$. Then there exists a conformal metric $\tilde{g}$ with constant $U$-curvature. 
\end{thm}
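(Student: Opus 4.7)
The plan is to realize the desired conformal metric as a critical point of the log-determinant functional $F_A[w]=\gamma_1 I[w]+\gamma_2 II[w]+\gamma_3 III[w]$, whose Euler–Lagrange equation is exactly \eqref{EL1}. By the discussion following \eqref{Udef}, critical points correspond precisely to conformal metrics $\tilde g=e^{2w}g$ with constant $U$-curvature, so the issue is purely variational.

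First I would analyse the structure of $F_A$ on the unit-volume slice $\{\int_M e^{4w}\,dv_g=1\}$, normalising by $\bar w=\fint_M w\,dv_g$. The top-order quadratic part combines $\frac{\gamma_2}{2}\int w P_g w$ with $6\gamma_3\int(\Delta_g w+|\nabla w|^2)^2$, and under the hypothesis $\gamma_2/\gamma_3\ge 6$ this form is, modulo lower-order curvature terms, nonnegative and equivalent to the Paneitz quadratic form. Coupled with Adams/Fontana's inequality in the conformally invariant version of \cite{ChangYangAnnals},
\[
\log\fint_M e^{4(w-\bar w)}\,dv_g\;\le\; \frac{1}{8\pi^2}\int_M w P_g w\,dv_g + C,
\]
one obtains: if $\int_M U_g\,dv_g<8\pi^2\gamma_2$, then $F_A$ is coercive and bounded below on the unit-volume slice modulo constants, so a direct minimisation yields a weak critical point; standard regularity together with alternative (i) of Theorem \ref{p:minimal-blow-up} upgrades it to a smooth solution.

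The substantive case is $\int_M U_g\,dv_g>8\pi^2\gamma_2$ with $\int_M U_g\,dv_g\notin 8\pi^2\gamma_2\mathbb{N}$. Then there is a unique $k\in\mathbb{N}$ with $8\pi^2\gamma_2 k<\int_M U_g\,dv_g<8\pi^2\gamma_2(k+1)$, and $F_A$ is unbounded below. Adapting the scheme of \cite{DM08} to the mixed-order functional, I would prove an \emph{improved} Adams-type inequality showing that if the conformal volume $e^{4w}dv_g$ is macroscopically spread into at least $k+1$ disjoint regions, then the constant $\frac{1}{8\pi^2}$ improves to $\frac{1}{8\pi^2(k+1)}$. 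Integrating this along concentration profiles, one then characterises very low sublevels $\{F_A\le -L\}$: for $L$ large, the projection $w\mapsto e^{4w}dv_g/\!\int e^{4w}\,dv_g$ into the space of probability measures lands within a small neighbourhood of the set $M_k$ of formal barycenters of order at most $k$ on $M$. Conversely, testing $F_A$ on explicit test families of bubbles modelled on stereographic projections and centred at configurations in $M_k$, one shows $F_A\to-\infty$ uniformly, and that the induced map to $M_k$ has non-trivial topology. This yields a min-max class
\[
c\;=\;\inf_{\pi\in\Pi}\;\sup_{t\in M_k} F_A[\pi(t)],
\]
with $c$ finite by the non-contractibility of $M_k$.

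Finally, to pass from this min-max level to an actual solution, I would invoke Struwe's monotonicity trick applied to a smooth one-parameter deformation $F_A^\lambda$ (for instance by scaling the coefficient of the volume constraint), producing, for a.e.\ $\lambda$ in a neighbourhood of the true parameter, a sequence of \emph{exact} solutions $w_n^\lambda$ of the perturbed equation at the bounded min-max level. Theorem \ref{p:minimal-blow-up} then applies: since the deformed total curvature remains off the discrete set $8\pi^2\gamma_2\mathbb{N}$, alternative (ii) is excluded and the sequence is $C^{4,\alpha}$-bounded, giving a smooth solution $w^\lambda$. A further diagonal passage in $\lambda$, again controlled by Theorem \ref{p:minimal-blow-up}, produces a solution at $\lambda=1$ and hence the required conformal metric.

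The main obstacle I anticipate is the improved Moser–Trudinger/Adams inequality for the full quasi-linear, mixed-order quadratic form of $F_A$: the cubic and quartic terms $(\Delta_g w+|\nabla w|^2)^2$ of $III$ do not fit the linear Adams framework, and it is precisely here that the assumption $\gamma_2/\gamma_3\ge 6$ enters, ensuring that these additional terms can be absorbed into the Paneitz quadratic form and thus that the sublevel analysis of \cite{DM08} can be transplanted to the present setting.
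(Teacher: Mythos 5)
Your proposal is correct and follows essentially the same route as the paper's Section~\ref{s:mt-ex}: an improved Adams/Moser--Trudinger inequality for spread conformal volume, characterization of low sublevels of $F_\gamma$ via the barycenter sets $M_k$ and the projection $\Psi_k$, test functions of bubble type mapping $M_k$ into low sublevels with the energy estimate absorbing the $III$-terms, a min-max scheme over the cone on $M_k$ exploiting non-contractibility, Struwe's monotonicity trick to produce exact solutions of perturbed equations, and finally compactness from Theorem~\ref{p:minimal-blow-up} under the hypothesis $\int_M U_g\,dv_g\notin 8\pi^2\gamma_2\mathbb{N}$. The only minor difference is that you single out the subcritical case $\int_M U_g\,dv_g<8\pi^2\gamma_2$ for direct minimization, which the paper covers through Theorem~\ref{t:sharp-ineq}, and you attribute the absorption of the quartic terms to $\gamma_2/\gamma_3\geq 6$ whereas $\gamma_2/\gamma_3>\tfrac32$ (with $\gamma_2,\gamma_3>0$) already suffices for that step, the stronger ratio being needed for the compactness theory.
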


 Examples to which the latter theorem applies include 
(suitable) products of  negatively-curved surfaces, hyperbolic manifolds 
or their perturbations.

\begin{rem}\label{r:ker}
In case of trivial kernel, both log-determinants of  $L_g$ and $\slash \!\!\!\! D_g^2$ fit in the assumptions of Theorems \ref{p:minimal-blow-up} and \ref{t:ex}.

In general, if a conformally-covariant operator $A$ has a non-trivial kernel,  
some additional quantities appear in \eqref{FAdef}, see Remark 2.2 in \cite{BO}. 
If $A$ has order $2 \ell$, on the R.H.S. of \eqref{FAdef} 
one should add the term 
\begin{equation}\label{eq:add}
2 \ell \int_M\left( w \int_0^1 \Phi_t^2 e^{4 t w} dt \right)dv_g - \frac{1}{2} \ell \, q[A] \log \frac {\int_M 
  e^{4 w} dv_g}{Vol_g(M)}.  
\end{equation}
Here $q[A]$ stands for the dimension of the kernel of $A$, while $\Phi_t^2(x) = \displaystyle \sum_{j=1}^{q[A]} 
\varphi_{j,t}^2(x)$, with $(\varphi_{j,t})_j$ an orthonormal  basis of elements of the kernel 
with respect to the metric  $e^{2 t w} g$. 

For example if $A = L$, the conformal Laplacian, and if the kernel is one-dimensional, denote 
by $\var_1$ an element of the kernel normalized in $L^2$ with respect to $dv_g$. Then,
 recalling that \eqref{Ccovar} holds with $a = 1$, we find that 
 $$
   \Phi_t^2(x)  = \frac{e^{-2tw(x)} \var_1^2(x)}{\int_M e^{2 t w(y)} \var_1^2(y) dv_g(y)}.
 $$ 
Therefore, the extra-term  in \eqref{eq:add} becomes 
$$
  2  \int_M\left(  \int_0^1 \left( \frac{e^{2t w(x)} \var_1^2(x) w(x)}{\int_M e^{2 t w(y)} \var_1^2(y) dv_g(y)} \right)  dt \right)dv_g(x) - \frac{1}{2}  \log \frac {\int_M 
    e^{4 w} dv_g}{Vol_g(M)}.  
$$
Noticing that 
$$
2 \int_M  \left( \frac{e^{2t w(x)} \var_1^2(x) w(x)}{\int_M e^{2 t w(y)} \var_1^2(y) dv_g(y)} \right)  dv_g(x) = 
\frac{d}{dt} \log \int_M e^{2 t w(x)}  \var_1^2(x) dv_g(x),  
$$
the expression in \eqref{eq:add} finally becomes 
$$
 \log \int_M e^{2 w(x)}  \var_1^2(x) dv_g(x) - \frac{1}{2}  \log \frac {\int_M 
     e^{4 w} dv_g}{Vol_g(M)}.  
$$
We will not analyze this term in the present paper. 
\end{rem}

\medskip

The proof of Theorem \ref{t:ex}, given in Section \ref{s:mt-ex}  is variational and mainly inspired from \cite{ChangYangAnnals,DM08}, 
where the $Q$-curvature problem was treated. First, using the results in Section \ref{s:gen-est}, one can obtain a 
sharp Moser-Trudinger inequality involving combinations of the functionals $I$,  $II$ and $III$. The latter  is then improved under suitable conditions on the 
distribution of conformal volume. This allows to apply a general min-max scheme, relying also 
on the construction of test functions with low energy and a prescribed (multiple) concentration 
behaviour of the conformal volume.

\ 

It would be interesting to consider on general manifolds cases with $\gamma$'s of opposite signs, like for the 
determinant of the Paneitz operator (see \cite{Connes94}, IV.4.$\gamma$). This issue is quite hard, as the 
two main terms in the nonlinear operator have competing effects. It is indeed 
studied so far only in particular cases with ODE techniques, see for example \cite{GuMa12}.

\

  \noindent {\bf Notation.} We will work on a compact four-dimensional Riemannian manifold 
  $M$  without boundary endowed with a background metric $g$. When considering this metric, the index $g$ relative to 
  it will be omitted in symbols like $\Delta_g, P_g, dv_g$, etc. Spaces of $L^p$ functions with 
  respect to $dv_g$ will be simply denoted by $L^p$, $p \geq 1$, with norm 
  $\| \cdot \|_p$, and similarly for Sobolev spaces. 
  When the domain of integration is omitted, we mean that it coincides with the whole $M$. The injectivity radius of $(M,g)$ will be denoted by $i_0$ and $B_r$ will denote a generic geodesic ball in $M$. The symbols $\ov{w}$, $\ov{w}^A$ and $\ov{w}^r$ will stand for $\fint_M w \ dv_g$, $\fint_A w \ dv_g$ and $\fint_{B_r} w\  dv_g$, respectively.

 \
  
  \noindent {\bf Acknowledgments.}
  A.M. has been supported by the project {\em Geometric Variational Problems} and {\em Finanziamento a supporto della ricerca di base} from Scuola Normale Superiore and by MIUR Bando PRIN 2015 2015KB9WPT$_{001}$.  P.E. has been supported by MIUR Bando PRIN 2015 2015KB9WPT$_{008}$. As members, they are both partially supported by GNAMPA as part of INdAM.

\section{Some basic estimates} \label{s:gen-est}
\setcounter{equation}{0}  

\noindent In this section we will derive some uniform estimates for smooth solutions of \eqref{EL1} with 
a general R.H.S. by just assuming $\frac{\gamma_2}{\gamma_3}>\frac{3}{2}$. To this aim, recall the 
definition of the   quasilinear differential operator $\mathcal{N}$ in \eqref{linearL}. 
Integrating by parts, notice that the main order term in $\langle \mathcal{N}(w),w\rangle$ has the form
$$(\frac{\gamma_2}{2}+6\gamma_3) \int (\Delta w)^2 dv+18\gamma_3 \int \Delta w |\nabla w|^2  dv
+12 \gamma_3 \int |\nabla w|^4 dv,$$
which can be easily seen to have a sign by a squares completion provided $\frac{\gamma_2}{\gamma_3}>\frac{3}{2}$. In the next section, we will further strengthen the a-priori estimates when $\frac{\gamma_2}{\gamma_3} \geq 6$ and deduce  uniqueness properties when $\frac{\gamma_2}{\gamma_3}=6$.

\medskip \noindent In order to include also local estimates, test \eqref{linearL} against $\varphi=\chi^4 \psi (w-c)$, 
where $c \in \mathbb{R}$, $\psi \in C^2(\mathbb{R})$ (bounded, and with bounded first- and second-order derivatives) and $\chi \in C^\infty(M)$, to get
\begin{eqnarray}
&& \langle \mathcal{N} (w),\varphi \rangle= (\frac{\gamma_2}{2}+6\gamma_3) \int \chi^4 \psi' (\Delta w)^2 dv+
 \int \chi^4 [18 \gamma_3 \psi'+(\frac{\gamma_2}{2}+6\gamma_3) \psi''] \Delta w |\nabla w|^2 dv \label{00828} \\
&&+6 \gamma_3 \int \chi^4 (2\psi'+\psi'') |\nabla w|^4  dv+ \int \chi^4  \psi' [(\frac{\gamma_2}{3}-2\gamma_3) 
R |\nabla w|^2-\gamma_2 Ric(\nabla w,\nabla w)] dv +\mathcal{R}, \nonumber
\end{eqnarray}
with
\begin{eqnarray*}
\mathcal R & = &\int [ (\frac{\gamma_2}{2}+6\gamma_3) \Delta w+6\gamma_3 |\nabla w|^2 ]
 [\psi \Delta \chi^4 +2\psi' \langle \nabla \chi^4, \nabla w \rangle] dv 
+12 \gamma_3 \int (\Delta w+|\nabla w|^2) \psi \langle \nabla w, \nabla \chi^4\rangle dv\\
&&+\int \psi[(\frac{\gamma_2}{3}-2\gamma_3) R \langle \nabla w,\nabla \chi^4 \rangle -\gamma_2  Ric(\nabla w,\nabla \chi^4)] dv,
\end{eqnarray*}
where the argument of $\psi$ has been omitted for simplicity.
\begin{rem} \label{rem0} When $\partial M \not= \emptyset$,   \eqref{00828} still holds for $\chi \in C_0^\infty (M)$: 
this will be useful in Section \ref{blowup}. 
\end{rem}

\medskip \noindent The first use of \eqref{00828} concerns global bounds for weighted $W^{2,2}-$norms in $M$: 
\begin{thm} \label{t:weak-norms} Let $\frac{\gamma_2}{\gamma_3}>\frac{3}{2}$. Assume $\ov{f} =0$ and $\|f\|_{1} \leq C_0$ for some $C_0>0$. Then there exists $C>0$ so that
\begin{equation} \label{00839}\int \frac{(\Delta w)^2+|\nabla w|^4}{[1+(w-\ov{w})^2]^{\frac{2}{3}}} dv
\leq C \end{equation}
for every smooth solution $w$ of $\mathcal{N}(w)=f$ in $M$. Moreover, given $1\leq q<2$ there exists $C>0$ so that
\begin{equation} \label{new0855}
\|w-\ov{w}\|_{W^{2,q}} \leq C
\end{equation} 
for any such solution $w$.
\end{thm}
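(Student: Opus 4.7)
The plan is to test the equation $\mathcal{N}(w)=f$ against a bounded nonlinear function of $w-\ov w$, chosen so that the weight in (2.2) arises directly from the integration-by-parts identity. Concretely, I would use the identity (2.1) with $\chi\equiv 1$, $c=\ov w$, and a primitive $\psi\in C^2(\mathbb{R})$ of the weight, namely $\psi'(t)=(1+t^2)^{-2/3}$. Because $(1+s^2)^{-2/3}\sim s^{-4/3}$ is integrable at infinity, $\psi$ is uniformly bounded on $\mathbb{R}$, so the pairing $\int f\,\psi(w-\ov w)\,dv$ is controlled by $\|f\|_{1}\|\psi\|_\infty \leq C\,C_0$. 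With $\chi\equiv 1$ the remainder $\mathcal{R}$ in (2.1) vanishes identically, and the identity reduces to a pointwise quadratic form in $(\Delta w,|\nabla w|^2)$ weighted by $\psi'$ and $\psi''$, plus a zeroth-order curvature correction.

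The heart of the proof is the pointwise lower bound
\[
(\tfrac{\gamma_2}{2}+6\gamma_3)\psi'(\Delta w)^2 + \bigl[18\gamma_3\psi'+(\tfrac{\gamma_2}{2}+6\gamma_3)\psi''\bigr]\Delta w\,|\nabla w|^2 + 6\gamma_3(2\psi'+\psi'')|\nabla w|^4 \;\geq\; c_0\,\psi'\bigl[(\Delta w)^2+|\nabla w|^4\bigr]
\]
for some $c_0=c_0(\gamma_2,\gamma_3)>0$. A direct computation gives $\psi''(t)=-\tfrac{4t}{3}(1+t^2)^{-5/3}$, hence $|\psi''|\leq \tfrac{2}{3}\psi'$ with equality attained only at $|t|\sim 1$. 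The form is therefore a bounded perturbation of its model analogue ($\psi\equiv\mathrm{id}$), whose square-completion discriminant $3\gamma_3(2\gamma_2-3\gamma_3)(\psi')^2$ was noted by the authors right after (2.1) to be strictly positive under $\gamma_2/\gamma_3>3/2$; carrying that discriminant estimate with the sharper bound on $\psi''/\psi'$ (and, if necessary, integrating by parts once on the $\psi''$ cross-term to reabsorb part of it) yields $c_0>0$. The lower-order curvature terms $(\gamma_2/3-2\gamma_3)R|\nabla w|^2-\gamma_2\,\mathrm{Ric}(\nabla w,\nabla w)$ carrying a $\psi'$ weight are absorbed via Young's inequality into $\tfrac{c_0}{2}\psi'|\nabla w|^4$ plus a global constant, since $\int\psi'\,dv\leq |M|$. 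This yields (2.2).

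To derive (2.3) from (2.2), I would apply Hölder's inequality
\[
\int |\Delta w|^q\,dv \;\leq\; \Bigl(\int\tfrac{(\Delta w)^2}{[1+(w-\ov w)^2]^{2/3}}\,dv\Bigr)^{q/2}\Bigl(\int[1+(w-\ov w)^2]^{\frac{q}{3(2-q)}}\,dv\Bigr)^{(2-q)/2}
\]
and the analogous estimate for $|\nabla w|^{2q}$, which reduces the task to a uniform $L^p$-bound on $w-\ov w$. This is set up by the analogous Hölder estimate for $r<4$,
\[
\int|\nabla w|^r\,dv \;\leq\; C\Bigl(\int[1+(w-\ov w)^2]^{\frac{2r}{3(4-r)}}\,dv\Bigr)^{(4-r)/4},
\]
and then closed by a self-improving bootstrap: for $r$ small enough one uses the elementary inequality $(1+x^2)^\alpha\leq 1+|x|^{2\alpha}$ together with Jensen to obtain a sublinear closed inequality for $\|w-\ov w\|_{L^1}$, and subsequently the four-dimensional zero-mean Poincaré–Sobolev embedding $\|w-\ov w\|_{L^{4r/(4-r)}}\leq C\|\nabla w\|_{L^r}$ upgrades the exponent in finitely many iterations to any $p<\infty$.

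The main obstacle is the pointwise positivity in Step 2: the discriminant of the $2\times 2$ quadratic form mixes $\psi'$ (positive) with $\psi''$ (signed) of comparable magnitude, so the square completion is tight and its success depends sensitively on the hypothesis $\gamma_2/\gamma_3>3/2$ and on the very specific exponent $2/3$ in the weight — any less-degenerate weight would force stronger a priori assumptions than $\ov f=0$, $\|f\|_{1}\leq C_0$. The bootstrap in Step 3, while intricate, is more routine and only relies on having $c_0>0$ from Step 2.
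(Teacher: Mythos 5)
Your overall strategy (test \eqref{00828} with $\chi\equiv 1$, $c=\ov w$ and a bounded $\psi$ whose derivative is the weight, absorb curvature terms by Young, then pass to \eqref{new0855} by H\"older, Sobolev embedding and Young) is the same as the paper's. However, Step 2 as you state it has a genuine gap: with the \emph{fixed} choice $\psi'(t)=(1+t^2)^{-2/3}$ the pointwise coercivity fails when $\beta=\gamma_2/\gamma_3$ is close to $3/2$. Writing the (renormalized) principal part as a quadratic form in $X=\Delta w$, $Y=|\nabla w|^2$,
\begin{equation*}
(\beta+12)\psi'\,X^2+\bigl[36\psi'+(\beta+12)\psi''\bigr]XY+12\,(2\psi'+\psi'')\,Y^2,
\end{equation*}
pointwise positivity requires $\bigl[36\psi'+(\beta+12)\psi''\bigr]^2<48(\beta+12)\psi'(2\psi'+\psi'')$. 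Your bound $|\psi''|\le\frac23\psi'$ is attained with the \emph{unfavourable} sign at $t=-1$ (where $\psi''=+\frac23\psi'$), and there, for instance for $\beta=1.6$, the left-hand side is $(36+13.6\cdot\frac23)^2(\psi')^2\approx 2.0\cdot 10^{3}(\psi')^2$ while the right-hand side is $48\cdot 13.6\cdot\frac83(\psi')^2\approx 1.74\cdot 10^{3}(\psi')^2$: the form is indefinite on the set $\{w-\ov w\approx -1\}$, so no $c_0>0$ exists for all $\beta>\frac32$. Your fallback (``integrate by parts once on the $\psi''$ cross-term'') does not repair this as stated: it produces $\psi'''|\nabla w|^4$ and $\psi''\,\nabla^2w(\nabla w,\nabla w)$ contributions, the latter involving the full Hessian, which is not controlled by the $(\Delta w)^2$ term in your form. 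The paper's fix is exactly at this point: it takes $\psi'(t)=(M_0+t^2)^{-2/3}$ with $M_0$ large, so that $|\psi''/\psi'|\le \frac{2}{3\sqrt{M_0}}$ (see \eqref{01513}) is as small as needed relative to $2\beta-3$, making the discriminant inequality \eqref{01130} hold; since $\psi'\ge M_0^{-2/3}(1+t^2)^{-2/3}$, the estimate \eqref{00839} with the original weight still follows, only with a constant depending on $M_0(\beta)$.

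Apart from this, your argument is essentially the paper's. Two minor remarks: your absorption of the curvature terms directly into $\e\int\psi'|\nabla w|^4\,dv+C_\e$ via Young (using $0<\psi'\le 1$) is legitimate and in fact slightly shortcuts the paper, which instead bounds them by $C\int|\nabla w|^2\,dv$ and then closes a separate sublinear inequality for $\int|\nabla w|^2\,dv$ through Poincar\'e and Young; and your bootstrap from \eqref{00839} to \eqref{new0855} can be done in one shot as in the paper, by H\"older with exponent $\frac{2}{2-q}$, the embedding $W^{2,q}\hookrightarrow L^{\frac{4q}{3(2-q)}}$, the equivalence of $\|\Delta\cdot\|_q$ with the zero-average $W^{2,q}$-norm, and Young (the exponent $\frac23<1$ gives the needed sublinearity), so no iteration is required.
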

\begin{pf} Let $\chi \equiv 1$, $c=\ov{w}$ and $\psi \in C^2(\mathbb{R})$ be so that $2\psi'+\psi'' > 0$. Then $\mathcal R=0$ and by a squares completion the (re-normalized) main order term in \eqref{00828} satisfies, thanks to $\beta=\frac{\gamma_2}{\gamma_3}>\frac{3}{2}$, the inequality
\begin{eqnarray}
&& (\beta+12 ) \int \psi' (\Delta w)^2 dv+
 \int [36 \psi'+(\beta+12) \psi''] \Delta w |\nabla w|^2 dv +12 \int (2\psi'+\psi'') |\nabla w|^4  dv \label{n244}\\
&& \geq \int \frac{ 48{ [2\beta -3 -2\delta(\beta+12)]}(\psi')^2 -24(1+2\delta)(\beta+12 ) \psi' \psi'' -  (\beta+12)^2 (\psi'')^2}{48(1-\delta)(2 \psi' + \psi'')} (\Delta w)^2 dv \nonumber \\
&&+12 \delta \int (2\psi'+\psi'') |\nabla w|^4  dv \nonumber
\end{eqnarray}
for any $0<\delta <1$, in view of the positivity of
$$\int [ \frac{36 \psi'+(\beta+12) \psi''}{\sqrt{48(1-\delta)(2 \psi' + \psi'')}}\Delta w+\sqrt{12(1-\delta)(2 \psi' + \psi'')}|\nabla w|^2]^2 \ dv.$$
Fix $0<\delta<\frac{2\beta-3}{4(\beta+12)}$ and set
$\psi(t)=\int_{-\infty}^t \frac{ds}{(M_0+s^2)^{\frac{2}{3}}}$, $M_0\geq 1$. Since
\begin{equation} \label{01513}
|\frac{\psi''}{\psi'}|=\frac{4}{3}  \frac{|t|}{M_0+t^2}\leq \frac{2 }{3 \sqrt{M_0}} ,
\end{equation}
we can find $M_0 \geq 1$ large so that
\begin{equation} \label{01130}
\frac{ 48{ [2\beta-3-2 \delta(\beta+12) ]}(\psi')^2 -24(1+2\delta)(\beta+12 ) \psi' \psi'' -  (\beta+12)^2 (\psi'')^2}{48(1-\delta)(2 \psi' + \psi'')}, \ 12 \delta(2 \psi'+\psi'') \geq \delta^2 \psi' .\end{equation}
Thanks to \eqref{01130} we have that
\begin{eqnarray*}
 (\beta+12) \int \psi' (\Delta w)^2 dv&+&
 \int [36 \psi'+(\beta+12) \psi''] \Delta w |\nabla w|^2 dv + 12 \int (2\psi'+\psi'') |\nabla w|^4  dv \\ 
 & \geq & \delta^2 \int \psi' [(\Delta w)^2 +|\nabla w|^4 ]dv,
\end{eqnarray*}
and then
\begin{eqnarray} \label{1331}
\int \frac{(\Delta w)^2 +|\nabla w|^4 }{[1+(w-\ov{w})^2]^{\frac{2}{3}}}dv \leq C_1 \left(\|f \|_{1} + \int |\nabla w|^2 dv\right)
\end{eqnarray}
for some $C_1>0$ in view of $ M_0^{-\frac{2}{3}}(1+t^2)^{-\frac{2}{3}} \leq \psi' \leq 1$ and $0\leq \psi \leq \int_{\mathbb{R}} \frac{ds}{(1+s^2)^{\frac{2}{3}}}$. From \eqref{1331} and  H\"older's inequality we obtain 
\begin{eqnarray*} 
\int |\nabla w|^2 dv &\leq &  \int [1+|w-\ov{w}|^{\frac{2}{3}}]  \frac{|\nabla w|^2}{[1+(w-\ov{w})^2]^{\frac{1}{3}}} dv
\leq \|1+|w-\ov{w}|^{\frac{2}{3}} \|_{2}
\left(\int \frac{|\nabla w|^4}{[1+(w-\ov{w})^2]^{\frac{2}{3}}} dv \right)^{\frac{1}{2}}\\
&\leq & C_1^{\frac{1}{2}} \left(|M|^{\frac{1}{2}}+\|w-\ov{w}\|_{\frac{4}{3}}^{\frac{2}{3}} \right)
\left( \|f \|_{1} + \int |\nabla w|^2 dv \right)^{\frac{1}{2}} \leq
C_2 (1+\int |\nabla w|^2 dv)^{\frac{5}{6}}
\nonumber
\end{eqnarray*}
for some $C_2>0$ in view of Poincar\'e's inequality. By  Young's inequality we then have  $\int |\nabla w|^2 dv \leq C$ for some $C>0$, and in turn by \eqref{1331}  we deduce the validity of \eqref{00839}. 

Similarly, since $W^{2,q}(M)$ embeds continuously into $ L^{\frac{4q}{3(2-q)}}(M)$ by  Sobolev's Theorem, for any $1\leq q<2$ there holds
\begin{eqnarray*}
\int |\Delta w|^q dv &\leq &  \int [1+|w-\ov{w}|^{\frac{2q}{3}}] \frac{|\Delta w|^q}{[1+(w-\ov{w})^2]^{\frac{q}{3}}} dv
\leq \| 1+|w-\ov{w}|^{\frac{2q}{3}} \|_{{\frac{2}{2-q}}} 
\left(\int \frac{(\Delta w)^2}{[1+(w-\ov{w})^2]^{\frac{2}{3}}} dv \right)^{\frac{q}{2}}\\
&\leq & C_3 \left(1  +\|w-\ov{w}\|_{W^{2,q}}^{\frac{2q}{3}} \right)
\end{eqnarray*}
for some $C_3>0$ in view of \eqref{00839}. Since $(\int |\Delta w|^q dv)^{\frac{1}{q}}$ is equivalent to the $W^{2,q}-$norm on the functions in $W^{2,q}(M)$ with zero average, by  Young's inequality we then have the validity of \eqref{new0855} for some uniform $C>0$.
\end{pf}

\medskip \noindent Once global bounds on $W^{2,q}-$norms have been derived for $1 \leq q<2$, we will make use once more of \eqref{00828} to establish Caccioppoli-type estimates:
\begin{thm} \label{Caccioppoli} Let $\frac{\gamma_2}{\gamma_3}>\frac{3}{2}$. There exist $C>0$ and $k_0>0$ so that
\begin{equation} \label{01510}\int_{\{ |w-c|<k \} \cap B_\rho} [(\Delta w)^2+|\nabla w|^4] dv
\leq \frac{C}{(r-\rho)^4} \int_{B_r \setminus B_\rho} (1+(w-c)^4) \ dv+C k \int_{B_r} |f| \ dv
\end{equation}
for any $0<\rho<r<i_0$, $c \in \mathbb{R}$, $k \geq k_0$ and any smooth solution $w$ of $\mathcal{N}(w)=f$ in $M$ with $\ov{f}=0$. Here $B_\rho$ and $B_r$ are centered at the same point.
\end{thm}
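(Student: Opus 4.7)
The plan is to test the identity \eqref{00828} with a cutoff $\chi\in C_0^\infty(B_r)$ satisfying $\chi\equiv 1$ on $B_\rho$ and $|\nabla^j\chi|\le C(r-\rho)^{-j}$ for $j\le 3$, with the constant $c$ from the statement, and with the rescaled truncation
\[
\psi_k(t)=k\,\widetilde{\psi}(t/k),\qquad \widetilde{\psi}(s)=\int_0^s\frac{du}{(M_0+u^2)^{2/3}},
\]
a scaled version of the function already used in the proof of Theorem \ref{t:weak-norms}. It satisfies $|\psi_k(t)|\le C\min\{k,|t|\}$, while $\psi_k'(t)=(M_0+(t/k)^2)^{-2/3}$ lies in $[c_1,M_0^{-2/3}]$ on $\{|t|\le k\}$ and $|\psi_k''/\psi_k'|(t)\le 2/(3k\sqrt{M_0})$. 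In particular, for $k\ge k_0$ large enough the latter ratio is so small that the squares-completion of \eqref{n244}--\eqref{01130} applies pointwise under the integral (with $\chi^4\ge 0$ factored out), giving
\[
\text{(main order in \eqref{00828})}\;\ge\;\delta^2\int\chi^4\psi_k'\bigl[(\Delta w)^2+|\nabla w|^4\bigr]dv\;\ge\;\delta^2 c_1\int_{\{|w-c|<k\}\cap B_\rho}\bigl[(\Delta w)^2+|\nabla w|^4\bigr]dv,
\]
which is the quantity on the left-hand side of \eqref{01510}.

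Using $\mathcal{N}(w)=f$, this lower bound is paired via \eqref{00828} against $|\int f\,\chi^4\psi_k\,dv|+|\text{curvature integral}|+|\mathcal{R}|$. Since $\|\psi_k\|_\infty\le Ck$, the forcing integral is bounded by $Ck\|f\|_{L^1(B_r)}$, producing the second term on the right of \eqref{01510}. The curvature integral $\int\chi^4\psi_k'[(\tfrac{\gamma_2}{3}-2\gamma_3)R|\nabla w|^2-\gamma_2 Ric(\nabla w,\nabla w)]\,dv$ is controlled, through $|\nabla w|^2\le\varepsilon|\nabla w|^4+C_\varepsilon$, by an $\varepsilon$-fraction of the main lower bound plus a harmless $C|B_r|$, which can be absorbed in the first term on the right of \eqref{01510}.

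The bulk of the work is the remainder $\mathcal{R}$ in \eqref{00828}, whose model terms all have the schematic form $\int \chi^a\,\mathcal{W}\,|\Delta w|^p|\nabla w|^q(r-\rho)^{-s}\,dv$ with $\mathcal{W}\in\{\psi_k,\psi_k'\}$ and are supported in $B_r\setminus B_\rho$. For each of them I would apply Young's inequality after placing a factor $(\psi_k')^{1/2}$ on every $\Delta w$ and $(\psi_k')^{1/4}$ on every $\nabla w$, obtaining a bound of the form
\[
|\text{term}|\;\le\;\varepsilon\int\chi^4\psi_k'\bigl[(\Delta w)^2+|\nabla w|^4\bigr]dv+\frac{C_\varepsilon}{(r-\rho)^4}\int_{B_r\setminus B_\rho}\Phi_k(w-c)\,dv,
\]
with $\Phi_k$ one of $\psi_k^2/\psi_k'$, $\psi_k^{4/3}/(\psi_k')^{1/3}$, or $\psi_k^4/(\psi_k')^3$. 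The essential point is the pointwise inequality $\Phi_k(t)\le C(1+t^4)$ for all $t\in\mathbb{R}$ and $k\ge k_0$, which follows by an elementary split into $|t|\le k$ (where $|\psi_k|\le C|t|$ and $\psi_k'\ge c_1$) and $|t|>k$ (where $|\psi_k|\le Ck$ and $(\psi_k')^{-1}\le C(|t|/k)^{4/3}$). The genuine obstacle is the mixed term $\int\psi_k\,\Delta w\,(\nabla w\cdot\nabla\chi^4)\,dv$, in which $\Delta w$ and $\nabla w$ appear simultaneously; it can be tamed either by a triple Young inequality with exponents $(2,4,4)$ using the same weights, or more cleanly by one integration by parts that rewrites it as a linear combination of the preceding model terms plus a Hessian contribution $\int\psi_k(\nabla^2\chi^4)(\nabla w,\nabla w)\,dv$, estimated exactly like the $\Delta\chi^4$ one. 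Choosing $\varepsilon$ small enough that the collected absorbed pieces sum to less than $\tfrac12\delta^2 c_1$, \eqref{01510} follows.
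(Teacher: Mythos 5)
Your proposal is correct and follows essentially the same route as the paper's proof: testing \eqref{00828} with $\chi^4\psi(w-c)$ for a $k$-rescaled truncation $\psi$ whose ratio $|\psi''|/\psi'$ is $O(1/k)$, running the squares-completion of \eqref{n244}--\eqref{01130} for the main order, bounding the forcing term by $Ck\|f\|_{L^1(B_r)}$, and controlling $\mathcal{R}$ via Young's inequality together with pointwise bounds of $\psi^2/\psi'$, $\psi^4/(\psi')^3$ by $C(1+t^4)$ (your triple Young with exponents $(2,4,4)$ for the mixed term is exactly what the paper does). The only difference is your specific choice of $\psi_k$ (the rescaled weight from Theorem \ref{t:weak-norms} instead of the paper's explicit $C^2$ odd extension $\Psi$ with properties \eqref{02131}--\eqref{02131bis}), which is immaterial.
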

\begin{pf} Let $\chi \in C_0^\infty (B_r)$ be so that $0\leq \chi \leq 1$, $\chi=1$ in $B_\rho$ and
\begin{equation} \label{02141}
(r-\rho)|\nabla \chi|+(r-\rho)^2 |\Delta \chi| \leq C  .
\end{equation}
Letting $\Psi$ be the odd extension to $\R$ of
$$\Psi(s)=\left\{ \begin{array}{ll} s &\hbox{if }0\leq s\leq 1\\
8-9s^{-\frac{1}{3}} +2 s^{-1} & \hbox{if }s>1, \end{array} \right.$$
we have that $\Psi \in C^2(\mathbb{R})$ satisfies $|\Psi''| \leq 4 \Psi'$, $0<\Psi' \leq 1$, $\Psi^2 \leq 8^2 s^2 \Psi'$ and $\Psi^4 \leq 8^4 s^4 (\Psi')^3$ in $\mathbb{R}$. Hence, $\psi(s)=k \Psi(\frac{s}{k})$ is a $C^2-$function so that $0<\psi'\leq 1$, 
\begin{equation} \label{02131}
\sup_{s\in \mathbb{R}} \frac{|\psi''(s)|}{\psi'(s)} \leq \frac{4}{k}
\end{equation}
and
\begin{equation} \label{02131bis}
\sup_{s \in \mathbb{R}} \frac{\psi^2(s)}{s^2 \psi'(s)} \leq 8^2, \qquad \sup_{s \in \mathbb{R}} \frac{\psi^4(s)}{s^4 (\psi'(s))^3} \leq 8^4.
\end{equation}
By  Young's inequality we have that
\begin{eqnarray*}
\int [|\Delta w|+|\nabla w|^2] |\psi| |\Delta \chi^4 | \, dv&\leq& \frac{C}{(r-\rho)^2} \int_{B_r \setminus B_\rho} [|\Delta w|+|\nabla w|^2] \chi^2 |\psi| \\
&\leq & \epsilon \int \psi' \chi^4 [(\Delta w)^2+|\nabla w|^4] dv+\frac{C_\epsilon}{(r-\rho)^4} \int_{B_r \setminus B_\rho} |w-c|^2 \, dv
\end{eqnarray*}
in view of \eqref{02141} and \eqref{02131bis}, where $\psi$ stands for $\psi(w-c)$. Similarly, there holds
\begin{eqnarray*}
&& \int [|\Delta w|+|\nabla w|^2] (\psi'+|\psi|)  |\nabla \chi^4| |\nabla w| dv  \leq \frac{C}{r-\rho} \int_{B_r \setminus B_\rho} [|\Delta w|+|\nabla w|^2] (\psi'+|\psi|)  \chi^3 |\nabla w| dv \\
&& \leq \epsilon  \int  \psi' \chi^4[(\Delta w)^2+|\nabla w|^4]dv+\epsilon \int \psi' \chi^4 |\nabla w|^4 dv
+\frac{C_\epsilon'}{(r-\rho)^4} \int_{B_r \setminus B_\rho}  \frac{(\psi'+|\psi|)^4}{(\psi')^3} dv \\
&&\leq 2 \epsilon \int  \psi' \chi^4[(\Delta w)^2+|\nabla w|^4]dv+\frac{C_\epsilon}{(r-\rho)^4} \int_{B_r \setminus B_\rho}  (1+(w-c)^4) dv, 
\end{eqnarray*}
and
$$\int |\psi| |\nabla w||\nabla \chi^4| dv\leq  \frac{C}{r-\rho} \int_{B_r \setminus B_\rho} |\psi| |\nabla w|\chi^3 dv \leq \epsilon \int \psi' \chi^4 |\nabla w|^4 dv+\frac{C_\epsilon}{(r-\rho)^4} \int_{B_r \setminus B_\rho}  (w-c)^4 dv+C_\epsilon$$
in view of \eqref{02141} and \eqref{02131bis}. In conclusion, for all $\epsilon>0$ there exists $C_\epsilon>0$ so that $\mathcal R$ in \eqref{00828} satisfies
\begin{equation} \label{01501}
|\mathcal R| \leq C \epsilon  \int  \psi' \chi^4[(\Delta w)^2+|\nabla w|^4]dv+ \frac{C_\epsilon}{(r-\rho)^4} \int_{B_r \setminus B_\rho} (1+(w-c)^4) dv
\end{equation}
for some $C>0$. Since $\frac{|\psi''(s)|}{\psi'(s)} $ can be made as small as we need for $k$ large thanks to \eqref{02131}, we are in the same situation as with \eqref{01513} and, arguing as in the proof of Theorem \ref{t:weak-norms}, there exists $k_0>0$ large so that
\begin{eqnarray}\nonumber  
 \Big|(\frac{\gamma_2}{2}+6 \gamma_3) \int \chi^4 \psi' (\Delta w)^2 dv&+&
 \int \chi^4 [18 \gamma_3 \psi'+(\frac{\gamma_2}{2}+6\gamma_3) \psi''] \Delta w |\nabla w|^2 dv \\
 &+& 6 \gamma_3 \int \chi^4 (2\psi'+\psi'') |\nabla w|^4  dv \Big| \geq   \delta^2 \int  \psi' \chi^4 [(\Delta w)^2 +|\nabla w|^4 ]dv  \label{01516} 
\end{eqnarray}
for some $\delta>0$ and all $k\geq k_0$. Since $\int  \psi' \chi^4 |\nabla w|^2 dv \leq \epsilon \int  \psi' \chi^4 |\nabla w|^4 dv+C_\epsilon $ and $|\int f \chi^4 \psi \, dv| \leq 8k \int_{B_r}|f| \ dv$, by inserting \eqref{01501}-\eqref{01516} into \eqref{00828} for $\epsilon>0$ small we deduce the validity of \eqref{01510} for all $k\geq k_0$ in view of $\chi^4 \psi' (w-c) \geq \chi_{\{ |w-c|<k \}\cap B_\rho}$.
\end{pf} 

\medskip \noindent The aim is now  to control the mean oscillation 
$$[w]_{BMO}=\left(\sup_{0<r<i_0} \fint_{B_r} (w-\ov{w}^r)^4 dv\right)^{\frac{1}{4}} $$
of a solution $w$. Our approach in this step heavily relies on the ideas developed in \cite{DHM2}, where Caccioppoli-type estimates like in Theorem \ref{Caccioppoli} were crucial to establish BMO-bounds. We believe that $L^{4,\infty}-$estimates on $\nabla w$ are still true as in \cite{DHM2} but it is not clear which are the optimal bounds for $\Delta w$. We will not  pursue more this line since the following BMO-estimates are enough for our purposes.
\begin{thm} \label{BMO} Let $\frac{\gamma_2}{\gamma_3}>\frac{3}{2}$. Assume $\ov{f}=0$ and $\|f\|_{1} \leq C_0$ for some $C_0>0$. There exists $C>0$ such that for any smooth solution $w$ of $\mathcal{N}(w)=f$ in $M$ one has 
\begin{equation} \label{01745}
[w]_{BMO} \leq C. 
\end{equation}
\end{thm}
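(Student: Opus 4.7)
The plan is to follow the strategy developed in \cite{DHM2} for $p$-Laplace type problems, adapting it to the quasilinear fourth-order operator $\mathcal{N}$. The key inputs are the global bound from Theorem \ref{t:weak-norms} and the Caccioppoli-type estimate from Theorem \ref{Caccioppoli}; the former handles ``large'' balls while the latter gives the rescaled control needed on ``small'' balls.

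First, I would use Theorem \ref{t:weak-norms} to reduce to small balls: choosing $q \in (1,2)$ sufficiently close to $2$, the Sobolev embedding $W^{2,q}(M) \hookrightarrow L^p(M)$ (with $p = \tfrac{2q}{2-q}$ arbitrarily large) yields $\|w - \ov{w}\|_{L^p(M)} \leq C$ uniformly. For balls $B_r$ with $r \geq r_0$ fixed, H\"older and the elementary bound $|B_r|\geq c r_0^4$ give $\fint_{B_r}(w - \ov{w}^r)^4 \, dv \leq C$ at once. It therefore remains to treat balls of small radius.

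Second, I would fix a geodesic ball $B_r \subset M$ with $r < i_0$, set $c = \ov{w}^r$, and apply Theorem \ref{Caccioppoli} on the pair $B_{r/2}\subset B_r$ with a level $k \geq k_0$ to be chosen large but independent of $r$. Since $\|f\|_1 \leq C_0$, and because $\nabla T_k(w-c) = \nabla w\, \chi_{\{|w-c|<k\}}$, the estimate reads
\begin{equation*}
\int_{B_{r/2}} |\nabla T_k(w-c)|^4 \, dv \;\leq\; \frac{C}{r^4}\int_{B_r}\bigl(1 + (w-c)^4\bigr) dv + C_k .
\end{equation*}
Applying the $L^4$-Poincar\'e inequality on $B_{r/2}$ to $T_k(w-c)$, and normalizing by $|B_{r/2}| \sim r^4$, gives
\begin{equation*}
\fint_{B_{r/2}} \bigl|T_k(w-c) - \overline{T_k(w-c)}^{\, r/2}\bigr|^4 dv \;\leq\; C + C\fint_{B_r}(w-c)^4 dv + C_k .
\end{equation*}

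Third, I would handle the discrepancy between $w-c$ and its truncation $T_k(w-c)$ via a level-set estimate on the bad set $E_{k,r}=\{|w-c|>k\}\cap B_{r/2}$. A Chebyshev-type argument based on the already established Caccioppoli control of $|\nabla T_k(w-c)|^4$ and Sobolev embedding shows that $|E_{k,r}|/|B_{r/2}|$ can be made as small as we like by choosing $k$ large (uniformly in $r$ and in the solution). Setting $\phi(r) = \sup\{\fint_{B_r}(w - \ov{w}^r)^4 dv : B_r \subset M\}$, this produces an inequality of hole-filling type
\begin{equation*}
\phi(r/2) \;\leq\; \theta(k)\, \phi(r) + C(k) ,
\end{equation*}
with $\theta(k)<1$ provided $k$ is fixed sufficiently large. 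A standard iteration on dyadic scales, combined with the bound on $\phi(r)$ at radius $r=r_0$ from Step one, then yields $\phi(r) \leq C$ uniformly in $r<i_0$, which is precisely \eqref{01745}.

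The main obstacle will be the third step: since $\mathcal{N}$ is quasilinear and of mixed orders, truncation methods apply only to Caccioppoli-type quantities of the form $\psi(w-c)$ and not to $w$ directly, so passing from the oscillation bound on the truncation $T_k(w-c)$ to one on $w$ itself is delicate. Checking that the contraction factor $\theta(k)$ is genuinely independent of the solution and of the radius $r$ — in particular, that the $Ck\|f\|_1$ term from Theorem \ref{Caccioppoli} does not interact badly with the chosen level $k$ — is the critical point where the condition $\|f\|_1 \leq C_0$ and the uniform $W^{2,q}$-estimate are essential.
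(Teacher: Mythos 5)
Your Steps 1--2 do match the paper's preliminaries (Theorem \ref{t:weak-norms} disposes of radii bounded below; Theorem \ref{Caccioppoli} is indeed the tool at small scales), but Step 3 contains a genuine gap and the iteration cannot close as written. First, the claim that $|E_{k,r}|/|B_{r/2}|$ can be made small \emph{uniformly in $r$ and in the solution} by taking $k$ large is circular: by Chebyshev, $|E_{k,r}|\lesssim k^{-4}\int_{B_{r/2}}(w-c)^4\,dv$, so the relative measure is small only if $\fint_{B_r}(w-c)^4\,dv$ is already uniformly bounded --- which is exactly the BMO estimate being proved; the global $W^{2,q}$ bound does not rescue this, since it only gives $\fint_{B_r}|w-\ov{w}|^{4p}\,dv\le C r^{-4}$, which degenerates as $r\to 0$. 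Moreover, even if the bad set were small, $(w-c)^4$ is unbounded there, so its contribution to $\fint_{B_{r/2}}(w-c)^4\,dv$ is not controlled by the measure of $E_{k,r}$ alone. Second, on the good set the chain Caccioppoli $+$ $L^4$-Poincar\'e yields $\fint_{B_{r/2}}|T_k(w-c)-\overline{T_k(w-c)}|^4\,dv\le C\,\fint_{B_r}(w-c)^4\,dv+C(k)$ with a \emph{fixed structural constant} $C\geq 1$ (the product of the Caccioppoli and Poincar\'e constants); nothing in the argument makes this factor less than one, and the usual hole-filling trick is unavailable because the left- and right-hand sides involve different quantities (gradient of a truncation on $B_{r/2}$ versus oscillation of $w$ itself on the annulus). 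Hence the decay inequality $\phi(r/2)\le \theta(k)\phi(r)+C(k)$ with $\theta(k)<1$ is unsubstantiated, and with $\theta\ge 1$ the dyadic iteration gives nothing.

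The paper circumvents precisely this obstruction by a contradiction/blow-up argument: if $[w_n]_{BMO}\to+\infty$, one selects balls $B_{r_n}(x_n)$ nearly attaining the seminorm, shows $r_n\to 0$ via Theorem \ref{t:weak-norms}, and rescales \emph{after dividing by} $[w_n]_{BMO}$, i.e. $u_n(y)=[w_n]_{BMO}^{-1}\bigl(w_n(\hbox{exp}_{x_n}(r_n y))-\ov{w}_n^{r_n}\bigr)$. This normalization restores exactly the uniformity your scheme lacks: the $u_n$ have oscillation of order one on all balls, so the truncated Caccioppoli estimate \eqref{01502} passes to the limit and produces $u$ with $\nabla u\in L^4(\mathbb{R}^4)$ satisfying a limiting Caccioppoli inequality; choosing $c$ as the annulus average and applying Poincar\'e on annuli forces $\nabla u\equiv 0$, hence $u\equiv 0$ by the zero-average normalization, contradicting $\int_{B_1(0)}u^4\,dx\ge \tfrac{\omega_4}{6}$. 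To salvage your direct iteration you would need some mechanism producing a genuinely small contraction factor (or the blow-up normalization itself); at a fixed level $k$ it cannot work.
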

\begin{proof} If \eqref{01745} does not hold, we can find smooth solutions $w_n$ of $\mathcal{N} (w_n)=f_n$ so that $[w_n]_{BMO} \to +\infty$ as $n \to +\infty$, with $\ov{f}_n =0$ and $\|f_n\|_1\leq C_0$. By definition we can find $0<r_n<i_0$,  $x_n \in M$ so that
\begin{equation} \label{01236}
\fint_{B_{r_n}(x_n)} (w_n-\ov{w}_n^{r_n})^4 \ dv \geq \frac{1}{2} [w_n]_{BMO}^4.
\end{equation}
Since $[w_n]_{BMO} \to +\infty$ as $n \to +\infty$, up to a subsequence we can assume that $r_n \to 0$ as $n \to +\infty$ in view of 
$$\sup_{n \in \mathbb{N}} \sup_{\delta<r<i_0} \fint_{B_r} (w_n-\ov{w}_n^r)^4 dv<+\infty $$ 
for all $0<\delta \leq i_0$, as it follows by the Poincar\'e-Sobolev's embedding
$$\left( \int_{B_r} |w_n-\ov{w}_n^r|^4 dv \right)^{\frac{1}{4}} \leq C \left( \int_{B_r} |\nabla w_n|^2 dv\right)^{\frac{1}{2}}$$
and Theorem \ref{t:weak-norms}. Letting $exp_{x_n}: B_{i_0}(0) \to B_{i_0}(x_n)$ be the exponential map at $x_n$, for $|y|<\frac{i_0}{r_n}$ introduce the rescaled metric $g_n(y)=g (exp_{x_n}(r_ny))$  and the rescaled functions 
$$u_n(y)=\frac{w_n(exp_{x_n}(r_n y))- \ov{w}_n^{r_n}}{[w_n]_{BMO}}.$$
We have that
\begin{equation} \label{01850}
\int_{B_1(0)} u_n \ dv_{g_n}=0, \quad \int_{B_1(0)} u_n^4 \ dv_{g_n} \geq \frac{\hbox{vol}(B_{r_n}(x_n))}{2r_n^4},\quad 
\int_{B_r(0)} (u_n-\ov{u}_n^r)^4 \ dv_{g_n} \leq \frac{\hbox{vol}(B_{r r_n}(x_n))}{r_n^4} 
\end{equation}
for all $ r<\frac{i_0}{r_n}$ in view of \eqref{01236}, where $\ov{u}_n^r=\fint_{B_r(0)} u_n \ dv_{g_n}$ is the average of $u_n$ on $B_r(0)$ w.r.t. $g_n$. Neglecting the term involving the Laplacian, we can rewrite the estimate \eqref{01510} in terms of $u_n$ as
\begin{equation} \label{01502}
\int_{\{ |u_n-c|<k \} \cap B_\rho(0)} |\nabla u_n|_{g_n}^4 dv_{g_n}
\leq \frac{C}{(r-\rho)^4} \int_{B_r(0) \setminus B_\rho(0)} \left[\frac{1}{[w_n]_{BMO}^4}+(u_n-c)^4 \right]\ dv_{g_n}+\frac{C k \|f_n \|_{1}}{[w_n]_{BMO}^3}  
\end{equation}
for any $0<\rho<r<\frac{i_0}{r_n}$, $c \in \mathbb{R}$ and $k \geq \frac{k_0}{[w_n]_{BMO}}$. Since $\hbox{vol}(B_{r r_n}(x_n))\leq C (rr_n)^4$ for all $0<r<\frac{i_0}{r_n}$ there holds
\begin{equation} \label{01554}
\int_{B_r(0)} (u_n-\ov{u}_n^r)^4 \ dv_{g_n} \leq C r^4 \qquad \forall \ 0<r<\frac{i_0}{r_n}
\end{equation}
thanks to \eqref{01850}, and we can apply \eqref{01502} with $\rho=\frac{r}{2}$ and $c=\ov{u}_n^r$ to get 
\begin{equation} \label{01543}\int_{\{ |u_n-\ov{v}_n^r|<k \} \cap B_{\frac{r}{2}}(0)} |\nabla u_n|_{g_n}^4 dv_{g_n}
\leq C( \frac{1}{[w_n]_{BMO}^4} +1)+\frac{C k \|f_n \|_{1}}{[w_n]_{BMO}^3}  
\end{equation}
in view of \eqref{01554}. Since
$$|\ov{u}_n^r| \int_{B_1(0)} dv_{g_n} \leq \int_{B_1(0)} |u_n-\ov{u}_n^r| \ dv_{g_n}  \leq
C \left(\int_{B_r(0)} (u_n-\ov{u}_n^r)^4 \ dv_{g_n}\right)^{\frac{1}{4}} \left(\int_{B_1(0)} dv_{g_n}\right)^{\frac{3}{4}} \leq C_0 r \int_{B_1(0)} dv_{g_n}$$
for all $1\leq r <\frac{i_0}{r_n}$ in view of \eqref{01850} and \eqref{01554}, we have that $\{|u_n|<k \} \subset \{|u_n-\ov{u}_n^r|<2k\}$ and then 
\begin{equation} \label{01757} \int_{\{ |u_n|<k \} \cap B_{\frac{r}{2}}(0)} |\nabla u_n|_{g_n}^4 dv_{g_n}
\leq C \left(1+\frac{ k \|f_n \|_{1}}{[w_n]_{BMO}^3}  \right)
\end{equation}
for all $1\leq r <\frac{i_0}{r_n}$ and $k > C_0 r$ in view of \eqref{01543}. From \eqref{01757} and $\int_{B_1(0)} u_n \ dv_{g_n}=0$ it is rather classical to derive that $u_n$ is uniformly bounded in $W^{1,q}_{loc}(\mathbb{R}^4)$ for all $1 \leq q<4$, see for example Lemma 2.3 in \cite{DHM2} and the proof of  Lemma 10 in \cite{DHM1}. Up to a subsequence, we can assume that $u_n \rightharpoonup u$ in $W^{1,q}_{loc}(\mathbb{R}^4)$ for all $1\leq q <4$. Letting $\varphi_k \in C_0^\infty (-k,k)$ so that $\varphi_k(s)=s$ for $s\in [-\frac{k}{2},\frac{k}{2}]$, by $|\varphi_k'|\leq C_k$ and \eqref{01502} we deduce that
\begin{equation} \label{01254}
\int_{B_\rho(0)} |\nabla \varphi_k(u_n-c)|^4 dx
\leq \frac{C}{(r-\rho)^4} \int_{B_r(0) \setminus B_\rho(0)} \left[\frac{1}{[w_n]_{BMO}^4}+(u_n-c)^4 \right] \ dx+\frac{C k \|f_n \|_{1}}{[w_n]_{BMO}^3}  
\end{equation}
for any $0<\rho<r<\frac{i_0}{r_n}$, $c \in \mathbb{R}$ and $k \geq \frac{k_0}{[w_n]_{BMO}}$. Since $\nabla \varphi_k(u_n-c) \rightharpoonup \nabla \varphi_k(u-c)$ in $L^4_{loc}(\mathbb{R}^4)$ in view of $u_n \to u$ in $L^q_{loc}(\mathbb{R}^4)$ for all $q\geq 1$ as $n \to +\infty$, by weak lower semi-continuity of the $L^4-$norm we can let $n \to +\infty$ in \eqref{01254} to get
$$\int_{\{ |u-c|<\frac{k}{2} \} \cap B_\rho(0)} |\nabla u|^4 dx
\leq \frac{C}{(r-\rho)^4} \int_{B_r(0) \setminus B_\rho(0)} (u-c)^4 \ dx  $$
and then by the Monotone Convergence Theorem as $k \to +\infty$
\begin{equation} \label{01808}
\int_{B_\rho(0)} |\nabla u|^4 dx \leq \frac{C}{(r-\rho)^4} \int_{B_r(0) \setminus B_\rho(0)} (u-c)^4 \ dx  
\end{equation}
for any $0<\rho<r$, $c \in \mathbb{R}$ and $k >0$. Similarly, by letting $n \to +\infty$ into \eqref{01757} we deduce that
$$ \int_{\{ |u|<\frac{k}{2} \} \cap B_{\frac{r}{2}}(0)} |\nabla u|^4 dx \leq C $$
for all $r \geq 1$ and $k > C_0 r$, and then by the Monotone Convergence Theorem we get $\int_{\mathbb{R}^4} |\nabla u|^4 dx <+\infty$ as $k,r \to +\infty$. Taking $\rho=\frac{r}{2}$ and $c=\fint_{B_r(0) \setminus B_{\frac{r}{2}}(0)} u \, dx$ in \eqref{01808}, by  Poincar\'e's inequality one finally deduces 
$$\int_{B_{\frac{r}{2}}(0)} |\nabla u|^4 dx
\leq \frac{C}{r^4} \int_{B_r (0) \setminus B_{\frac{r}{2}}(0)} (u-c)^4 \ dx  \leq C' \int_{B_r(0) \setminus B_{\frac{r}{2}}(0)} |\nabla u|^4 dx \to 0$$
as $r \to +\infty$ in view of $\int_{\mathbb{R}^4} |\nabla u|^4 dx <+\infty$, leading to $\nabla u=0$ a.e. in $\mathbb{R}^4$. By \eqref{01850} and $g_n \to \delta_{eucl}$ locally uniformly as $n \to +\infty$ we have that $u=0$ a.e. in view $\int_{B_1(0)} u \, dx=0$, in contradiction with $\int_{B_1(0)} u^4 dx \geq \frac{\omega_4}{6}$.
\end{proof}

\section{General ``linear" theory} \label{linear}
\setcounter{equation}{0}  

\noindent We aim to develop a comprehensive theory for the operator $\mathcal{N}$ in \eqref{linearL} when $\frac{\gamma_2}{\gamma_3} \geq 6$. In this section we are interested in existence issues for a general Radon measure $\mu$ and Solutions will be Obtained as Limits of smooth Approximations, from now on referred to as SOLA (see \cite{BBGGPV,BoccGa}). On the other hand since, as we will see, blow-up sequences give rise in the limit to a solution with a linear combination $\mu_s$ of Dirac masses as R.H.S., it will be crucial to establish in the next section the logarithmic behaviour of any of such singular solutions, referred to as a \emph{fundamental solution} of $\mathcal{N}$ corresponding to $\mu_s$. We will guarantee  that SOLA's will be unique just when $\gamma_2=6\gamma_3$.

\medskip \noindent The assumption $\frac{\gamma_2}{\gamma_3} \geq 6$ is crucial to have some monotonicity property on $\mathcal{N}$, expressed by a sign  for the main order term in expressions of the form $\langle \mathcal{N}(w_1)-\mathcal{N}(w_2),w_1-w_2 \rangle$. When $\gamma_2=6 \gamma_3$ the lower-order terms cancel out and uniqueness is in order, as already noticed in \cite{ChangYangAnnals}. The operator $\mathcal{N}(w)$ in \eqref{linearL} is considered here in the following distributional sense:
\begin{eqnarray*}
\langle \mathcal{N}(w), \varphi \rangle&=& \frac{\gamma_2}{2} \int \Delta w \Delta \varphi \, dv- \gamma_2 \int \hbox{Ric}(\n w,\nabla \varphi) dv+6\gamma_3 \int (\Delta w+|\nabla w|^2) \Delta \varphi \, dv \\
&&+12\gamma_3 \int (\Delta w+|\nabla w|^2)\langle \nabla w,\nabla \varphi \rangle dv+(\frac{\gamma_2}{3}	-2\gamma_3) \int R \langle \nabla w,\nabla \varphi \rangle dv 
\end{eqnarray*} 
for all $\varphi \in C^\infty(M)$, provided $\nabla w \in L^3$ and $\nabla^2 w \in L^{\frac{3}{2}}$. We have the following result.
\begin{pro} \label{crucProp}
There holds
\begin{eqnarray}  \langle \mathcal{N}(w_1)-\mathcal{N}(w_2),\varphi \rangle
&=& 3\gamma_3 \int \Delta_{\hat g} p \ \Delta_{\hat g} \varphi  \, dv_{\hat g}+6\gamma_3 \int \langle \nabla^2_{\hat g} p,\nabla^2_{\hat g} \varphi \rangle_{\hat g} dv_{\hat g}+3 \gamma_3  \int |\nabla p|_{\hat g}^2 \langle \nabla p,\nabla \varphi\rangle_{\hat g}  dv_{\hat g}\nonumber \\
&&+(\frac{\gamma_2}{2}-3\gamma_3) \int \Delta p \Delta \varphi \, dv +(2\gamma_3-\frac{\gamma_2}{3}) \int [3 \hbox{Ric}(\n p,\nabla \varphi)-R \langle \nabla p,\nabla \varphi \rangle]
dv  \label{difference}  
\end{eqnarray}
for all $\varphi \in C^\infty(M)$ provided $\mathcal{N}(w_1)$ and $\mathcal{N}(w_2)$ exist in a distributional sense, where $p=w_1-w_2$, $q=w_1+w_2$ and $\hat g=e^q g$.
\end{pro}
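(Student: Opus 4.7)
The plan is to verify the identity by a direct computation on both sides, reducing each to an explicit polynomial expression in the basic ingredients $\Delta p, \Delta q, \nabla p, \nabla q$, $\hbox{Ric}$ and $R$ tested against $\Delta\varphi, \nabla\varphi$, and matching term by term.

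On the left I use the distributional formula for $\mathcal{N}(w_i)$ recalled just above the proposition. The linear-in-$w$ parts of $\mathcal{N}$ (the Paneitz quadratic form, the biharmonic $6\gamma_3\int \Delta w\,\Delta\varphi$, and the scalar-curvature piece) transfer verbatim into the same expressions in $p = w_1 - w_2$. For the nonlinear parts, writing $w_i = (q \pm p)/2$ with $q = w_1 + w_2$ yields the algebraic identities
\begin{align*}
|\nabla w_1|^2 - |\nabla w_2|^2 &= \langle \nabla p, \nabla q\rangle,\\
\Delta w_1\,\nabla w_1 - \Delta w_2\,\nabla w_2 &= \tfrac{1}{2}\bigl(\Delta p\,\nabla q + \Delta q\,\nabla p\bigr),\\
|\nabla w_1|^2\nabla w_1 - |\nabla w_2|^2\nabla w_2 &= \tfrac{|\nabla p|^2 + |\nabla q|^2}{4}\,\nabla p + \tfrac{\langle \nabla p, \nabla q\rangle}{2}\,\nabla q,
\end{align*}
which, substituted into the $6\gamma_3(\Delta w + |\nabla w|^2)\Delta\varphi$ and $12\gamma_3(\Delta w + |\nabla w|^2)\langle \nabla w, \nabla\varphi\rangle$ terms, turn $\langle \mathcal{N}(w_1) - \mathcal{N}(w_2), \varphi\rangle$ into a sum of integrals of explicit monomials.

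On the right, I would expand each conformally-phrased integral using the four-dimensional change of metric formulas with $\hat g = e^q g$: $dv_{\hat g} = e^{2q}dv$, $|\nabla f|_{\hat g}^2 = e^{-q}|\nabla f|^2$, and $\Delta_{\hat g} f = e^{-q}(\Delta f + \langle \nabla q, \nabla f\rangle)$. These immediately give $3\gamma_3\int |\nabla p|_{\hat g}^2 \langle \nabla p, \nabla\varphi\rangle_{\hat g}\,dv_{\hat g} = 3\gamma_3\int |\nabla p|^2 \langle \nabla p, \nabla\varphi\rangle\,dv$, accounting for the cubic term, and unfold $\int \Delta_{\hat g} p\,\Delta_{\hat g}\varphi\,dv_{\hat g}$ into $\int[\Delta p\,\Delta\varphi + \Delta p\,\langle \nabla q, \nabla\varphi\rangle + \Delta\varphi\,\langle \nabla q, \nabla p\rangle + \langle \nabla q, \nabla p\rangle\langle \nabla q, \nabla\varphi\rangle]\,dv$. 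For the Hessian term I would apply the bilinear Bochner identity on the closed manifold $(M,\hat g)$,
$$\int \langle \nabla^2_{\hat g} p, \nabla^2_{\hat g}\varphi\rangle_{\hat g}\,dv_{\hat g} = \int \Delta_{\hat g} p\,\Delta_{\hat g}\varphi\,dv_{\hat g} - \int \hbox{Ric}_{\hat g}(\nabla_{\hat g} p, \nabla_{\hat g}\varphi)\,dv_{\hat g},$$
together with the standard conformal transformation
$$\hbox{Ric}_{\hat g} = \hbox{Ric} - \nabla^2 q + \tfrac{1}{2}\,dq \otimes dq - \tfrac{1}{2}(\Delta q + |\nabla q|^2)\,g.$$
The resulting integral $\int \nabla^2 q(\nabla p, \nabla\varphi)\,dv$ is eliminated via the pointwise relation $\langle \nabla \langle \nabla u, \nabla v\rangle, \nabla w\rangle = \nabla^2 u(\nabla v, \nabla w) + \nabla^2 v(\nabla u, \nabla w)$: integrating by parts and applying this to the three pairs $(u,v) \in \{(p,q),(p,\varphi),(q,\varphi)\}$ produces a $3\times 3$ linear system whose solution expresses $\int \nabla^2 q(\nabla p, \nabla\varphi)\,dv$ purely as a combination of $\int \Delta p\,\langle \nabla q, \nabla\varphi\rangle$, $\int \Delta q\,\langle \nabla p, \nabla\varphi\rangle$ and $\int \Delta\varphi\,\langle \nabla p, \nabla q\rangle$.

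After these substitutions, the identity reduces to a combinatorial match of coefficients. The main obstacle is precisely this bookkeeping: the $9\gamma_3$, $-6\gamma_3$, $3\gamma_3$, $\frac{\gamma_2}{2} - 3\gamma_3$ produced by expanding the right-hand side must assemble into exactly the same polynomial expression as the left-hand side, and all $\nabla^2 q$ and $|\nabla q|^2 g$ contributions must cancel correctly once the integrations by parts are carried out. In particular, the factor $3$ in front of $\hbox{Ric}(\nabla p, \nabla\varphi)$ in the residual curvature term $(2\gamma_3 - \frac{\gamma_2}{3})\int[3\hbox{Ric}(\nabla p, \nabla\varphi) - R\langle \nabla p, \nabla\varphi\rangle]\,dv$ arises from combining the $-\gamma_2\,\hbox{Ric}$ appearing in the Paneitz form with the $-6\gamma_3\,\hbox{Ric}$ produced by the Bochner step applied to $\hbox{Ric}_{\hat g}$.
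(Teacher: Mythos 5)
Your argument is correct in substance, but it follows a genuinely different route on the right-hand side from the paper's. Like the paper, you split the left-hand side by writing $w_1=\frac{q+p}{2}$, $w_2=\frac{q-p}{2}$; your three algebraic identities are exactly \eqref{n1129}--\eqref{n1142}. The paper then works forward: it uses the conformal Hessian formula \eqref{13261} and the single integration by parts \eqref{10551} to recast the nonlinear difference as the mixed expression \eqref{n1205}, and converts only the flat Hessian term through the polarized Bochner identity \eqref{14122} with respect to the smooth background metric $g$ -- no curvature of $\hat g$ ever appears. You instead unfold the right-hand side, invoking the Bochner identity on $(M,\hat g)$ together with the conformal transformation law of $\hbox{Ric}_{\hat g}$, and eliminate $\int \nabla^2 q(\nabla p,\nabla\varphi)\,dv$ via the three-pair integration-by-parts system. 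This does close: carrying out the bookkeeping you defer, the coefficients of $\Delta p\,\Delta\varphi$, $\Delta p\,\langle\nabla q,\nabla\varphi\rangle$, $\Delta\varphi\,\langle\nabla p,\nabla q\rangle$, $\Delta q\,\langle\nabla p,\nabla\varphi\rangle$, $\langle\nabla p,\nabla q\rangle\langle\nabla q,\nabla\varphi\rangle$, $|\nabla q|^2\langle\nabla p,\nabla\varphi\rangle$, $|\nabla p|^2\langle\nabla p,\nabla\varphi\rangle$, $\hbox{Ric}(\nabla p,\nabla\varphi)$ and $R\langle\nabla p,\nabla\varphi\rangle$ come out as $\frac{\gamma_2}{2}+6\gamma_3$, $6\gamma_3$, $6\gamma_3$, $6\gamma_3$, $6\gamma_3$, $3\gamma_3$, $3\gamma_3$, $-\gamma_2$, $\frac{\gamma_2}{3}-2\gamma_3$ on both sides, so the cancellation you assert is genuine -- though in a final write-up it must be displayed, not merely claimed, since it is the whole content of the identity. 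The one issue you should address explicitly is regularity: the proposition assumes only $\nabla w_i\in L^3$, $\nabla^2 w_i\in L^{3/2}$ (and is later applied with a singular $w_2$ satisfying \eqref{12292}), so the conformal factor of $\hat g=e^qg$ has merely weak second derivatives; the Bochner identity for $\hat g$ and the pointwise formula for $\hbox{Ric}_{\hat g}$ are then not free, and your computation needs either a smooth-approximation step in $(w_1,w_2)$ (all terms on both sides are stable under convergence of $\nabla^2 w_i$ in $L^{3/2}$ and $\nabla w_i$ in $L^3$, so this works but is not mentioned) or a rewriting in background quantities -- which is precisely what the paper's route buys: by using Bochner only with respect to the smooth metric $g$, its proof is valid verbatim at the stated low regularity, whereas your route is the more transparent coefficient-check for smooth data.
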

\begin{pf}  Notice that when $w_1 = w_2$, $q = 2 w_i$ and hence our notation for the conformal metric $\hat g = e^{q} g$ is consistent with out previous one. 
Since $\hat g=e^q g$ has derivatives in a weak sense up to order two, the Riemann tensor of $\hat g$ and all the geometric quantities which involve at most second-order derivatives make sense. One can easily check that
\begin{eqnarray}\label{12391}
&& dv_{\hat g}=e^{2q} dv, \quad e^q \Delta_{\hat g} w=\Delta w+\langle \nabla q,\nabla w\rangle,\quad e^{2q}|\nabla w|_{\hat g}^4=|\nabla w|^4,\\
&& \nabla^2_{\hat g}w = \nabla^2 w - \frac{1}{2}d w \otimes d q -\frac{1}{2}d q  \otimes d w +\frac{1}{2} \langle \nabla q, \nabla  w \rangle g .
\label{13261}
\end{eqnarray}
Since $w_1=\frac{p+q}{2}$ and $w_2=\frac{q-p}{2}$ we have that
\begin{equation} \label{n1129}
\int [(\Delta w_1+|\nabla w_1|^2)-(\Delta w_2+|\nabla w_2|^2)] \Delta \varphi \, dv= \int (\Delta p+\langle \nabla p,\nabla q \rangle) \Delta \varphi \, dv, 
\end{equation}
and
\begin{eqnarray} \label{n1142}
&& \int \langle (\Delta w_1+|\nabla w_1|^2) \nabla w_1- (\Delta w_2 +|\nabla w_2|^2)\nabla w_2,\nabla \varphi \rangle dv=
\frac{1}{2} \int (\Delta p+\langle \nabla p,\nabla q \rangle)  \langle  \nabla q,\nabla \varphi \rangle dv\\
&&+\frac{1}{4} \int (2\Delta q +|\nabla p|^2+|\nabla q|^2) \langle \nabla p,\nabla \varphi \rangle dv. \nonumber \end{eqnarray}
By \eqref{n1129}-\eqref{n1142} we deduce that
\begin{eqnarray}
\label{n1205}
&& 2\int \langle (\Delta w_1+|\nabla w_1|^2) \nabla w_1- (\Delta w_2 +|\nabla w_2|^2)\nabla w_2,\nabla \varphi \rangle dv\\
&& \qquad+\int [(\Delta w_1+|\nabla w_1|^2)-(\Delta w_2+|\nabla w_2|^2)] \Delta \varphi \, dv =\frac{1}{2}\int \Delta p \Delta \varphi \, dv-\int \langle \nabla^2 p,\nabla^2 \varphi \rangle dv \nonumber\\
&&\qquad \qquad+ \frac{1}{2}\int \Delta_{\hat g} p \Delta_{\hat g} \varphi \, dv_{\hat g}+\int \langle \nabla^2_{\hat g} p,\nabla^2_{\hat g} \varphi \rangle_{\hat g} dv_{\hat g}+
\frac{1}{2} \int  |\nabla p|_{\hat g}^2 \langle \nabla p,\nabla \varphi \rangle_{\hat g} dv_{\hat g}, \nonumber 
\end{eqnarray}
in view of \eqref{12391}-\eqref{13261} and the formula
\begin{eqnarray}\label{n1305}
&& \int \langle \nabla^2_{\hat g} p,\nabla^2_{\hat g} \varphi \rangle_{\hat g} dv_{\hat g}-\int \langle \nabla^2 p,\nabla^2 \varphi \rangle dv=
\int \langle \nabla^2_{\hat g} p,\nabla^2_{\hat g} \varphi \rangle dv-\int \langle \nabla^2 p,\nabla^2 \varphi \rangle dv\\
&&=\int \left[\Delta q  \langle \nabla p,\nabla \varphi \rangle+ \frac{1}{2} \Delta p \langle  \nabla q,\nabla \varphi \rangle +\frac{1}{2} \langle \nabla p,\nabla q \rangle \langle  \nabla q,\nabla \varphi \rangle+\frac{1}{2} |\nabla q|^2 \langle \nabla p,\nabla \varphi \rangle +\frac{1}{2} \langle \nabla p,\nabla q \rangle \Delta \varphi \right] dv. \nonumber
\end{eqnarray}
To establish \eqref{n1305} we simply use \eqref{13261} and an integration by parts to get
\begin{equation} \label{10551}
\int \left[\nabla^2 p(\nabla q, \nabla \varphi)+\nabla^2 \varphi(\nabla q, \nabla p)\right] dv=
\int \langle \nabla q, \nabla \langle \nabla  p, \nabla \varphi\rangle ) dv=-\int \Delta q   \langle \nabla  p, \nabla \varphi\rangle  dv
\end{equation}
for all $\varphi \in C^\infty(M)$, in view of $\nabla p,\nabla q \in L^3$ and $\nabla^2 p,\nabla^2 q \in L^{\frac{3}{2}}$.
Thanks to   Bochner's identity
\begin{equation*}
\hbox{Ric}(\nabla p,\nabla p)=-\langle \nabla p,\nabla \Delta p \rangle-|\nabla^2  p|^2 +  \frac{1}{2}\Delta (|\nabla p|^2), \quad p \in C^3(M),
\end{equation*}
an integration by parts gives that $\int \hbox{Ric}(\nabla p,\nabla p) dv=\int (\Delta p)^2 dv-\int |\nabla^2  p|^2  dv$ and by differentiation
\begin{eqnarray} \label{14122}
\int \hbox{Ric}(\nabla p,\nabla \varphi) dv=\int \Delta p \Delta \varphi \, dv-\int \langle \nabla^2  p, \nabla^2 \varphi \rangle  dv
\end{eqnarray}
for all $\varphi \in C^\infty(M)$, where by density it is enough to assume $\nabla p, \, \nabla^2 p \in L^1 $. By inserting \eqref{14122} into \eqref{n1205}, we then deduce the validity of \eqref{difference}. \end{pf}
\begin{rem} \label{rem1} When $\partial M \not= \emptyset$ notice that the integrations by parts  in \eqref{10551}-\eqref{14122} 
and then \eqref{difference} are still valid for $\varphi \in C_0^\infty(M)$ as long as $\mathcal{N}(u), \ \mathcal{N}(v)$ exist in a distributional sense. 
\end{rem}

\medskip \noindent The usefulness of assumption $\frac{\gamma_2}{\gamma_3} \geq 6$ becomes apparent from the choice $\varphi=p$ in \eqref{difference} since it guarantees that the first four terms in the R.H.S. of \eqref{difference} have all the same sign. When $\gamma_2=6\gamma_3$ there are no lower-order terms and uniqueness is expected. Since in general $p$ is not an admissible function in \eqref{difference}, we will follow the strategy in \cite{GIS,IwSb,Iw} via a Hodge decomposition to build up admissible approximations of $p$ to be used in \eqref{difference}. 

\medskip  \noindent Letting $w_1$ and $w_2$ be smooth functions, consider the \emph{Hodge decomposition}
\begin{equation} \label{15277}
\frac{\nabla p}{(\delta^2+|\nabla p|^2+|\nabla q|^2)^{2\epsilon}}=\nabla \varphi+h,
\end{equation}
where $\epsilon>0$, $0<\delta\leq 1$ and $\varphi,h$ satisfy $\Delta \, \hbox{div} \ h=0$ and $\ov{\varphi} =0$. Notice that
\begin{equation} \label{1926}
\Delta \varphi=\frac{\Delta p}{(\delta^2+|\nabla p|^2+|\nabla q|^2)^{2\epsilon}}
-4 \epsilon \frac{  \nabla^2 p(\nabla p,\nabla p)+\nabla^2 q (\nabla p,\nabla q)}{(\delta^2+|\nabla p|^2+|\nabla q|^2)^{2\epsilon+1}}- \hbox{div } h.
\end{equation}
Even if $\hbox{div } h= { 0}$ when $\partial M=\emptyset$, we prefer to keep this term in order to include later the case $\partial M \not= \emptyset$. The function $\varphi$ is uniquely determined as the smooth solution of
$$\Delta^2 \varphi=\Delta \left[\frac{\Delta p}{(\delta^2+|\nabla p|^2+|\nabla q|^2)^{2\epsilon}}
-4 \epsilon \frac{  \nabla^2 p(\nabla p,\nabla p)+\nabla^2 q (\nabla p,\nabla q)}{(\delta^2+|\nabla p|^2+|\nabla q|^2)^{2\epsilon+1}}\right],\quad \ov{\varphi}=0, $$
in view of \eqref{1926}, and then $h$ is simply defined as $h=\frac{\nabla p}{(\delta^2+|\nabla p|^2+|\nabla q|^2)^{2\epsilon}}-\nabla \varphi$. Given distinct points $p_1,\dots,p_l \in M$ and $\alpha_1,\dots,\alpha_l \in \mathbb{R}$, we want to allow one between functions $w_i$, say $w_2$, to satisfy $w_2 \in C^\infty(M \setminus \{p_1, \dots, p_l\})$ and such that 
\begin{equation} \label{12292}
\lim_{x \to 0} |x|^k |\n^{(k)} (w_2-\alpha_i \log |x|)| =0, \quad  k = 1, 2, 3, 
\end{equation}
 holds in geodesic coordinates near each $p_i$. Let us justify \eqref{15277} more in general (i.e. for $w_1$ smooth 
 and $w_2$ singular) by introducing the Green's function $G(x,y)$ of $\Delta^2$ in $M$, i.e.  the solution of
$$\left\{ \begin{array}{ll}\Delta^2 G(x,\cdot)=\delta_x-\frac{1}{|M|} &\hbox{in } M\\ \int G(x,y) dv(y)=0. & \end{array} \right.$$
For all $F \in C^\infty(M, TM)$ the solution of $\Delta^2 \varphi=\Delta \hbox{div}\; F$ in $M$, $\ov{\varphi}=0$, takes the form 
$$\varphi(x)=\int G(x,y) \Delta \hbox{div}\; F(y) dv(y)=- \int \langle \nabla_y \Delta_y G(x,y), F(y) \rangle \, dv(y).$$
Hence $\nabla \varphi$ can be expressed as the  singular integral
$$\nabla \varphi(x)=- \big(\int \nabla_{xy} \Delta_y G(x,y) [F(y)] dv(y) \big)^{\sharp}=\mathcal K(F),$$
where $\sharp$ stands for the sharp musical isomorphism. Since $M$ is a smooth manifold, by the theory of singular integrals the operator $\mathcal K$ extends from $C^\infty(M, TM)$ to $L^s(M, TM)$ and $\nabla \varphi=\mathcal K(F)$, $h=F-\mathcal K(F)$ provide for the vector field $F$ the Hodge decomposition $F=\nabla \varphi+h$ with 
\begin{equation}\label{1116}
\|\nabla \varphi\|_s+\|h\|_s \leq C(s) \|F\|_s
\end{equation}
for all $s>1$. The key point is that $C(s)$ is locally uniformly bounded  in $(1,+\infty)$, see for example \cite{IwMa}.

\medskip \noindent Since $w_1$ is smooth and $w_2$ satisfies \eqref{12292}, in geodesic coordinates near each $p_i$ there holds
$$  |x|^2(\delta^2+|\nabla p|^2+|\nabla q|^2)=2 \alpha_i^2+o(1) , \quad |\Delta p|+|\nabla ^2 p|+|\nabla ^2 q|=O(\frac{1}{|x|^2}) \qquad \hbox{as } x\to 0,$$
and then $F=\frac{\nabla p}{(\delta^2+|\nabla p|^2+|\nabla q|^2)^{2\epsilon}}$ satisfies $\hbox{div }F =O(\frac{1}{|x|^{2(1-2\epsilon)}})$ as $x \to 0$. Since $w_2$ is smooth away from $p_1,\ldots,p_l$, we have that  $\hbox{div }F \in L^{2(1+2\epsilon)}(M)$ and then by elliptic regularity theory the solution $\varphi$ of $\Delta^2 \varphi=\Delta \hbox{div}\; F$ in $M$, $\ov{\varphi}=0$, is in $ W^{2,2(1+2\epsilon)}(M)$. The Hodge decomposition \eqref{15277} does hold with $h=\frac{\nabla p}{(\delta^2+|\nabla p|^2+|\nabla q|^2)^{2\epsilon}}-\nabla \varphi \in W^{1,2(1+2\epsilon)}(M)$ and by \eqref{1116} $\varphi$ satisfies
\begin{equation} \label{1118}
\|\nabla \varphi\|_ {\frac{4(1-\epsilon)}{1-4\epsilon}} \leq   K \| \frac{\nabla p}{(\delta^2+|\nabla p|^2+|\nabla q|^2)^{2\epsilon}}\|_ {\frac{4(1-\epsilon)}{1-4\epsilon}}\leq K
\|\nabla p\|_{4(1-\epsilon)}^{1-4\epsilon}.
\end{equation}
To show the smallness of $h$ in \eqref{15277} for $\epsilon$ small, we follow the approach introduced in \cite{IwSb} based on a general estimate for commutators in Lebesgue spaces. For the sake of completeness we include it in the Appendix and we just make use here of the following estimate:
\begin{equation} \label{0945}
\|h\|_{\frac{4(1-\epsilon)}{1-4\epsilon}} \leq   K \epsilon  \left(\delta^{1-4\epsilon}+\|\nabla p\|_{4(1-\epsilon)}^{1-4\epsilon}+\|\nabla q \|_{4(1-\epsilon)}^{1-4\epsilon}\right)
\end{equation}
for all $0<\epsilon \leq \epsilon_0$ and $0<\delta\leq 1$, for some $K>0$ and $\epsilon_0>0$ small. Thanks to the Hodge decomposition \eqref{15277} we are now ready to show the following result.
\begin{pro} \label{thm1915}  Let $\frac{\gamma_2}{\gamma_3} \geq 6$ and set 
\begin{equation} \label{eta}
\eta=|\gamma_2-6\gamma_3| \sup_M (|R|+\|\hbox{Ric}\|).
\end{equation}
There exist $\epsilon_0>0$ and $C>0$ so that
\begin{equation} \label{1537}
 \int \frac{|\nabla^2_{\hat g} p|_{\hat g}^2+|\nabla p|_{\hat g}^4}{(|\nabla p|^2+|\nabla q|^2)^{2\epsilon}} dv_{\hat g} \leq C  (\|F_1-F_2\|_{\frac{4(1-\epsilon)}{3}}^{\frac{4(1-\epsilon)}{3}}
+ \eta \| \nabla p\|_{2-4 \epsilon}^{2-4\epsilon}+\epsilon^{\frac{4}{3}} \|F_1\|_{\frac{4(1-\epsilon)}{3}}^{\frac{4(1-\epsilon)}{3}} +\epsilon^{\frac{4}{3}} \|F_2\|_{\frac{4(1-\epsilon)}{3}}^{\frac{4(1-\epsilon)}{3}}+\epsilon^{\frac{2}{3}} )
\end{equation}  
for all $0<\epsilon\leq \epsilon_0$ and all distributional solutions $w_i$ of $\mathcal{N}(w_i)=\hbox{div }F_i$, $i=1,2$, provided that $w_1$ is smooth and either $w_2$ is smooth or satisfies \eqref{12292}. Here $p=w_1-w_2$, $q = w_1 + w_2$ and $\hat g=e^q g$.
\end{pro}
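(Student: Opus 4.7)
The plan is to use the function $\varphi$ produced by the Hodge decomposition \eqref{15277} as test function in the identity \eqref{difference} from Proposition \ref{crucProp}. The assumption $\frac{\gamma_2}{\gamma_3} \geq 6$ is critical because it forces the four leading terms on the R.H.S. of \eqref{difference} (those with coefficients $3\gamma_3$, $6\gamma_3$, $3\gamma_3$, and $\frac{\gamma_2}{2}-3\gamma_3$) to carry matching signs, so that replacing $\nabla\varphi$ by $\nabla p/(\delta^2+|\nabla p|^2+|\nabla q|^2)^{2\epsilon}$ in these terms reproduces, up to controllable errors, exactly the weighted quantity on the L.H.S. of \eqref{1537}. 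Note also that on the closed manifold $M$ the requirement $\Delta\,\mathrm{div}\,h = 0$ together with $\int \mathrm{div}\,h \, dv = 0$ forces $\mathrm{div}\,h \equiv 0$, which eliminates what would otherwise be a harmful term in the expression \eqref{1926} for $\Delta\varphi$.

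On the L.H.S. of \eqref{difference}, since $\mathcal{N}(w_1)-\mathcal{N}(w_2) = \mathrm{div}(F_1-F_2)$ in the distributional sense, an integration by parts yields $-\int\langle F_1 - F_2, \nabla\varphi\rangle\, dv$. Splitting $\nabla\varphi = \nabla p/(\cdots)^{2\epsilon} - h$ and applying H\"older on the dual pair $\bigl(\frac{4(1-\epsilon)}{3},\frac{4(1-\epsilon)}{1-4\epsilon}\bigr)$, together with the bounds \eqref{1118} and \eqref{0945}, controls this by a constant times $\|F_1-F_2\|_{4(1-\epsilon)/3}\bigl(\|\nabla p\|_{4(1-\epsilon)}^{1-4\epsilon} + \epsilon(\delta^{1-4\epsilon}+\|\nabla p\|_{4(1-\epsilon)}^{1-4\epsilon}+\|\nabla q\|_{4(1-\epsilon)}^{1-4\epsilon})\bigr)$. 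Young's inequality then produces the $\|F_1-F_2\|^{4(1-\epsilon)/3}_{4(1-\epsilon)/3}$ term appearing in \eqref{1537}, with the residual higher-power contributions in $\|\nabla p\|$, $\|\nabla q\|$ ready to be absorbed into the L.H.S. of \eqref{1537}.

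On the R.H.S. of \eqref{difference} I would first convert the $\hat g$-quantities to $g$-quantities via \eqref{12391}--\eqref{13261} (which generates additional lower-order pieces linear in $\nabla p, \nabla q$), then substitute \eqref{1926} for $\Delta\varphi$ and $\nabla\varphi = \nabla p/(\cdots)^{2\epsilon}-h$ wherever $\nabla\varphi$ appears explicitly. The leading substitution, combined with the Bochner identity \eqref{14122} used to convert $\nabla^2\varphi$-terms into $\Delta\varphi$ plus Ricci pieces, produces a positive quadratic form of the type
\[
c\int\frac{(\Delta p)^2 + |\nabla^2 p|^2 + |\nabla p|^4}{(\delta^2 + |\nabla p|^2+|\nabla q|^2)^{2\epsilon}}\,dv
\]
for some $c=c(\gamma_2,\gamma_3)>0$ whose positivity is exactly ensured by $\frac{\gamma_2}{\gamma_3}\geq 6$; reconverting to $\hat g$ via \eqref{12391}--\eqref{13261} bounds the L.H.S. of \eqref{1537} from above by a multiple of this quantity plus $\eta$-weighted lower-order terms. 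The surviving error families are then handled in turn: the $O(\epsilon)$ pieces arising from differentiating $(\cdots)^{2\epsilon}$ in \eqref{1926} are absorbed by Cauchy-Schwarz against the weighted energy itself once $\epsilon$ is small; the $h$-contributions (such as $\int|\nabla p|^2\langle\nabla p,h\rangle\, dv$) are estimated by H\"older and \eqref{0945}, producing after Young's inequality the $\epsilon^{4/3}\|F_i\|_{4(1-\epsilon)/3}^{4(1-\epsilon)/3}$ terms of \eqref{1537} once $\nabla w_i$ is controlled by $F_i$ through the equation and Theorem \ref{t:weak-norms}; the lower-order curvature terms in \eqref{difference} are bounded directly by H\"older and give the $\eta\|\nabla p\|_{2-4\epsilon}^{2-4\epsilon}$ contribution. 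Sending $\delta\to 0$ and collecting bounds yields \eqref{1537}.

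The main technical obstacle is the singular case in which $w_2$ only satisfies \eqref{12292}. One must verify both that the Hodge decomposition is well-posed (relying on $\mathrm{div}\,F \in L^{2(1+2\epsilon)}$ near the singular points, elliptic regularity yielding $\varphi\in W^{2,2(1+2\epsilon)}$, and the $L^s$-boundedness of the singular integral operator $\mathcal{K}$, as discussed before the statement) and that $\varphi$ is a legitimate test function in \eqref{difference} despite the logarithmic blow-up of $w_2$. The latter calls for an approximation argument excising small geodesic balls around each $p_i$ and showing, using the explicit profile \eqref{12292}, that all boundary terms generated by the integrations by parts underlying Proposition \ref{crucProp} vanish as the radii shrink to zero.
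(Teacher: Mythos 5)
Your overall scheme (test \eqref{difference} with the Hodge potential $\varphi$ from \eqref{15277}, use \eqref{1118} and \eqref{0945}, absorb the $O(\epsilon)$ errors) is the paper's, but two steps as you describe them do not deliver \eqref{1537}. First, the treatment of the Hessian pairing $6\gamma_3\int\langle\nabla^2_{\hat g}p,\nabla^2_{\hat g}\varphi\rangle_{\hat g}dv_{\hat g}$: if you use \eqref{14122} to convert the whole ``$\nabla^2\varphi$-term'' into $\Delta\varphi$ plus Ricci pieces, the Ricci contribution comes with the coefficient $6\gamma_3$, not with $2\gamma_3-\frac{\gamma_2}{3}$, so the curvature term you produce is of size $|\gamma_3|\sup\|\hbox{Ric}\|\,\|\nabla p\|_{2-4\epsilon}^{2-4\epsilon}$ \emph{without} the factor $\eta$ of \eqref{eta}; this destroys exactly the cancellation at $\gamma_2=6\gamma_3$ that \eqref{1537} encodes and on which the uniqueness in Theorem \ref{thm3.2} rests (and it also fails to produce the weighted $|\nabla^2_{\hat g}p|^2_{\hat g}$ on the left of \eqref{1537}). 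The correct move, and the one the paper makes, is to keep the Hessian pairing intact as one of the signed leading terms and to integrate by parts only its $\nabla h^{\flat}$ remainder, via the commutation identity $g^{ij}h^kp_{;jki}=\langle h,\nabla\Delta p\rangle+\hbox{Ric}(h,\nabla p)$ as in \eqref{120221}--\eqref{12022}; then every curvature term involving $h$ carries the undifferentiated $h$, which is $O(\epsilon)$-small by \eqref{0945}, and the only curvature terms surviving with an order-one factor are those already weighted by $2\gamma_3-\frac{\gamma_2}{3}$, whence the $\eta$ prefactor. Note also that the point is not $\hbox{div}\,h$ (which indeed vanishes on closed $M$) but $\nabla h$, which \eqref{0945} does not control.

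Second, after all substitutions the right-hand side unavoidably contains $\|\nabla p\|_{4(1-\epsilon)}$, $\|\nabla q\|_{4(1-\epsilon)}$ and the weighted Hessian of the \emph{sum} $q=w_1+w_2$, namely $\|\rho\nabla^2_{\hat g}q\|_{2,\hat g}$, multiplied by $\epsilon$-factors; these cannot be absorbed into the left-hand side (which only controls $p$), and your appeal to ``the equation and Theorem \ref{t:weak-norms}'' does not supply them: that theorem assumes $\mathcal{N}(w)=f$ with $f\in L^1$, $\ov f=0$, and yields only $W^{2,q}$ bounds for $q<2$ with no quantitative dependence on $\|F_i\|_{\frac{4(1-\epsilon)}{3}}$ uniform in $\epsilon$, whereas \eqref{1537} needs precisely $\|\nabla w_i\|_{4(1-\epsilon)}^{4(1-\epsilon)}+\int|\nabla^2_{\tilde g}w_i|^2_{\tilde g}|\nabla w_i|^{-4\epsilon}dv_{\tilde g}=O(\|F_i\|_{\frac{4(1-\epsilon)}{3}}^{\frac{4(1-\epsilon)}{3}}+1)$ to generate the $\epsilon^{\frac43}\|F_i\|_{\frac{4(1-\epsilon)}{3}}^{\frac{4(1-\epsilon)}{3}}$ terms. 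The missing idea is the bootstrap: apply the intermediate estimate \eqref{12421} once with $(w_2,F_2)=(0,0)$ and once with $(w_1,F_1)=(0,0)$, i.e.\ compare each $w_i$ with the trivial solution, obtaining \eqref{15081} and \eqref{15141}, hence \eqref{1504}--\eqref{11531}, and only then reinsert these into \eqref{12421} and let $\delta\to0$ by Fatou. Your outline for the singular case (excision around the $p_i$ using \eqref{12292}) is a workable alternative to the paper's density argument, but it too must be combined with the two corrections above.
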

\begin{pf} As already observed, we have that $\varphi \in W^{1,\frac{4(1-\epsilon)}{1-4\epsilon}}(M)\cap W^{2,2(1+2\epsilon)}(M)$. Letting $\varphi_k \in C^\infty (M)$ so that $\varphi_k \to \varphi $ in $ W^{1,\frac{4(1-\epsilon)}{1-4\epsilon}}(M)  \cap  W^{2,2(1+2\epsilon)}(M) $ as $k\to +\infty$, we can use \eqref{difference} with $\varphi_k$: thanks to
\eqref{12391}-\eqref{13261} and
$$|\nabla p|^2+|\nabla q|^2+|\Delta p|+|\nabla^2 p|  \in \bigcap_{1\leq q<2} L^q(M),$$  
let $k\to +\infty$ to get the validity of 
\begin{eqnarray} \label{18371} 
&& 3\gamma_3 \int \Delta_{\hat g} p \ \Delta_{\hat g} \varphi  \, dv_{\hat g}+6\gamma_3 \int \langle \nabla^2_{\hat g} p,\nabla^2_{\hat g} \varphi \rangle_{\hat g} dv_{\hat g}+3 \gamma_3  \int |\nabla p|_{\hat g}^2 \langle \nabla p,\nabla \varphi\rangle_{\hat g}  dv_{\hat g}\\
&&+(\frac{\gamma_2}{2}-3\gamma_3) \int \Delta p \Delta \varphi \, dv +(2\gamma_3-\frac{\gamma_2}{3}) \int [3\hbox{Ric}(\n p,\nabla \varphi)- R \langle \nabla p,\nabla \varphi \rangle] dv =- \int  \langle F_1-F_2, \nabla \varphi \rangle \ dv. \nonumber
\end{eqnarray}
Notice that such a Sobolev regularity of $\varphi$ might fail for a general solution $w_2 \in W^{\theta,2,2)}(M)$, see the definition in \eqref{W}, and this explains why, even tough SOLA lie in $W^{\theta,2,2)}(M)$, in Theorem \ref{thm3.2} we will not prove uniqueness in such a {\em grand Sobolev space}.

\medskip \noindent Setting $\rho=(\delta^2+|\nabla p|^2+|\nabla q|^2)^{-\epsilon}$, by \eqref{15277}-\eqref{1926} we deduce that
\begin{eqnarray}
 && |\Delta_{\hat g} \varphi-(\rho^2 \Delta_{\hat g} p- e^{-q}\hbox{div}\, h)|+|\nabla^2_{\hat g} \varphi-(\rho^2 \nabla_{\hat g}^2 p- \nabla h^{\flat})|_{\hat g} = 
\label{20082}  \\
&& = \epsilon \rho^2 O\left( |\nabla p|_{\hat g} |\nabla q|_{\hat g}+|\nabla q|_{\hat g}^2+|\nabla^2_{\hat g} p|_{\hat g}+|\nabla_{\hat g}^2 q|_{\hat g} \right) +O\left(|\nabla q|_{\hat g} |h|_{\hat g}\right)\nonumber
\end{eqnarray}
and
\begin{eqnarray}
|\Delta \varphi-(\rho^2 \Delta  p- \hbox{div}\, h)|=  \epsilon \rho^2 O\left( |\nabla p| |\nabla q| +|\nabla q|^2+|\nabla^2_{\hat g} p|+|\nabla_{\hat g}^2 q| \right), \label{20122} 
\end{eqnarray}
in view of \eqref{12391}-\eqref{13261}, where $\flat$ stands for the flat musical isomorphism. By \eqref{15277} and \eqref{20082}-\eqref{20122} let us re-write \eqref{18371} as
\begin{eqnarray} \label{20172} 
&& 3 \gamma_3 \int \rho^2(\Delta_{\hat g} p)^2 dv_{\hat g}
+6 \gamma_3  \int \rho^2 |\nabla^2_{\hat g} p|_{\hat g}^2 dv_{\hat g}+3 \gamma_3  \int \rho^2 |\nabla p|_{\hat g}^4 dv_{\hat g}+(\frac{\gamma_2}{2}-3\gamma_3) \int \rho^2(\Delta p)^2  dv\\
&&-3\gamma_3 \int e^{-q} \Delta_{\hat g} p \, \hbox{div}\, h  \, dv_{\hat g}
-6 \gamma_3  \int \langle \nabla^2_{\hat g} p,\nabla h^{\flat} \rangle_{\hat g} dv_{\hat g}-(\frac{\gamma_2}{2}-3\gamma_3) \int \Delta p \, \hbox{div}\, h \, dv \nonumber\\
&&=- \int \langle F_1-F_2, \nabla \varphi \rangle dv+\mathfrak{R}, \nonumber
\end{eqnarray}
where by \eqref{12391}-\eqref{13261} and H\"older's inequality $\mathfrak{R}$ satisfies 
\begin{eqnarray} \label{estR} \nonumber
\mathfrak{R}&=& \epsilon \left(\|\rho \nabla^2_{\hat g} p\|_{2,\hat g} +(\int \rho^2 |\nabla p|_{\hat g}^4 dv_{\hat g})^{\frac{1}{4}} (\int \rho^2 |\nabla q|_{\hat g}^4 dv_{\hat g})^{\frac{1}{4}}\right) O\left[(\int \rho^2 |\nabla p|_{\hat g}^4 dv_{\hat g})^{\frac{1}{4}} (\int \rho^2 |\nabla q|_{\hat g}^4 dv_{\hat g})^{\frac{1}{4}}\right. \\
&&\left.+(\int \rho^2 |\nabla q|_{\hat g}^4 dv_{\hat g})^{\frac{1}{2}}+\|\rho \nabla^2_{\hat g}p \|_{2,\hat g}+\|\rho \nabla^2_{\hat g}q \|_{2,\hat g}\right] +O \Big(\int [|\nabla^2_{\hat g} p|  |\nabla q|+|\nabla p|^3]  |h| dv \Big) \nonumber \\
&& +O\left(\eta \int [|\nabla p|^{2-4\epsilon}+|\nabla p||h|]dv \right).  
\end{eqnarray}
Notice that by \eqref{12391} and H\"older's inequality 
\begin{eqnarray} \label{1358} 
&& \int [|\nabla^2_{\hat g} p| |\nabla q|+|\nabla p|^3] |h| dv  \\
&&= O\Big(\|\rho \nabla^2_{\hat g} p\|_{2,\hat g} (\int \rho^2 |\nabla q|^4_{\hat g}dv_{\hat g})^{\frac{1}{4}}+(\int \rho^2 |\nabla p|^4_{\hat g}dv_{\hat g})^{\frac{3}{4}}
\Big) \|\rho^{-1} \|_{\frac{2(1-\epsilon)}{\epsilon}}^{\frac{3}{2}} \|h\|_{\frac{4(1-\epsilon)}{1-4\epsilon}}
\nonumber \\
&&  = \epsilon \, O\Big(\|\rho \nabla^2_{\hat g} p\|_{2,\hat g} (\int \rho^2 |\nabla q|^4_{\hat g}dv_{\hat g})^{\frac{1}{4}}+(\int \rho^2 |\nabla p|^4_{\hat g}dv_{\hat g})^{\frac{3}{4}}
\Big)  (\delta^{1-\epsilon}+\|\nabla p\|_{4(1-\epsilon)}^{1-\epsilon}+\|\nabla q\|_{4(1-\epsilon)}^{1- \epsilon}),\nonumber
\end{eqnarray}
thanks to \eqref{0945} and
\begin{eqnarray} \label{12541}
 \|\rho^{-1} \|_{\frac{2(1-\epsilon)}{\epsilon}} \leq \| \delta+|\nabla p|+|\nabla q|\|_{4(1-\epsilon)}^{2\epsilon}= O(\delta^{2\epsilon}+ \|\nabla p\|_{4(1-\epsilon)}^{2\epsilon}+\|\nabla q\|_{4(1-\epsilon)}^{2 \epsilon}).
\end{eqnarray}
The difficult term to handle is 
\begin{eqnarray*}
&& 3 \gamma_3 \int e^{-q} \Delta_{\hat g} p \, \hbox{div} \, h  \, dv_{\hat g}+6\gamma_3 \int \langle \nabla^2_{\hat g} p,\nabla h^{\flat} \rangle_{\hat g} dv_{\hat g}+(\frac{\gamma_2}{2}-3\gamma_3) \int \Delta p \, \hbox{div}\, h \, dv\\
&&=3 \gamma_3 \int \langle \nabla q,\nabla p \rangle \hbox{div} \, h  \, dv+6 \gamma_3 \int \langle \nabla^2_{\hat g} p,\nabla h^{\flat} \rangle dv+\frac{\gamma_2}{2} \int \Delta p \, \hbox{div}\, h \, dv
\end{eqnarray*}
in view of \eqref{12391}-\eqref{13261}. For smooth functions $w_1$ and $w_2$, integrating by parts we have that
\begin{eqnarray} \label{120221} 
\qquad 3 \gamma_3 \int \langle \nabla q,\nabla p \rangle \hbox{div} \, h  \, dv+\frac{\gamma_2}{2} \int \Delta p \, \hbox{div}\, h \, dv
=-3 \gamma_3 \int  \langle \nabla \langle \nabla q,\nabla p \rangle, h \rangle \, dv+ \frac{\gamma_2}{2} \int  \Delta p \, \hbox{div}\, h  \, dv, 
\end{eqnarray}
and
\begin{eqnarray} \label{12022} \nonumber
 \int \langle \nabla^2_{\hat g} p,\nabla h^{\flat}\rangle  dv&=&-\int g^{ij}h^k (\nabla^2_{\hat g} p)_{kj;i} \ dv \\
&=& -\int [\langle h, \nabla \Delta p\rangle  +\hbox{Ric}\ (h,\nabla p)]dv+\frac{1}{2} \int [\Delta p \langle \nabla q,h\rangle +\Delta q \langle \nabla p,h\rangle] dv 
  \\
 &=& \int [\Delta p \, \hbox{div} \, h  - \hbox{Ric}\ (h,\nabla p)+
\frac{1}{2} \Delta p \langle \nabla q,h\rangle +\frac{1}{2} \Delta q \langle \nabla p,h\rangle] dv 
\nonumber
\end{eqnarray}
in view of \eqref{13261} and
\begin{eqnarray*}
g^{ij} h^k p_{;jki}  = g^{ij} h^k p_{;jik} + R_{sk}  h^k (\nabla p )^s=\langle h,\nabla \Delta p \rangle + \hbox{Ric}\ (h,\nabla p).
\end{eqnarray*}
Since $\Delta \, \hbox{div} \ h=0$, by H\"older's inequality and \eqref{120221}-\eqref{12022} we then have 
\begin{eqnarray} && 3\gamma_3 \int e^{-q} \Delta_{\hat g} p \, \hbox{div}\, h  \, dv_{\hat g}+6\gamma_3 \int \langle \nabla^2_{\hat g} p,\nabla h^{\flat} \rangle_{\hat g} dv_{\hat g}+ (\frac{\gamma_2}{2}-3\gamma_3) \int \Delta p \, \hbox{div}\, h \, dv \nonumber\\
&=& O\Big (\int |h||\nabla p|dv+  \int  [|\nabla^2_{\hat g}  p||\nabla q| + |\nabla^2_{\hat g}  q| |\nabla p|+|\nabla q|^2 |\nabla p|] |h| dv  \Big) \nonumber \\
&=& O\Big( \|\nabla p\|_{\frac{4(1-\epsilon)}{3}} \|h\|_{\frac{4(1-\epsilon)}{1-4\epsilon}} \Big)
+O\Big( \|\rho \nabla^2_{\hat g} q\|_{2,\hat g} (\int \rho^2 |\nabla p|^4_{\hat g} dv_{\hat g})^{\frac{1}{4}}  \|\rho^{-1} \|^{\frac{3}{2}}_{\frac{2(1-\epsilon)}{\epsilon}} \|h\|_{\frac{4(1-\epsilon)}{1-4\epsilon}} \Big)  \nonumber \\
&&+O\Big( \|\rho \nabla^2_{\hat g} p\|_{2,\hat g}+(\int \rho^2 |\nabla p|^4_{\hat g} dv_{\hat g})^{\frac{1}{4}} (\int \rho^2 |\nabla q|^4_{\hat g} dv_{\hat g})^{\frac{1}{4}} \Big)  (\int \rho^2 |\nabla q|^4_{\hat g} dv_{\hat g})^{\frac{1}{4}}  \|\rho^{-1} \|^{\frac{3}{2}}_{\frac{2(1-\epsilon)}{\epsilon}} \|h\|_{\frac{4(1-\epsilon)}{1-4\epsilon}}  \nonumber \\
&=& \epsilon \, O ( \delta^{2-4\epsilon}+\|\nabla p\|_{4(1-\epsilon)}^{2-4\epsilon}+\|\nabla q\|_{4(1-\epsilon)}^{2-4\epsilon})
+ \epsilon ( \delta^{1-\epsilon}+\|\nabla p\|_{4(1-\epsilon)}^{1-\epsilon}+\|\nabla q\|_{4(1-\epsilon)}^{1-\epsilon}) \times \nonumber \\
&&\times O[ \|\rho \nabla^2_{\hat g} q\|_{2,\hat g} (\int \rho^2 |\nabla p|^4_{\hat g} dv_{\hat g})^{\frac{1}{4}} + \|\rho \nabla^2_{\hat g} p\|_{2,\hat g} (\int \rho^2 |\nabla q|^4_{\hat g} dv_{\hat g})^{\frac{1}{4}} +(\int \rho^2 |\nabla p|^4_{\hat g} dv_{\hat g})^{\frac{1}{4}} (\int \rho^2 |\nabla q|^4_{\hat g} dv_{\hat g})^{\frac{1}{2}} ] \label{12011}   
\end{eqnarray}
in view of \eqref{12391}-\eqref{13261}, \eqref{0945} and \eqref{12541}. When $w_2$ satisfies \eqref{12292}, notice that $p,q \in \displaystyle \bigcap_{1\leq q <2} W^{2,q}(M)$ and $h \in L^{\frac{4(1-\epsilon)}{1-4\epsilon}}(M) \cap W^{1,2(1+2\epsilon)}(M)$. By an approximation argument we see that \eqref{120221}-\eqref{12022} and $\int  \Delta p \ \hbox{div} h  \ dv=0$ still hold for $p$, $q$ and $h$ also in this case, and then \eqref{12011} again follows.

\medskip \noindent As
\begin{eqnarray*}
\|\nabla \varphi\|_{\frac{4(1-\epsilon)}{1-4\epsilon}}^{\frac{4(1-\epsilon)}{1-4\epsilon}}=O\left(\int (\rho^2 |\nabla p|)^{\frac{4(1-\epsilon)}{1-4\epsilon}} dv \right)=
O\left(\int \rho^2 |\nabla p|^4  (\frac{|\nabla p|^2}{\delta^2+|\nabla p|^2+|\nabla q|^2})^{\frac{6 \epsilon}{1-4\epsilon}} dv \right)
=O(\int \rho^2 |\nabla p|_{\hat g}^4 dv_{\hat g})
\end{eqnarray*}
in view of \eqref{1118}, notice that
\begin{equation}\label{new330}
\int \langle F_1-F_2, \nabla \varphi \rangle dv=O\left(\|F_1-F_2\|_{\frac{4(1-\epsilon)}{3}}(\int \rho^2 |\nabla p|_{\hat g}^4 dv_{\hat g})^{\frac{1-4\epsilon}{4(1-\epsilon)}} \right).
\end{equation}
Since
$$\eta \int |\nabla p||h| dv=O(\eta^{2-4\epsilon} \|\nabla p\|_{2-4\epsilon}^{2-4\epsilon} + \epsilon^{\frac{8}{3}}+\frac{1}{\epsilon^{\frac{8}{3}}}\|h\|_{\frac{4(1-\epsilon)}{1-4\epsilon}}^{\frac{4(1-\epsilon)}{1-4\epsilon}}),$$
inserting \eqref{estR}-\eqref{1358} and \eqref{12011}-\eqref{new330} into \eqref{20172}, by Young's inequality and \eqref{0945} one finally gets that
\begin{eqnarray} \label{12421} 
\int \rho^2 \Big[|\nabla^2_{\hat g} p|^2_{\hat g} +|\nabla p|_{\hat g}^4 \Big] dv_{\hat g}&=&
O\big(  \|F_1-F_2\|_{\frac{4(1-\epsilon)}{3}}^{\frac{4(1-\epsilon)}{3}} +\eta \| \nabla p\|_{2-4 \epsilon}^{2-4\epsilon}  \big)\\
&&+\epsilon^{\frac{4}{3}}  O\big(
 \|\rho \nabla^2_{\hat g}q \|_{2,\hat g}^2+ \|\nabla p\|_{4(1-\epsilon)}^{4-4\epsilon} +\|\nabla q\|_{4(1-\epsilon)}^{4-4 \epsilon}
+\epsilon^{-\frac{2}{3}} \big) \nonumber 
\end{eqnarray}
for all $0<\epsilon\leq \epsilon_0$ and $0<\delta \leq 1$, for some $\epsilon_0>0$ small.

\medskip \noindent Since \eqref{12421}  holds for any smooth functions $w_1$ and $w_2$, if we choose $w_2=F_2=0$ then $w_1=p=q$ satisfies
\begin{equation} \label{642}
\int \frac{|\nabla^2_{\tilde g} w_1|^2_{\tilde g}+
|\nabla w_1|_{\tilde g}^4}{(\delta^2+|\nabla w_1|^2)^{2 \epsilon}} dv_{\tilde g}
=O \Big(\|F_1 \|_{\frac{4(1-\epsilon)}
{3}}^{\frac{4(1-\epsilon)}{3}} +\| \nabla w_1 \|_{2-4 \epsilon}^{2-4\epsilon}+ \epsilon^{\frac{4}{3}} \|\nabla w_1 \|_{4(1-\epsilon)}^{4-4\epsilon}+\epsilon^{\frac{2}{3}}\Big)  
\end{equation}
for all $0<\epsilon\leq \epsilon_0$ and $0<\delta \leq 1$, where $\tilde g=e^{w_1} g$. Letting $\delta \to 0^+$ in \eqref{642}, by Fatou's Lemma we deduce that
$$\int \frac{|\nabla^2_{\tilde g} w_1|^2_{\tilde g}+
|\nabla w_1|_{\tilde g}^4}{|\nabla w_1|^{4\epsilon}} dv_{\tilde g}
=O \Big(\|F_1 \|_{\frac{4(1-\epsilon)}
{3}}^{\frac{4(1-\epsilon)}{3}} +\| \nabla w_1\|_{2-4 \epsilon}^{2-4\epsilon}+ \epsilon^{\frac{4}{3}} \|\nabla w_1 \|_{4(1-\epsilon)}^{4-4\epsilon}+\epsilon^{\frac{2}{3}}\Big)  $$
for all $0<\epsilon\leq \epsilon_0$. Since $\int \frac{|\nabla w_1|_{\tilde g}^4}{{|\nabla w_1|^{4\epsilon}}} dv_{\tilde g}=\int |\nabla w_1|^{4(1-\epsilon)}dv,$ 
by Young's inequality we obtain that
\begin{equation} \label{15081}
\int \frac{|\nabla^2_{\tilde g} w_1|^2_{\tilde g}
}{|\nabla w_1|^{4\epsilon}} dv_{\tilde g}+\| \nabla w_1\|_{4(1-\epsilon)}^{4(1- \epsilon)}=O(\|F_1 \|_{\frac{4(1-\epsilon)}{3}}^{\frac{4(1-\epsilon)}{3}} +1).
\end{equation}
If $w_2$ is either smooth or satisfies \eqref{12292}, we can still apply \eqref{12421}  with $w_1=F_1=0$ and get 
\begin{equation} \label{15141}
\int \frac{|\nabla^2_{g^\#} w_2|^2_{g^\#}}{|\nabla w_2|^{4\epsilon}} dv_{g^\#}+\| \nabla w_2 \|_{4(1-\epsilon)}^{4(1-\epsilon)}=O(\|F_2 \|_{\frac{4(1-\epsilon)}{3}}^{\frac{4(1-\epsilon)}{3}} +1)
\end{equation}
for all $0<\epsilon \leq \epsilon_0$, where $g^\#=e^{w_2} g$. Since $\rho\leq |\nabla w_1|^{-2\epsilon}, |\nabla w_2|^{-2\epsilon}$ and
\begin{eqnarray*}
e^{2q}[|\nabla^2_{\hat g} p|_{\hat g}^2+|\nabla^2_{\hat g} q|_{\hat g}^2]&=&2e^{2w_1} |\nabla^2_{\tilde g} w_1|_{\tilde g}^2+2e^{2w_2} |\nabla^2_{g^\#} w_2|_{g^\#}^2 +|dw_1 \otimes dw_2+dw_2 \otimes dw_1 -\langle \nabla w_1,\nabla w_2 \rangle g|^2\\
&&-2 \langle \nabla^2_{\tilde g} w_1+\nabla^2_{g^\#} w_2,dw_1 \otimes dw_2+dw_2 \otimes dw_1 -\langle \nabla w_1,\nabla w_2 \rangle g\rangle
\end{eqnarray*}
in view of \eqref{12391}-\eqref{13261}, by \eqref{15081}-\eqref{15141} we deduce that
\begin{equation} \label{1504}
\| \nabla p \|_{4(1-\epsilon)}^{4(1-\epsilon)}+\| \nabla q \|_{4(1-\epsilon)}^{4(1-\epsilon)}=O(\| \nabla w_1 \|_{4(1-\epsilon)}^{4(1-\epsilon)}+\| \nabla w_2 \|_{4(1-\epsilon)}^{4(1-\epsilon)})
=O( \|F_1\|_{\frac{4(1-\epsilon)}{3}}^{\frac{4(1-\epsilon)}{3}}+\|F_2\|_{\frac{4(1-\epsilon)}{3}}^{\frac{4(1-\epsilon)}{3}}  +1)
\end{equation}
and 
\begin{eqnarray} \label{11531}
\| \rho \nabla^2_{\hat g} p \|_{2,\hat g}^2+\| \rho \nabla^2_{\hat g} q \|_{2,\hat g}^2&=&O\Big(\int \frac{|\nabla^2_{\tilde g} w_1|_{\tilde g}^2}{|\nabla w_1|^{4\epsilon}} dv_{\tilde g}+\int \frac{|\nabla^2_{g^\#} w_2|_{g^\#}^2}{|\nabla w_2|^{4\epsilon}}dv_{g^\#} +\int |\nabla w_1|^{2-2\epsilon} |\nabla w_2|^{2-2\epsilon} dv \Big) \nonumber \\
&=& O(\|F_1\|_{\frac{4(1-\epsilon)}{3}}^{\frac{4(1-\epsilon)}{3}} +\|F_2 \|_{\frac{4(1-\epsilon)}{3}}^{\frac{4(1-\epsilon)}{3}}+1)
\end{eqnarray}
for all $0<\epsilon \leq \epsilon_0$. Inserting \eqref{1504}-\eqref{11531} into \eqref{12421} and letting $\delta \to 0^+$, estimate \eqref{1537} follows by Fatou's Lemma for some $\epsilon_0>0$ small.
\end{pf}
\begin{rem} \label{rem11} When $\partial M \not= \emptyset$, re-consider $G(x,y)$ as the Green function of $\Delta^2$ in $M$ with  boundary conditions $G(x,\cdot)=\partial_\nu G(x,\cdot)=0$ on $\partial M$. The Hodge decomposition \eqref{15277} does hold with $\varphi \in W^{2,2(1+2\epsilon)}_0(M)$ and $h \in W_0^{1,2(1+2\epsilon)}(M)$. Letting $\varphi_k \in C^\infty_0 (M)$ so that $\varphi_k \to \varphi $ in $ W_0^{1,\frac{4(1-\epsilon)}{1-4\epsilon}}(M) \cap  W_0^{2,2(1+2\epsilon)}(M) $ as $k\to +\infty$, thanks to Remark \ref{rem1} we can use \eqref{difference} with $\varphi_k$ and let $k\to +\infty$ to get the validity of \eqref{18371} for $\varphi$. The integrations by parts \eqref{120221}-\eqref{12022} are still valid since $h \in W_0^{1,2(1+2\epsilon)}(M)$, while $\int \Delta p \, \hbox{div}\, h \, dv=0$ does hold provided $w_1-w_2 \in W^{2,1}_0(M)$. Hence, Proposition \ref{thm1915} does hold when $\partial M\not= \emptyset$ provided that we assume $w_1-w_2 \in W^{2,1}_0(M)$.\end{rem}
\noindent Let $L^{\theta,q)}(M, TM)$ be the \emph{grand Lebesgue space}  of all vector fields $F \in \displaystyle \bigcup_{1\leq \tilde q<q} L^{\tilde q}(M,TM)$ with 
$$\|F\|_{\theta, q)}= \sup_{0<\epsilon \leq \epsilon_0} \epsilon^{\frac{\theta}{q}}\|F \|_{q(1-\epsilon)}<+\infty$$
and $W^{\theta,2,2)}$ be the \emph{grand Sobolev space}
\begin{equation} \label{W}
W^{\theta,2,2)}=\{ w \in W^{2,1}(M): \, \ov{w}=0, \|w\|_{W^{\theta,2,2)}}:= \|\Delta w\|_{\theta,2)}+\|\nabla w\|_{\theta,4)}<+\infty\}.
\end{equation}
Let $\mathcal M=\{\mu \hbox{ Radon measure in }M: \ \mu(M)=0\}$. For $\mu \in \mathcal M$ we say that a distributional solution $w$ of $\mathcal{N}(w)=\mu$ in $M$ is a SOLA if $w=\displaystyle \lim_{n \to +\infty} w_n$ a.e., where $w_n$ are smooth solutions of $\mathcal{N}(w_n)=f_n$ with $f_n \in C^\infty(M)$, $\ov{w}_n=\ov{f}_n=0$ and $f_n dv \rightharpoonup \mu$ as $n \to +\infty$. Letting $G_2$ be the Green's function of $\Delta$ in $M$, the function
$$H(\mu)=\int \nabla_x G_2(x,y) d \mu(y)$$
for $\mu \in \mathcal M$ satisfies by Jensen's inequality
\begin{equation} \label{19139} 
\epsilon^{\frac{3}{4}} \|H(\mu) \|_{\frac{4(1-\epsilon)}{3}} \leq   \epsilon^{\frac{3}{4}} |d \mu| \sup_{y \in M} \left( \int  |\nabla_x G_2(x,y)|^{\frac{4(1-\epsilon)}{3}} dv(x) \right)^{\frac{3}{4(1-\epsilon)}} \leq  C |d \mu| \end{equation}
for all $0<\epsilon\leq \epsilon_0$. Therefore, we have that $H: \mathcal M \to L^{1,\frac{4}{3})}(M,TM)$ is a linear bounded operator satisfying the property $\mu=\hbox{div} \ H(\mu)$, and we can now re-phrase Proposition \ref{thm1915} as the following main a-priori estimate.
\begin{pro} \label{thm1915bis} Let $\frac{\gamma_2}{\gamma_3} \geq 6$, $\frac{2}{3} \leq \theta<\frac{4}{3}$ and $\eta$ be given as in \eqref{eta}. There exists $C>0$ such  that
\begin{eqnarray} \label{11411}
\| w_1-w_2 \|_{W^{\theta,2,2)}}& \leq&  C \|F_1-F_2 \|_{\theta,\frac{4}{3})}^{\frac{4-3\theta}{6}} (\|F_1\|_{\theta,\frac{4}{3})}+\|F_2\|_{\theta,\frac{4}{3})}+1)^{\frac{\theta}{2}} \\
&&+C \|F_1-F_2\|_{\theta,\frac{4}{3})}^{\frac{4-3\theta}{12}} (\|F_1\|_{\theta,\frac{4}{3})}+\|F_2\|_{\theta,\frac{4}{3})}+1)^{\frac{4+3 \theta}{12}} \nonumber \\
&&+\eta (\|F_1\|_{\theta,\frac{4}{3})}+\|F_2 \|_{\theta,\frac{4}{3})}+1)^{\frac{1}{3}} \, O(\|\nabla (w_1-w_2)\|_2+ \|\nabla (w_1-w_2)\|_2^{\frac{1}{4}}) \nonumber 
\end{eqnarray}
for all SOLA's $w_1$, $w_2$ of $\mathcal{N}(w_1)=\mu_1 \in \mathcal{M}$, $\mathcal{N}(w_2)=\mu_2 \in \mathcal{M}$, where $F_1=H(\mu_1)$ and $F_2=H(\mu_2)$.  Estimate \eqref{11411} holds even if $w_2$ is a distributional solution which satisfies \eqref{12292}. 
\end{pro}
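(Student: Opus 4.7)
The strategy is to translate the weighted a-priori estimate \eqref{1537} of Proposition \ref{thm1915} into the grand Lebesgue/Sobolev formulation via H\"older's and Young's inequalities, and then pass to the limit along a smoothing sequence in the SOLA definition. First I fix, for $i=1,2$, a sequence $f_n^{(i)}\in C^\infty(M)$ with $\ov{f}_n^{(i)}=0$ and $f_n^{(i)}\,dv\rightharpoonup \mu_i$, and let $w_n^{(i)}$ be the corresponding smooth solution of $\mathcal{N}(w_n^{(i)})=f_n^{(i)}$ with $\ov{w}_n^{(i)}=0$ converging a.e.\ to $w_i$. Setting $p_n=w_n^{(1)}-w_n^{(2)}$, $q_n=w_n^{(1)}+w_n^{(2)}$, $\hat g_n=e^{q_n}g$ and $\rho_n=(|\nabla p_n|^2+|\nabla q_n|^2)^{-\epsilon}$, Proposition \ref{thm1915} applied to $(w_n^{(1)},w_n^{(2)})$ gives, for every $0<\epsilon\leq \epsilon_0$,
\[
\int \rho_n^{2}\bigl(|\nabla^2_{\hat g_n}p_n|_{\hat g_n}^2+|\nabla p_n|_{\hat g_n}^4\bigr)\,dv_{\hat g_n}\le C\Bigl(\|F_1^n-F_2^n\|_{\frac{4(1-\epsilon)}{3}}^{\frac{4(1-\epsilon)}{3}}+\eta\|\nabla p_n\|_{2-4\epsilon}^{2-4\epsilon}+\epsilon^{\frac{4}{3}}\bigl(\|F_1^n\|_{\frac{4(1-\epsilon)}{3}}^{\frac{4(1-\epsilon)}{3}}+\|F_2^n\|_{\frac{4(1-\epsilon)}{3}}^{\frac{4(1-\epsilon)}{3}}\bigr)+\epsilon^{\frac{2}{3}}\Bigr),
\]
where $F_i^n=H(f_n^{(i)}\,dv)$.

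Next I convert the weighted left-hand side into the unweighted $L^{4(1-\epsilon)}$- and $L^{2(1-\epsilon)}$-norms appearing in the definition \eqref{W} of $W^{\theta,2,2)}$. By H\"older's inequality with conjugate exponents $\tfrac{1}{1-\epsilon}$ and $\tfrac{1}{\epsilon}$,
\[
\|\nabla p_n\|_{4(1-\epsilon)}^{4(1-\epsilon)}\le\Bigl(\int \rho_n^2|\nabla p_n|_{\hat g_n}^4\,dv_{\hat g_n}\Bigr)^{1-\epsilon}\bigl(\|\nabla p_n\|_{4(1-\epsilon)}^{4(1-\epsilon)}+\|\nabla q_n\|_{4(1-\epsilon)}^{4(1-\epsilon)}\bigr)^{\epsilon},
\]
and an analogous bound for $\|\Delta p_n\|_{2(1-\epsilon)}$ that incorporates the curvature correction \eqref{13261} linking $\nabla^2_{\hat g_n}p_n$ to $\nabla^2 p_n$. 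The auxiliary $\|\nabla p_n\|_{4(1-\epsilon)}$ and $\|\nabla q_n\|_{4(1-\epsilon)}$ factors are absorbed via the absolute energy estimate \eqref{1504} already proved inside Proposition \ref{thm1915}, while the $\hat g_n$-to-$g$ passage for the Hessian is controlled by \eqref{11531}. Multiplying the resulting pointwise-in-$\epsilon$ bound by $\epsilon^{\theta/4}$ or $\epsilon^{\theta/2}$, taking the supremum over $0<\epsilon\leq \epsilon_0$ and invoking the defining relation $\|F\|_{\frac{4(1-\epsilon)}{3}}\leq \epsilon^{-3\theta/4}\|F\|_{\theta,4/3)}$ of the grand Lebesgue space, one obtains the first two summands on the right-hand side of \eqref{11411}; the exponents $(4-3\theta)/6$ and $(4-3\theta)/12$ emerge from balancing the H\"older factor $1-\epsilon$ against the grand-norm weight $\epsilon^{\theta/q}$ for $q=4$ and $q=2$ respectively.

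The $\eta$-term is reshaped by interpolating $\|\nabla p_n\|_{2-4\epsilon}$ between $L^2(M)$ and $L^{4(1-\epsilon)}(M)$ and then invoking Young's inequality to absorb the higher-order $L^{4(1-\epsilon)}$-power: what remains is exactly the third line of \eqref{11411}, in which the dependence on $\|\nabla(w_n^{(1)}-w_n^{(2)})\|_2$ and its $1/4$-power is made explicit. Finally I pass to the limit $n\to+\infty$: estimate \eqref{19139} ensures $\|F_i^n\|_{\theta,4/3)}\to\|F_i\|_{\theta,4/3)}$ and $\|F_1^n-F_2^n\|_{\theta,4/3)}\to \|F_1-F_2\|_{\theta,4/3)}$, whereas the grand Sobolev norm is lower semicontinuous under a.e.\ convergence of $\nabla p_n$ (first pointwise in $\epsilon$ via Fatou's lemma, then under the supremum). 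When $w_2$ is instead a distributional solution satisfying \eqref{12292}, no approximation on the second slot is needed, since Proposition \ref{thm1915} already applies to the pair $(w_n^{(1)},w_2)$ and the chain of inequalities is identical.

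The main obstacle is the careful bookkeeping of H\"older/Young exponents together with the $\epsilon$-scalings: the three summands of \eqref{11411} appear only if one balances precisely the ``difference'' norm $\|F_1-F_2\|_{\theta,4/3)}$ against the ``absolute'' norm $\|F_1\|_{\theta,4/3)}+\|F_2\|_{\theta,4/3)}+1$ at every step, since a suboptimal split produces either negative $\epsilon$-powers that blow up in the supremum as $\theta\to 4/3$ or exponents incompatible with \eqref{11411}. A secondary subtlety is that the grand Sobolev ``norm'' is only a quasi-norm, so lower semicontinuity must be argued $\epsilon$-by-$\epsilon$ and only afterwards lifted to the supremum over $\epsilon$.
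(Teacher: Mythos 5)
Your overall architecture (approximate by smooth solutions, apply Proposition \ref{thm1915}, strip the weight, pass to grand norms, then let $n\to+\infty$) is the right skeleton, but the step that actually produces \eqref{11411} is missing. Multiplying the pointwise-in-$\epsilon$ bound by $\epsilon^{\theta/4}$ (resp.\ $\epsilon^{\theta/2}$), using $\|F\|_{\frac{4(1-\epsilon)}{3}}\leq \epsilon^{-\frac{3\theta}{4}}\|F\|_{\theta,\frac{4}{3})}$ and taking the supremum over $\epsilon$ does \emph{not} give the stated exponents: writing $D=\|F_1-F_2\|_{\theta,\frac43)}$ and $S=\|F_1\|_{\theta,\frac43)}+\|F_2\|_{\theta,\frac43)}+1$, that procedure only yields an additive bound of the type $\|\nabla p\|_{\theta,4)}\lesssim D^{\frac13}+S^{\frac13}+\dots$, whose right-hand side does not vanish as $F_1\to F_2$; in particular it cannot give the multiplicative splitting $D^{\frac{4-3\theta}{12}}S^{\frac{\theta}{4}}$, nor the uniqueness consequence when $\gamma_2=6\gamma_3$. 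The paper's proof hinges on a tuned choice of the exponent: starting from \eqref{10431} one evaluates the estimate at $\epsilon_\delta=\delta\, D/S$ for $0<\delta\leq\epsilon_0$, and since $\epsilon_\delta\leq\delta$ one has $\|\cdot\|_{q(1-\delta)}\leq C\|\cdot\|_{q(1-\epsilon_\delta)}$ by H\"older, so the supremum over $\delta$ defining the grand norms of $\nabla p$ and of $\Delta p+\langle\nabla q,\nabla p\rangle$ is controlled by the Lebesgue norms at the tuned exponent; the powers $\frac{4-3\theta}{6}$ and $\frac{4-3\theta}{12}$ in \eqref{14561}--\eqref{12203} come exactly from this optimization, not from ``balancing $1-\epsilon$ against $\epsilon^{\theta/q}$''. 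Your closing remark acknowledges that a suboptimal split fails, but the construction that avoids it is precisely the content you omit. Relatedly, the second summand of \eqref{11411} is not obtained by a ``curvature correction'' on the Hessian: one only controls $\|\Delta p+\langle\nabla q,\nabla p\rangle\|_{\theta,2)}$ directly, and then bounds $\|\nabla q\|_{\theta,4)}=O(S^{\frac13})$ via the absolute estimates (the cases $w_2=F_2=0$ and $w_1=F_1=0$, cf.\ \eqref{1504}) so that $\|\Delta p\|_{\theta,2)}$ picks up the product $\|\nabla p\|_{\theta,4)}\|\nabla q\|_{\theta,4)}$, which is where $D^{\frac{4-3\theta}{12}}S^{\frac{4+3\theta}{12}}$ comes from.

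The limit passage is also not justified as you state it. You perform the grand-norm manipulation at level $n$ and then claim $\|F_i^n\|_{\theta,\frac43)}\to\|F_i\|_{\theta,\frac43)}$ ``by \eqref{19139}'': but \eqref{19139} is only a uniform bound by the total variation, and weak convergence $f_n^{(i)}dv\rightharpoonup\mu_i$ gives no upper semicontinuity of these norms (the needed inequality $\limsup_n\|F_i^n\|_{\theta,\frac43)}\leq\|F_i\|_{\theta,\frac43)}$ may fail), so the right-hand side at level $n$ need not converge to the right-hand side of \eqref{11411}. The correct order is the reverse: pass to the limit $n\to+\infty$ in \eqref{1537} at each \emph{fixed} $\epsilon$, where only convergence of the classical norms $\|F_{i,n}\|_{\frac{4(1-\epsilon)}{3}}$ is needed and this follows from the compactness property \eqref{10460} ($F_{i,n}=\nabla u_{i,n}$ with $u_{i,n}$ pre-compact in $W^{1,q}$, $q<\frac43$); on the left one first shows uniform $W^{2,q}$ bounds on $p_n$, uses Rellich to get $p_n\to w_1-w_2$ in $W^{1,q}$, and applies Fatou, obtaining \eqref{new1537} for the SOLA pair. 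Only after this does one run the $\epsilon_\delta$-argument on the limiting functions, which removes any need for lower semicontinuity of the grand Sobolev quasi-norm along the approximation.
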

\begin{pf} Since $w_1$ is a SOLA, by definition let $f_{1,n}$ be the corresponding approximating sequence of $\mu_1=\hbox{div} \, F_1$. Letting $u_{1,n}$ be the smooth solution of $\Delta u_{1,n}=f_{1,n}$ in $M$, $\ov{u}_{1,n} =0$, we have that $u_{1,n}$ is pre-compact in $W^{1,q}(M)$ for all $1\leq q <\frac{4}{3}$, see for example Lemma 1 in \cite{BoccGa} in the Euclidean context, and then the following property does hold:
\begin{equation} \label{10460}
\sup_n \|f_{1,n}\|_1 <+\infty \quad \Rightarrow \quad  H(f_{1,n} dv) \hbox{ pre-compact in }L^q(M),\ 1\leq q <\frac{4}{3}, \end{equation} 
in view of $H(f_{1,n} dv)=\nabla u_{1,n}$. Up to a subsequence, we have that $u_{1,n} \to u_1$ in $W^{1,q}(M)$ for all $1\leq q <\frac{4}{3}$, where $u_1$ is a distributional solution of $\Delta u_1=\mu_1$ in $M$, $\ov{u}_1=0$. By uniqueness $\nabla u_1= H(\mu_1)$ and therefore $w_1=\displaystyle \lim_{n \to +\infty} w_{1,n}$ a.e., where $\mathcal{N} (w_{1,n})=\hbox{div} \, F_{1,n}$ with $F_{1,n}=\nabla u_{1,n} \to F_1$ in  $L^q(M)$ for all $1\leq q<\frac{4}{3}$. 

\medskip \noindent Assume that $w_2$ is either a SOLA or a distributional solution satisfying \eqref{12292} of $\mathcal{N}(w_2)=\mu_2=\hbox{div}\,F_2$. In the first case, let $f_{2,n}$ and $F_{2,n}$ be the corresponding sequences for $w_2$ so that $w_2=\displaystyle \lim_{n \to +\infty} w_{2,n}$ a.e., where $\mathcal{N} (w_{2,n})=\hbox{div} \, F_{2,n}$ with $F_{2,n}\to F_2$ in  $L^q(M)$ for all $1\leq q<\frac{4}{3}$.  In the second case, consider $w_{2,n}=w_2$ for all $n \in \mathbb{N}$. Apply \eqref{1537} to $w_{1,n}$ and $w_{2,n}$ to get by  \eqref{1504}
$$\int \frac{|\nabla^2_{\hat g_n} p_n|_{\hat g_n}^2+|\nabla p_n|_{\hat g_n}^4}{(|\nabla p_n|^2+|\nabla q_n|^2)^{2\epsilon}} dv_{\hat g_n} \leq C$$
in terms of $p_n=w_{1,n}-w_{2,n}$, $q_n=w_{1,n}+w_{2,n}$ and $\hat g_n=e^{q_n}g$. Notice that for $1\leq q<2$ by H\"older's estimate there holds
\begin{eqnarray*}
\int |\Delta p_n|^q dv \leq C \left(\int \frac{(\Delta_{\hat g_n} p_n)^2+|\nabla q_n|_{\hat g_n}^2|\nabla p_n|_{\hat g_n}^2}{(|\nabla p_n|^2+|\nabla q_n|^2)^{2\epsilon}} dv_{\hat g_n} \right)^{\frac{q}{2}} \left(\int (|\nabla p_n|^2+|\nabla q_n|^2)^{\frac{2\epsilon q}{2-q}} dv \right)^{\frac{2-q}{2}}
\end{eqnarray*}
in view of \eqref{12391}, and then $p_n$ is uniformly bounded in $W^{2,q}(M)$ for all $1\leq q<2$ thanks to \eqref{1504}. By Rellich's Theorem we deduce that $p_n \to w_1-w_2$ in $W^{1,q}(M)$ for all $1\leq q <4$. Letting $n \to +\infty$ into \eqref{1537} applied to $w_{1,n}$ and $w_{2,n}$, by Fatou's Lemma we get the validity of 
\begin{equation} \label{new1537}
 \int \frac{|\nabla^2_{\hat g} p|_{\hat g}^2+|\nabla p|_{\hat g}^4}{(|\nabla p|^2+|\nabla q|^2)^{2\epsilon}} dv_{\hat g} \leq C  (\|F_1-F_2\|_{\frac{4(1-\epsilon)}{3}}^{\frac{4(1-\epsilon)}{3}}
+ \eta \| \nabla p\|_{2-4 \epsilon}^{2-4\epsilon}+\epsilon^{\frac{4}{3}} \|F_1\|_{\frac{4(1-\epsilon)}{3}}^{\frac{4(1-\epsilon)}{3}} +\epsilon^{\frac{4}{3}} \|F_2\|_{\frac{4(1-\epsilon)}{3}}^{\frac{4(1-\epsilon)}{3}}+\epsilon^{\frac{2}{3}} )
\end{equation}  
for all $0<\epsilon\leq \epsilon_0$ and for all distributional solutions $w_i$ of $\mathcal{N}(w_i)=\hbox{div}\, F_i$, $i=1,2$, provided $w_1$ is a SOLA and $w_2$ is either a SOLA or satisfies \eqref{12292}, where $p=w_1-w_2$, $q=w_1+w_2$ and $\hat g=e^q g$. Re-written \eqref{new1537} as
\begin{eqnarray*}
\int \frac{(\Delta p+\langle \nabla q ,\nabla p \rangle)^2+|\nabla p|^4}{(|\nabla p|^2+|\nabla q|^2)^{2\epsilon}} dv & \leq & C( \epsilon^{-\theta} \|F_1-F_2\|_{\theta,\frac{4}{3})}^{\frac{4(1-\epsilon)}{3}}+  \eta \| \nabla p\|_{2-4 \epsilon}^{2-4\epsilon}) \\
&&+C \epsilon^{\frac{4}{3}-\theta} ( \|F_1\|_{\theta,\frac{4}{3})}^{\frac{4(1-\epsilon)}{3}} +\|F_2\|_{\theta,\frac{4}{3})}^{\frac{4(1-\epsilon)}{3}}+\epsilon^{\theta-\frac{2}{3}} ) 
\end{eqnarray*}  
in view of \eqref{12391}, by Young's inequality we deduce that
\begin{eqnarray} \label{10431}
\hspace{0.3cm}
&& \int |\Delta p+\langle \nabla q ,\nabla p \rangle|^{2(1-\epsilon)} dv +\int |\nabla p|^{4(1-\epsilon)}  dv  
\leq C \int [  (\Delta p+\langle \nabla q ,\nabla p \rangle)^2+|\nabla p|^4]^{1-\epsilon}  dv \\
&&=O \left(\int \frac{(\Delta p+\langle \nabla q ,\nabla p \rangle)^2+|\nabla p|^4}{(|\nabla p|^2+|\nabla q|^2)^{2\epsilon}}dv \right)+\epsilon O\left(  \|\nabla p\|_{4(1-\epsilon)}^{4 (1-\epsilon)}+\|\nabla q\|_{4(1-\epsilon)}^{4 (1-\epsilon)} \right) \leq C \eta \| \nabla p\|_{2-4 \epsilon}^{2-4\epsilon}  \nonumber\\
&&+C \epsilon^{-\theta} \|F_1-F_2\|_{\theta,\frac{4}{3})}^{\frac{4(1-\epsilon)}{3}} +C \epsilon^{\frac{4}{3}-\theta} ( \|F_1\|_{\theta,\frac{4}{3})}^{\frac{4(1-\epsilon)}{3}} +\|F_2\|_{\theta,\frac{4}{3})}^{\frac{4(1-\epsilon)}{3}}+\epsilon^{\theta-\frac{2}{3}} ) \nonumber
\end{eqnarray}  
for $0<\epsilon \leq \epsilon_0$ in view of \eqref{1504}. If $F_1 \not= F_2$, let $\epsilon_\delta>0$ be defined as 
$$\epsilon_\delta=\delta (\frac{\|F_1-F_2 \|_{\theta,\frac{4}{3})}}{\|F_1\|_{\theta,\frac{4}{3})} +\|F_2 \|_{\theta,\frac{4}{3})}+1})$$
for $0<\delta\leq \epsilon_0$. Since $0<\epsilon_\delta \leq \delta \leq \epsilon_0$ and $\|\cdot \|_{q(1-\delta)}=O( \|\cdot \|_{q(1-\epsilon_\delta)})$ by H\"older's inequality, inserting $\epsilon_\delta$ into \eqref{10431} we deduce that
\begin{eqnarray} \label{14561}
&& \| \Delta p+\langle \nabla q,\nabla p \rangle \|_{\theta, 2)}=\sup_{0<\delta\leq \epsilon_0} \delta^{\frac{\theta}{2}} \|\Delta p+\langle \nabla q,\nabla p \rangle \|_{2(1-\delta)}
=O(\sup_{0<\delta\leq \epsilon_0} \delta^{\frac{\theta}{2}} \|\Delta p+\langle \nabla q,\nabla p \rangle \|_{2(1-\epsilon_\delta)})\\
&&= \|F_1-F_2 \|_{\theta,\frac{4}{3})}^{\frac{4-3\theta}{6}} O(\|F_1 \|_{\theta,\frac{4}{3})}+\|F_2\|_{\theta,\frac{4}{3})}+1)^{\frac{ \theta}{2}}+\eta \, O(\|\nabla p\|_2+ \|\nabla p\|_2^{\frac{1}{2}})
\nonumber
\end{eqnarray} 
and
\begin{eqnarray} \label{12203}
\| \nabla p \|_{\theta, 4)}&=& \sup_{0<\delta\leq \epsilon_0} \delta^{\frac{\theta}{4}} \|\nabla p\|_{4(1-\delta)}
=O(\sup_{0<\delta\leq \epsilon_0} \delta^{\frac{\theta}{4}} \|\nabla p\|_{4(1-\epsilon_\delta)})\\
&=& \|F_1-F_2\|_{\theta,\frac{4}{3})}^{\frac{4-3\theta}{12}} O( \|F_1\|_{\theta,\frac{4}{3})}+\|F_2\|_{\theta,\frac{4}{3})}+1)^{\frac{\theta}{4}}+\eta \, O(\|\nabla p\|_2^{\frac{1}{2}}+ \|\nabla p\|_2^{\frac{1}{4}}). \nonumber 
\end{eqnarray} 
Considering as above the two cases $w_1=F_1=0$ and $w_2=F_2=0$ by \eqref{12203} and  Young's inequality we obtain that
$$\| \nabla q \|_{\theta, 4)}=O(\| \nabla w_1 \|_{\theta, 4)}+\| \nabla w_2 \|_{\theta, 4)})=O  (\|F_1\|_{\theta,\frac{4}{3})}+\|F_2\|_{\theta,\frac{4}{3})}+1)^{\frac{1}{3}},$$
which inserted into \eqref{14561} by H\"older's inequality gives 
\begin{eqnarray*} 
&& \| \Delta p\|_{\theta, 2)} = O(\| \Delta p+\langle \nabla q,\nabla p \rangle \|_{\theta, 2)}+\| \nabla p \|_{\theta, 4)}\| \nabla q \|_{\theta, 4)})\\
&& =  \|F_1-F_2 \|_{\theta,\frac{4}{3})}^{\frac{4-3\theta}{6}} O(\|F_1\|_{\theta,\frac{4}{3})}+\|F_2\|_{\theta,\frac{4}{3})}+1)^{\frac{\theta}{2}} +
\|F_1-F_2 \|_{\theta,\frac{4}{3})}^{\frac{4-3\theta}{12}} O(\|F_1\|_{\theta,\frac{4}{3})}+\|F_2 \|_{\theta,\frac{4}{3})}+1)^{\frac{4+3 \theta}{12}}  \\
&&+\eta (\|F_1\|_{\theta,\frac{4}{3})}+\|F_2\|_{\theta,\frac{4}{3})}+1)^{\frac{1}{3}} \, O(\|\nabla p\|_2+ \|\nabla p\|_2^{\frac{1}{4}}).
\end{eqnarray*} 
Therefore \eqref{11411} has been established. \end{pf}

\medskip

\noindent We have the following general result of independent interest.
\begin{thm} \label{thm3.2}
Let $\frac{\gamma_2}{\gamma_3} \geq 6$. For any $\mu \in \mathcal M$ there exists a SOLA $w$ of $\mathcal{N}(w)=\mu$ in $M$ so that $w \in W^{1,2,2)}$. When $\gamma_2=6\gamma_3$ such a SOLA is unique.
\end{thm}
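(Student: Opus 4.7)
The plan is to prove existence and uniqueness in three movements: regularize, solve the smoothed equation, then pass to the limit using the a-priori estimate \eqref{11411}. Fix $\mu\in\mathcal M$ and choose $f_n\in C^\infty(M)$ with $\ov{f}_n=0$, $f_n\,dv\rightharpoonup \mu$ and $\|f_n\|_1\leq C|d\mu|$ (standard mollification via the background metric). We look for smooth solutions $w_n$ of $\mathcal N(w_n)=f_n$ with $\ov{w}_n=0$.

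For the smooth approximate problems, I would rely on the variational structure: specializing \eqref{difference} to $w_2\equiv 0$, one sees that $\mathcal N$ is the Euler--Lagrange operator of
\[
J_n(w)=\tfrac{\gamma_2}{4}\!\int(\Delta w)^2\,dv+3\gamma_3\!\int(\Delta w+|\nabla w|^2)^2dv-\gamma_2\!\int\mathrm{Ric}(\nabla w,\nabla w)\,dv+\Bigl(\tfrac{\gamma_2}{3}-2\gamma_3\Bigr)\!\int R|\nabla w|^2dv-\int f_n w\,dv.
\]
Because $\gamma_2/\gamma_3\geq 6>3/2$, the same completion of squares used in the proof of Theorem \ref{t:weak-norms} shows that the principal part of $J_n$ dominates a positive weighted combination of $\int(\Delta w)^2$ and $\int|\nabla w|^4$. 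Combined with the $W^{2,q}$-bounds of Theorem \ref{t:weak-norms} (with $q<2$) and the standard Poincar\'e--Sobolev inequalities, this yields coercivity of $J_n$ on the affine space $\{w\in W^{2,2}(M)\colon \ov{w}=0\}$; weak lower semicontinuity is standard. Minimizers exist and are smooth by bootstrap elliptic regularity for the quasilinear fourth-order equation $\mathcal N(w_n)=f_n$. (A pseudo-monotone operator argument \`a la Browder--Minty, using the monotonicity visible in \eqref{difference} with $\varphi=p$, gives an equivalent proof.)

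To pass to the limit I would apply Proposition \ref{thm1915bis} to the pair $(w_n,0)$, which is allowed since $w_n$ is smooth (hence a SOLA of $\mathcal N(w_n)=f_n$) and $0$ is a smooth solution of $\mathcal N(0)=0$. Choosing $\theta=1$, and using \eqref{19139} together with the uniform bound $\|f_n\|_1\leq C|d\mu|$ to control $\|F_n\|_{1,4/3)}=\|H(f_n dv)\|_{1,4/3)}\leq C|d\mu|$, estimate \eqref{11411} gives a uniform bound $\|w_n\|_{W^{1,2,2)}}\leq C(|d\mu|)$. The $\epsilon$-level inequality \eqref{new1537} furnishes moreover a uniform $W^{2,q}$-bound ($q<2$) via the computation in \eqref{10431}. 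Rellich's theorem then yields, along a subsequence, $w_n\to w$ strongly in $W^{1,s}(M)$ for every $s<4$ and a.e., which is precisely the regularity needed to pass to the limit in the quasilinear terms $\int(\Delta w_n+|\nabla w_n|^2)\Delta\varphi\,dv$ and $\int(\Delta w_n+|\nabla w_n|^2)\langle\nabla w_n,\nabla\varphi\rangle\,dv$ of the weak formulation. The limit $w$ is by construction a SOLA of $\mathcal N(w)=\mu$, and the uniform grand-Sobolev bound survives in the limit by its $\sup$-definition, hence $w\in W^{1,2,2)}$.

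For uniqueness when $\gamma_2=6\gamma_3$, the constant $\eta$ defined in \eqref{eta} vanishes identically. Given two SOLAs $w_1,w_2$ attached to the same $\mu$, one has $F_1=F_2=H(\mu)$, so every term on the right-hand side of \eqref{11411} is zero and $\|w_1-w_2\|_{W^{\theta,2,2)}}=0$. Since this norm includes the zero-mean condition, $w_1=w_2$ a.e. The principal obstacle I foresee lies in step two: producing smooth solutions to the quasilinear mixed-order equation with control depending only on $\|f_n\|_1$, because standard elliptic theory for biharmonic-type equations is typically built around $L^2$-energy estimates rather than $L^1$-data, and one must patch together the variational existence (in $W^{2,2}$) with the weaker a-priori bounds of Theorem \ref{t:weak-norms} to retain compactness uniformly in $n$ when only $\mu\in\mathcal M$ is available.
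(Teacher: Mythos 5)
Your argument is correct and is essentially the paper's own proof: mollify $\mu$, solve the regularized problems by minimizing $J(w)-4\int f_n w\,dv$ with the same completion of squares for coercivity and smoothness from the regularity result the paper cites, obtain uniform grand Sobolev bounds from Proposition \ref{thm1915bis} with $F_n=H(\mu_n)$ controlled via \eqref{19139}, pass to the limit by Rellich, and deduce uniqueness for $\gamma_2=6\gamma_3$ from \eqref{11411} since $\eta=0$. The only two points to tidy are that Theorem \ref{t:weak-norms} is an a-priori estimate for solutions and is not what gives coercivity of $J_n$ (the squares completion plus Young's and Poincar\'e's inequalities already do), and that for $\gamma_2>6\gamma_3$ the right-hand side of \eqref{11411} still contains $\eta\,O(\|\nabla w_n\|_2+\|\nabla w_n\|_2^{1/4})$, so a uniform bound on $\|\nabla w_n\|_2$ must be secured first --- exactly what Theorem \ref{t:weak-norms} (or, as in the paper, \eqref{12203} combined with Young's inequality) supplies.
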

\begin{pf}  Since $\eta=0$ when $\gamma_2=6\gamma_3$, uniqueness directly follows from estimate \eqref{11411} and we are just concerned with the existence issue. Letting $\rho_n$ be a sequence of mollifiers in $[0,+\infty)$, define the approximate measures $\mu_n=(f_n-\ov{f}_n) dv$, where $f_n(x)=\int \rho_n(d(x,y)) d \mu(y)$ are smooth functions. Since $\mu_n \rightharpoonup \mu$, by \eqref{19139} and \eqref{10460} we have that $F_n=H(\mu_n)$ is uniformly bounded in $L^{1,\frac{4}{3})}(M,TM)$ and is pre-compact in $L^q(M)$ for all $1\leq q<\frac{4}{3}$.
Up to a subsequence, it is easily seen that $F_n$ is a Cauchy sequence in $L^{\theta,\frac{4}{3})}(M,TM)$ for all $\theta>1$. In order to solve $\mathcal{N}(w_n)=f_n$ in $M$, notice that $\mathcal{N}(w)=\frac{J'(w)}{4}$, where
\begin{eqnarray*}
J(w)&=&\gamma_2 \int  (\Delta w)^2 dv-2\gamma_2 \int \hbox{Ric}(\nabla w,\nabla w) dv +12\gamma_3 \int (\Delta w + |\nabla w|^2 )^2 dv\\
&&+(\frac{2}{3}\gamma_2 - 4 \gamma_3) \int R|\nabla w|^2 dv, \quad w \in W^{2,2}(M) .
\end{eqnarray*}
Since by squares completion 
$$\beta \int  (\Delta w)^2 dv+12 \int (\Delta w + |\nabla w|^2 )^2 dv \geq \frac{24+\beta-\sqrt{576+\beta^2}}{2} \int [(\Delta w)^2+|\nabla w|^4]dv$$
with $\beta=\frac{\gamma_2}{\gamma_3}>0$, the functional $J(w)-4 \int f w\, dv$ is easily seen to attain a minimizer in $W^{2,2}(M) \cap \{ \ov{w}=0 \}$ as long as $f \in L^q(M)$ for some $q>1$. So we can construct $w_n \in W^{2,2}(M)$ solutions of $\mathcal{N}(w_n)=f_n$ in $M$, $\ov{w}_n=0$, which are smooth thanks to \cite{UVMRL}. Estimate \eqref{12203} provides by Young's inequality  
$$\| \nabla w_n \|_{1,4)} = O\Big( \|F_n \|_{1,\frac{4}{3})}^{\frac{1}{12}} (\|F_n \|_{1,\frac{4}{3})}+1)^{\frac{1}{4}}+1 \Big).$$
Therefore, by \eqref{11411} $w_n$ is a bounded sequence in $W^{1,2,2)}$. In particular, $w_n$ is uniformly bounded in $W^{2,q}(M)$ for all $1\leq q<2$ and by Rellich's Theorem we deduce that, up to a subsequence, $w_n \to w$ in $W^{1,q}(M)$ for all $1\leq q <4$. Since $\|\nabla(w_n-w_m)\|_2 \to 0$ as $n,m\to +\infty$, we can use again \eqref{11411} to show that $w_n$ is a Cauchy sequence in $W^{\theta,2,2)}$ for $1<\theta<\frac{4}{3}$. Then $w$ is a SOLA of $\mathcal{N}(w)=\mu$ in $M$ with $w \in W^{1,2,2)}$ by the boundedness of $w_n$ in $W^{1,2,2)}$.
\end{pf}
\begin{rem} \label{rem12}  Let $\partial M \not= \emptyset$ and $\Phi \in C^\infty(\overline{M})$. For a Radon measure $\mu$ on $M$ we say that a distributional solution $w$ of $\mathcal{N}(w)=\mu$ in $M$, $w=\Phi$ and $\partial_\nu w=\partial_\nu \Phi$ on $\partial M$, is a SOLA if $w=\displaystyle \lim_{n \to +\infty} w_n$ a.e., where $w_n$ are smooth solutions of $\mathcal{N}(w_n)=f_n$ in $M$, $w_n=\Phi$ and $\partial_\nu w_n=\partial_\nu \Phi$ on $\partial M$, for $f_n \in C^\infty(\overline{M})$ so that  $f_n dv \rightharpoonup \mu$ as $n \to +\infty$. The map $H$ is defined by using as $G_2(x,y)$ the Green function of $\Delta$ in $M$ with zero Dirichlet boundary condition on $\partial M$. By Remark \ref{rem11} we have that Proposition \ref{thm1915bis} still holds in this context provided $w_1-w_2 \in W^{2,1}_0(M)$ and Theorem \ref{thm3.2} does hold providing a SOLA $w \in W^{1,2,2)}(M)$ of $\mathcal{N}(w)=\mu$ in $M$, $w=\Phi$ and $\partial_\nu w=\partial_\nu \Phi$ on $\partial M$ for any Radon measure $\mu$.
\end{rem}

\section{Fundamental solutions} \label{s:fund-sol}
\setcounter{equation}{0}  

\noindent Let $\mu_s=\displaystyle \sum_{i=1}^l \b_i  \delta_{p_i}$ be a linear combination of Dirac masses centred at distinct points $p_1,\ldots,p_l \in M$. Given $U$ as in \eqref{Udef}, the parameters $\beta_1,\ldots,\beta_l \not= 0$ are chosen to satisfy
\begin{equation} \label{balance}
\sum_{i=1}^l \b_i= \int U dv. 
\end{equation} 
Since \eqref{balance} guarantees that $\mu_s -U \in \mathcal{M}$, for $\frac{\gamma_2}{\gamma_3}\geq 6$ we can apply Theorem \ref{thm3.2} to find a SOLA $w_s \in W^{1,2,2)}(M)$ (recall \eqref{W}) of $\mathcal{N}(w_s)=\displaystyle \sum_{i=1}^l \b_i  \delta_{p_i}-U$ in $M$, referred to as a fundamental solution corresponding to $\mu_s$. Unless $\gamma_2=6\gamma_3$, fundamental solutions $w_s$  corresponding to $\mu_s$ are not unique and the aim now is to establish a logarithmic behaviour of each $w_s$, no matter whether uniqueness holds or not.

\medskip \noindent Since
$$ \frac{d}{dx}[(\gamma_2+12 \gamma_3)x +18\gamma_3 x^2 +6\gamma_3 x^3] =(\gamma_2+12 \gamma_3)+36 \gamma_3 x +18 \gamma_3 x^2$$
has a given sign in view of $\Delta=-72 \gamma_3^2(\frac{\gamma_2}{\gamma_3}-6) \leq 0$, let $\a_i = \a(\b_i) \not= 0$ be the unique solution of
\begin{equation}\label{eq:alpha-beta}
-4 \pi^2  [(\gamma_2+12 \gamma_3)\alpha +18\gamma_3 \alpha^2 +6\gamma_3 \alpha^3]= \b_i.
\end{equation}
The function 
\begin{equation}\label{eq:w0}
w_0(x) = \sum_{i=1}^l \a_i \log \tilde{d}(x,p_i)
\end{equation}
is an approximate solution of $\mathcal{N}(w)=\displaystyle \sum_{i=1}^l \b_i  \delta_{p_i}-U$ in $M$, where $\tilde{d}(x,p_i)$ stands for the distance function 
smoothed away from $p_i$. Since $w_0$ satisfies \eqref{12292} and $\mathcal{N}(w_s)-\mathcal{N}(w_0)$ is sufficiently integrable, we can let $\epsilon \to 0$ in estimate \eqref{new1537} to obtain $W^{2,2}-$estimates w.r.t. $\hat g=e^{w_s+w_0}g$. Once re-written as $W^{2,2}-$estimates w.r.t. $g_0=e^{2w_0}g$, the argument in \cite{UVMRL} can be adapted to annular regions around the  singularities to show that such weighted $W^{2,2}$-estimates imply the validity of \eqref{12292} for $w_s$ too.

\medskip \noindent Concerning the role of $w_0$ we have the following result.
\begin{lem}\label{l:log-coeff}
The function $w_0$ in \eqref{eq:w0} is a distributional solution of
\begin{equation} \label{18351}
\mathcal{N}(w_0)= \displaystyle \sum_{i=1}^l \b_i  \delta_{p_i}+f_0
\end{equation}
with { $f_0- \gamma_2 \hbox{div}[\hbox{Ric}(\cdot,\nabla w_0)]-(2\gamma_3-\frac{\gamma_2}{3}) \hbox{div}(R \nabla w_0) \in L^\infty(M)$.} \end{lem}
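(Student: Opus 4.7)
The plan is to localize near each singularity $p_i$ using normal coordinates, in which $\tilde{d}(x,p_i)=|x|$ and $w_0=\alpha_i\log|x|+\phi_i$ with $\phi_i$ smooth; away from $\{p_1,\dots,p_l\}$ the function $w_0$ is smooth, so $\mathcal{N}(w_0)$ is smooth and in particular bounded there. The key flat-space identities in $\mathbb{R}^4$ are $\Delta\log|x|=2/|x|^2$ and $|\nabla\log|x||^2=1/|x|^2$ (both proportional to the harmonic $1/|x|^2$), together with the distributional formulas $\Delta^2\log|x|=-8\pi^2\delta_0$, $\Delta(1/|x|^2)=-4\pi^2\delta_0$, and $\hbox{div}(x/|x|^4)=2\pi^2\delta_0$.

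Decompose $\mathcal{N}(w)=\mathcal{A}(w)+\gamma_2\hbox{div}[\hbox{Ric}(\cdot,\nabla w)]+(2\gamma_3-\tfrac{\gamma_2}{3})\hbox{div}(R\nabla w)$, where the ``quasilinear core'' is
$$\mathcal{A}(w):=\tfrac{\gamma_2}{2}\Delta^2 w+6\gamma_3\Delta(\Delta w+|\nabla w|^2)-12\gamma_3\hbox{div}\bigl[(\Delta w+|\nabla w|^2)\nabla w\bigr].$$
The lemma then reduces to showing $\mathcal{A}(w_0)-\sum_i\beta_i\delta_{p_i}\in L^\infty(M)$. Applying the flat-space identities above to $u=\alpha_i\log|x|$ yields, after a direct computation,
$$\mathcal{A}_{\rm flat}(\alpha_i\log|x|)=-4\pi^2\bigl[(\gamma_2+12\gamma_3)\alpha_i+18\gamma_3\alpha_i^2+6\gamma_3\alpha_i^3\bigr]\delta_0=\beta_i\delta_0$$
by the definition \eqref{eq:alpha-beta} of $\alpha_i$; so $\mathcal{A}$ produces exactly the Dirac mass in the flat model, with no additional bounded remainder.

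The remaining (and main) step is to compare $\mathcal{A}(\alpha_i\log|x|)$ on the manifold with its flat Euclidean counterpart using the normal-coordinate expansions $g^{ij}-\delta^{ij}=\tfrac13 R_{ikjl}(0)x^kx^l+O(|x|^3)$ and $\Gamma^k_{ij}=O(|x|)$. A naive term-by-term estimate using $|\partial^{(k)}\log|x||\lesssim|x|^{-k}$ leaves contributions of order $|x|^{-2}$ in each of the three summands of $\mathcal{A}$, which are not bounded individually. The hard part is the algebraic verification that, when these $|x|^{-2}$ curvature corrections from the three summands of $\mathcal{A}$ are combined with the proper $\gamma_2,\gamma_3$-weights, they cancel against each other and leave a bounded $L^\infty$ remainder. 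The nonlinear cross-interactions between $\alpha_i\log|x|$ and the smooth correction $\phi_i$ are controlled more easily, since they involve a bounded factor multiplied by a singular one of order $|x|^{-1}$ at worst, producing only lower-order singularities absorbed in $L^\infty$. The crux of the proof is thus a meticulous normal-coordinate bookkeeping of the curvature corrections relying on the specific algebraic structure of $\mathcal{A}$ enforced by the compatibility relation \eqref{eq:alpha-beta}.
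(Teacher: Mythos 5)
Your reduction and the flat-space computation are correct as far as they go: with $\mathcal{A}$ as you define it, the Euclidean identities do give $\mathcal{A}_{\rm flat}(\alpha\log|x|)=-4\pi^2[(\gamma_2+12\gamma_3)\alpha+18\gamma_3\alpha^2+6\gamma_3\alpha^3]\,\delta_0$, matching \eqref{eq:alpha-beta}. But at that point you stop: the assertion ``$\mathcal{A}(w_0)-\sum_i\beta_i\delta_{p_i}\in L^\infty(M)$'' is not a reduction of the lemma, it \emph{is} the lemma, and the step that would prove it (your ``hard part'', the normal-coordinate bookkeeping and cancellation of the $|x|^{-2}$ corrections) is only announced, never carried out. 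Moreover, the one quantitative claim you do make about the perturbative terms is wrong: the cross terms with the smooth part $\phi_i$ are not ``a bounded factor times a singularity of order $|x|^{-1}$, absorbed in $L^\infty$''. For instance $|\nabla w_0|^2$ contains $2\alpha_i\langle\nabla\log|x|,\nabla\phi_i\rangle=O(|x|^{-1})$, which is already unbounded, and the summand $6\gamma_3\Delta(|\nabla w_0|^2)$ then produces a genuine $O(|x|^{-3})$ singularity (in $\mathbb{R}^4$, $\Delta(x^k|x|^{-2})=-4x^k|x|^{-4}$), which is neither bounded nor of the retained curvature form; it disappears only because an equal and opposite $O(|x|^{-3})$ contribution comes out of $-12\gamma_3\,\mathrm{div}[(\Delta w_0+|\nabla w_0|^2)\nabla w_0]$. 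So the cancellations you invoke are real but nontrivial, they involve the $\phi_i$-cross terms at least as much as the metric corrections, and they are not ``enforced by \eqref{eq:alpha-beta}'', which only fixes the Dirac coefficient. Finally, even granting the pointwise cancellation away from $p_i$, your route still needs an argument that replacing the flat model by $w_0$ on $(M,g)$ creates no additional atom at $p_i$ in the distributional identity; corrections of order $|x|^{-2}$, $|x|^{-3}$ are merely locally integrable and one must check they enter only through their pointwise values.

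For comparison, the paper argues differently: it exploits the radial structure of $\alpha_i\log d(\cdot,p_i)$, for which $|\nabla w_0|^2=\alpha_i^2/d^2$, $\Delta w_0=2\alpha_i/d^2$ and $(\Delta w_0+|\nabla w_0|^2)\nabla w_0=(2+\alpha_i)\alpha_i^2\,d^{-4}x$ (up to terms swept into $f_0$), so that away from $p_i$ the quasilinear core reduces pointwise to the two displayed curvature divergences plus bounded terms; and it identifies the Dirac masses not via $\Delta^2\log|x|=-8\pi^2\delta_0$ and its companions, but by excising balls $B_\epsilon(p_i)$, testing against $\varphi\in C^\infty(M)$, and computing the boundary fluxes $\partial_\nu[(\tfrac{\gamma_2}{2}+6\gamma_3)\Delta w_0+6\gamma_3|\nabla w_0|^2]$ and $(\Delta w_0+|\nabla w_0|^2)\partial_\nu w_0$ on $\partial B_\epsilon(p_i)$, which as $\epsilon\to0^+$ yield exactly $\sum_i\beta_i\varphi(p_i)$ through \eqref{eq:alpha-beta}. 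Your flat-distributional route is viable in principle, but to turn it into a proof you must actually perform the curved/cross-term bookkeeping (including the $|x|^{-3}$ cancellation indicated above), or replace it by an excision-and-flux argument of the paper's type.
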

\begin{pf} $w_0$ is a radial function in a neighbourhood of $p_i$, so in geodesic coordinates it satisfies
$$\Delta w_0=\frac{2 \alpha_i}{|x|^2},\quad |\nabla w_0|^2=\frac{\alpha_i^2}{|x|^2}, \quad
(\Delta w_0+|\nabla w_0|^2)\nabla w_0=(2+\alpha_i)\alpha_i^2 \frac{x}{|x|^4}$$
for all $x \not=0$.
Since
\begin{eqnarray*}
\mathcal{N}(w_0) & = & (\frac{\gamma_2}{2} +6\gamma_3)\Delta^2 w_0+6 \gamma_3 \Delta (|\nabla w_0|^2)-12\gamma_3 \hbox{div}[(\Delta w_0+|\nabla w_0|^2)\nabla w_0]+\gamma_2 \hbox{div}[\hbox{Ric}(\cdot,\nabla w_0)] \\ 
& &+(2\gamma_3-\frac{\gamma_2}{3}) \hbox{div}(R \nabla w_0)= \gamma_2 \hbox{div}[\hbox{Ric}(\cdot,\nabla w_0)]+(2\gamma_3-\frac{\gamma_2}{3}) \hbox{div}(R \nabla w_0)
\end{eqnarray*}
near $p_i$ and $\mathcal{N}(w_0)$ is a bounded function away from $p_1,\ldots,p_l$, we have that $w_0$ solves $\mathcal{N}(w_0)=f_0$ in $M\setminus \{p_1,\ldots,p_l\}$, with $f_0- \gamma_2 \hbox{div}[\hbox{Ric}(\cdot,\nabla w_0)] -(2\gamma_3-\frac{\gamma_2}{3}) \hbox{div}(R \nabla w_0)\in L^\infty(M)$.
 
\medskip \noindent Given $\epsilon>0$ small and $\varphi \in C^\infty(M)$, we have that
\begin{eqnarray*}
&&\int_{M \setminus \cup_{i=1}^l B_\epsilon(p_i)} f_0 \varphi dv = \int_{M \setminus \cup_{i=1}^l B_\epsilon(p_i)} \mathcal{N}(w_0) \varphi dv\\
&& =
- \sum_{i=1}^l \oint_{\partial B_\epsilon(p_i)}\left[(\frac{\gamma_2}{2}+6\gamma_3)\partial_\nu \Delta w_0 
+6 \gamma_3 \partial_\nu |\nabla w_0|^2 -12 \gamma_3 (\Delta w_0+|\nabla w_0|^2) \partial_\nu w_0 \right]\varphi d \sigma\\
&&+ \int_{M \setminus \cup_{i=1}^l B_\epsilon(p_i)}\left[(\frac{\gamma_2}{2}+6\gamma_3)  \Delta w_0 \Delta \varphi+6\gamma_3|\nabla w_0|^2 \Delta \varphi+12 \gamma_3  (\Delta w_0+|\nabla w_0|^2) \langle \nabla w_0,\nabla \varphi \rangle \right]dv\\
&&- \int_{M \setminus \cup_{i=1}^l B_\epsilon(p_i)} \left[\gamma_2 \hbox{Ric}(\nabla w_0,\nabla \varphi) +(2\gamma_3-\frac{\gamma_2}{3})  R \langle \nabla w_0,\nabla \varphi\rangle \right]dv+o_\epsilon(1), 
\end{eqnarray*}
where $o_\epsilon(1)\to 0$ as $\epsilon \to 0^+$. Since 
$$\partial_\nu \left[(\frac{\gamma_2}{2}+6\gamma_3)\Delta w_0+6\gamma_3|\nabla w_0|^2\right]=-\frac{2\alpha_i}{\epsilon^3}[\gamma_2+12\gamma_3+6\alpha_i \gamma_3],\quad
(\Delta w_0+|\nabla w_0|^2) \partial_\nu w_0=\frac{2\alpha_i^2 +\alpha_i^3}{\epsilon^3}$$
on $\partial B_\epsilon(p_i)$, as $\epsilon \to 0^+$ we get that
\begin{eqnarray*} 
&& \int [(\frac{\gamma_2 }{2}+6\gamma_3)  \Delta w_0 \Delta \varphi+6\gamma_3 |\nabla w_0|^2 \Delta \varphi+12\gamma_3  (\Delta w_0+|\nabla w_0|^2) \langle \nabla w_0,\nabla \varphi \rangle ]dv \\
&&- \int [\gamma_2 \hbox{Ric}(\nabla w_0,\nabla \varphi) +(2\gamma_3-\frac{\gamma_2}{3})  R \langle \nabla w_0,\nabla \varphi\rangle ]dv = \sum_{i=1}^l \b_i  \varphi(p_i)+\int f_0 \varphi \, dv \nonumber
\end{eqnarray*}
for all $\varphi \in C^\infty(M)$ in view of \eqref{eq:alpha-beta}, i.e. $w_0$ is a distributional solution of \eqref{18351}. 
\end{pf}
\begin{rem} \label{rem2} Let $\Phi \in C^\infty(\overline{B_r(p_i)})$, $i=1,\dots,l$, { so that $\Phi=0$ near $p_i$} and assume that $\{p_1,\dots,p_l\} \cap \overline{B_r(p_i)}=\{p_i\}$. 
Letting $-4 \pi^2  [(\gamma_2+12 \gamma_3)\alpha_i +18\gamma_3 \alpha_i^2 +6\gamma_3 \alpha_i^3]= \b_i$, choose $w_0(x) = \a_i \log \tilde{d}(x,p_i)$ in such a way that $w_0=0$ near $\partial B_r(p_i)$. We have that $w_0+\Phi$ is a distributional solution of \eqref{18351} in $B_r(p_i)$ such that { $f_0- \gamma_2 \hbox{div}[\hbox{Ric}(\cdot,\nabla w_0)]-(2\gamma_3-\frac{\gamma_2}{3}) \hbox{div}(R \nabla w_0) \in L^\infty(B_r(p_i))$.}
Moreover, thanks to Remark \ref{rem12} there exists a fundamental solution $w_s$ corresponding to $\mu_s$ and $\Phi$, namely a SOLA $w_s \in W^{1,2,2)}(B_r(p_i))$ of $\mathcal{N}(w_s)= \beta_i \delta_{p_i}-U$ in $B_r(p_i)$, $w_s=\Phi$ and $\partial_\nu w_s=\partial_\nu \Phi$ on $\partial B_r(p_i)$.
\end{rem}
\noindent The aim now is to show that any fundamental solution $w_s$ has a logarithmic behaviour near $p_1,\dots,p_l$. For problems involving the $p-$Laplace operator an extensive study on isolated singularities is available, see \cite{VeKi86,Serrin-1,Serrin-2} 
(see also \cite{LiNg} for some fully nonlinear equations in conformal geometry). We adapt the argument in \cite{UVMRL} to our  situation and in presence of singularities to show the following result.
\begin{thm}\label{p:ex-singg}
{ Let $\frac{\gamma_2}{\gamma_3} \geq 6$. Any fundamental solution $w_s$ corresponding to $\mu_s$ satisfies $w_s \in C^\infty(M \setminus \{p_1, \dots, p_l\})$ and \eqref{12292} with $\a_i$ given by \eqref{eq:alpha-beta}.}
\end{thm}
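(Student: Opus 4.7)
The plan is to compare the fundamental solution $w_s$ with the explicit logarithmic profile $w_0$ from \eqref{eq:w0} and show that the difference $p := w_s - w_0$ is regular enough not to contribute to the leading asymptotics.

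First I would apply the a-priori estimate \eqref{new1537} to the pair $(w_1, w_2) = (w_s, w_0)$: this is admissible since $w_s$ is a SOLA and, by Lemma \ref{l:log-coeff}, $w_0$ is a distributional solution of $\mathcal{N}(w_0) = \sum_{i=1}^l \beta_i \delta_{p_i} + f_0$ that automatically satisfies \eqref{12292}. The Dirac masses on the two right-hand sides cancel upon subtracting, so the remainder $-(U + f_0)$ can be written as $\mathrm{div}\,\tilde F$ with $\tilde F \in L^{\theta,4/3)}(M,TM)$ for an appropriate $\theta$, using that $f_0 - \gamma_2 \mathrm{div}[\mathrm{Ric}(\cdot,\nabla w_0)] - (2\gamma_3-\frac{\gamma_2}{3})\mathrm{div}(R \nabla w_0) \in L^\infty$ and that the remaining curvature contributions are already divergence form with $L^{4(1-\epsilon)/3}$ coefficients. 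Passing $\epsilon \to 0^+$ in \eqref{new1537} via Fatou's lemma yields
\[
\int \bigl( |\nabla^2_{\hat g} p|^2_{\hat g} + |\nabla p|^4_{\hat g} \bigr) \, dv_{\hat g} < +\infty,
\]
where $\hat g = e^{w_s + w_0} g$.

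Next I would rewrite this global bound using the conformal relation $\hat g = e^{p} g_0$ with $g_0 := e^{2 w_0} g$, which in geodesic coordinates near $p_i$ behaves as $|x|^{2\alpha_i} g_{eucl}$, a conformally flat metric with a conical singularity. The transformation laws \eqref{12391}--\eqref{13261}, applied with $(g,\hat g)$ replaced by $(g_0, \hat g)$ and $q$ replaced by $p$, convert the $\hat g$-estimate into a weighted $W^{2,2}$-control of $p$ with respect to $g_0$. Away from the singular points this already yields enough Sobolev regularity for $w_s = w_0 + p$ to be smooth by the regularity theory of \cite{UVMRL} applied to the smooth equation $\mathcal{N}(w_s) = -U$.

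For the behaviour near $p_i$ I would adapt the annular regularity arguments of \cite{UVMRL}. On each dyadic annulus $A_r = B_{2r}(p_i) \setminus B_r(p_i)$ I would rescale by setting $\tilde p_r(y) = p(\exp_{p_i}(r y)) - c_r$ for a suitable constant $c_r$ (e.g.\ the average on $A_1$). The finiteness of the global weighted integral forces the contribution over $A_r$ to vanish as $r \to 0^+$, so $\tilde p_r \to 0$ in $W^{2,2}(A_1)$ for the rescaled metric $r^{-2} \exp_{p_i}^* g_0$, which tends locally to the model conical metric $|y|^{2\alpha_i} g_{eucl}$. Elliptic regularity on each fixed annulus then upgrades this $W^{2,2}$-convergence to a $C^{2,\alpha}$-convergence, by invoking standard Schauder bootstrap once the leading quasi-linear terms are shown to be uniformly elliptic on each rescaled annulus. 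Unwinding the rescaling gives $|x|^k |\nabla^{(k)} p|(x) \to 0$ as $x \to p_i$ for $k=1,2,3$, and combining with the explicit form of $w_0$ produces \eqref{12292} for $w_s$ with the prescribed $\alpha_i$.

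The main obstacle is the last step: one must carry out the annulus-by-annulus blow-up compatibly with both the quasi-linear, mixed-order structure of $\mathcal{N}$ and the presence of the conical weight $|x|^{2\alpha_i}$ coming from $g_0$. In particular, when pulling the \cite{UVMRL} bootstrap back to the rescaled picture one has to keep track of the precise scaling of every term in $\mathcal{N}$ (including the lower-order Ricci/scalar-curvature contributions and the $f_0 + U$ source), and to check that these terms remain asymptotically negligible at the scale of each annulus; this is the step where the techniques of \cite{UVMRL} demand the most care to adapt to the singular setting.
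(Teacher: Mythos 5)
Your opening steps coincide with the paper's: you compare $w_s$ with the explicit profile $w_0$ of Lemma \ref{l:log-coeff}, pass to the limit $\epsilon\to 0^+$ in \eqref{new1537} (admissible because $w_s$ is a SOLA and $w_0$ satisfies \eqref{12292}), and rewrite the resulting weighted bound with respect to $g_0=e^{2w_0}g$, exactly as in \eqref{crucestm}--\eqref{crucestmm}. The genuine gap is in the step you yourself flag as the obstacle, and the mechanism you propose there would not close it. Invoking a ``standard Schauder bootstrap once the leading quasi-linear terms are shown to be uniformly elliptic'' does not fit this operator: $\mathcal{N}$ is of mixed order with critical gradient growth, there is no frame in which it becomes a uniformly elliptic second-order quasilinear operator, and smallness of $\tilde p_r$ in $W^{2,2}(A_1)$ alone does not feed any off-the-shelf fourth-order Schauder theory (the rescaled source contains terms like $\hbox{div}[\hbox{Ric}(\cdot,\nabla w_0)]\sim |x|^{-2}$ whose annular scaling must be tracked in the correct negative Sobolev norms). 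Moreover, even granting a $C^{2,\alpha}$ limit you would only reach \eqref{12292} for $k=1,2$; the case $k=3$ needs a further quantitative bootstrap.

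What the paper actually supplies at this point, and what is missing from your sketch, is a perturbative linear theory around the conical model: after rewriting the difference equation with respect to $g_0$ (formula \eqref{234}, the analogue of \eqref{difference} with $\hat g$ replaced by $g_0$), one localizes on annuli via $h=\chi(p-\ov{p}^A)$ and recasts the equation as $3\gamma_3\,\Delta_0^2 h+T[h]=\mathcal{L}$, where $\Delta_0^2$ is the weighted bi-Laplacian built from $\Delta_0=\Delta+2\langle\nabla w_0,\nabla\,\cdot\,\rangle$, $\|T\|\leq C(\epsilon_r+r^2)$ with $\epsilon_r$ as in \eqref{epsilonr}, and $\|\mathcal{L}\|_{W^{-2,q}(A)}$ is controlled as in \eqref{449}. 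The crux is that $\Delta_0^2:W^{2,q}_0(A)\to W^{-2,q}(A)$ (and subsequently $W^{3,q}_0(\tilde A)\to W^{-1,q}(\tilde A)$) is an isomorphism with inverse bounded uniformly in $r$, proved through the coercivity \eqref{633} --- where the hypothesis $\frac{\gamma_2}{\gamma_3}\geq 6$ enters again via the sign of $\frac{\gamma_2}{6\gamma_3}-1$ --- elliptic estimates for the bi-Laplacian, and a scaling argument identifying the limit with the Euclidean conical operator. This yields the quantitative bounds \eqref{18148} and \eqref{12002}, hence the pointwise decay \eqref{18078}, i.e. \eqref{12292} for $k=1$, and the scheme is then iterated for $k=2,3$ and, off the singular set, for full smoothness. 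This linearization, its uniform invertibility, and the smallness of the quasi-linear remainder measured by $\epsilon_r+r^2$ constitute the core of the proof; your proposal identifies the right target but leaves this machinery entirely unsupplied.
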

\begin{pf}
\noindent Recall that $w_s$ is a SOLA of $\mathcal{N}(w_s)=\mu_s-U:=\hbox{div}\, F$ and $w_0$ is a distributional solution of $\mathcal{N}(w_0)=\mu_s+f_0:=\hbox{div}\, F_0$. Since $F,F_0 \in L^{1,\frac{4}{3})}(M,TM)$ with $\hbox{div}\, (F-F_0)=-(f_0+U) \in L^q(M)$ for all $1\leq q <2$ in view of
\begin{equation} \label{1157}
f_0- \gamma_2 \hbox{div}[\hbox{Ric}(\cdot,\nabla w_0)] -(2\gamma_3-\frac{\gamma_2}{3}) \hbox{div}(R \nabla w_0) \in L^\infty(M)
\end{equation}
by Lemma \ref{l:log-coeff}, we can let $\epsilon \to 0^+$ in \eqref{new1537} and by Fatou's lemma end up with 
\begin{eqnarray} \label{crucestm} 
\int [|\nabla^2_{\hat g} p|^2+|\nabla p|^4]  dv \leq C(  \|F-F_0\|_{\frac{4}{3}}^{\frac{4}{3}}+\eta \| \nabla p\|_2^2 +1)<+\infty
\end{eqnarray}  
in view of \eqref{12391}, where $p=w_s-w_0$ and $\hat g=e^{w_s+w_0} g$. Setting $g_0=e^{2w_0} g$, by $2w_0=w_s+w_0-p$ we deduce that $\nabla^2_{g_0} p=\nabla^2_{\hat g} p+O(|\nabla p|^2)$ in view of \eqref{13261} and then \eqref{crucestm} re-writes as
\begin{eqnarray} \label{crucestmm} 
\int [|\nabla^2_{g_0} p|^2+|\nabla p|^4]  dv <+\infty.
\end{eqnarray}

\medskip \noindent Notice that $w_s$ and $w_0$ satisfy
\begin{equation}\label{new917}
\langle \mathcal{N}(w_s)-\mathcal{N}(w_0),\varphi \rangle=-\int (f_0+U)\varphi \, dv,\ \ \varphi \in C^\infty(M),
\end{equation}
and it is crucial to properly re-write the L.H.S. in terms of $g_0$ and not $\hat g$ as in \eqref{difference}. We can argue exactly as in Proposition \ref{crucProp} to get 
\begin{eqnarray} \label{newdifference}  
\langle \mathcal{N}(w_s)-\mathcal{N}(w_0),\varphi \rangle
&=& 3\gamma_3 \int (\Delta_{g_0} p +2|\nabla p|_{g_0}^2) \Delta_{g_0} \varphi  \, dv_{g_0}+6\gamma_3 \int \langle \nabla^2_{g_0} p,\nabla^2_{g_0} \varphi \rangle_{g_0} dv_{g_0}\\
&&+12 \gamma_3  \int (\Delta_{g_0}p+|\nabla p|_{g_0}^2) \langle \nabla p,\nabla \varphi\rangle_{g_0}  dv_{g_0}+(\frac{\gamma_2}{2}-3\gamma_3) \int \Delta p \Delta \varphi \, dv \nonumber\\
&&+(2\gamma_3-\frac{\gamma_2}{3}) \int [3 \hbox{Ric}(\n p,\nabla \varphi) dv- R \langle \nabla p,\nabla \varphi \rangle] dv \nonumber
\end{eqnarray}
for all $\varphi \in C^\infty(M)$. Setting $\Delta_0 p=\Delta p+2\langle \nabla w_0,\nabla p \rangle$, by  \eqref{12391} we can re-write \eqref{new917}-\eqref{newdifference} as
\begin{eqnarray} 
&& 3\gamma_3 \int [ \Delta_0 p+ 2|\nabla p|^2] \Delta_0 \varphi \, dv+6\gamma_3 \int \langle \nabla^2_{g_0} p,\nabla^2_{g_0}\varphi \rangle dv+
12 \gamma_3 \int [\Delta_0 p+|\nabla p|^2]\langle \nabla p,\nabla \varphi\rangle dv \label{234}  \\
&&+(\frac{\gamma_2}{2}-3\gamma_3) \int \Delta p \Delta \varphi \, dv +(2\gamma_3-\frac{\gamma_2}{3}) \int [ 3 \hbox{Ric}(\n p,\nabla \varphi) dv- R \langle \nabla p,\nabla \varphi \rangle] dv =-\int (f_0+U) \varphi \, dv   \nonumber
\end{eqnarray}
for all $\varphi \in C^\infty(M)$. 

\medskip \noindent Given $p=p_i$, $i=1,\ldots,l$, set $\alpha=\alpha_i$, $A=\{ x \in M: \ d(x,p) \in [\frac{r}{4}, 8 r] \}$, $r>0$ small, and fix $2\leq q<4$. Through geodesic coordinates at $p$ and the change of variable $x= ry$, notice that  
\begin{eqnarray}
\int_A |\Delta_0 \varphi |^q dv&=&
\int_{B_{8r} \setminus B_{\frac{r}{4}}} |\Delta \varphi+\frac{2 \alpha}{|x|} \partial_{|x|} \varphi |^q \sqrt{|g|} dx =r^{4-2q} \int_{B_{8} \setminus B_{\frac{1}{4}} } |\Delta_{g^r} \varphi^r+\frac{2 \alpha}{|y|} \partial_{|y|} \varphi^r |^q \sqrt{|g^r|} dy \nonumber \\
&\leq & C r^{4-2q} \int_{B_{8} \setminus B_{\frac{1}{4}} } |\Delta_{g^r} \varphi^r|^q \sqrt{|g^r|} dy 
=C \int_A |\Delta \varphi|^q dv  
\label{13113}
\end{eqnarray}
for all $\varphi \in W^{2,q}_0(A)$, where $\varphi^r(y)=\varphi(\hbox{exp}_p(ry)) \in W^{2,q}_0(B_{8}\setminus B_{\frac{1}{4}})$ 
and $g^r(y)=g(\hbox{exp}_p(ry)) \to \delta_{\text{eucl}}$ $C^2-$uniformly in $B_8 \setminus B_{\frac{1}{4}}$ as $r \to 0^+$. { We have used that $$\int_{B_{8} \setminus B_{\frac{1}{4}} } |\nabla \varphi^r |^q \sqrt{|g^r|} dy \leq C  \int_{B_{8} \setminus B_{\frac{1}{4}} } |\Delta_{g^r} \varphi^r|^q \sqrt{|g^r|} dy$$ 
in view of Poincar\'e's inequality.} Arguing in the same way, one can also show that
\begin{eqnarray} \label{15515}
\int_A |\nabla^2_{g_0} \varphi|^q dv \leq C' \int_A |\nabla^2 \varphi|^q dv \leq  C \int_A |\Delta \varphi|^q dv  
\end{eqnarray}
for all $\varphi \in W^{2,q}_0(A)$, and
\begin{equation}
(\int_A |\psi|^{\frac{4q}{4-q}} dv)^{\frac{4-q}{4q}} \leq C (\int_A |\nabla \psi|^{q}dv)^{\frac{1}{q}},\quad  (\int_A |\nabla \varphi|^{\frac{4q}{4-q}}dv)^{\frac{4-q}{4q}} \leq C (\int_A |\Delta \varphi|^q dv)^{\frac{1}{q}}
\label{13114} \end{equation}for all $\psi \in W^{1,q}(A)$ such that either $\psi |_{\partial A}=0$ or $\ov{\psi}^A=0$ and for all $\varphi \in W^{2,q}_0(A)$.

\medskip \noindent Given $\tilde \chi \in C_0^\infty (\frac{1}{4},8)$ so that $0\leq \tilde \chi \leq 1$ and $\tilde \chi=1$ on $[\frac{1}{2},4]$, set $\chi(x)= \tilde \chi(\frac{d (x,p)}{r})$ and let
\begin{equation}\label{epsilonr}
\epsilon_r^2=\int_A (\Delta_0 p)^2 dv+\int_A  |\nabla^2_{g_0} p|^2 dv
+(\int_A |\nabla p|^4 dv)^{\frac{1}{2}}.
\end{equation}
We can assume that $0<\epsilon_r\leq 1$ for $r>0$ small since
$$\lim_{r \to 0}\epsilon_r=0$$
in view of \eqref{crucestmm}. By \eqref{13113}, \eqref{13114} and  H\"older's estimate we have that
\begin{eqnarray*}
&&|\int [ \Delta_0 p+ 2|\nabla p|^2] [\varphi \Delta_0  \chi +2\langle \nabla \chi,\nabla \varphi \rangle] dv|+ |\int  [2 \langle \nabla \chi,\nabla p \rangle(1+p-\ov{p}^A)+\Delta_0 \chi (p-\ov{p}^A)]  \Delta_0 \varphi \, dv|  \\
&& \leq  \frac{C' \epsilon_r}{r^{\frac{2(q-2)}{q}}}[(\int_A |\varphi|^{\frac{2q}{q-2}}dv )^{\frac{q-2}{2q}}+(\int_A |\nabla \varphi|^{\frac{4q}{3q-4}}dv )^{\frac{3q-4}{4q}}+ (\int_A |\Delta \varphi|^{\frac{q}{q-1}}dv )^{\frac{q-1}{q}}] \\
&& \leq  \frac{C \epsilon_r}{r^{\frac{2(q-2)}{q}}} (\int_A |\Delta \varphi|^{\frac{q}{q-1}}dv )^{\frac{q-1}{q}} 
\end{eqnarray*}
for all $\varphi \in W^{2,\frac{q}{q-1}}_0(A)$, taking into account that 
\begin{eqnarray*}
&& |\int  \langle \nabla \chi,\nabla p \rangle(p-\ov{p}^A) \Delta_0 \varphi \, dv| +|\int \Delta_0 \chi (p-\ov{p}^A)  \Delta_0 \varphi \, dv|  \\
&& \leq \frac{C''}{r^2} \left[r \epsilon_r (\int_A |p-\ov{p}^A|^{\frac{4q}{4-q}} dv)^{\frac{4-q}{4q}}+ (\int_A |p-\ov{p}^A|^q dv)^{\frac{1}{q}}  \right] (\int_A |\Delta_0 \varphi|^{\frac{q}{q-1}}dv )^{\frac{q-1}{q}} \\ 
&&\leq
\frac{C'}{r^2} \left[r \epsilon_r  (\int_A |\nabla p|^q dv)^{\frac{1}{q}} +(\int_A |\nabla p|^{\frac{4q}{q+4}} dv)^{\frac{q+4}{4q}} \right](\int_A |\Delta \varphi|^{\frac{q}{q-1}}dv )^{\frac{q-1}{q}} \leq  \frac{C \epsilon_r}{r^{\frac{2(q-2)}{q}}} (\int_A |\Delta \varphi|^{\frac{q}{q-1}}dv )^{\frac{q-1}{q}}.
\end{eqnarray*}
Since 
\begin{eqnarray*}
( \Delta_0 p+2 |\nabla p|^2 )
\Delta_0 (\chi \varphi) 
&=& (\Delta_0 h +2 \langle \nabla h , \nabla p \rangle ) \Delta_0 \varphi +
(\Delta_0 p+2|\nabla p|^2) (\varphi \Delta_0  \chi +2\langle \nabla \chi,\nabla \varphi \rangle) \\
&&-[2 \langle \nabla \chi,\nabla p \rangle (1+p-\ov{p}^A)+\Delta_0 \chi (p-\ov{p}^A)]  \Delta_0
 \varphi, 
\end{eqnarray*}
where $h=\chi (p-\ov{p}^A)$, we have that for some $\mathcal{L}_1 \in W^{-2,q}(A)$:
\begin{equation} \label{new904}
\int ( \Delta_0 p+2 |\nabla p|^2 ) \Delta_0 (\chi \varphi) dv= \int_A (\Delta_0 h +2 \langle \nabla h , \nabla p \rangle ) \Delta_0 \varphi dv+\mathcal{L}_1, \quad \|\mathcal{L}_1 \|\leq \frac{C \epsilon_r}{r^{\frac{2(q-2)}{q}}}.
\end{equation}
Analogously, there holds
\begin{eqnarray} \label{new913}
&& 6\gamma_3 \int \langle \nabla^2_{g_0} p, \nabla^2_{g_0} (\chi \varphi) \rangle dv+(\frac{\gamma_2}{2}-3\gamma_3) \int  \Delta p \Delta (\chi \varphi) dv=6\gamma_3  \int_A \langle \nabla^2_{g_0} h, \nabla^2_{g_0} \varphi \rangle dv\\
&&+(\frac{\gamma_2}{2}-3\gamma_3) \int_A \Delta h \Delta \varphi dv+\mathcal{L}_2, \quad \|\mathcal{L}_2 \|\leq \frac{C \epsilon_r}{r^{\frac{2(q-2)}{q}}}, \nonumber
\end{eqnarray}
thanks to
\begin{eqnarray*}
&& |\int \langle O(|\nabla p||\nabla \chi|)+\nabla^2_{g_0} \chi (p-\ov{p}^A),  \nabla^2_{g_0} \varphi \rangle dv|+ |\int \langle \nabla^2_{g_0} p, \varphi \nabla^2_{g_0}  \chi +O(|\nabla \chi||\nabla \varphi|) \rangle dv| \\
&& \leq   \frac{C' \epsilon_r}{r^{\frac{2(q-2)}{q}}}[(\int_A |\varphi|^{\frac{2q}{q-2}}dv )^{\frac{q-2}{2q}}+(\int_A |\nabla \varphi|^{\frac{4q}{3q-4}}dv )^{\frac{3q-4}{4q}}+ (\int_A |\nabla^2_{g_0} \varphi|^{\frac{q}{q-1}}dv )^{\frac{q-1}{q}}] \\
&&\leq  \frac{C \epsilon_r}{r^{\frac{2(q-2)}{q}}} (\int_A |\Delta \varphi|^{\frac{q}{q-1}}dv )^{\frac{q-1}{q}}
\end{eqnarray*}
and
\begin{eqnarray*}
\langle \nabla^2_{g_0} p, \nabla^2_{g_0} (\chi \varphi)\rangle &=&  \langle \nabla^2_{g_0} h  - d \chi \otimes d p -d p \otimes  d \chi- \nabla^2_{g_0} \chi (p-\ov{p}^A),  \nabla^2_{g_0} \varphi \rangle \\
&& + \langle \nabla^2_{g_0} p, \varphi \nabla^2_{g_0}  \chi +d \chi \otimes d \varphi+
d \varphi \otimes d \chi \rangle, 
\end{eqnarray*}
in view of \eqref{13261} and \eqref{15515}-\eqref{13114}. Since in a similar way 
\begin{eqnarray*}
|\int  |\nabla \chi| \Big( |\Delta_0 p|+ |\nabla p|^2+1 \Big) \Big( |\nabla \varphi||p-\ov{p}^A|+ |\nabla p||\varphi| \Big) dv |  \leq  \frac{C \epsilon_r}{r^{\frac{2(q-2)}{q}}} (\int_A |\Delta \varphi|^{\frac{q}{q-1}}dv )^{\frac{q-1}{q}}
\end{eqnarray*}
for all $\varphi \in W^{2,\frac{q}{q-1}}_0(A)$, there holds
\begin{eqnarray}\label{new930}
&& 12 \gamma_3 \int [\Delta_0 p+|\nabla p|^2]\langle \nabla p,\nabla (\chi \varphi) \rangle dv +(2 \gamma_3-\frac{\gamma_2}{3}) \int [3\hbox{Ric}(\n p,\nabla (\chi\varphi)) dv -R \langle \nabla p,\nabla (\chi \varphi) \rangle] dv \\
&&=12 \gamma_3 \int_A [\Delta_0 p+|\nabla p|^2]\langle \nabla h,\nabla \varphi\rangle dv  +(2\gamma_3-\frac{\gamma_2}{3}) \int_A [3\hbox{Ric}(\n h,\nabla \varphi) dv- R \langle \nabla h,\nabla \varphi \rangle] dv \nonumber \\
&&+\mathcal{L}_3, \quad \|\mathcal{L}_3 \|\leq \frac{C \epsilon_r}{r^{\frac{2(q-2)}{q}}}. \nonumber
\end{eqnarray}
Since by density and \eqref{crucestmm}  we can use $\chi \varphi$, $\varphi \in W^{2,2}_0(A)$, into \eqref{234}, by collecting \eqref{new904}-\eqref{new930} one has that 
\begin{eqnarray} \label{new1110}
&& 3\gamma_3  \int_A [ \Delta_0 h+ 2 \langle \nabla h, \nabla p \rangle] \Delta_0 \varphi \, dv+6 \gamma_3 \int_A \langle \nabla^2_{g_0} h,\nabla^2_{g_0}\varphi \rangle dv
+
12 \gamma_3 \int_A [\Delta_0 p+|\nabla p|^2]\langle \nabla h,\nabla \varphi\rangle dv\\
&&+(\frac{\gamma_2}{2}-3\gamma_3) \int_A \Delta h \Delta \varphi dv
+(2\gamma_3-\frac{\gamma_2}{3}) \int_A [3\hbox{Ric}(\n h,\nabla \varphi) dv- R \langle \nabla h,\nabla \varphi \rangle] dv
=\mathcal{L}(\varphi) \nonumber
\end{eqnarray}
for some $\mathcal{L} \in W^{-2,2}(A)$, which can also be regarded as $\mathcal{L} \in W^{-2,q}(A)$ satisfying 
\begin{equation} \label{449}
\|\mathcal{L}\|\leq \frac{C \epsilon_r}{r^{\frac{2(q-2)}{q}}}+(\int_A |f_0+U|^{\frac{2q}{q+2}}dv)^{\frac{q+2}{2q}}, 
\end{equation} 
in view of
$$|\int (f_0+U) \chi \varphi \, dv|\leq (\int_A |f_0+U|^{\frac{2q}{q+2}}dv)^{\frac{q+2}{2q}} (\int_A |\varphi|^{\frac{2q}{q-2}}dv)^{\frac{q-2}{2q}}.$$ 
Since
\begin{eqnarray*}
&& |\int_A  \langle \nabla \tilde h, \nabla p \rangle  \Delta_0 \varphi \, dv|+|\int_A [\Delta_0 p+|\nabla p|^2]\langle \nabla \tilde h,\nabla \varphi\rangle  dv | \\
&&\leq \epsilon_r (\int_A |\nabla \tilde h|^{\frac{4q}{4-q}} dv)^{\frac{4-q}{4q}} (\int_A |\Delta \varphi|^{\frac{q}{q-1}} dv)^{\frac{q-1}{q}}+C \epsilon_r (\int_A |\nabla \tilde h|^{\frac{4q}{4-q}} dv)^{\frac{4-q}{4q}} (\int_A |\nabla \varphi|^{\frac{4q}{3q-4}} dv)^{\frac{3q-4}{4q}}
\end{eqnarray*}
and
\begin{eqnarray*}
|\int_A [3\hbox{Ric}(\n \tilde h,\nabla \varphi) dv- R \langle \nabla \tilde h,\nabla \varphi \rangle] dv| \leq C r^2 (\int_A |\nabla \tilde h|^{\frac{4q}{4-q}} dv)^{\frac{4-q}{4q}} (\int_A |\nabla \varphi|^{\frac{4q}{3q-4}} dv)^{\frac{3q-4}{4q}}
\end{eqnarray*}
for all $\varphi \in W^{2,\frac{q}{q-1}}_0(A)$, equation \eqref{new1110} written in $\tilde h$ is equivalent to
$$3\gamma_3  \int_A \Delta_0 \tilde h \Delta_0 \varphi \, dv +6\gamma_3 \int_A \langle \nabla^2_{g_0} \tilde h,\nabla^2_{g_0}\varphi \rangle dv
+(\frac{\gamma_2}{2}-3\gamma_3) \int_A \Delta \tilde h \Delta \varphi dv+T[\tilde h](\varphi)=\mathcal{L}(\varphi),$$
where $T: W^{2,q}_0(A) \to W^{-2,q}(A)$ is a linear operator which satisfies $\|T\|\leq C (\epsilon_r+r^2)$. The crucial point is that the linear operator $\Delta_0^2 : W^{2,q}_0(A) \to W^{-2,q}(A)$ is an isomorphism with uniformly bounded inverse, where 
$$\Delta_0^2 \tilde h(\varphi)= \int_A  \Delta_0 \tilde h \Delta_0 \varphi \, 
 dv
+2 \int_A \langle \nabla_{g_0}^2 \tilde h, \nabla_{g_0}^2 \varphi \rangle dv+(\frac{\gamma_2}{6\gamma_3}-1) \int_A \Delta \tilde h \Delta \varphi dv.$$
Since $\epsilon_r +r^2 \to 0$ as $r \to 0$ we have that $3\gamma_3 \Delta^2_0+T:W^{2,q}_0(A) \to  W^{-2,q}(A)$ is still an isomorphism with uniformly bounded inverse. Then $3 \gamma_3 \Delta^2_0 \tilde h+T[\tilde  h]=\mathcal{L}$ is uniquely solvable in $W^{2,q}_0(A)$ for all $2\leq q<4$ and such a solution $\tilde h$ coincides with $h \in W^{2,2}_0(A)$ by uniqueness in $W^{2,2}_0(A)$. So for all $2\leq q<4$ we have shown that 
\begin{equation}
\|h\|_{W^{2,q}_0(A)} \le C' \|\mathcal{L}\|_{W^{-2,q}(A)} \leq C\left[\frac{\epsilon_r}{r^{\frac{2(q-2)}{q}}}+(\int_A |f_0+U|^{\frac{2q}{q+2}}dv)^{\frac{q+2}{2q}}\right] \label{18148}
\end{equation} 
for some $C>0$ thanks to \eqref{449}.

\medskip \noindent In order to show that $\Delta^2_0 : W^{2,q}_0(A) \to W^{-2,q}(A)$ is an isomorphism with uniformly bounded inverse, notice first that 
{ \begin{equation} \label{633}
\delta_A:=\inf \left\{ \int_A (\Delta_0 h)^2 dv: \ h \in W^{2,2}_0(M), \ \int_A (\Delta h)^2 dv=1 \right\}>0.
\end{equation}
Indeed, letting $h_n$ be a minimizing sequence in \eqref{633}, we can assume that $h_n \rightharpoonup h$ in $W^{2,2}_0(A)$ and $h_n \to h $ in $W^{1,2}_0(A)$ as $n \to +\infty$ thanks to  Sobolev's embedding Theorem. When $h=0$ we have that $\int_A (\Delta_0 h_n)^2 dv \to 1$ as $n \to +\infty$ and then $\delta_A=1$. If $h \not= 0$, we need to show that $\Delta_0 h \not= 0$ since by weak lower semi-continuity $\delta_A \geq \int_A (\Delta_0 h)^2 \ dv$. Observe that $\Delta_0 h=\Delta h+2 \langle \nabla w_0,\nabla h \rangle=0$ has only the trivial solution in $W^{2,2}_0(A)$ as it follows by testing $\Delta_0 h$ against $e^{2w_0} h$ and integrating by parts:
$$0=\int_A (\Delta h+2 \langle \nabla w_0,\nabla h \rangle) e^{2w_0} h dv
=- \int_A e^{2w_0} |\nabla h|^2 dv.$$}
Since every $\mathcal{L} \in W^{-2,q}(A)$ can be viewed as an element in $W^{-2,2}(A)$ in view of $\frac{q}{q-1}\leq 2$ and by \eqref{633} there holds
$$\Delta^2_0 h(h)= \int_A  (\Delta_0 h)^2 dv+2 \int_A |\nabla_{g_0}^2 h|^2 dv +(\frac{\gamma_2}{6\gamma_3}-1) (\int_A \Delta h)^2 dv\geq \delta_A \int_A ( \Delta h)^2 dv$$
due to $\frac{\gamma_2}{\gamma_3}\geq 6$, we can minimize $\frac{1}{2}\Delta^2_0 h (h)-\mathcal{L}(h)$ in $W^{2,2}_0(A)$ and find a solution $h \in W^{2,2}_0(A)$ of $\Delta^2_0 h=\mathcal{L}$ in $W^{-2,2}(A)$. Thanks to \eqref{12391}-\eqref{13261} and \eqref{14122} let us now rewrite $\Delta^2_0 h(\varphi)$ as
$$\Delta^2_0 h(\varphi)=(2+\frac{\gamma_2}{6\gamma_3}) \int_A \Delta h \Delta \varphi \, dv +\tilde{ \mathcal{L}}(\varphi),$$
where $\tilde{\mathcal{L}}$ satisfies $|\tilde{\mathcal{L}}(\varphi)|\leq { \frac{C}{r} \|h\|_{W^{2,2}_0(A)} \|\varphi\|_{W^{2,\frac{4}{3}}_0(A)}}$. Since $\mathcal{L} \in W^{-2,q}(A)$ and $\tilde{ \mathcal{L}} \in W^{-2,4}(A)$, we can use elliptic estimates for the bi-Laplacian operator in \cite{Agm} to show that $h \in W^{2,q}_0(A)$. Moreover, by the inverse mapping theorem we know that $\|\Delta^2_0 h \|_{W^{-2,q}(A)}\geq \delta \|h\|_{W^{2,q}_0(A)}$ for some $\delta=\delta(r)>0$. To see that $\delta>0$ can be chosen independent of $r>0$, through geodesic coordinates at $p$ and the change of variable $x= ry$ as in \eqref{13113}
we simply observe that 
$$\|\Delta^2_0 h \|_{W^{-2,q}(A)}= r^{\frac{4-2q}{q}} \sup\{ \Delta^{2,r}_0 h^r(\psi): \ \psi \in W_0^{2,\frac{q}{q-1}}(B_8 \setminus B_{\frac{1}{4}}), \ \int_{B_8 \setminus B_{\frac{1}{4}}} |\Delta_{g^r} \psi|^{\frac{q}{q-1}} dv_{g^r} \leq 1 \}$$
and $\|h\|_{W^{2,q}_0(A)}=r^{\frac{4-2q}{q}} (\int_{B_8 \setminus B_{\frac{1}{4}}} |\Delta_{g^r} h^r|^q dv_{g^r})^{\frac{1}{q}}$, where $\nabla w_0^r(y)=\frac{\alpha y}{|y|^2}$ and 
\begin{eqnarray*}
\Delta_0^{2,r} h^r(\psi)&=&
\int_{B_8 \setminus B_{\frac{1}{4}}}( \Delta_{g^r} h^r+ 2 \langle \nabla w_0^r, \nabla h^r \rangle_{g^r}) (\Delta_{g^r} \psi+ 2 \langle \nabla w_0^r, \nabla \psi \rangle_{g^r}) dv_{g^r} \\
&&+2 \int_{B_8 \setminus B_{\frac{1}{4}}} \langle \nabla_{g_0^r}^2 h^r, \nabla_{g_0^r}^2 \psi \rangle_{g^r} dv_{g^r}+ (\frac{\gamma_2}{6\gamma_3}-1) \int_{B_8 \setminus B_{\frac{1}{4}}} \Delta_{g^r} h^r \Delta_{g^r} \psi\, dv_{g^r}.
\end{eqnarray*}
Since $g_r(y)=g(ry) \to \delta_{\hbox{eucl}}$ $C^2-$uniformly in $B_8 \setminus B_{\frac{1}{4}}$ as $r \to 0^+$, we have that
\begin{equation} \label{703} 
\sup\{ \Delta_0^{2,r} \tilde h (\psi): \ \psi \in W_0^{2,\frac{q}{q-1}}(B_8 \setminus B_{\frac{1}{4}}), \ \int_{B_8 \setminus B_{\frac{1}{4}}} |\Delta_{g^r} \psi|^{\frac{q}{q-1}} dv_{g^r} \leq 1 \} \geq \delta (\int_{B_8 \setminus B_{\frac{1}{4}}} |\Delta_{g^r} \tilde h|^q dv_{g^r})^{\frac{1}{q}}
\end{equation}
uniformly in $\tilde h$ for some $\delta>0$, and then $\|\Delta^2 _0 h \|_{W^{-2,q}(A)}\geq \delta \|h\|_{W^{2,q}_0(A)}$. { We have used that the desired inequality $\|\Delta^2_{0,eucl} \tilde h \|_{W^{-2,q}(B_8 \setminus B_{\frac{1}{4}})}\geq \delta \| \tilde h\|_{W^{2,q}_0(B_8 \setminus B_{\frac{1}{4}})}$ does hold in the euclidean case with some $\delta>0$ and the following convergences:
$$\hbox{L.H.S. in }\eqref{703} \to \sup\{ \Delta_{0,eucl}^{2} \tilde h(\psi): \ \psi \in W_0^{2,\frac{q}{q-1}}(B_8 \setminus B_{\frac{1}{4}}), \ \int_{B_8 \setminus B_{\frac{1}{4}}} |\Delta \psi|^{\frac{q}{q-1}} dx \leq 1 \}
=\|\Delta^2_{0,eucl} \tilde h \|_{W^{-2,q}(B_8 \setminus B_{\frac{1}{4}} )}$$
and 
$$\hbox{R.H.S. in }\eqref{703} \to (\int_{B_8 \setminus B_{\frac{1}{4}}} |\Delta \tilde h|^q dx)^{\frac{1}{q}}$$
as $r \to 0^+$ uniformly in $\tilde h$.}

\medskip \noindent  Set $\tilde A=\{ x \in M: \ d(x,p) \in [\frac{r}{2}, 4 r] \}$.  Notice that by \eqref{13261} and \eqref{13114} it follows that 
\begin{equation}
(\int_{\tilde A} |\Delta_0 p|^q dv)^{\frac{1}{q}} +(\int_{\tilde A} |\nabla_{g_0}^2 p|^q dv)^{\frac{1}{q}}+(\int_{\tilde A}  |\nabla p|^{\frac{4q}{4-q}} dv)^{\frac{4-q}{4 q}}+r^{\frac{2(2-q)}{q}} \|p-\ov{p}^{\tilde A} \|_{\infty,\tilde A}
\leq C \|h\|_{W^{2,q}_0(A)} \label{09439}
\end{equation} 
for some $C>0$, in view of  $(\int_A |\nabla w_0|^q |\nabla h|^q dv)^{\frac{1}{q}} \leq C (\int_A |\nabla h|^{\frac{4q}{4-q}} dv)^{\frac{4-q}{4q}}$ and through geodesic coordinates
\begin{equation} \label{1025}
\|\psi \|_{\infty,\tilde A}=\| \psi^r\|_{\infty,B_{4} \setminus B_{\frac{1}{2}}} \leq C (\int_{B_{4} \setminus B_{\frac{1}{2}} } |\Delta_{g^r} \psi^r|^q \sqrt{|g^r|} dy)^{\frac{1}{q}} =C r^{\frac{2(q-2)}{q}}  (\int_{\tilde A} |\Delta \psi|^q dv)^{\frac{1}{q}}
\end{equation}
for all $\psi \in W^{1,q}(\tilde A)$ with $\ov{\psi}^{\tilde A}$ and for $q>2$. To get stronger estimates, let $\tilde \chi \in C_0^\infty (\frac{1}{2},4)$ with $0\leq \tilde \chi \leq 1$ and $\tilde \chi=1$ on $[1,2]$, and define now $\chi(x)= \tilde \chi(\frac{d (x,p)}{r})$ and $h=\chi (p-\ov{p}^{\tilde A})$. Thanks to \eqref{18148} and \eqref{09439} we can repeat the above argument  and, integrating by parts all the terms involving second-order derivatives of $\varphi$, get that:
\begin{eqnarray*}
&&|\int [ \Delta_0 p+ 2|\nabla p|^2] [\varphi \Delta_0  \chi +2\langle \nabla \chi,\nabla \varphi \rangle] dv|+ |\int  [2 \langle \nabla \chi,\nabla p \rangle(1+p-\ov{p}^{\tilde A})+\Delta_0 \chi (p-\ov{p}^{\tilde A})]  \Delta_0 \varphi \, dv|
\\
&&+ |\int \langle d \chi \otimes d p +d p \otimes  d \chi+\nabla^2_{g_0} \chi (p-\ov{p}^{\tilde A}),  \nabla^2_{g_0} \varphi \rangle dv| + |\int \langle \nabla^2_{g_0} p, \varphi \nabla^2_{g_0}  \chi +d \chi \otimes d \varphi+
d \varphi \otimes d \chi \rangle dv|
 \\
&&+|\int \Delta p  [\varphi \Delta  \chi +2\langle \nabla \chi,\nabla \varphi \rangle] dv|+ |\int  [2 \langle \nabla \chi,\nabla p \rangle +\Delta \chi (p-\ov{p}^{\tilde A})]  \Delta \varphi \, dv|
\\
&&+\int |\nabla \chi| ( |\Delta_0 p|+ |\nabla p|^2+1)( |\nabla \varphi| |p-\ov{p}^{\tilde A}| + |\nabla p| |\varphi |) dv| \\
&& \leq  \frac{C}{r}[\frac{\epsilon_r}{r^{\frac{2(q-2)}{q}}}+(\int_A |f_0+U|^{\frac{2q}{q+2}}dv)^{\frac{q+2}{2q}}](\int_{\tilde A} |\nabla \varphi|^{\frac{q}{q-1}}dv )^{\frac{q-1}{q}}, 
\end{eqnarray*}
and
\begin{eqnarray*}
 && |\int_{\tilde A}  \langle \nabla \tilde h, \nabla p \rangle \Delta_0 \varphi \, dv|+|\int_{\tilde A} [\Delta_0 p+|\nabla p|^2]\langle \nabla \tilde h,\nabla \varphi\rangle  dv| +|\int_{\tilde A} [3\hbox{Ric}(\n \tilde h,\nabla \varphi) dv- R \langle \nabla \tilde h,\nabla \varphi \rangle] dv| \\
&& \leq C (\tilde \epsilon_r +r^2) \|\tilde h\|_{W^{3,q}_0(\tilde A)}(\int_{\tilde A} |\nabla \varphi|^{\frac{q}{q-1}} dv)^{\frac{q-1}{q}} 
\end{eqnarray*}
for all $\varphi \in W^{2,\frac{q}{q-1}}_0(\tilde A)$, where $\tilde \epsilon_r$ is given by \eqref{epsilonr} on $\tilde A$. Notice that quadratic or cubic terms in $p$  have been estimated in the above expression by using \eqref{09439} on $p$ and \eqref{epsilonr} for the remaining powers of $p$. Hence, equation \eqref{new1110} in $\tilde h$ is equivalent to
$$3\gamma_3 \Delta^2_0 \tilde h(\varphi)+T[\tilde h](\varphi)=\mathcal{L}(\varphi),$$
where $T: W^{3,q}_0(\tilde A) \to W^{-1,q}(\tilde A)$ is a linear operator so that $\|T\|\leq C (\tilde \epsilon_r+r^2) $ and $\mathcal{L} \in W^{-1,q}(\tilde A)$ satisfies 
$$\|\mathcal{L}\|\leq \frac{C}{r}[\frac{\epsilon_r}{r^{\frac{2(q-2)}{q}}}+(\int_A |f_0+U|^{\frac{2q}{q+2}}dv)^{\frac{q+2}{2q}}]+(\int_{\tilde A} |f_0+U|^{\frac{4q}{q+4}}dv)^{\frac{q+4}{4q}}$$ 
in view of 
$$|\int_{\tilde A} (f_0+U) \chi \varphi \, dv|  \leq  (\int_{\tilde A} |f_0+U|^{\frac{4q}{q+4}}dv)^{\frac{q+4}{4q}}(\int_{\tilde A} |\nabla \varphi|^{\frac{q}{q-1}}dv )^{\frac{q-1}{q}}.$$
Arguing as before, since the operator $\Delta^2_0 : W^{3,q}_0(\tilde A) \to W^{-1,q}(\tilde A)$ is an isomorphism with uniformly bounded inverse, $3 \gamma_3 \Delta^2_0 \tilde h+T[\tilde  h]=\mathcal{L}$ is uniquely solvable in $W^{3,q}_0(A)$, $2< q<4$,  and such a solution $\tilde h$ coincides with $h \in W^{2,2}_0(\tilde A)$ by uniqueness in $W^{2,2}_0(\tilde A)$. Then, for all $2< q<4$ there holds 
\begin{equation}
\|h\|_{W^{3,q}_0(\tilde A)} \leq C \left[ \frac{\epsilon_r}{r^{\frac{3q-4}{q}}}+ \frac{1}{r} (\int_A |f_0+U|^{\frac{2q}{q+2}}dv)^{\frac{q+2}{2q}}+(\int_{\tilde A} |f_0+U|^{\frac{4q}{q+4}}dv)^{\frac{q+4}{4q}}\right] \label{12002}
\end{equation} 
for some $C>0$. Since arguing as in \eqref{1025} there holds
$$r \|\nabla h\|_{\infty,\tilde A}=\|\nabla h^r\|_{\infty,B_{4} \setminus B_{\frac{1}{2}}} \leq C (\int_{B_{4} \setminus B_{\frac{1}{2}} } |\Delta_{g^r} h^r|^{\frac{4q}{4-q}} \sqrt{|g^r|} dy)^{\frac{4-q}{4q}} =C r^{\frac{3q-4}{q}}  (\int_{\tilde A} |\Delta h|^{\frac{4q}{4-q}} dv)^{\frac{4-q}{4q}}
$$
in view of $\frac{4q}{4-q}>4$, by \eqref{13114} and \eqref{12002} for all $2<q<4$ we finally deduce that
\begin{equation} \label{18078}
r \|\nabla p\|_{\infty,B_{2r} \setminus B_r} \leq
C  \left[\epsilon_r + r^{\frac{2(q-2)}{q}}   (\int_{A}   |f_0+U|^{\frac{2q}{q+2}}dv)^{\frac{q+2}{2q}}+r^{\frac{3q-4}{q}}  (\int_{\tilde A}   |f_0+U|^{\frac{4q}{q+4}}dv)^{\frac{q+4}{4q}}\right]
\end{equation}
for some $C>0$. Estimate \eqref{18078} establishes the validity of \eqref{12292} when $k=1$ in view of \eqref{1157}. Iterating the argument one shows that \eqref{12292} does hold for $k=2,3$ too.

\medskip \noindent When $p \in M \setminus \{p_1,\dots,p_l\}$, there is no need to work on annuli as in the previous argument, and it is therefore possible to show that $w \in W^{3,q}_0(B_r(p))$, $2<q<4$. Then $w \in C^\infty(M \setminus \{p_1,\dots,p_l\})$ by an iteration. \end{pf}
\begin{rem}\label{rem4} According to the terminology in Remark \ref{rem2}, any fundamental solution corresponding to $\mu_s=\beta_i \delta_{p_i}$ and $\Phi \in C^\infty(\ov{B_r(p_i)})$ satisfies  the conclusions of Theorem \ref{p:ex-singg} in $\ov{B_r(p_i)}$.
\end{rem}

\section{Blow-up analysis} \label{blowup}
\setcounter{equation}{0}  
\noindent In this section we are concerned with the asymptotic analysis of sequences of 
solutions $w_n$ to \eqref{EL1}. The first issue is to determine a minimal volume quantization in the blow-up scenario, as it will follow by Adams' inequality and \eqref{00828}. The blow-up threshold is not optimal but it can be sharpened by using a Pohozaev identity along with the logarithmic behaviour of the singular limit for $w_n-\ov{w}_n$. However, it is not clear whether $\ov{w}_n$ tends to minus infinity or not, determining whether the limiting measure of $\mu_n e^{4w_n}$ is purely concentrated or  presents some residual $L^1-$part. The latter is usually excluded by comparison with the purely concentrated case.

 In our setting {\em maximum principles} are not available for the fourth-order operator $\mathcal{N}$ and a new approach has to be devised, based only on the scaling invariance of the PDE:  we apply  asymptotic analysis and  Pohozaev's identity to a slightly rescaled sequence $u_n$ for which the  limiting measure is purely concentrated, getting the optimal blow-up threshold; since the concentrated part is sufficiently strong, the fundamental solution in the purely-concentrated case has a low exponential integrability and,  by using  $W^{1,2,2)}$-bounds to make a comparison, the same remains true for $\displaystyle \lim_{n\to +\infty}(w_n-\ov{w}_n)$ when $\inf_n \ov{w}_n >-\infty$, in contrast to $\int e^{4w_n} dv=1$ (which is assumed in Theorem \ref{p:minimal-blow-up}). In order to have an asymptotic description of $u_n$, observe that scaling-invariant uniform estimates on $w_n$ are needed, which is precisely the content of  Theorem \ref{BMO}.

\medskip \noindent Let $g_n$ be a metric on $B_r$ with volume element $dv_{g_n}$, $U_n \in C^\infty(\ov{B_r})$ and $\mathcal{N}_n$ be the operator associated to $g_n$ through \eqref{linearL}. We consider a sequence of solutions $u_n$ to
\begin{equation} \label{v_n}
\mathcal{N}_n(u_n)+U_n=\mu_n e^{4u_n} \qquad \hbox{in }B_r.
\end{equation} 
We assume that $\mu_n \to \mu_0$,
\begin{equation} \label{154200}
\sup_n \int_{B_r} e^{4u_n}dv_{g_n}<+\infty,\qquad \sup_n \int_{B_r} (u_n-c_n)^4 dv_{g_n}<+\infty, 
\end{equation}
and
\begin{equation} \label{bis154200}
U_n \to U_\infty \hbox{ in }C^1(\ov{B_r}),\qquad g_n \to g_\infty \hbox{ in }C^4(\ov{B_r})
\end{equation}
for some $U_\infty \in C^\infty(\ov{B_r})$, a metric $g_\infty$ and $c_n \in \mathbb{R}$. Notice that \eqref{154200} implies 
\begin{equation} \label{154200bis}
\sup_n  \int_{B_r} (u_n-\ov{u}_n^r)^4 dv_{g_n}<+\infty
\end{equation}
in terms of the average $\ov{u}_n^r=\fint_{B_r} u_n dv_{g_n}$ of $u_n$ on $B_r$ w.r.t. $g_n$, since by H\"older's inequality  
$$|\ov{u}_n^r-c_n| \leq \fint_{B_r} |u_n-c_n| dv_{g_n}\leq \frac{C}{r} (\int_{B_r} (u_n-c_n)^4 dv_{g_n})^{\frac{1}{4}}.$$
We have the following local result on minimal volume quantization.
\begin{pro}\label{bound} Let $\frac{\gamma_2}{\gamma_3}>\frac{3}{2}$. There exists $\epsilon_0>0$ so that
\begin{equation} \label{H2est}
\sup_n \int_{B_{\frac{r}{2}}} [(\Delta_{g_n} u_n)^2+|\nabla u_n|_{g_n}^4] \ dv_{g_n} <+\infty
\end{equation}
provided $|\mu_n| \int_{B_r} e^{4 u_n}dv_{g_n} \leq  \epsilon_0$. Moreover, assuming $u_n -c_n \rightharpoonup u_0$ in $W^{2,2}_{g_\infty}(B_{\frac{r}{2}})$ and $\frac{\gamma_2}{\gamma_3}\geq 6$, there exists $0<r_0\leq \frac{r}{4}$ so that
\begin{equation} \label{C4est}
\sup_n \|u_n-c_n \|_{C^{4,\alpha}(B_{r_0})}<+\infty
\end{equation}
for any $\alpha \in (0,1)$.
\end{pro}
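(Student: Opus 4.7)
The first bound (i) rests on the Caccioppoli-type inequality of Theorem \ref{Caccioppoli}, which is valid locally in view of Remark \ref{rem0}. Since $\mathcal{N}$ is invariant under addition of constants, I first replace $(u_n,\mu_n)$ by $(u_n-c_n,\tilde\mu_n:=\mu_n e^{4 c_n})$: the product $|\mu_n|\int_{B_r} e^{4u_n}\,dv_{g_n}$ and the local equation are preserved, the new sequence is bounded in $L^4(B_r)$ by \eqref{154200}, and the forcing $f_n=\tilde\mu_n e^{4(u_n-c_n)}-U_n$ satisfies $\|f_n\|_{L^1(B_r)}\leq \epsilon_0+C$. Applying \eqref{01510} at truncation height $k\geq k_0$ then controls the integrand on $\{|u_n-c_n|<k\}\cap B_{r/2}$ by a constant depending linearly on $k$; the super-level set $\{u_n-c_n>k\}$ has measure at most $\epsilon_0 |\tilde\mu_n|^{-1}e^{-4k}$, while $\{u_n-c_n<-k\}$ is controlled via Chebyshev from the $L^4$-bound. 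Patching these contributions and taking $\epsilon_0$ sufficiently small relative to the background quantities yields \eqref{H2est}.

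An alternative and cleaner route is to insert into the general identity \eqref{00828} the weighted cutoff $\psi'(s)=(M_0+s^2)^{-2/3}$ used in Theorem \ref{t:weak-norms}: this produces a weighted $H^2$-bound which, combined with the uniform $L^4$-control on $u_n-c_n$ and H\"older's inequality, controls the full integral in \eqref{H2est} once $\epsilon_0$ has been chosen small enough to absorb the quartic terms on the right-hand side. The main delicacy in this first step is precisely the passage from the truncated (or weighted) estimate to a global $H^2\cap W^{1,4}$-bound; it is the smallness of $\|f_n\|_{L^1}$ that makes such a passage possible, in contrast to the purely global setting of Theorem \ref{t:weak-norms}, where only $W^{2,q}$-bounds for $q<2$ were available.

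For (ii), the weak convergence in $W^{2,2}(B_{r/2})$ combined with (i) yields a uniform $H^2$-bound on $u_n-c_n$ over $B_{r/2}$. The Adams--Moser--Trudinger inequality of \cite{Adams} then gives $e^{4(u_n-c_n)}\in L^p(B_{r/2})$ uniformly for some $p>1$, and since $\tilde\mu_n$ stays bounded (as $\int e^{4(u_n-c_n)}dv_{g_n}\geq |B_r|$ by Jensen's inequality combined with the hypothesis), the forcing $f_n$ is uniformly bounded in $L^p(B_{r/2})$. Interpreting $u_n-c_n$ on a slightly smaller ball as a SOLA of $\mathcal{N}(w)=f_n$ with boundary data extracted from the weak limit $u_0$ (Remark \ref{rem12}), Proposition \ref{thm1915bis} applied with $F_n=H(f_n\,dv_{g_n})$ yields uniform $W^{2,q}$-bounds for every $q<\infty$ thanks to $\gamma_2/\gamma_3\geq 6$. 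A standard elliptic bootstrap for the quasilinear $\mathcal{N}$, of the type carried out in \cite{UVMRL}, then promotes these to the desired $C^{4,\alpha}$-bound on $B_{r_0}$ for $r_0$ small. The main obstacle here is the choice of $r_0$ and the boundary data so that the SOLA theory applies cleanly; this is resolved via a trace argument on the uniform $H^2$-family together with the a-priori estimate \eqref{11411}.
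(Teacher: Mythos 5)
Your argument for \eqref{H2est} has a genuine gap, in both of the routes you sketch, and the missing ingredient is precisely the sharp Adams inequality. In the first route, the Caccioppoli estimate \eqref{01510} only controls $(\Delta u_n)^2+|\nabla u_n|^4$ on the truncated set $\{|u_n-c_n|<k\}\cap B_{\rho}$; knowing that the complementary set $\{|u_n-c_n|\geq k\}$ has small measure gives no control whatsoever of the integrand there, since in a blow-up scenario $(\Delta u_n)^2$ concentrates exactly on that small set (and the right-hand side of \eqref{01510} grows linearly in $k$, so one cannot let $k\to\infty$ either). In the second route, the weighted bound produced by $\psi'(s)=(M_0+s^2)^{-2/3}$ cannot be ``undone'' by H\"older together with an $L^4$ bound on $u_n-c_n$: that combination only yields $W^{2,q}$, $q<2$, exactly as in Theorem \ref{t:weak-norms}, not the full $H^2\cap W^{1,4}$ bound. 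The paper instead tests \eqref{00828} with the \emph{unbounded} choice $\psi(s)=s$, so that the dangerous term is $|\mu_n|\int\chi^4 e^{4u_n}|u_n-c_n|\,dv$, and handles it with the pointwise splitting $\chi^4|s|e^{s}\leq \tfrac{2}{\lambda}e^{s}+e^{\lambda\chi^4 s^2}$ with $\lambda=\pi^2/\|\Delta(\chi^2(u_n-c_n))\|_{L^2}^2$: the quadratic-exponential piece is uniformly bounded by Adams' inequality (the exponent $16\pi^2<32\pi^2$ is why this works), while the linear-exponential piece contributes a multiple of $|\mu_n|\int e^{4u_n}\cdot\|\Delta(\chi^2(u_n-c_n))\|_{L^2}^2$, absorbed into the coercive left-hand side precisely because $|\mu_n|\int_{B_r}e^{4u_n}\leq\epsilon_0$ with $\epsilon_0$ tied to $\pi^2\delta_0$. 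Smallness of $\|f_n\|_{L^1}$ alone, which is all your two routes exploit, is not the mechanism and does not suffice.

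For \eqref{C4est} the proposal also overstates what the linear theory delivers. Proposition \ref{thm1915bis} bounds differences of SOLA's only in the grand Sobolev space $W^{\theta,2,2)}$, i.e.\ $\|\Delta p\|_{2(1-\epsilon)}$ and $\|\nabla p\|_{4(1-\epsilon)}$, so it gives $W^{2,q}$ bounds for $q<2$, not ``for every $q<\infty$''; moreover treating $u_n-c_n$ as a SOLA with boundary data taken from the weak limit $u_0$ is not legitimate, since $u_n-c_n\neq u_0$ on the boundary, and the estimate \eqref{11411} also carries the curvature term $\eta$ which is not small on a fixed ball. The regularity result of \cite{UVMRL} that produces uniform $C^{4,\alpha}$ bounds requires smallness of the local energy $\int_{B_{r_0}}[(\Delta u_n)^2+|\nabla u_n|^4]\,dv$, and the actual content of the paper's second step is to upgrade the weak $W^{2,2}$ convergence to \emph{strong} convergence $u_n-c_n\to u_0$ in $W^{2,2}(B_{r/4})$: one first shows via Adams that $\mu_n e^{4u_n}\to\mu_0 e^{4u_0+4c_0}$ in every $L^q$, so $u_0$ is smooth by \cite{UVMRL}, and then tests the difference identity \eqref{difference} (with $w_1=u_n-c_n$, $w_2=u_0$, $\varphi=\chi^4 p$), where the hypothesis $\gamma_2/\gamma_3\geq6$ makes the quadratic terms coercive and forces $\int\chi^4(\Delta_{g_\infty}p)^2\,dv\to0$. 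Only after this strong convergence can one choose $r_0$ so small that the energy of $u_n-c_n$ on $B_{r_0}$ is below the threshold of \cite{UVMRL}. Without this step your bootstrap has no starting point, so the second part of the proposal does not close either.
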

\begin{pf} By \eqref{154200bis}, it is enough to establish the proposition with $c_n=\ov{u}_n^r$. For simplicity we omit the dependence on $n$ and the dependence of geometric quantities on $g_n$. Let $\chi \in C_0^\infty (B_r)$ be so that $0\leq \chi \leq 1$, $\chi=1$ in $B_{\frac{r}{2}}$ and $|\Delta \chi|+|\nabla \chi| =O(1)$. In view of  Remark \ref{rem0}, re-write \eqref{00828} with $\psi(s)=s$:
\begin{eqnarray*}
&& \int_{B_r}  \chi^4 [\mu e^{4u}-U] (u-c) \ dv= \int_{B_r} \chi^4 [(\frac{\gamma_2}{2}+6\gamma_3) (\Delta u)^2 +18 \gamma_3 \Delta u |\nabla u|^2
+12 \gamma_3 |\nabla u|^4 ] \, dv\\
&&+O\left(\int_{B_r} [\chi^4+\chi^2 |u-c|+\chi^3 |\nabla u|(1+|u-c|)][1+|\Delta u|+|\nabla u|^2] \, dv\right).
\end{eqnarray*}
By Young's inequality and \eqref{154200} we have that
\begin{eqnarray*}
O(\int_{B_r} [\chi^4+\chi^2 |u-c|+\chi^3 |\nabla u|(1+|u-c|)][1+|\Delta u|+|\nabla u|^2] \, dv) \leq \epsilon \int_{B_r} \chi^4 [(\Delta u)^2+|\nabla u|^4] \, dv +C_\epsilon
\end{eqnarray*}
for all $\epsilon>0$, with some $C_\epsilon>0$. Setting $\beta=\frac{\gamma_2}{\gamma_3}$, arguing as in \eqref{n244} when $\psi(s)=s$ we have that
\begin{eqnarray} \label{103400}
&&  \int \chi^4 \left[ (\beta+12)(\Delta u)^2  +36 \Delta u |\nabla u|^2 +24 |\nabla u|^4 \right] dv\\
&& \geq (\beta+12-\frac{27}{2(1-\delta)}) \int \chi^4 (\Delta u)^2 dv +24 \delta \int \chi^4  |\nabla u|^4 dv \geq \frac{2 \delta_0}{|\gamma_3|}\int \chi^4  [(\Delta u)^2+|\nabla u|^4]dv
\nonumber 
\end{eqnarray}
for some $\delta_0>0$, thanks to $\beta >\frac{3}{2}$ and for a suitable choice of $\delta \in (0,1)$. Since $\Delta [\chi^2(u-c)] = \chi^2 \Delta u +O(|\nabla \chi^2||\nabla u|+|u-c|)$ and $\nabla[\chi(u-c)]=\chi \nabla u+O(|u-c|)$, by Young's inequality we obtain 
$$\int_{B_r} [\Delta (\chi^2(u-c))]^2+ |\nabla (\chi(u-c))|^4] \ dv \leq (1+\epsilon) \int_{B_r} \chi^4[ (\Delta u)^2+|\nabla u|^4] \ dv +C_\epsilon$$
for all $\epsilon>0$ with some $C_\epsilon>0$, thanks to \eqref{154200}. Re-collecting all the above estimates we  proved that
\begin{eqnarray} \label{104900}
\int_{B_r} [\Delta (\chi^2(u-c))]^2+ |\nabla (\chi(u-c))|^4] \ dv  \leq C_\epsilon+\frac{(1+\epsilon)|\mu| }{\delta_0-\epsilon}\int_{B_r} \chi^4 e^{4u}|u-c| \, dv
\end{eqnarray}
for all $0<\epsilon<\delta_0$ and some $C_\epsilon>0$. To estimate the R.H.S. we use the inequality 
$$\chi^4 |s| e^{s} \leq \frac{2}{\l} e^s + e^{\l \chi^4 s^2}$$
with $s = 4 (u-c)$ and $\l = \frac{\pi^2}{ \| \Delta (\chi^2 (u-c)) \|_{L^2(B_r)}^2}$, to get by Jensen's inequality that 
\begin{eqnarray*}
\int_{B_r} \chi^4 e^{4u}|u-c| dv  
\leq \frac{  \int_{B_r} e^{4 u}dv }{2 \pi^2}  \int_{B_r} [\Delta (\chi^2(u-c))]^2 \ dv+   \frac{\fint_{B_r} e^{4u} \, dv }{4 }   \int_{B_r} e^{\frac{16 \pi^2 \chi^4 (u-c)^2}{\| \Delta (\chi^2 (u-c))\|_{L^2(B_r)}^2}} \ dv.
\end{eqnarray*}
Setting $\epsilon_0=\pi^2 \delta_0$, we can find $\epsilon>0$ small so that $\frac{(1+\epsilon)|\mu|}{ 2\pi^2 (\delta_0-\epsilon)} \int_{B_r} e^{4 u}dv \leq \frac{3}{4}$  and then \eqref{104900} produces
$$\int_{B_r} [\Delta (\chi^2(u-c))]^2+ |\nabla (\chi(u-c))|^4] \ dv \leq C+C \fint_{B_r} e^{4u} dv  \, \int_{B_r} e^{\frac{16 \pi^2 \chi^4(u-c)^2}{\| \Delta (\chi^2 (u-c))\|_{L^2(B_r)}^2}} \ dv$$
for some $C>0$. Thanks to \eqref{bis154200} and $16\pi^2 < 32 \pi^2$ we can apply Adams' inequality in \cite{Adams,Fon} to $\chi^2(u-c)$ and finally get  the validity of \eqref{H2est}.

\medskip  We are now in the case $u -c \rightharpoonup u_0$ in $W^{2,2}_{g_\infty}(B_{\frac{r}{2}})$ and $\frac{\gamma_2}{\gamma_3}\geq 6 $. By contradiction, assume that for all $0<r_0 \leq \frac{r}{4}$ there holds, up to a subsequence,
$$\|u-c\|_{C^{4,\alpha}(B_{r_0})} \to +\infty$$
for some $\alpha \in (0,1)$ and $c \to c_0$, where $c_0 \in [-\infty,+\infty)$ thanks to Jensen's inequality and \eqref{154200}. By Adams' inequality it is straightforward to show that 
\begin{equation} \label{new1449}
\mu e^{4u} \to \mu_0 e^{4u_0+4c_0} \qquad \hbox{in }L^q_{g_\infty} (B_{\frac{r}{2}}), \, q \geq 1.
\end{equation}
Since the limiting function $u_0 \in W^{2,2}_{g_\infty}(B_{\frac{r}{2}})$ solves $\mathcal{N}_{g_\infty} (u_0)=\mu_0 e^{4u_0+4c_0}-U_{\infty}$ in $B_{\frac{r}{2}}$ in view of \eqref{new1449}, by the regularity result in \cite{UVMRL} we have that $u_0 \in C^\infty(B_{\frac{r}{2}})$ and then $\mathcal{N}(u_0) \to \mu_0 e^{4u_0+4c_0}-U_{\infty}$  holds locally uniformly in $B_{\frac{r}{2}}$ in view of \eqref{bis154200}.  We can make use of \eqref{difference} with $w_1=u-c$, $w_2=u_0$ and $\varphi \in C_0^\infty(B_{\frac{r}{2}})$ thanks to Remark \ref{rem1}. Setting $p=u-c-u_0$ and $q=u-c+u_0$, \eqref{difference} re-writes as
\begin{eqnarray} \label{diff1}
&& 3 \gamma_3 \int (\Delta p+\langle \nabla q,\nabla p \rangle)  (\Delta \varphi+\langle \nabla q,\nabla \varphi \rangle) dv
+6 \gamma_3 \int \langle \nabla^2_{\hat g} p,\nabla^2_{\hat g} \varphi \rangle dv + 3\gamma_3\int |\nabla p|^2 \langle \nabla p,\nabla \varphi \rangle  dv\\
&&+(\frac{\gamma_2}{2}-3\gamma_3) \int \Delta p \Delta \varphi dv+(2\gamma_3-\frac{\gamma_2}{3}) \int [3\hbox{Ric}(\nabla p,\nabla \varphi)-R \langle \nabla p,\nabla \varphi \rangle]dv \nonumber \\
&&=\int [\mu e^{4u}-U-\mathcal{N}(u_0)] \varphi \, dv \nonumber 
\end{eqnarray}
for all $\varphi \in C_0^\infty (B_{\frac{r}{2}})$ in view of \eqref{12391}, where $\hat g=e^q g$. Take $\varphi=\chi^4 p$ and $\chi \in C_0^\infty(B_{\frac{r}{2}})$ in \eqref{diff1} to get
\begin{eqnarray} \label{09232}
&& \int \chi^4 \left[ 3 \gamma_3  (\Delta p+\langle \nabla q,\nabla p \rangle)^2 +6\gamma_3 |\nabla^2_{\hat g} p|^2 +3\gamma_3 |\nabla p|^4 +(\frac{\gamma_2}{2}-3 \gamma_3) (\Delta p)^2\right]dv\\
&&=O(\int_{B_{\frac{r}{2}}}\chi^4 |\mu e^{4u}-U-\mathcal{N}(u_0)| |p| dv) \nonumber \\
&&+O( \int_{B_{\frac{r}{2}}} |p||\nabla p|^3 dv+ \int_{B_{\frac{r}{2}}} (|p|+ |\nabla p|+  |p| |\nabla q|)(|\nabla p|+|\nabla p||\nabla q|+|\nabla^2 p|)dv ). \nonumber
\end{eqnarray}
Since $p \rightharpoonup 0$ in $W^{2,2}_{g_\infty}(B_{\frac{r}{2}})$, by \eqref{bis154200} we have  that 
\begin{equation} \label{new903}
\int \left[ |\nabla p|^4+|\nabla q|^4 +|\nabla^2 p|^2 \right]dv=O(1),\quad \int \left[|p|^4 +|\nabla p|^{\frac{8}{3}}\right] dv \to 0.
\end{equation}
Inserting \eqref{new1449} and \eqref{new903} into \eqref{09232} we deduce that
$$\int \chi^4 (\Delta_{g_\infty} p)^2 dv_{g_\infty} \to 0,$$
and by taking $\chi=1$ on $B_{\frac{r}{4}}$ we end up with $u -c \to u_0$ in $W^{2,2}_{g_\infty}(B_{\frac{r}{4}})$. Since $u_0 \in C^{\infty}(B_{\frac{r}{2}})$, for all $\delta>0$ we can find $0<r_0\leq \frac{r}{4}$ so that 
$$\int_{B_{r_0}} [(\Delta u)^2+|\nabla u|^4] \, dv \leq \delta:$$
this is the crucial assumption in \cite{UVMRL} to derive upper bounds in strong norms on $u$ which do not depend on $g$. Then $u-c$ is uniformly bounded in $C^{4,\alpha}(B_{r_0})$ for any $\alpha \in (0,1)$, which is a contradiction, and the proof is thereby complete.
\end{pf}

\medskip \noindent Hereafter we assume $\frac{\gamma_2}{\gamma_3}\geq 6$. Let $w_n$ be as in Theorem \ref{p:minimal-blow-up} and let us restrict our attention to the case $\|w_n-\ov{w}_n \|_{C^{4,\alpha}(M)} \to +\infty$ as $n \to +\infty$ for some $\alpha \in (0,1)$. Thanks to Theorem \ref{BMO} we have that $[w_n]_{BMO} \leq C$, which implies the validity of \eqref{154200}-\eqref{bis154200} for $w_n$ with $c_n=\ov{w}_n$, $\tilde U_n$ and $g_n \equiv g$. Up to a subsequence, assume that $e^{4w_n} \rightharpoonup \hat \mu$ as $n \to +\infty$ in the weak sense of distributions on $M$, where $\hat \mu$ is a probability measure on $M$. Consider the finite set 
$$S=\{ p \in M:\  |\mu_0| \hat \mu (B_r(p)) \geq  \epsilon_0 \ \forall \ 0<r \leq i_0 \},$$
where $\epsilon_0>0$ is given by Proposition \ref{bound}. For any compact set $K \subset M \setminus S$, by \eqref{H2est} we deduce 
\begin{equation} \label{H2estbis}
\sup_n \int_K [(\Delta w_n)^2+|\nabla w_n|^4] \ dv <+\infty.
\end{equation}
By \eqref{new0855} and \eqref{H2estbis} we have that $w_n-\ov{w}_n$ is uniformly bounded in $W^{2,2}(K)$ and then, up to a subsequence and a diagonal process, $w_n-\ov{w}_n \rightharpoonup w_0$ weakly in $W^{2,2}_{loc}(M \setminus S)$. For any $p \in M \setminus S$ by \eqref{C4est} we can find $r(p)>0$ small so that $\|w_n- \ov{w}_n\|_{C^{4,\alpha}(B_{r(p)}) }\leq C(p)$. By compactness $w_n-\ov{w}_n$ is uniformly bounded in $C^{4,\alpha}_{loc}(M\setminus S)$ and then, up to a further subsequence, $w_n -\ov{w}_n \to w_0$ in $C^4_{loc}(M \setminus S)$. In particular $S \not= \emptyset$, $\mu_0 \not=0$ and
$\max_M w_n \to +\infty$ as $n \to +\infty$.

\medskip \noindent Since $e^{4 \ov{w}_n}\leq \frac{1}{\hbox{vol} M}$ by Jensen's inequality, up to a subsequence assume that $\ov{w}_n \to c \in [-\infty,+\infty)$ as $n \to + \infty$. Since $e^{4 w_n} \to e^{4w_0+4c}$ locally uniformly in $M \setminus S$, we have that
$$ e^{4w_n} \rightharpoonup e^{4w_0+4c} dv+\sum_{i=1}^l \tilde \beta_i \delta_{p_i} \qquad 
 \hbox{ as } n \to +\infty $$
weakly in the sense of measures, where $S=\{p_1,\dots,p_l \}$ and $\tilde \beta_i \geq \frac{\epsilon_0}{|\mu_0|}$. The function $w_0$ is a SOLA of
\begin{equation} \label{205000}
\mathcal{N}(w_0)=\mu_0 e^{4w_0+4c}+\sum_{i=1}^l  \beta_i \delta_{p_i}-U \qquad \hbox{in }M
\end{equation}
for $\beta_i=\mu_0 \tilde \beta_i$.  

\medskip \noindent  We aim to compute the values of the $\beta_i$'s, and we will prove below a quantization result in a suitable general form. In particular, it will apply to the following scaling of $w_n$, $\tilde U_n$ and $g$: 
\begin{equation} \label{new1055}
u_n(y)=w_n[\hbox{exp}_p (r_n y) ]+ \log r_n,\quad U_n(y)=r_n^4 \tilde U_n [  \hbox{exp}_p (r_n y)], \quad g_n(y)=g [  \hbox{exp}_p (r_n y)]
\end{equation}
for $|y| \leq \frac{i_0}{r_n}$, where $p \in M$ and $r_n \to 0^+$. The function $u_n$  is a solution of \eqref{v_n} for $|y| \leq \frac{i_0}{r_n}$ which satisfies
$$\int_{B_1(0)} |u_n -\ov{u}_n^1|^4 dv_{g_n}=\frac{1}{r_n^4} \int_{B_{r_n}(p)} |w_n- \ov{w}_n^{r_n}|^4 dv \leq C' \fint_{B_{r_n}(p)} |w_n- \ov{w}_n^{r_n}|^4 dv\leq C$$
in view of $[w_n]_{BMO} \leq C$. Therefore $u_n$ satisfies \eqref{154200}-\eqref{bis154200} on any $B_r \subset B_1(0)$ with $c_n=\ov{u}_n^1$, $U_n \to 0$ in $C^1(\ov{B_1(0)})$ and $g_n \to \delta_{eucl}$ in $C^4(\ov{B_1(0)})$. The result we have is the following.
\begin{lem}\label{l:quant}
Let $u_n$ be a solution of \eqref{v_n} which satisfies \eqref{154200}-\eqref{bis154200} in $B_1(0)$. Suppose that 
\begin{equation}\label{pureconc1}
\mu_n e^{4u_n} dv_{g_n} \rightharpoonup  \beta \, \delta_0
\end{equation}
weakly in the sense of measures in $B_1(0)$ as $n \to +\infty$, for some $\beta \not= 0$. Then $\beta =  8 \pi^2 \gamma_2$. 
\end{lem}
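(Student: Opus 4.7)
The plan is to resolve the concentration at $0$ by a bubble rescaling, extract a limiting entire solution on $\R^4$ using the $\varepsilon$-regularity of Proposition~\ref{bound}, and compute its mass via the fundamental-solution asymptotics of Section~\ref{s:fund-sol} coupled to a Pohozaev-type identity.

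First I would set up the rescaling. Since $\mu_n e^{4u_n}\,dv_{g_n}\rightharpoonup\beta\,\delta_0$ with $\beta\neq 0$, necessarily $M_n:=\max_{\bar B_{1/2}(0)}u_n\to+\infty$; let $y_n\to 0$ attain this maximum and set $\lambda_n:=e^{-M_n}\to 0^+$. The rescaled functions
\[
v_n(z):=u_n(y_n+\lambda_n z)+\log\lambda_n,\qquad g_n^*(z):=g_n(y_n+\lambda_n z),
\]
satisfy $v_n(0)=0=\max v_n$ and solve $\mathcal{N}_{g_n^*}(v_n)+\lambda_n^4 U_n(y_n+\lambda_n\cdot)=\mu_n e^{4v_n}$, with $g_n^*\to\delta_{\mathrm{eucl}}$ and $\lambda_n^4 U_n(y_n+\lambda_n\cdot)\to 0$ in $C^4_{\mathrm{loc}}(\R^4)$. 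Since $v_n\le 0$, for every $z_0\in\R^4$ and $\rho$ small enough one has $|\mu_n|\int_{B_{2\rho}(z_0)}e^{4v_n}\,dv_{g_n^*}\le C\rho^4<\varepsilon_0$, so Proposition~\ref{bound} yields uniform $W^{2,2}$ bounds and then, via its $C^{4,\alpha}$ part together with the BMO control from Theorem~\ref{BMO}, convergence $v_n\to v_\infty$ in $C^4_{\mathrm{loc}}(\R^4)$ along a subsequence, where
\[
\mathcal{N}_{\mathrm{eucl}}(v_\infty)=\mu_0 e^{4v_\infty}\ \text{in }\R^4,\qquad v_\infty(0)=0=\max v_\infty,\qquad \mu_0\int_{\R^4}e^{4v_\infty}\,dz=\beta.
\]

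Next, I would extract the logarithmic decay of $v_\infty$ at infinity and use a Pohozaev identity to pin down the correct exponent. Under the Kelvin inversion $z\mapsto z/|z|^2$ coupled with the conformal weight shift $-4\log|z|$, the function $\tilde v(y):=v_\infty(y/|y|^2)-4\log|y|$ extends as a distributional solution of $\mathcal{N}(\tilde v)=\beta\,\delta_0+\text{smooth}$ near the origin, placing it in the framework of Remark~\ref{rem4}; Theorem~\ref{p:ex-singg} then delivers $\tilde v(y)=\alpha\log|y|+O(1)$ (with matching decay of derivatives), i.e.\ $v_\infty(z)=\alpha\log|z|+O(1)$ as $|z|\to\infty$, where $\alpha$ satisfies \eqref{eq:alpha-beta}. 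Testing the equation for $v_\infty$ against $\langle z,\nabla v_\infty\rangle+1$ on $B_R(0)$ and exploiting the scale-covariance of $\mathcal{N}_{\mathrm{eucl}}$, the interior contribution collapses to a boundary integral on $\partial B_R$; inserting the Kelvin expansion and letting $R\to\infty$, combined with the smoothness of $v_\infty$ at $0$ and the local-maximum normalization $\max v_\infty=v_\infty(0)=0$, singles out the root $\alpha=-2$ of \eqref{eq:alpha-beta}. Plugging back yields
\[
\beta=-4\pi^2\bigl[(\gamma_2+12\gamma_3)(-2)+18\gamma_3\cdot 4+6\gamma_3\cdot(-8)\bigr]=8\pi^2\gamma_2.
\]

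The hardest step is the last one: unlike the pure Paneitz case, where Lin's classification of entire finite-mass solutions of $\Delta^2 u=ce^{4u}$ forces $\alpha=-2$ immediately, no analogous classification is available for the quasi-linear operator $\mathcal{N}_{\mathrm{eucl}}$. Ruling out the spurious roots of the cubic \eqref{eq:alpha-beta} therefore requires a careful bookkeeping of the Pohozaev boundary contributions, the asymptotic expansion of $v_\infty$ at infinity, and the fact that $v_\infty$ is smooth across $0$ with a local maximum there; verifying that these constraints conspire to exclude every root other than $\alpha=-2$ is where the bulk of the technical effort resides.
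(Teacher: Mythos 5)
Your scheme breaks down at the point you yourself flag as delicate, but the fatal issue is earlier than the classification problem: it is the unproved identity $\mu_0\int_{\R^4}e^{4v_\infty}\,dz=\beta$. Weak convergence $\mu_n e^{4u_n}dv_{g_n}\rightharpoonup\beta\,\delta_0$ together with $C^4_{loc}$ convergence of the rescalings only yields $|\mu_0|\int_{\R^4}e^{4v_\infty}\,dz\le|\beta|$: the concentration at $0$ may be produced by several bubbles at mutually diverging scales and locations, and by residual ``neck'' regions, so the single bubble extracted at the maximum point need not carry the whole mass. Even granting your Kelvin/Pohozaev analysis of $v_\infty$, you would only conclude that \emph{that one bubble} carries $8\pi^2\gamma_2$, not that $\beta=8\pi^2\gamma_2$. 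This is precisely why the paper's proof of Lemma \ref{l:quant} never computes the mass of an individual entire bubble: it establishes the Harnack-type bound \eqref{1} with finitely many concentration points (Step 1), proves the quantization by applying the Pohozaev identity of Proposition \ref{p:poho} to $u_n$ itself on balls at the original scale, using the asymptotics \eqref{12292} of the \emph{singular limit} $u_0$ (Step 2), and then removes the multi-bubble obstruction by the cluster-grouping and boundary-compensation argument of Step 3 and the Conclusion. None of this machinery has a counterpart in your proposal, and without it the equality of the bubble mass with $\beta$ is a genuine gap, not a technicality.

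A second, independent gap lies in the Kelvin-inversion step. The asymptotics of Theorem \ref{p:ex-singg} (and Remark \ref{rem4}) are proved for \emph{fundamental solutions}, i.e.\ SOLAs in the sense of Section \ref{linear}; since uniqueness fails for $\gamma_2>6\gamma_3$, being a distributional solution with a Dirac datum is not enough, and you have not shown that your inverted function $\tilde v$ arises as a limit of smooth approximations with the prescribed boundary data (as $u_0$ does, via Remark \ref{rem12}, in the paper). Moreover, with the paper's convention $\tilde g=e^{2w}g$ the correct conformal weight under $z\mapsto z/|z|^2$ is $-2\log|y|$, not $-4\log|y|$, and the invariance of the quasi-linear operator $\mathcal{N}_{\mathrm{eucl}}$ under this substitution, which you use implicitly to transport the equation and to link the log-coefficient $\alpha$ of $\tilde v$ to \eqref{eq:alpha-beta}, is nowhere justified. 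By contrast, the paper needs no classification or decay theory for entire solutions at all: the rescaled limits in its Step 1 are used only to separate bubbling points and secure the minimal mass $\epsilon_0$ from Proposition \ref{bound}, while the algebra forcing $\alpha=-2$ comes from matching \eqref{eq:alpha-beta} with the Pohozaev boundary terms evaluated on the logarithmic profile of $u_0$ at the blow-up point.
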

\begin{pf} Arguing as we did for $w_n$, we can apply Proposition \ref{bound} to $u_n$ to get that $u_n-\ov{u}_n^1$ is uniformly bounded in $W^{2,2}_{loc}(B_1\setminus \{0\} )$ in view of \eqref{154200bis}. Up to a subsequence and a diagonal process, we have that $u_n-\ov{u}_n^1 \rightharpoonup u_0$ weakly in $W^{2,2}_{loc}(B_1(0) \setminus \{0\})$ and in turn 
\begin{equation} \label{new1046} 
u_n-\ov{u}_n^1 \to u_0 \quad \hbox{in }C^4_{loc}(B_1(0) \setminus \{0\}), \quad \ov{u}_n^1 \to -\infty ,
\end{equation}
as $n \to +\infty$ in view of \eqref{pureconc1}. According to Remark \ref{rem12} $u_0$ is a SOLA of $\mathcal{N}_{g_\infty} \, u_0+U_\infty=\beta \, \delta_0$ in $B_{\frac{1}{2}}(0)$, $u_0=\Phi$ and $\partial_\nu u_0 =\partial_\nu \Phi$ on $\partial B_{\frac{1}{2}}(0)$, where $\Phi$ is a smooth extension in $B_{\frac{1}{2}}(0)$ of $u_0 \Big|_{\partial B_{\frac{1}{2}}(0)}$. We continue the proof dividing it into the following  steps. 

\medskip

\noindent {\bf Step 1.}  Up to a subsequence, there exist $p_n ^1,\dots,p_n^J $, $J \in \N$, such that $p_n ^1,\dots,p_n^J \to 0$ as $n \to +\infty$ and 
\begin{equation} \label{1}
d_n(y)^4 e^{4u_n} \leq C_1 \qquad \hbox{in }B_1(0)
\end{equation}
where $d_n(y)=\min \{ d_{g_n}(y,p_n^1),\dots, d_{g_n}(y,p_n^J) \}$.

To prove \eqref{1}, we first take $p_n^1 \to 0$ as the maximum point of $u_n$ in $B_1(0)$. Let $z_n^1$ be the scaling of $u_n$ around $p_n^1$ with scale $\mu_n^1=\hbox{exp}[-u_n(p_n^1)] \to 0$ in view of $u_n(p_n^1) \to +\infty$. Since $z_n^1 \leq z_n^1(0)=0$, by Proposition \ref{bound} we deduce that 
\begin{equation} \label{4}
z_n^1 \to z^1\qquad \hbox{in }C^4_{loc}(\mathbb{R}^4).
\end{equation}
Given $r_n^1 >> \mu_n^1$  we have that the scaling $\tilde z_n^1$ of $u_n$ around $p_n^1$ with scale $r_n^1$ still blows up and by Proposition \ref{bound} $|\mu_n| \int_{B_1(0)} e^{4 \tilde z_n^1} dv_{\tilde g_n}\geq \epsilon_0$, where $\tilde g_n=g_n(r_n^1y+p_n^1)$, or equivalently $|\mu_n| \int_{B_{r_n^1}(p_n^1)} e^{4 u_n} dv_{g_n} \geq \epsilon_0$.\\
We now proceed as follows.
If \eqref{1} were not valid with $d_n(y)=d_{g_n}(y,p_n^1)$, by \eqref{new1046} we would find a sequence $p_n^2\to 0$ of maximum points for $d_{g_n}(y,p_n^1)e^{u_n}$ in $B_1(0)$ so that 
\begin{equation} \label{5}
d_{g_n}(p_n^1,p_n^2) e^{u_n(p_n^2)} \to +\infty.
\end{equation} 
Let $z_n^2$ be the scaling of $u_n$ around $p_n^2$ with scale $\mu_n^2=\hbox{exp}[-u_n(p_n^2)] \to 0$ in view of \eqref{5}. Thanks to \eqref{4}-\eqref{5} we have that
$$\frac{d_{g_n}(p_n^1,p_n^2)}{\mu_n^1} \to +\infty,\qquad \qquad \frac{d_{g_n}(p_n^1,p_n^2)}{\mu_n^2} \to +\infty. $$
By the maximality property of $p_n^2$, $z_n^2$ is bounded from above and then by Proposition \ref{bound}
$$z_n^2 \to z^2 \qquad \hbox{in }C^4_{loc}(\mathbb{R}^4).$$
Arguing as above, for $r_n^2 >> \mu_n^2$ we have that $|\mu_n| \int_{B_{r_n^2}(p_n^2)} e^{4 u_n} dv_{g_n} \geq \epsilon_0$. Iterating as long as \eqref{1} is not valid, we can find points $p_n^1,\dots,p_n^J \to 0$ so that 
\begin{equation} \label{8} \frac{\mu_n^i+\mu_n^j}{d_{g_n}(p_n^i,p_n^j)} \to 0 \qquad \forall \ i \not= j \end{equation}
and  $|\mu_n| \int_{B_{r_n^i}(p_n^i)} e^{4 u_n}dv_{g_n} \geq \epsilon_0$ for all $i=1,\dots,J$, for a choice $r_n^i >> \mu_n^i$. Now we define  radii $r_n^i$ by
$r_n^i=\frac{1}{2} \min\{ d_{g_n}(p_n^i,p_n^j): \ j\not=i \}$, in such a way that $B_{r_n^i}(p_n^i) \cap B_{r_n^j}(p_n^j)$ for all $i \not= j$ and $r_n^i>>\mu_n^i$ thanks to \eqref{8}. Since
$$|\mu_n| \int_{B_1(0)} e^{4 u_n} dv_{g_n} \geq J \epsilon_0,$$
by $|\mu_n| \int_{B_1(0)} e^{4 u_n} dv_{g_n} \to |\beta|$ we have that such an iterative procedure must stop after $J$ times, and then \eqref{1} does hold with $p_n^1,\dots,p_n^J$.

\medskip \noindent {\bf Step 2.}  Assume that $d_{g_n}(y,p_n)^4 e^{4u_n} \leq C_1$ does hold in $B_1(0)$ for some $p_n \to 0$. Then $\beta =  8 \pi^2 \gamma_2$. 

To show this, first notice that by Proposition \ref{bound} and $d_{g_n}(y,p_n)^4 e^{4u_n} \leq C_1$ in $B_1(p_n)$ there exists $\tilde{C}_1>0$ such that for all $s \in (0,1/4)$ one has 
\begin{equation} \label{H2est-s}
 \int_{B_{2s}(p_n) \setminus B_s(p_n)} [(\Delta_{g_n} u_n)^2+|\nabla u_n|_{g_n}^4] \ dv_{g_n} \leq \tilde{C}_1
\end{equation}
for all $n$.  Since by \eqref{H2est-s} the remainder volume integrals in the Pohozaev identity \eqref{eq:poho-1} converge to zero as $r \to 0$ uniformly in $n$, we can apply Proposition \ref{p:poho} in $B_r(p_n)$ and letting $n \to +\infty$ get that
 $$-\beta=
 \mathcal{B}_{g_0}(0,B_r(0),u_0)+o_r(1),$$
 in view of \eqref{bis154200} and \eqref{pureconc1}-\eqref{new1046}. By Remark \ref{rem4} $u_0$ satisfies \eqref{12292} at $0$, and a straightforward computation 
for the boundary integrals in \eqref{eq:poho-12} leads as $r \to 0^+$ to the identity 
$$  - [9 \gamma_3 \alpha^4 + (\gamma_2 + 12 \gamma_3) \alpha^2 + 24 \gamma_3 \alpha^3] 2 \pi ^2 = 
  - \beta =  4 \pi^2 [(\gamma_2+12 \gamma_3) \alpha + 18 \gamma_3 \alpha^2 + 6\gamma_3 \alpha^3]$$
in view of \eqref{eq:alpha-beta}, which has a unique solution in $\mathbb{R}\setminus \{0\}$ given by $\alpha = -2$. Hence we have shown that $\beta =  8 \pi^2 \gamma_2$, as claimed.

\medskip \noindent Since \eqref{1} does not allow the direct use of Step 2 when $J\geq 2$, the idea is to properly group the points $p_n^1,\dots,p_n^J$ in clusters and substitute the corresponding points by a representative in the cluster. Up to re-ordering, assume that $d_{g_n}(p_n^1,p_n^2) =\inf \{ d_{g_n}(p_n^i, p_n^j): \, i \not=j \}$ and $d_{g_n}(p_n^i,p_n^j) \leq C d_{g_n}(p_n^1,p_n^2)$ for all $i,j=1,\dots,I$, $i \not= j$, for some $C>0$, where $2\leq I\leq J$. Setting $s_n=\frac{Cd_{g_n}(p_n^1,p_n^2)}{2}$, as in the previous step by \eqref{1} the remainder volume integrals in \eqref{eq:poho-1}-\eqref{eq:poho-2} are well controlled on the disjoint balls $B_{s_n}(p_n^j)$, $j=1,\dots,I$, leading to
\begin{eqnarray}
\label{new845}
&& \mathcal{B}_{g_n}(p_n^j,B_{s_n}(p_n^j),u_n)=- \mu_n \int_{B_{s_n}(p_n^j)} e^{4 u_n} dv_{g_n}+\frac{\mu_n}{4} \oint_{\pa B_{s_n}(p_n^j)} e^{4 u_n} (x_{n,p_n^j})^i \nu_i d \s_{g_n}+o(1); \\
&& \mathcal{B}_{g_n}(p_n^j,B_{s_n}(p_n^j),a_n,u_n) = \frac{\mu_n}{4} \oint_{\pa B_{s_n}(p_n^j)} e^{4 u_n} a_n^i \nu_i \, d \s_{g_n}+o(1) \label{new846}
\end{eqnarray}
as $n\to +\infty$, for any infinitesimal vector field $(a_n^i)_i$ with constant components in a $g_n-$geodesic coordinate system $(x_{n,p_n^j}^i)_i$ centred at $p_n^j$. The key point is to replace $p_n^1,\dots,p_n^I$ by the representative $p_n^1$ in such a way that \eqref{new845}-\eqref{new846} continue to hold for $p_n^1$ with $r_n>>d_{g_n}(p_n^1,p_n^2)$, as it follows by Step 3 below.

\medskip \noindent {\bf Step 3.} Assume that 
$$ d_{g_n}(p_n^1,p_n^2) \leq d_{g_n}(p_n^i,p_n^j) \leq C d_{g_n}(p_n^1,p_n^2) \qquad \forall i,j=1,\dots,I,\, i \not= j$$
for some $C>1$ and \eqref{new845}-\eqref{new846} are valid in $B_{s_n}(p_n^j)$, $j=1,\dots,I$, for $s_n=\frac{Cd_{g_n}(p_n^1,p_n^2)}{2}$. Then \eqref{new845}-\eqref{new846} are valid in $B_{r_n}(p_n^1)$ for any $r_n>>d_{g_n}(p_n^1,p_n^2)$ provided  \eqref{1} does hold in $A_n := B_{r_n}(p_n^1) \setminus B_n$ with $d_n(y)=\min \{d_{g_n}(y,p_n^1),d_{g_n}(y,p_n^{I+1}),\dots,d_{g_n}(y,p_n^J)\}$, where $B_n :=  \displaystyle \bigcup_{j=1}^I B_{s_n}(p_n^j)$.

To see this, by \eqref{1} in $A_n$ with $d_n(y)=\min \{d_{g_n}(y,p_n^1),d_{g_n}(y,p_n^{I+1}),\dots,d_{g_n}(y,p_n^J)\}$ we deduce that { the remainder volume  integrals in \eqref{eq:poho-1}-\eqref{eq:poho-2} tend to zero in $A_n$: \begin{eqnarray}\label{part-1}
&&  \mathcal{B}_{g_n}(p_n^1,A_n,u_n) = - \mu_n \int_{A_n} e^{4 u_n} dv_{g_n} + 
  \frac{\mu_n}{4} \oint_{\pa A_n} e^{4 u_n} (x_{n,p_n^1})^i \nu_i d \s_{g_n}
  +o(1)\\
 &&   \mathcal{B}_{g_n}(p_n^1,A_n,a_n,u_n) =   \frac{\mu_n}{4} \oint_{\pa A_n} e^{4 u_n} a_n^i \nu_i d \s_{g_n}
  +o(1) \label{part-1-bis}
\end{eqnarray}
for any infinitesimal vector field $(a_n^i)_i$ which is constant in a $g_n-$geodesic coordinate system $(x_{n,p_n^1}^i)_i$ centred at $p_n^1$. Letting $a_{n,j}=\left(x_{n,p_n^1}(p_n^j)\right)^i$, we have that $a_{n,j} \to 0$ as $n \to + \infty$ and by the validity of \eqref{new845}-\eqref{new846} in $B_{s_n}(p_n^j)$, $j=1,\dots,I$, we can deduce that
\begin{eqnarray}\label{part-2}
&& \sum_{j=1}^J \left[ \mathcal{B}_{g_n}(p_n^j,B_{s_n}(p_n^j),u_n) + \mathcal{B}_{g_n}(p_n^j,B_{s_n}(p_n^j),a_{n,j}, u_n) \right] 
 = - \mu_n \int_{B_n} e^{4 u_n} dv_{g_n} \\
&& +\frac{\mu_n}{4} \sum_{j=1}^J    \oint_{\pa B_{s_n}(p_n^j)} e^{4 u_n}[a_{n,j}^i+ (x_{n,p_{n,j}})^i] \nu_i d \s_{g_n}+ o(1), \nonumber
\end{eqnarray}
and
\begin{eqnarray}\label{part-2-bis}
&& \sum_{j=1}^J  \mathcal{B}_{g_n}(p_n^j,B_{s_n}(p_n^j),a_n, u_n)  
 = \frac{\mu_n}{4} \sum_{j=1}^J    \oint_{\pa B_{s_n}(p_n^j)} e^{4 u_n}a_n^i  \nu_i d \s_{g_n}+ o(1). 
\end{eqnarray}
It is possible to orient the geodesic coordinates both at $p_n^1$ and at $p_j^n$ so that the coordinates of $y \in \pa B_n$ in these systems satisfy (with exact equality 
for the Euclidean metric)
$$
  (x_{n,p_n^1})^i(y) = a_{n,j}^i + (x_{n,p^j_n})^i(y) + o(s_n). 
$$
By Proposition \ref{bound} and a scaling argument, there exists $\tilde{C} > 0$ 
such that 
\begin{equation*}\label{eq:est-bd-An}
  |\n u_n| \leq \frac{\tilde{C}}{s_n}; \quad |\n^2 u_n| \leq \frac{\tilde{C}}{s_n^2}; \quad 
  |\n^3 u_n| \leq \frac{\tilde{C}}{s_n^3}  \qquad \quad \hbox{ on } \partial B_n.  
\end{equation*}
The last two formulas then imply that there is approximate compensation for the 
boundary integrals on $\pa B_n$ and on the inner boundaries of $\pa A_n$. More precisely, one has 
$$
  \mathcal{B}_{g_n}(p_n^1,A_n,u_n) + \sum_{j=1}^J \left[ \mathcal{B}_{g_n}(p_n^j,B_{s_n}(p_n^j),u_n) + \mathcal{B}_{g_n}(p_n^j,B_{s_n}(p_n^j),a_{n,j}, u_n) \right] 
  = \mathcal{B}_{g_n}(p_n^1,B_{r_n}(p_n^1),u_n) 
+ o(1), 
$$
and
$$ \oint_{\pa A_n} e^{4 u_n} (x_{n,p_n^1})^i \nu_i d \s_{g_n}+\sum_{j=1}^J    \oint_{\pa B_{s_n}(p_n^j)} e^{4 u_n}[ a_{n,j}^i+(x_{n,p_{n,j}})^i] \nu_i d \s_{g_n}= \oint_{\pa B_{r_n}(p_n^1)} e^{4 u_n} (x_{n,p_n^1})^i \nu_i d \s_{g_n}+o(1) .
$$
The latter formulas, together with \eqref{part-1} and \eqref{part-2}, imply the validity of \eqref{new845} for $r_n$ and $p_n^1$. Summing up \eqref{part-1-bis} and \eqref{part-2-bis}, we also deduce the validity of \eqref{new846} for $r_n$ and $p_n^1$.}

\medskip \noindent {\bf Conclusion.} We arrange the remaining points $p_n^{I+1},\dots,p_n^J$, if any, in clusters in a similar way and substitute them by a representative. We continue to arrange the representative points in clusters and to perform a substitution thanks to Step 3. At the end, we  find a unique cluster which we collapse again to a single point $p_n$, obtaining the validity of \eqref{new845} for $p_n$ and $r>0$ with $o_n(1)+o_r(1)$ as in Step 2. Letting $n \to +\infty$ and then $r \to 0^+$ we get that
 $$-\beta=- [9 \gamma_3 \alpha^4 + (\gamma_2 + 12 \gamma_3) \alpha^2 + 24 \gamma_3 \alpha^3] 2 \pi ^2 .$$
Comparing with \eqref{eq:alpha-beta} we deduce that $\alpha=-2$  and $\beta =  8 \pi^2 \gamma_2$, completing the proof of Lemma \ref{l:quant}. 
\end{pf}    

\medskip

\begin{rem}\label{r:gur}
By studying the  point-wise limiting behaviour of the rescaled blowing-up solutions, it should be 
possible to obtain the spherical profiles classified in \cite{GurClass}. Even without this information, 
in Lemma \ref{l:quant}
we proved that such profiles would exhaust the volume accumulating near each blow-up point. 
\end{rem}

\medskip \noindent We next have the following result.
\begin{lem} \label{c}
In the above notation, there holds $c=-\infty$. 
\end{lem}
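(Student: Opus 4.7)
The plan is to argue by contradiction: assuming $c > -\infty$, I will show that the limiting function $w_0$ is forced to have $e^{4 w_0}$ non-integrable near each blow-up point, contradicting Fatou's lemma applied to the normalized sequence $e^{4(w_n - \overline{w}_n)}$.

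First, since $\overline{w}_n \to c \in \R$, the normalization $\int_M e^{4 w_n}\,dv = 1$ gives $\int_M e^{4 (w_n - \overline{w}_n)}\,dv = e^{-4 \overline{w}_n} \to e^{-4c}$. Combined with the $C^4_{loc}(M \setminus S)$-convergence $w_n - \overline{w}_n \to w_0$ already established before the lemma, Fatou's lemma yields
$$
\int_K e^{4 w_0}\,dv \leq e^{-4c} \qquad \text{for every compact } K \subset M \setminus S.
$$

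Next I would use \eqref{205000}, which says that $w_0$ is a SOLA of a problem whose R.H.S.\ is the Radon measure $\sum_{i=1}^l \beta_i \delta_{p_i} + \mu_0 e^{4 w_0 + 4c} - U$. The quantization in Lemma \ref{l:quant}, applied to the rescalings \eqref{new1055} of $w_n$, forces $\beta_i = \mu_0 \tilde\beta_i = 8\pi^2 \gamma_2$ for every $i$. On any small ball $B_r(p_i)$ disjoint from the remaining singular points, the perturbation $\mu_0 e^{4 w_0 + 4c} - U$ is smooth on $B_r(p_i) \setminus \{p_i\}$ (by $C^4_{loc}$-convergence) and, in particular, bounded on every annulus $\{r/4 \leq d(\cdot,p_i) \leq 8r\}$ inside $B_r(p_i)$. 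Since the proof of Theorem \ref{p:ex-singg}, carried out through the annular estimates recorded in Remark \ref{rem4}, only uses $L^q$-integrability of the non-Dirac part of the R.H.S.\ on such annuli for $1 \leq q < 2$, it applies here verbatim. Thus $w_0$ satisfies \eqref{12292} at $p_i$ with $\alpha_i$ determined by $\beta_i$ via \eqref{eq:alpha-beta}; solving the resulting cubic exactly as at the end of Step~2 of Lemma \ref{l:quant} produces $\alpha_i = -2$. Consequently, in geodesic coordinates centred at $p_i$ one has $e^{4 w_0(x)} \geq c_0 |x|^{-8}$ for some $c_0 > 0$ and $x$ close to $p_i$, so that $e^{4 w_0}$ is not integrable on any punctured neighbourhood of $p_i$ in $\R^4$. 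Letting $K$ exhaust $B_r(p_i) \setminus \{p_i\}$ in the displayed inequality above yields $+\infty \leq e^{-4c}$, a contradiction; hence $c = -\infty$.

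The main obstacle is the middle step: verifying that Theorem \ref{p:ex-singg} applies to $w_0$ despite the additional term $\mu_0 e^{4 w_0 + 4c}$ in the R.H.S.\ of its equation. Once one checks that the annular argument in Section \ref{s:fund-sol} only relies on integrability of the non-singular part of the R.H.S.\ on annuli around each $p_i$ -- where the extra term is smooth and bounded -- the sharp coefficient $\alpha_i = -2$ is forced by Lemma \ref{l:quant}, and the non-integrability $e^{4w_0} \sim |x|^{-8}$ in dimension four delivers the contradiction immediately.
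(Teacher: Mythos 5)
Your overall strategy (contradict the finiteness of $\int e^{4w_0}\,dv$ forced by Fatou when $c>-\infty$ by showing a $|x|^{-8}$ singularity of $e^{4w_0}$ at a blow-up point) is reasonable, but the pivotal middle step has a genuine gap. You claim that the proof of Theorem \ref{p:ex-singg} ``applies verbatim'' to $w_0$, whose equation \eqref{205000} carries the extra term $\mu_0 e^{4w_0+4c}$, on the grounds that this term is smooth and bounded on each annulus around $p_i$. That is not what the proof of Theorem \ref{p:ex-singg} needs: the annular estimates \eqref{18148}, \eqref{12002}, \eqref{18078} require scale-uniform control of the non-Dirac part of the right-hand side in $L^{s}$ with $s=\tfrac{2q}{q+2}>1$ (and $s=\tfrac{4q}{q+4}$) on annuli $A_r$ shrinking to $p_i$, multiplied by explicit powers of $r$, and these quantities must tend to zero as $r\to 0$. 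For the term $\mu_0 e^{4w_0+4c}$ the only information available under the contradiction hypothesis is $\int_{B_r(p_i)} e^{4w_0}\,dv\le e^{-4c}<\infty$ (an $L^1$ bound, which cannot control any $L^s$ norm with $s>1$), while pointwise or $L^s$ bounds on $A_r$ that are uniform as $r\to 0$ would require precisely the asymptotics of $w_0$ at $p_i$ that you are trying to prove; smoothness away from $p_i$ only gives bounds on each fixed annulus, with constants that blow up as $r\to 0$. So the argument is circular at exactly the point you flagged as ``the main obstacle,'' and it is not repaired by the observation that the extra term is smooth off the singular set. (A secondary omission: invoking Lemma \ref{l:quant} requires verifying the pure-concentration hypothesis \eqref{pureconc1} for the rescaled sequence, which under $c>-\infty$ forces a careful choice of the rescaling radii --- in the paper, radii $r_n$ with $\int_{B_{r_n^2}} e^{4w_n}dv=\tilde\beta$ and vanishing mass on $B_{r_n}\setminus B_{r_n^2}$ --- rather than an arbitrary $r_n\to 0$ in \eqref{new1055}.)

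The paper avoids attributing the logarithmic profile to $w_0$ altogether. After obtaining $\beta=8\pi^2\gamma_2$ (hence $\alpha=-2$) from Lemma \ref{l:quant} via the rescaling just described, it introduces a fundamental solution $w_s$ on $B_r(p_i)$ of $\mathcal{N}(w_s)+U=\beta\delta_{p_i}$ with the same Cauchy data as $w_0$ on $\partial B_r(p_i)$ (Remarks \ref{rem2}, \ref{rem12}); Theorem \ref{p:ex-singg} legitimately applies to $w_s$, whose right-hand side has no exponential term. The stability estimate \eqref{11411}, applied to the rescaled pair $w_{0,r},w_{s,r}$ where $\eta_r\to 0$ and $\|\mu_0 e^{4w_{0,r}+4c}\|_1$ is small, is then upgraded --- using the sharp constants \eqref{09519}--\eqref{10100} --- to the exponential integrability \eqref{new1051} of $|w_0-w_s|$ at every power $q$, and the contradiction comes from H\"older's inequality: $e^{\gamma w_s}\in L^1(B_r(p_i))$ would follow for $2<\gamma<4$ from $\int e^{4w_0}dv<\infty$ and \eqref{new1051}, contradicting $w_s=-2(1+o(1))\log|x|$. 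If you want to salvage your route, you would need to supply exactly this kind of comparison (or some other mechanism giving quantitative control of $e^{4w_0}$ on shrinking annuli) before the asymptotics of $w_0$ can be asserted.
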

\begin{pf} 
By contradiction assume $c>-\infty$, and fix some $p=p_i \in S$, $\tilde \beta=\tilde \beta_{p_i}$. Given $0< R \leq \min \{i_0,\frac{1}{2} \hbox{dist}(p_i,p_j):\ j \not= i\}$, we have that
$$e^{4w_n} \rightharpoonup e^{4w_0+4c} dv+\tilde \beta \, \delta_{p}$$
weakly in the sense of measures on the ball $B_R=B_R(p)$ as $n \to +\infty$. Since
$$\int_{B_r} e^{4w_n}dv \to \int_{B_r} e^{4w_0+4c}dv+\tilde \beta>\tilde \beta$$
as $n \to +\infty$ for all $0<r\leq R$, we can find a sequence $r_n \to 0$ so that
\begin{equation} \label{103100}
\int_{B_{r_n^2}} e^{4w_n}dv =\tilde \beta.
\end{equation}
Since $\int_{B_r} e^{4w_0+4c} dv \to 0$ as $r \to 0$ and
$$0 \leq \int_{B_{r_n} \setminus B_{r_n^2}} e^{4w_n}dv \leq \int_{B_r } e^{4w_n}dv -\tilde \beta
\to \int_{B_r} e^{4w_0+4c} dv$$
for all $r>0$, notice that
\begin{equation} \label{103200}
\int_{B_{r_n} \setminus B_{r_n^2}} e^{4w_n}dv \to 0
\end{equation}
as $n \to +\infty$. We consider now the scaling $u_n$ of $w_n$ as given by \eqref{new1055}, which satisfies, as already observed there, the assumptions  \eqref{154200}-\eqref{bis154200} in $B_1(0)$ with $c_n=\ov{u}_n^1$, $U_\infty=0$ and $g_\infty= \delta_{eucl}$. By \eqref{103100}-\eqref{103200} we have 
$$\int_{B_1} e^{4u_n} dv_{g_n}=\int_{B_{r_n}} e^{4w_n} dv \to \tilde \beta,$$
and
\begin{eqnarray*}
 \int_{B_1} e^{4u_n} \phi \, dv_{g_n}&=&
\phi(0) \int_{B_{r_n}} e^{4u_n} dv_{g_n}+\int_{B_{r_n}} e^{4u_n} [\phi-\phi(0)] dv_{g_n}+ \int_{B_1 \setminus B_{r_n}} e^{4u_n} \phi \, dv_{g_n}\\
&=&
\phi(0) \int_{B_{r_n^2}} e^{4w_n} dv+o(\int_{B_{r_n^2}} e^{4w_n} dv) +O(\int_{B_{r_n} \setminus B_{r_n^2}} e^{4w_n}  dv) \to \tilde \beta \phi(0)
\end{eqnarray*}
for all $\phi \in C(B_1)$ as $n \to +\infty$. Hence we deduce that
$$e^{4u_n} dv_{g_n} \rightharpoonup \tilde \beta \delta_0$$
weakly in the sense of measures on $B_1$ as $n \to +\infty$.  We now apply Lemma \ref{l:quant} to deduce that $\beta=\mu_0 \tilde \beta=8\pi^2 \gamma_2$, or equivalently $\alpha = - 2$ in view of \eqref {eq:alpha-beta}. 

\medskip \noindent Let $w_0=\displaystyle \lim_{n \to +\infty} w_n-c$ be a SOLA of \eqref{205000}. Given $0<r\leq i_0$, thanks to Remark \ref{rem2} let $w_s$ be a fundamental solution in $B_r(p)$ corresponding to $\mu_s=\beta \delta_{p}$ and the boundary values as $w_0$, namely $w_s$ solves $\mathcal{N}(w_s)+U=\beta \delta_p$ in $B_r(p)$, $w_s=w_0$ and $\partial_\nu w_s=\partial_\nu w_0$ on $\partial B_r(p)$. In order to compare $w_0$ and $w_s$, consider the following scaling of $w_0$, $w_s$ and $g$: 
$$w_{0,r}(y)=w_0[\hbox{exp}_p (r y) ]+ \log r,\quad w_{s,r}(y)=w_s[\hbox{exp}_p (r y) ]+ \log r, \quad g_r(y)=g [  \hbox{exp}_p (r y)]$$
for $|y| \leq 1$. Letting $U_r$ the $U-$curvature and $\mathcal{N}_r$ be the operator associated to $g_r$, we have that
$$\mathcal{N}_r(w_{0,r})+U_r=\mu_0 e^{4w_{0,r}+4c}+\beta \delta_p \hbox{ and }\mathcal{N}_r(w_{s,r})+U_r=\beta \delta_p \quad \hbox{in }B_1(0)$$
with $w_{0,r}=w_{s,r}$ and $\partial_\nu w_{0,r}=\partial_\nu w_{s,r}$ on $\partial B_1(0)$. According to Remark \ref{rem12}  we have the validity of \eqref{11411} on $w_{0,r}-w_{s,r}$, with constants which are uniform in $r$ in view of $g_r \to \delta_{eucl}$ in $C^4(\ov{B_1(0)})$ as $r \to 0^+$. The constant $\eta_{r}$ corresponding to $g_r$ through \eqref{eta} satisfies $\eta_r \to 0$ as $r \to 0^+$, and then \eqref{11411} simply reduces to
$$\|w_{0,r}-w_{s,r}\|_{W^{1,2,2)}} \leq C_0 (\|\mu_0 e^{4w_{0,r}+4c}\|_1^{\frac{1}{12}}+\eta_r^{\frac{4}{3}})\qquad \hbox{(w.r.t. $g_r$)}$$
for some $C_0>0$ in view of \eqref{19139} and
$$\int_{B_1(0)} e^{4w_{0,r}}dv_{g_r}=\int_{B_r(p)} e^{4w_0}dv \leq C ,\quad \int_{B_1(0)} |U_r| \, dv_{g_r}=\int_{B_r(p)} |U| \, dv \leq C.$$
In particular, there holds
\begin{eqnarray} \label{14534} 
\epsilon^{\frac{1}{4}} (\int_{B_1(0)} |\nabla (w_{0,r}-w_{s,r})|^{4(1-\epsilon)} dv_{g_r})^{\frac{1}{4(1-\epsilon)}} \leq C_0 (\|\mu_0 e^{4w_{0,r}+4c}\|_1^{\frac{1}{12}}+\eta_r^{\frac{4}{3}})
\end{eqnarray}
for some $C_0>0$ and for all $0<\epsilon \leq \epsilon_0$. 

\medskip \noindent In order to derive exponential estimates from \eqref{14534}, let us recall the optimal Euclidean inequality
\begin{equation} \label{09519}
(\int_{\mathbb{R}^4} |U|^k dx)^{\frac{1}{k}} \leq C(k) (\int_{\mathbb{R}^4} |\nabla U|^{\frac{4k}{4+k}} dx)^{\frac{4+k}{4k}} \qquad  U \in C_0^\infty(\mathbb{R}^4)
\end{equation}
for all $k \geq 1$, where 
$$C(k)=\pi^{-\frac{1}{2}}4^{-\frac{4+k}{4k}} (\frac{3k-4}{16})^{\frac{3k-4}{4k}}\left[ \frac{\Gamma(3) \Gamma(4)}{\Gamma(\frac{4+k}{k}) \Gamma(\frac{15k -20 }{4k})}\right]^{\frac{1}{4}}$$
see \cite{Aub,Tal}. One has the following behaviour 
\begin{equation} \label{10100}
\frac{C(k)}{k^{\frac{3}{4}}}  \to  C_1= \frac{3}{8} \pi^{-\frac{1}{2}}  \Gamma^{-\frac{1}{4}}(\frac{15}{4}) \quad \hbox{ as } k \to + \infty.
\end{equation}
Since $w_{0,r}-w_{s,r}  \in W^{1,2}_0(B_1(0))$, we can extend it as zero outside $B_1(0)$ into a function $U \in D^{1,4}(\mathbb{R}^4)$. Since by density \eqref{09519} does hold for $U$, by \eqref{14534} we have that
\begin{eqnarray*}
\int_B e^{q |w_{0,r}-w_{s,r}|} dv_{g_r} & \leq & 2 \sum_{k=0}^\infty \frac{q^k}{k!} \int_{\mathbb{R}^4}  |U|^k dx \leq 2 \sum_{k=0}^\infty \frac{q^k C(k)^k}{k!} (\int_{\mathbb{R}^4} |\nabla U|^{4(1-\frac{4}{4+k})} dx)^{\frac{4+k}{4}} \\
&\leq & 4 \sum_{k=0}^\infty \frac{2^{\frac{k}{4}} q^k C(k)^k}{k!} \left( \int_B |\nabla(w_{0,r}-w_{s,r})|^{4(1-\frac{4}{4+k})} dv_{g_r} \right)^{\frac{4+k}{4}} \\
& \leq & 4 \sum_{k=0}^\infty \frac{ q^k C(k)^k}{k!} (\frac{4+k}{2})^{\frac{k}{4}} C_0^k (\|\mu_0 e^{4w_{0,r}+4c}\|_1^{\frac{1}{12}}+\eta_r^{\frac{4}{3}})^k
\end{eqnarray*}
in view of $\frac{dx}{2}\leq dv_{g_r}\leq 2 dx$ for $r>0$ small. Thanks to \eqref{10100} we have that
$$\frac{C(k)^k}{k!} (\frac{4+k}{2})^{\frac{k}{4}} \sim \frac{ C_1^k k^k }{2^{\frac{k}{4}} k!}\sim \frac{ e^k C_1^k}{\sqrt k 2^{\frac{k}{4}} }$$
in view of Stirling's formula. Then $e^{|w_{0,r}-w_{s,r}|} \in L^q(B_1(0))$ w.r.t. $g_r$ for all $q<q_r$, where
$$q_r=\frac{2^{\frac{1}{4}}}{eC_0C_1 (\|\mu_0 e^{4w_{0,r}+4c}\|_1^{\frac{1}{12}}+\eta_r^{\frac{4}{3}})}. $$
Since $q_r \to 0$ as $r \to 0^+$, we deduce that
\begin{equation} \label{new1051}
r^{-4} \int_{B_r(p)} e^{q |w_0-w_s|} dv  =\int_B e^{q |w_{0,r}-w_{s,r}|} dv_{g_r} <+\infty
\end{equation}
for all $q\geq 1$ provided $r>0$ is sufficiently small. Since $w_s$ satisfies \eqref{12292} in $B_r(p)$ with $\alpha=-2$ in view of Remark \ref{rem4}, we have that $w_s=-2(1+o(1)) \log |x|$ as $x \to 0$ in geodesic coordinates near $p$ and then $\int_{B_r(p)} e^{\gamma w_s} dv=+\infty$ for $\gamma>2$. This is in contradiction for $\gamma<4$ to 
$$\int_{B_r(p)} e^{\gamma w_s} dv =\int_{B_r(p)} e^{\gamma (w_s-w_0)} e^{\gamma  w_0} dv \leq
(\int_{B_r(p)} e^{\frac{4\gamma}{4-\gamma} (w_s-w_0)} dv)^{\frac{4-\gamma}{4}} (\int_{B_r(p)} e^{4  w_0},  dv)^{\frac{\gamma}{4}}<+\infty,$$
in view of $\int e^{4w_0} dv <+\infty$ and \eqref{new1051}. This concludes the proof that $c=-\infty$.
\end{pf}

\medskip \noindent Once we  established that $c=-\infty$, we have that 
$$ \mu_n e^{4w_n} \rightharpoonup  \sum_{i=1}^l \beta_i \delta_{p_i} \qquad 
 \hbox{ as } n \to +\infty $$
weakly in the sense of measures. We apply Lemma \ref{l:quant} near each $p_i$, ending up with $\beta_i=8\pi^2 \gamma_2$ for all $i=1,\dots,l$. The proof of Theorem \ref{p:minimal-blow-up} is now complete.

\section{Moser-Trudinger inequalities and existence results}\label{s:mt-ex}

In this section we show first a sharp Moser-Trudinger inequality of independent interest. 
We also derive an improved version of Adams' inequality involving also the functional 
$III$, a crucial ingredient   for 
the existence of critical metrics in Theorem \ref{t:ex} via a variational and 
topological argument. 

\subsection{Sharp and improved Moser-Trudinger inequalities}
\setcounter{equation}{0}

In \cite{ChangYangAnnals} (see also \cite{Adams}), the following inequality was proved 
\begin{equation}\label{eq:ineq-CY}
\log \int e^{4  w}  dv \leq \frac{1}{8 \pi^2}
       \int (\Delta u)^2 dv + 4 \ov{w} + C_g
      \qquad \quad \hbox{ for all } w \in W^{2,2}(M).
\end{equation}
If the Paneitz operator is positive-definite (see \eqref{eq:Paneitz}), the integral of $(\Delta u)^2$ in the R.H.S. of the above formula can be replaced by the quadratic form induced by $P$. We have the following sharp inequality despite of the sign of the Paneitz 
operator, see also  \cite{DJLW, NolTar} for related results. 

\begin{thm}\label{t:sharp-ineq}
Suppose  $\int U dv \leq 8 \pi^2 \g_2$. Then, if $F_\g = \gamma_1 I + \gamma_2 II + \gamma_3 III$ 
with $\g_2, \g_3  > 0$ and $\tfrac{\g_2}{\g_3} > \tfrac{3}{2}$, then  
for all functions $w \in W^{2,2}(M)$ one has the lower bound 
$$
 F_\g(w) \geq - C
$$
for some constant $C$.
\end{thm}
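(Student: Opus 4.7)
The plan is to use the Adams--Fontana inequality to compensate the logarithmic term in $F_\g$ against the $\g_2 \int (\Delta w)^2$ contribution in the quadratic form, and then to absorb the lower-order remainders using the positivity provided by $\g_3 > 0$ together with the square-completion available under $\g_2/\g_3 > 3/2$. I first normalize $\ov w = 0$: the functional $F_\g$ is invariant under $w \mapsto w+c$, because in $I$ and $II$ the shift of the linear integrands exactly cancels the shift of $\log \fint e^{4w}$, while in $III$ the extra piece $c \int \Delta R \, dv$ vanishes by the divergence theorem.

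Next, writing $\alpha = \int U\, dv = \g_1 \int |W|^2 dv + \g_2 \int Q\, dv$, integration by parts on $P$ and the expansion of $(\Delta w + |\nabla w|^2)^2$ give
\[
F_\g(w) = \g_2 \int (\Delta w)^2\, dv + 12 \g_3 \int (\Delta w + |\nabla w|^2)^2\, dv + \mathcal{Q}(w) + \mathcal{L}(w) - \alpha \log \fint e^{4w}\, dv,
\]
where $\mathcal{L}(w) = 4 \int U w\, dv$ is linear and $\mathcal{Q}(w) = O\big(\int |\nabla w|^2\, dv\big)$ gathers the Ricci and scalar-curvature contributions produced by $P$ and by $III$. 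The Adams--Fontana inequality, valid for $\ov w = 0$, yields $\log \fint e^{4w}\, dv \le \frac{1}{8\pi^2} \int (\Delta w)^2\, dv + C$; combined with the hypothesis $\alpha \leq 8\pi^2 \g_2$ (the easy case $\alpha \leq 0$ being handled by Jensen's inequality $\log \fint e^{4w}\, dv \geq 0$), this produces the fundamental estimate
\[
\g_2 \int (\Delta w)^2\, dv - \alpha \log \fint e^{4w}\, dv \geq -C.
\]

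It remains to bound from below the quantity $12 \g_3 \int(\Delta w + |\nabla w|^2)^2\, dv + \mathcal{Q}(w) + \mathcal{L}(w)$. By Poincar\'e (available since $\ov w = 0$) and the Sobolev embedding $W^{2,2}(M) \hookrightarrow W^{1,4}(M)$, valid in dimension four, one obtains $|\mathcal{L}(w)| + |\mathcal{Q}(w)| \leq \epsilon \int (\Delta w)^2\, dv + \epsilon \int |\nabla w|^4\, dv + C_\epsilon$ for every $\epsilon > 0$. A uniform lower bound on $F_\g$ will therefore follow once the principal positive form $\g_2 \int(\Delta w)^2 + 12 \g_3 \int(\Delta w + |\nabla w|^2)^2$ is shown to dominate $\delta \big(\int(\Delta w)^2 + \int |\nabla w|^4\big)\, dv$ for some $\delta > 0$; this is precisely the content of the square-completion argument exploited in the proof of Theorem \ref{thm3.2}, and it requires precisely the structural hypothesis $\g_2/\g_3 > 3/2$.

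The main obstacle is the critical case $\int U\, dv = 8 \pi^2 \g_2$, in which the Adams estimate is saturated and leaves no slack in $\int(\Delta w)^2$ for the absorption step. I would handle this by splitting the principal quadratic form into two portions---one reserved for the sharp Adams compensation and the other retained to produce a strictly positive form in the variables $\big(\int(\Delta w)^2, \int |\nabla w|^4\big)$ dominating the $\epsilon$-remainders---and the strict inequality $\g_2/\g_3 > 3/2$ is exactly the condition under which the discriminant of the retained form is negative, thereby guaranteeing a genuinely positive coefficient of $\int |\nabla w|^4$ even after Adams has consumed all of $\g_2 \int(\Delta w)^2$.
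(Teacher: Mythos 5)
Your argument is fine in the strictly subcritical regime $\int U\,dv<8\pi^2\g_2$ (there it is essentially the Chang--Yang argument), but the critical case $\int U\,dv=8\pi^2\g_2$ --- which is part of the statement, and which you correctly identify as the main obstacle --- is not repaired by the fix you propose. Once Adams' inequality must compensate the full coefficient $8\pi^2\g_2$ of the logarithm, it consumes the entire term $\g_2\int(\Delta w)^2\,dv$, and what is retained from the principal part is exactly $12\g_3\int(\Delta w+|\nabla w|^2)^2\,dv$: pointwise in the variables $(X,Y)=(\Delta w,|\nabla w|^2)$ this is $12\g_3(X+Y)^2$, a \emph{degenerate} form, whatever the value of $\g_2/\g_3$. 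There is therefore no ``genuinely positive coefficient of $\int|\nabla w|^4$'' left, and the degeneracy is not a technicality: along the standard bubbles (the test functions of Lemma \ref{l:pff}) one has $\int(\Delta w+|\nabla w|^2)^2\,dv=o(\log\l)$ while $\int|\nabla w|^4\,dv\sim c\log\l$ and the linear term $4\int Uw\,dv$ can itself be of order $\log\l$, so the retained form cannot absorb the remainders $\mathcal{Q}(w)+\mathcal{L}(w)$. Your discriminant computation also attaches the threshold $\g_2/\g_3>3/2$ to the wrong quadratic form: positive definiteness of the functional's principal form $(\g_2+12\g_3)X^2+24\g_3XY+12\g_3Y^2$ only requires $\g_2\g_3>0$, whereas the $3/2$ condition in this paper governs the form $(\tfrac{\g_2}{2}+6\g_3)X^2+18\g_3XY+12\g_3Y^2$ coming from testing the Euler--Lagrange operator $\mathcal{N}$ against $w$.

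The missing idea is to get the borderline case by compactness rather than by absorption. The paper argues by contradiction: for $\e>0$ it perturbs to $F_\e(w)=F_\g(w)+\e\log\int e^{4(w-\ov w)}dv$, which by \eqref{eq:ineq-CY} admits a global minimizer $w_\e$, and assumes $m_\e=F_\e(w_\e)\to-\infty$. The crucial step, unavailable in a purely variational absorption scheme, is that $w_\e$ solves an Euler--Lagrange equation to which Theorem \ref{t:weak-norms} applies (this is where $\g_2/\g_3>3/2$ really enters), yielding a \emph{uniform} bound $\int|\nabla w_\e|^2dv\le C$. This bound controls all the lower-order curvature and linear terms along the minimizing family, so that
\begin{equation*}
m_\e\;\ge\;\g_2\int(\Delta w_\e)^2dv-(8\pi^2\g_2-\e)\log\int e^{4(w_\e-\ov w_\e)}dv-C\;\ge\;-2C
\end{equation*}
by \eqref{eq:ineq-CY}, contradicting $m_\e\to-\infty$. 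In other words, at criticality the remainders are not absorbed by the quartic form but are bounded a priori on critical points; without an ingredient of this type (or some other replacement, e.g.\ a blow-up analysis of a minimizing sequence), your scheme does not close in the equality case.
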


\begin{pf}
For $\e  > 0$, consider the following functional 
$$
  F_\e(w) := F_\g(w) + \e \log \left( \int e^{4(w- \ov{w})} dv \right). 
$$
 Supposing by contradiction that $F_\g$ is unbounded from below, we then have that
$$
  m_\e := \inf_{W^{2,2}} F_\e \to - \infty \qquad \quad \hbox{ as } \e \searrow 0. 
$$
By \eqref{eq:ineq-CY} (and some easy reasoning, exploiting the quartic gradient terms, if the Paneitz operator has negative eigenvalues) we know that $F_\e$ admits a global minimum, which we call $w_\e$. 
Hence we have that $F_\e(w_\e) = m_\e \to - \infty$ as $\e \searrow 0$. 

Looking at the Euler-Lagrange equation satisfied by $w_\e$, by Theorem \ref{t:weak-norms} it follows that $\int |\n w_\e|^2 dv \leq C$. W.l.o.g., assume also that $\ov{w}_\e = 0$.
Therefore, from the explicit form of $F_\e$ and Poincar\'e's inequality, 
we have that 
$$
  m_\e = F_\e(w_\e) \geq \g_2 \int (\D w_\e)^2 dv - ( 8 \pi^2 \gamma_2 - \e) \log \left( \int e^{4(w- \ov{w})} dv \right) - C. 
$$
Inequality \eqref{eq:ineq-CY} and the last formula imply that $F_\e(w_\e) \geq - 2 C$, which contradicts 
$F_\e(w_\e) \to - \infty$ as $\e \searrow 0$. 
\end{pf}

\noindent Next, we show that if $e^{4w}$ has integral bounded from
below into $(\ell + 1)$ distinct regions of $M$, the Moser-Trudinger constant
can be basically divided by $(\ell + 1)$.
When dealing with the functional $II$ only, such an inequality was proved in 
\cite{DM08}, relying on some previous argument in \cite{CL91}. 

\begin{lem}\label{l:imprc}
Suppose $\g_2, \g_3 > 0$. 
For a fixed integer $\ell$, let $\Omega_1, \dots, \Omega_{\ell+1}$
be subsets of $M$ satisfying $dist(\Omega_i,\Omega_j) \geq \d _0$
for $i \neq j$, where $\d_0$ is a positive real number, and let
$\g_0 \in \left( 0, \frac{1}{\ell+1} \right)$. Then, for any
$\tilde{\e} > 0$ there exists a constant $C = C(\ell, \tilde{\e},
\d_0, \g_0)$ such that
$$
  8 (\ell + 1) \pi^2 \log \int e^{4(w - \ov{w})} dv \leq \ (1 + \tilde{\e}) 
  \left( \langle w, P w \rangle + \frac{\gamma_3}{\gamma_2} III(w) \right) + C 
$$
for all the functions $w \in W^{2,2}(M)$ satisfying
$$    \frac{\int_{\Omega_i} e^{4w} dv}{\int e^{4w} dv} \geq \g_0,
  \qquad \quad \quad \forall \; i \in \{1, \dots, \ell+1\}.$$
\end{lem}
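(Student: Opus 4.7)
The plan is to adapt the Chen--Li \cite{CL91} and Djadli--Malchiodi \cite{DM08} localization scheme to the combined quadratic form
$\mathcal{E}(v) := \langle v, P v\rangle + \tfrac{\gamma_3}{\gamma_2}\, III(v)$. For each $i=1,\dots,\ell+1$ select a smooth cutoff $\chi_i \in C^\infty(M)$ with $0\leq \chi_i \leq 1$, $\chi_i \equiv 1$ on $\Omega_i$ and $\mathrm{supp}\,\chi_i$ contained in the open $\tfrac{\delta_0}{4}$-tubular neighborhood of $\Omega_i$; the separation assumption forces these supports to be pairwise disjoint, so that $\sum_i \chi_i^k \leq 1$ pointwise for every integer $k\geq 1$, and $|\nabla^j\chi_i|\leq C(\delta_0)$ for $j=1,2$. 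Set $u := w - \ov w$.

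Applying the sharp Chang--Yang inequality \eqref{eq:ineq-CY} to each $\chi_i u$, using $\chi_i \equiv 1$ on $\Omega_i$ together with the mass assumption $\int_{\Omega_i} e^{4u}\,dv \geq \gamma_0 \int e^{4u}\,dv$, and noting that $|\overline{\chi_i u}|\leq \|u\|_{L^1}\leq C\|\nabla u\|_{L^2}$ by Poincar\'e, one obtains after summation
\[
8\pi^2(\ell+1)\log\int e^{4u}\,dv \;\leq\; \sum_{i=1}^{\ell+1}\int(\Delta(\chi_i u))^2\,dv + C\bigl(1+\|\nabla u\|_{L^2}\bigr),
\]
which already exhibits the improved prefactor $8(\ell+1)\pi^2$ on the left-hand side.

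The main obstacle is the \emph{localization estimate}
\[
\sum_{i=1}^{\ell+1} \int(\Delta(\chi_i u))^2\,dv \;\leq\; \bigl(1+\tfrac{\tilde\e}{4}\bigr)\int(\Delta u)^2\,dv + C_{\tilde\e,\delta_0}\|\nabla u\|_{L^2}^{2}.
\]
Expanding $\Delta(\chi_i u) = \chi_i \Delta u + 2\langle\nabla\chi_i,\nabla u\rangle + u\Delta\chi_i$ and squaring, the main piece $\sum_i \chi_i^2 (\Delta u)^2 \leq (\Delta u)^2$ pointwise by the disjointness of supports; every remaining summand carries at least one factor of $\nabla\chi_i$ or $\Delta\chi_i$ and, via Cauchy--Schwarz and Young's inequality, is absorbed into $\tfrac{\tilde\e}{4}\int(\Delta u)^2 + C_{\tilde\e,\delta_0}\bigl(\|\nabla u\|_{L^2}^2 + \|u\|_{L^2}^2\bigr)$, with $\|u\|_{L^2}$ further bounded by $\|\nabla u\|_{L^2}$ through $\ov u = 0$ and Poincar\'e.

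Finally, both $\int(\Delta u)^2$ and the residual $\|\nabla u\|_{L^2}^2$ must be converted into $\mathcal{E}(u)$. A direct expansion yields $\mathcal{E}(u) - \int(\Delta u)^2 = \tfrac{12\gamma_3}{\gamma_2}\int(\Delta u+|\nabla u|^2)^2\,dv + L(u)$ with $|L(u)|\leq C\|u\|_{W^{1,2}}^2 + C\|u\|_{L^1}$, so $\int(\Delta u)^2 \leq \mathcal{E}(u) + C(1+\|\nabla u\|_{L^2}^2)$; moreover, a Young split of the cross term $\int \Delta u\,|\nabla u|^2$ inside $III(u)$ (valid since $\gamma_2,\gamma_3>0$) produces a lower bound $\mathcal{E}(u)\geq c_2\int|\nabla u|^4 - C\|u\|_{W^{1,2}}^2 - C$ for some $c_2>0$. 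Combined with H\"older's and Young's inequalities, this gives $\|\nabla u\|_{L^2}^2 \leq \delta\,\mathcal{E}(u) + C_\delta$ for any $\delta>0$; choosing $\delta$ and $\tilde\e$ sufficiently small, relabeling constants, and plugging back into the summed inequality yields $8(\ell+1)\pi^2\log\int e^{4u}\,dv \leq (1+\tilde\e)\mathcal{E}(u) + C$, which is the desired bound.
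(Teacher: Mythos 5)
Your argument is correct, but it reaches the improved constant by a different route than the paper. The paper obtains $8(\ell+1)\pi^2\log\int e^{4(w-\overline{w})}dv\le(1+\tilde{\varepsilon}/2)\int(\Delta u)^2dv+C$ by invoking the proof of Lemma 2.2 in \cite{DM08}, a more delicate localization whose whole point is to avoid any residual $\|\nabla u\|_{L^2}^2$ term (in the pure Paneitz setting such a term is of the same order as $\int(\Delta u)^2$, by Poincar\'e and interpolation, and would ruin the improved constant). You instead apply the basic Chang--Yang inequality \eqref{eq:ineq-CY} to each localization $\chi_i u$ and sum over all $\ell+1$ regions, using the disjointness of the supports via $\sum_i\chi_i^2\le 1$; this is elementary and self-contained, but it leaves behind a residual $C_{\tilde{\varepsilon},\delta_0}\|\nabla u\|_{L^2}^2$ with a large constant. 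What legitimizes the shortcut here --- and you use it explicitly --- is the quartic term $\tfrac{12\gamma_3}{\gamma_2}\int(\Delta u+|\nabla u|^2)^2dv$ contained in $III$: since $\gamma_2,\gamma_3>0$, the quadratic form in $(\Delta u,|\nabla u|^2)$ is positive definite, giving $\langle w,Pw\rangle+\tfrac{\gamma_3}{\gamma_2}III(w)\ge c\int\big[(\Delta u)^2+|\nabla u|^4\big]dv-C\|\nabla u\|_{L^2}^2-C$, whence $\|\nabla u\|_{L^2}^2\le\delta\big(\langle w,Pw\rangle+\tfrac{\gamma_3}{\gamma_2}III(w)\big)+C_\delta$ for every $\delta>0$ by H\"older and Young; this is exactly the same absorption mechanism the paper uses in its second step to pass from $\int(\Delta u)^2$ to the combined functional. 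So the two proofs share the conversion/absorption step, while your localization is simpler at the price of being tied to the quartic coercivity: it would not reprove the DM08 lemma itself, whereas the paper's citation route keeps the sharper intermediate inequality with no gradient remainder. One small point you should spell out: in the final bookkeeping, replacing the coefficient $1+\tilde{\varepsilon}/4+C\delta$ of the functional by $1+\tilde{\varepsilon}$ is only harmless because the functional is bounded below on $W^{2,2}(M)$ (which follows from your own coercivity estimate together with Young's inequality); the paper's proof of \eqref{eq:show} relies on the same implicit fact.
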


\begin{pf}
Assume without loss of generality that $\ov{w} = 0$. With the same argument as in the 
proof of Lemma 2.2 in \cite{DM08}, it is possible to show under the above conditions that 
$$
  8 (\ell + 1) \pi^2 \log \int e^{4(w - \ov{w})} dv \leq \ (1 + \tfrac{\tilde{\e}}{2}) 
  \int (\D u)^2 dv + C. 
$$
Relabelling $C$, it is then enough to prove the inequality 
\begin{equation}\label{eq:show}
 (1 + \tfrac{\tilde{\e}}{2}) 
   \int (\D u)^2 dv \leq (1 + \tilde{\e}) 
     \left( \langle w, P w \rangle + \frac{\gamma_3}{\gamma_2} III(w) \right) + C. 
\end{equation}
However, using Poincar\'e's inequality and the expressions of $P$ and $III$ we can write that 
$$
  \langle w, P w \rangle +  \frac{\gamma_3}{\gamma_2} III(w) \geq 
  \int (\D u)^2 dv + 12  \frac{\g_3}{\g_2} \int (\D u + |\n u|^2)^2 dv - C \int |\n w|^2 - C.  
$$
For $\varsigma > 0$ sufficiently small, one has that 
$$
  \int (\D u)^2 dv + 12  \frac{\g_3}{\g_2} \int (\D u + |\n u|^2)^2 dv \geq  (1 - 2\varsigma) \int (\D u)^2 dv 
  + \varsigma \int |\n u|^4 dv
$$
Choosing $\varsigma$ small compared to $\tilde{\e}$ and using  Young's inequality, from the 
last two formulas we obtain \eqref{eq:show}, yielding the conclusion. 
\end{pf}

For $j \in \N$, we define the family of probability measures 
$$
  M_j = \left\{ \mu \in \mathcal{P}(M) \; : \; card(supp(\mu)) \leq j  \right\}. 
$$
We define the {\em distance} of  an $L^1-$
function $f$ in $M$  from $M_j$, $j \leq k$, as
$$    {\bf d}(f,M_j) = \inf_{\s \in M_j} \sup \left\{ \left| \int f  \, \psi \, dv - \langle \s, \psi 
  \rangle \right| \; : \;  \|\psi\|_{C^1(M)} \leq 1 \right\},  $$
where $\langle \s, \psi \rangle$ stands for the duality product between
$\mathcal{P}(M)$ and the space of $C^1$ functions. From Lemma \ref{l:imprc} and 
Poincar\'e's  inequality (to treat linear terms in $w$) we deduce immediately the following result.

\begin{pro}\label{l:diste}
Suppose that $\g_2, \g_3 > 0$ and that $\int U dv < 8(k+1) \g_2 \pi^2$ with $k \geq 1$. Then for
any  $\e > 0$ there exists a large positive $\Xi = \Xi (\e)$ such that
for every $w \in W^{2,2}(M)$ with $F_\g(w) \leq - \Xi $ and $\int
e^{4w} dv = 1$, we have ${\bf d}(\tfrac{e^{4w}}{\int e^{4w}},M_k) \leq \e$.
\end{pro}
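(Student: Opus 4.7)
The plan is to argue by contradiction and reduce everything to the improved Moser--Trudinger inequality of Lemma \ref{l:imprc}. Assume the conclusion fails: there exist $\e_0 > 0$ and a sequence $w_n \in W^{2,2}(M)$ with $\int e^{4 w_n}\, dv = 1$, $F_\gamma(w_n) \to - \infty$, yet $\mathbf{d}(e^{4 w_n}, M_k) > \e_0$ for every $n$. First I would invoke a standard covering lemma (cf.\ Lemma 2.3 in \cite{DM08}) to translate the positive distance from $M_k$ into geometric separation: there exist $\delta_0 > 0$, $\gamma_0 \in (0, 1/(k+1))$, and subsets $\Omega_{1,n}, \dots, \Omega_{k+1,n}$, pairwise $\delta_0$-separated, with $\int_{\Omega_{i,n}} e^{4 w_n}\, dv \geq \gamma_0$ for all $i$. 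This makes Lemma \ref{l:imprc} applicable to $w_n$ with $\ell = k$, yielding, after exploiting $\int e^{4 w_n}\, dv = 1$ (so that $\log \int e^{4(w_n - \ov{w}_n)}\, dv = -4 \ov{w}_n$),
\begin{equation}\label{eq:sketch1}
 -32 (k+1) \pi^2 \ov{w}_n \leq (1+\tilde\e) \Big[ \langle w_n, P w_n \rangle + \tfrac{\gamma_3}{\gamma_2} III(w_n) \Big] + C.
\end{equation}

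Next I would unpack $F_\gamma(w_n)$ under the normalization $\int e^{4 w_n}\, dv = 1$. Using $U = \gamma_1 |W|^2 + \gamma_2 Q - \gamma_3 \Delta R$ together with $\int \Delta R\, dv = 0$ to recombine the linear-in-$w_n$ terms, an elementary computation rewrites
\begin{equation*}
F_\gamma(w_n) = \gamma_2 \Big[ \langle w_n, P w_n \rangle + \tfrac{\gamma_3}{\gamma_2} III(w_n) \Big] + 4 \ov{w}_n \int U\, dv + L(w_n - \ov{w}_n) + \mathrm{const},
\end{equation*}
where $L$ is a continuous linear functional on $W^{2,2}(M)/\R$ with norm depending only on geometric quantities. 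The squares-completion argument used in Section \ref{s:gen-est} (valid since $\gamma_2/\gamma_3 > 3/2$) combined with Poincar\'e's inequality shows that the quadratic form $\langle w, P w \rangle + \tfrac{\gamma_3}{\gamma_2} III(w)$ dominates $\|w - \ov{w}\|_{W^{2,2}}^2$ up to additive constants, so Young's inequality absorbs $|L(w_n - \ov{w}_n)|$ into an arbitrarily small multiple of the quadratic form:
\begin{equation}\label{eq:sketch2}
 F_\gamma(w_n) \geq (\gamma_2 - \tau) \Big[ \langle w_n, P w_n \rangle + \tfrac{\gamma_3}{\gamma_2} III(w_n) \Big] + 4 \ov{w}_n \int U\, dv - C_\tau
\end{equation}
for any small $\tau > 0$.

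Combining \eqref{eq:sketch1} with \eqref{eq:sketch2} produces
\begin{equation*}
F_\gamma(w_n) \geq \ov{w}_n \left[ 4 \int U\, dv - \frac{32 (k+1) \pi^2 (\gamma_2 - \tau)}{1 + \tilde\e} \right] - C',
\end{equation*}
and the strict assumption $\int U\, dv < 8 (k+1) \gamma_2 \pi^2$ allows me to select $\tau$ and $\tilde\e$ small enough so that the bracketed coefficient is strictly negative. Since Jensen's inequality together with $\int e^{4 w_n}\, dv = 1$ forces $\ov{w}_n \leq -\tfrac{1}{4} \log |M|$, the resulting term is bounded below independently of $n$, yielding a uniform lower bound on $F_\gamma(w_n)$ that contradicts $F_\gamma(w_n) \to - \infty$. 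The main obstacle is purely bookkeeping: one must organize the expansion of $F_\gamma$ so that the critical threshold $8 (k+1) \gamma_2 \pi^2$ appears as the exact competition between the coefficient of $\ov{w}_n$ coming from the linear $\int U w_n\, dv$ part and that coming through Lemma \ref{l:imprc}; the extraction of the concentration regions $\Omega_{i,n}$ from the distance bound is standard and the coercivity of the quadratic form has already been established in Section \ref{s:gen-est}.
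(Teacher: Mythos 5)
Your argument is correct and follows exactly the route the paper intends: the paper derives Proposition \ref{l:diste} ``immediately'' from Lemma \ref{l:imprc} together with Poincar\'e's inequality for the linear terms, and your write-up simply makes explicit the steps left implicit there (the contradiction setup, the covering lemma from \cite{DM08} turning ${\bf d}(e^{4w},M_k)>\e_0$ into $(k+1)$ separated regions of definite mass, the coercivity of $\langle w,Pw\rangle+\tfrac{\g_3}{\g_2}III(w)$ used to absorb $L(w-\ov{w})$, and the comparison of $4\int U\,dv$ with $32(k+1)\pi^2\g_2$ combined with Jensen's bound $\ov{w}_n\leq -\tfrac14\log|M|$). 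No changes needed.
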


\

\noindent From the result in Section 3 of \cite{DM08}, one can deduce a further continuity property 
from $W^{2,2}(M)$ into $\mathcal{P}(M)$, endowed with the above distance {\bf d}.

\begin{pro}\label{c:Psik} For $\g_2, \g_3 > 0$ and $\int U dv < 8(k+1) \g_2 \pi^2$ there exist a large positive number  $\Xi $ and a continuous  map 
$\Psi_k : \left\{ F_\g \leq - \Xi   \right\} \to M_k$ such that, if $e^{2 w_n} \rightharpoonup \s \in M_k$, 
then $\Psi_k(w_n) \rightharpoonup \s$.  
\end{pro}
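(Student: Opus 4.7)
The plan is to adapt the construction carried out in Section 3 of \cite{DM08}, where the analogous statement was established for the functional $II$ alone. The only essential input that is new here is Proposition \ref{l:diste}, which plays for $F_\g$ the role that the sub-level analysis plays in \cite{DM08} for $II$; once it is in hand, the topological/combinatorial arguments of \cite{DM08} can be transplanted essentially verbatim.

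First, I would fix $\e > 0$ small (to be chosen later) and choose $\Xi = \Xi(\e)$ as in Proposition \ref{l:diste}, so that every $w \in W^{2,2}(M)$ with $F_\g(w) \leq -\Xi$ satisfies ${\bf d}(e^{4 w}/\int e^{4 w}\,dv,\, M_k) \leq \e$. After this step, I would construct a continuous retraction
$$
   r : N_\e(M_k) \longrightarrow M_k
$$
of an $\e$-neighbourhood of $M_k$ (in the metric ${\bf d}$) onto $M_k$ itself, following the recipe of \cite{DM08}. The construction proceeds by covering $M$ with finitely many geodesic balls of radius comparable to a small power of $\e$, by selecting for any $\nu$ close to $M_k$ the balls carrying the largest mass (up to $k$ of them), and by defining $r(\nu)$ as a convex combination of Dirac masses located at the centres of these balls, with weights depending continuously on the local mass distribution via a partition of unity; the weights are arranged so that $r$ fixes $M_k$ pointwise.

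Second, I would set
$$
   \Psi_k(w) \;=\; r\!\left(\tfrac{e^{4 w}}{\int e^{4 w}\,dv}\right).
$$
The map $w \mapsto e^{4 w}/\int e^{4 w}\,dv$ is continuous from $W^{2,2}(M)$ into $(\mathcal{P}(M),{\bf d})$ by the embedding $W^{2,2}(M) \hookrightarrow L^q(M)$ for every $q < \infty$ (a consequence of \eqref{eq:ineq-CY}), so $\Psi_k$ is continuous on $\{F_\g \leq -\Xi\}$ and takes values in $M_k$ by construction. For the convergence assertion: if $e^{4 w_n} \rightharpoonup \sigma \in M_k$, then ${\bf d}(e^{4 w_n}/\int e^{4 w_n}\,dv,\, \sigma) \to 0$, and the continuity of $r$ together with $r|_{M_k} = \mathrm{id}$ gives $\Psi_k(w_n) = r(e^{4 w_n}/\int e^{4 w_n}\,dv) \to r(\sigma) = \sigma$ in ${\bf d}$, hence weakly in $\mathcal{P}(M)$.

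The main obstacle is the construction of the retraction $r$. The space $M_k$ is a stratified compactum (not a smooth manifold): its singular strata correspond to measures whose support has cardinality strictly less than $k$, i.e.\ to configurations in which some of the concentration points coalesce or some weights vanish. Ensuring that the partition-of-unity construction produces a \emph{globally} continuous map which is the identity on every stratum requires the careful bookkeeping of \cite{DM08}, where repeated weights and coalescing atoms are handled consistently. All other ingredients — the continuity of $w \mapsto e^{4w}/\int e^{4w}\,dv$, the compactness of $M_k$, and the initial localisation near $M_k$ — reduce to the analysis already developed in the present paper (Sections \ref{s:gen-est} and this one), so no further difficulty is expected beyond that point.
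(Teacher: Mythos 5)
Your proposal follows essentially the same route as the paper, which proves this proposition simply by combining Proposition \ref{l:diste} with the projection construction of Section 3 of \cite{DM08} applied to the normalized conformal volume $e^{4w}/\int e^{4w}\,dv$. The only caveat is that the map from \cite{DM08} need not literally fix $M_k$ pointwise as you assert — it only enjoys the asymptotic property that $\Psi_k(w_n)\rightharpoonup\s$ whenever $e^{4w_n}\rightharpoonup\s\in M_k$ — but this is exactly what the statement requires, so your argument is in substance the paper's.
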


\medskip

\subsection{The topological argument}\label{s:msub}

The proof essentially follows the lines of Section 4 in \cite{DM08}, so we will 
mainly recall the principal steps. We first map $M_k$ into some low sub-levels of
$F_\g $ and finally, once we map back 
onto $M_k$ using Proposition \ref{c:Psik}, we obtain a map homotopic to the identity. 
The main difference with respect to the above reference is the 
energy estimate in Lemma \ref{l:pff}, where we need to estimate the functional $III$ on 
suitable test functions. We first recall a topological characterization of $M_k$. 
\begin{lem}\label{l:nonc} (\cite{DM08})
For any $k \geq 1$, the set $M_k$ is a {\em stratified set},
namely union of open manifolds of different dimensions, whose
maximal one is $3k - 1$. Furthermore $M_k$ is non-contractible.
\end{lem}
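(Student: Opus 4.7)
The plan is to first establish the stratification structure explicitly, then address the non-contractibility separately. For the stratification I would partition $M_k$ according to the cardinality of the support: set
$$ M_k^{(j)} = \{\mu \in M_k : \# \mathrm{supp}(\mu) = j\}, \qquad j = 1, \dots, k,$$
so that $M_k = \bigsqcup_{j=1}^k M_k^{(j)}$. An element of $M_k^{(j)}$ has the form $\sum_{i=1}^{j} t_i \delta_{p_i}$ with distinct points $p_1,\dots,p_j \in M$ and weights $(t_1,\dots,t_j)$ in the open $(j-1)$-simplex $\Delta_{j-1}^\circ$. Consequently $M_k^{(j)}$ is realized as the quotient $(F_j(M) \times \Delta_{j-1}^\circ)/S_j$, where $F_j(M)$ is the ordered configuration space of $j$ distinct points in $M$ and the symmetric group $S_j$ acts diagonally by permutations of labels. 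Since $F_j(M)$ is open in $M^j$ and the action of $S_j$ is free, each $M_k^{(j)}$ is an open manifold, and the maximal stratum $M_k^{(k)}$ achieves the top dimension recorded in the statement.

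For non-contractibility, the idea is to exhibit a non-vanishing class in Čech (or Alexander--Spanier) cohomology, which is well adapted to stratified spaces. I would work with $\mathbb{Z}_2$ coefficients to avoid orientation issues on the quotient by $S_k$. Taking a small geodesically convex ball $U \subset M$ (in particular a topological disk), the embedding
$$ \iota : \bigl(F_k(U) \times \Delta_{k-1}^\circ\bigr)/S_k \ \hookrightarrow\ M_k$$
models the top stratum near a generic point. A local computation in this model produces a fundamental class in top degree; pushing it forward along $\iota$ yields a candidate non-trivial class in $\check H^{\,\mathrm{top}}(M_k;\mathbb{Z}_2)$.

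The delicate step, and the real obstacle, will be to verify that this class genuinely survives globally, i.e.\ that it is not killed by the degenerations arising at the boundary of the top stratum (two atoms coalescing, or a weight $t_i$ tending to zero) which connect $M_k^{(k)}$ to the lower-dimensional pieces $M_k^{(j)}$, $j < k$. One way to handle this is to analyze the quotient $M_k/M_{k-1}$, showing it admits a Thom-type collapse structure that makes the detection of the top class computable. Because the analogous questions have been settled in the Bahri--Coron framework for formal barycenters on a manifold, and because the result as used here is stated as a citation from \cite{DM08}, I would ultimately close the argument by reducing the statement to the topological results in that reference, which already treat exactly this class of stratified barycentric spaces and detect the obstruction to contractibility via the nonvanishing of a top-dimensional $\mathbb{Z}_2$-cohomology class.
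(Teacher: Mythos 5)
Your proposal is in line with what the paper actually does: the lemma is quoted from \cite{DM08} with no proof supplied here, and for the only genuinely delicate point --- non-contractibility --- you too ultimately defer to the topological results of that reference, while your identification of the strata, $M_k^{(j)} \cong \bigl(F_j(M)\times \Delta_{j-1}^{\circ}\bigr)/S_j$ with $S_j$ acting freely, is the standard and correct justification of the stratified structure. One caveat: since $\dim M = 4$ here, your top stratum has dimension $4k+(k-1)=5k-1$, so your remark that it ``achieves the top dimension recorded in the statement'' does not actually match the quoted figure $3k-1$; that figure is the two-dimensional count (for a four-manifold the correct value is $5k-1$, which is what the cited source gives), so the discrepancy traces to the statement as reproduced in this paper rather than to your construction, but you should flag it rather than assert agreement. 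Finally, be clear that your cohomological sketch (a local $\mathbb{Z}_2$ top class from the configuration-space model, plus the unverified claim that it survives the degenerations onto lower strata) is not a proof on its own --- as you concede, the ``delicate step'' is exactly the content of the non-contractibility statement, so your argument stands or falls with the citation, precisely as the paper's does.
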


For $\d > 0$ small, consider a
smooth non-decreasing cut-off function $\chi_\d : \R_+ \to \R$
such that 
$$    \left\{%
\begin{array}{ll}
    \chi_\d(t) =  t & \hbox{ for } t \in [0,\d] \\
    \chi_\d(t) =  2 \d & \hbox{ for } t \geq 2 \d \\
    \chi_\d(t) \in [\d, 2 \d] & \hbox{ for } t \in [\d, 2 \d]. \\
\end{array}%
\right. $$
Then, given $\s \in M_k$ $\left( \hbox{i.e. }\s = \displaystyle \sum_{i=1}^k t_i \d_{x_i}
\right)$ and $\l
> 0$, we define the function $\var_{\l,\s} : M \to \R$ as
 \begin{equation}\label{eq:pls}
  \var_{\l,\s} (y) = \frac 14 \log
  \sum_{i=1}^k t_i \left( \frac{2 \l}{1 + \l^2 \chi_\d^2
  \left( d(y,x_i) \right)} \right)^4, \quad \quad y \in M.
\end{equation}

\

\noindent We prove next an energy estimate on the above test functions.

\begin{lem}\label{l:pff}
Suppose that $\g_2, \g_3 > 0$ and that $\var_{\l,\s}$ is as in \eqref{eq:pls}. Then as $\l \to +
\infty$ one has
$$
   F_\g(\var_{\l,\s})  \leq
   \left( 32 k \pi^2 \gamma_2 + o_\d(1) \right) \log \l + C_\d $$
uniformly in $\s \in M_k$, where $o_\d(1) \to 0$ as $\d \to 0$ and  $C_\d$ is a constant
independent of $\l$ and $x_1,\dots,x_k$.
\end{lem}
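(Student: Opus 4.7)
The proof is by direct computation: I would evaluate each of the functionals $I, II, III$ on $\var_{\l,\s}$ as $\l\to+\infty$ with $\d$ fixed, and track the coefficient of $\log\l$ in each. For $\d$ smaller than half the minimal separation of the points $x_j$, the test function decomposes cleanly: on each bubble core $B_\d(x_i)$ the $i$-th term dominates the log-sum and $\var_{\l,\s}$ coincides with the standard conformal bubble $\phi_i(y)=\log\frac{2\l t_i^{1/4}}{1+\l^2 d(y,x_i)^2}$; on the bulk $M\setminus\bigcup_i B_{2\d}(x_i)$ it is identically the constant $\psi := \log\frac{2\l}{1+4\l^2\d^2}=-\log\l+O_\d(1)$; and on the transition annuli the derivatives of $\var_{\l,\s}$ are uniformly bounded in $\l$ by $\d$-dependent constants.

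The leading $\log\l$ contribution comes from the Paneitz quadratic form in $\gamma_2 II$. From $\Delta\phi=-\frac{4\l^2(2+\l^2 r^2)}{(1+\l^2 r^2)^2}$, the substitution $s=\l r$ gives $\int_{B_\d(x_i)}(\Delta\phi_i)^2\,dv=32\pi^2\log(\l\d)+O(1)$, so summing over the $k$ disjoint cores yields $\int_M(\Delta\var_{\l,\s})^2\,dv=32k\pi^2\log\l+C_\d$. A parallel scaling computation shows $\int|\n\var_{\l,\s}|^2\,dv=O(\d^2)$ (each bubble contributes $2\omega_3\d^2$ uniformly in $\l$), so the Ricci and scalar-curvature correction terms in $\langle\var_{\l,\s},P\var_{\l,\s}\rangle$ are $o_\d(1)$. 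For the linear and volume terms in $I$ and $II$, I would show that $\int e^{4\var_{\l,\s}}\,dv\to\sum_i t_i\cdot\frac{8\pi^2}{3}=\frac{8\pi^2}{3}$ as $\l\to\infty$ (so all $\log\fint e^{4\var_{\l,\s}}$ factors are $O(1)$) and that the peak excess $\int_{B_\d(x_i)}(\var_{\l,\s}-\psi)\,dv$ is $O_\d(1)$, by exact cancellation of the $\log\l$ contributions between the log-peak and the baseline $\psi$; this forces $\int Q\,\var_{\l,\s}\,dv=-\log\l\int Q\,dv+O_\d(1)$ and likewise $\int|W|^2\,\var_{\l,\s}\,dv=-\log\l\int|W|^2\,dv+O_\d(1)$.

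The decisive estimate for $III$ exploits the algebraic cancellation $\Delta\phi+|\n\phi|^2=-\frac{8\l^2}{(1+\l^2 r^2)^2}$, whose squared integral over $\R^4$ is the finite quantity $32\pi^2/3$, producing \emph{no} $\log\l$ term. Combined with $-4\int\var_{\l,\s}\Delta R\,dv=-4\int R\Delta\var_{\l,\s}\,dv=O_\d(1)$ (using $\int\Delta\var_{\l,\s}\,dv=0$ together with the fact that $R-\fint R$ has vanishing mean and $\int|\Delta\var_{\l,\s}|\,dv$ is $\l$-bounded) and $-4\int R|\n\var_{\l,\s}|^2\,dv=O(\d^2)$, one obtains $\gamma_3 III(\var_{\l,\s})=C_\d+o(1)$. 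Using $\int\Delta R\,dv=0$ to combine the linear-in-$\var_{\l,\s}$ pieces of $I,II,III$ into $4\int\var_{\l,\s}\,U\,dv=-4(\int U\,dv)\log\l+O_\d(1)$, one arrives at $F_\g(\var_{\l,\s})=\bigl(32k\pi^2\gamma_2-4\int U\,dv\bigr)\log\l+C_\d+o_\d(\log\l)$; since in the regime relevant to Theorem \ref{t:ex} one has $\int U\,dv>0$, the stated bound is immediate. The main obstacle is the $III$ cancellation: without the identity $\Delta\phi+|\n\phi|^2=-8\l^2/(1+\l^2 r^2)^2$ the quartic-gradient portion of $III$ would produce a $\log\l$ term of the wrong sign and destroy the scheme. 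A secondary difficulty is controlling the annular contributions uniformly in $\l$, for which the derivatives of the cutoff $\chi_\d$ must be tracked carefully.
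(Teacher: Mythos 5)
Your strategy is essentially the paper's: take the quadratic estimate $\langle P\var_{\l,\s},\var_{\l,\s}\rangle \leq (32k\pi^2+o_\d(1))\log\l+C_\d$ from \cite{DM08}, control the linear terms, and show that $III$ contributes no $\log\l$ because of the bubble cancellation $\D\phi+|\n\phi|^2=-8\l^2/(1+\l^2 r^2)^2$ (your single-bubble computations are correct). But there is a genuine gap: your whole decomposition assumes $\d$ is smaller than half the minimal separation of the $x_j$'s, whereas the lemma must hold uniformly in $\s\in M_k$ with $C_\d$ independent of $x_1,\dots,x_k$ — the order of quantifiers is that $\d$ is fixed first, and the configurations needed for the min-max (and for Proposition \ref{p:sublev}) include centers at mutual distance much smaller than $\d$, possibly coalescing. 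In that regime the picture of disjoint bubble cores carrying a single standard bubble, a constant bulk, and transition annuli with $\l$-uniform derivative bounds breaks down, and that is exactly where the $\log\l$ danger in $III$ sits: the single-bubble identity does not control the interaction terms of $\D\var_{\l,\s}+|\n\var_{\l,\s}|^2$ when $d(x_i,x_k)$ is comparable to $\max\{1/\l, d(x_i,x_k)\}\leq\d$. The bulk of the paper's proof is devoted precisely to this: writing $\D\var+|\n\var|^2$ in terms of $\mathcal{F}_i=2\l/(1+\l^2\chi_\d^2(d(\cdot,x_i)))$, reducing to $L^2$ bounds on $\D\mathcal{F}_i/\mathcal{F}_i$ and on the cross quantities $\mathcal{G}_{i,k}$, and estimating $\mathcal{G}_{i,k}$ in the delicate case $d(x_i,x_k)\leq\d/2$ by an analysis on annuli around the cluster scale $\eta_{i,k}=\max\{1/\l,d(x_i,x_k)\}$, which yields only $o_\l(1)\log\l$. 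As written, your argument proves the lemma only for $\s$ with $\d$-separated support, which is not sufficient for the scheme; the same objection applies to your re-derivation of the $\int(\D\var)^2$ asymptotics, which in \cite{DM08} is established uniformly without any separation hypothesis.

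A secondary point: your bookkeeping of the linear terms produces the coefficient $32k\pi^2\g_2-4\int U\,dv$ and then invokes $\int U\,dv>0$ ``in the regime relevant to Theorem \ref{t:ex}'', a hypothesis that is not in the statement of Lemma \ref{l:pff}. The paper instead exploits the invariance of $F_\g$ under addition of constants and estimates $\left|\int U(\var_{\l,\s}-\ov{\var}_{\l,\s})\,dv\right|\leq o_\d(1)\log\l+C_\d$ (as in formula (40) of \cite{DM08}), so the $U$-linear contribution enters only at order $o_\d(1)\log\l$; phrasing the linear terms through the pointwise asymptotics $\int Q\,\var_{\l,\s}\,dv\approx-\log\l\int Q\,dv$ is both less robust (it again uses the separated-bubble picture) and imports a sign condition the lemma does not carry.
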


\begin{pf}
In \cite{DM08} it was proven that 
$$
  \langle P \var_{\l,\s}, \var_{\l,\s} \rangle \leq
   \left( 32 k \pi^2 + o_\d(1) \right) \log \l + C_\d 
$$
does hold uniformly in $\s \in M_k$, and moreover, as for formula (40) in \cite{DM08}, one has that 
$$
 \left| \int U (\var_{\l,\s} - \overline{ \var}_{\l,\s}) dv \right| \leq   o_\d(1)  \log \l + C_\d.
$$
Therefore it is sufficient to show that the following estimate  
\begin{equation}\label{eq:est-III}
  |III(\var_{\l,\s})| = o_\l(1) \log \l 
\end{equation}
does hold uniformly in $\s \in M_k$. In order to do this, we can focus on the term $(\D \var_{\l,\s} + |\n \var_{\l,\s}|^2)^2$, 
since the others are shown in \cite{DM08} to be of lower order. 
Setting 
$$
  \mathcal{F}_i(y) := \frac{2 \l}{1 + \l^2 \chi_\d^2    \left( d(y,x_i) \right)}, 
$$
we compute  explicitly $\D \var_{\l,\s} + |\n \var_{\l,\s}|^2$: 
$$
    \D \var_{\l,\s} + |\n \var_{\l,\s}|^2 = \frac{\sum_i t_i \mathcal{F}_i^3 \D \mathcal{F}_i}{\sum_j t_j \mathcal{F}_j^4} 
    + 3 \frac{\sum_i t_i \mathcal{F}_i^2 |\n \mathcal{F}_i|^2}{\sum_j t_j \mathcal{F}_j^4} - 3 
    \frac{\left| \sum_i t_i \mathcal{F}_i^3 \n \mathcal{F}_i \right|^2}{\left( \sum_j t_j \mathcal{F}_j^4 \right)^2}.   
$$
This can be rewritten as 
$$
   \D \var_{\l,\s} + |\n \var_{\l,\s}|^2 = \frac{\sum_i t_i \mathcal{F}_i^3 \D \mathcal{F}_i}{\sum_j t_j \mathcal{F}_j^4} 
      + 3 
      \frac{\sum_{i,k} t_i t_k \mathcal{F}_i^2 \mathcal{F}_k^2 \left( \mathcal{F}_k^2 |\n \mathcal{F}_i|^2 - \mathcal{F}_i \mathcal{F}_k \n \mathcal{F}_i \cdot \n \mathcal{F}_k \right)}{\left( \sum_j t_j \mathcal{F}_j^4 \right)^2}.
$$
At this point, symmetrizing in $i, k$ and playing with elementary inequalities,  it is enough to uniformly estimate in terms of $o_\l(1) \log \l $ the square $L^2$-norm of the following quantities
\begin{equation}\label{eq:main-terms}
  \frac{\D \mathcal{F}_i}{\mathcal{F}_i}; \qquad \quad \qquad \mathcal{G}_{i,k} := \frac{|\mathcal{F}_k^2 \mathcal{F}_i \n \mathcal{F}_i - \mathcal{F}_i^2 \mathcal{F}_k \n \mathcal{F}_k|^2}{( \mathcal{F}_i^4 + \mathcal{F}_k^4)^2}. 
\end{equation}
For the first term, working in normal coordinates $y$ at $x_i$ one finds  
$$
   \D \mathcal{F}_i(y) = \D_{\delta_{eucl}}\mathcal{F}_i (y) + O(|y|) |\n \mathcal{F}_i|(y) + O(|y|^2) |\n^2 \mathcal{F}_i|(y). 
$$
Using also the fact that 
$$
    \D_{\delta_{eucl}}\left(  \frac{1}{1+\l^2 |x|^2} \right)  =  -\frac{8  \l^2}{\left(1+\l^2 |x|^2\right)^3},
$$
one gets the following  bounds
 $$
  \mathcal{F}_i(y) \geq \begin{cases}
  C^{-1} \l & d(y,x_i) \leq \frac{1}{\l}; \\ 
  \frac{C}{\l d^2(y,x_i)} & \frac{1}{\l} \leq d(y,x_i)\leq \delta, 
  \end{cases}  \qquad \quad 
  |\D \mathcal{F}_i(y)| \leq \begin{cases}
  C \l^3 & d(y,x_i) \leq \frac{1}{\l}; \\ 
  \frac{C}{\l^3 d^6(y,x_i)} & \frac{1}{\l} \leq d(y,x_i)\leq \delta .  
  \end{cases}
$$
These imply 
$$
  \int \left(\frac{\D \mathcal{F}_i}{\mathcal{F}_i} \right)^2 dv \leq \int_{B_{\frac{1}{\l}}(x_i)} C  \lambda^4 \, dv 
  + \int_{ B_\delta(x_i) \setminus B_{\frac{1}{\l}}(x_i)} \frac{C}{\l^4 d^8(y,x_i)} dv  +C \leq C. 
$$
For the latter quantity in \eqref{eq:main-terms} we distinguish between two cases. 

\

\noindent {\bf Case 1:} $d(x_i,x_k) \geq \frac{\d}{2}$. When we integrate near $x_i$, $\mathcal{F}_k$ 
and its gradient are bounded by $\frac{C_\d}{\l}$. Using also the fact that 
$$
  |\n \mathcal{F}_i| \leq \frac{C \l^3 d(y,x_i)}{\left(1+\l^2 d^2(y,x_i)\right)^2}, 
$$
we find the upper bound 
$$
  \int_{B_{\frac{\d}{4}}(x_i)} \mathcal{G}_{i,k}^2
  dv \leq  C \int_{B_{\frac{\d}{4}}(x_i)} \left[ \frac{d(y,x_i)^4 (1 + \l^2 d^2(y,x_i))^4}{\l^8} 
  +  \frac{(1 + \l^2 d^2(y,x_i))^8}{\l^{16}}  \right] dv \leq C, 
$$
where the latter inequality follows from a change of variable. In the same way, one 
finds a similar bound on  $B_{\frac{\d}{4}}(x_k)$. In the exterior of these two balls, 
it is easily seen that $\mathcal{G}_{i,k}$ is uniformly bounded, and therefore 
$\mathcal{G}_{i,k}$ is uniformly bounded also in $L^2(M)$. In particular, there holds $\int \mathcal{G}_{i,k}^2 dv = o_\l(1) \log \l$.

\

\noindent {\bf Case 2:} $d(x_i,x_k) \leq \frac{\d}{2}$. In this case the functions 
$\mathcal{F}_i$ and $\mathcal{F}_k$ can be simultaneously {\em large} at the same  point.  By symmetry, it is 
sufficient to estimate $\mathcal{G}_{i,k}$ in the set 
$$  M_{i,k} := \left\{ y \in M \; : \; d(y,x_i) \leq d(y,x_k) \right\}.$$
Set $\eta_{i,k}=\max\{\frac{1}{\lambda}, d(x_i,x_k)\}$. In $\left( M_{i,k} \cap B_\d (x_i) \right) \setminus B_{\mathfrak{C} \eta_{i,k}}(x_i)$, $\mathfrak{C}\geq 1$, one has the estimates 
$$ \mathcal{F}_k =   \mathcal{F}_i (1+o_\mathfrak{C}(1)),\quad \n \mathcal{F}_k = \n \mathcal{F}_i + o_\mathfrak{C}(1) |\n \mathcal{F}_i| $$
with $o_{\mathfrak{C}}(1) \to 0$ as $\mathfrak{C} \to + \infty$, in view of
$$1\leq \frac{d(y,x_k)}{d(y,x_i)} \leq 1+\frac{d(x_i,x_k)}{d(y,x_i)} \leq 1 +\frac{1}{\mathfrak{C}}.$$
Since these estimates imply some cancellations in the numerator of $\mathcal{G}_{i,k}$, we have that 
$$  \mathcal{G}_{i,k}^2 \leq \frac{o_\mathfrak{C}(1)}{|y-x_i|^4} \quad   \hbox{ in } \left(M_{i,k} \cap  B_\d(x_i) \right) \setminus  B_{\mathfrak{C} \eta_{i,k}}(x_i), $$
and therefore we find 
\begin{equation} \label{eq:intGik-3}
\int_{\left(M_{i,k} \cap B_\d(x_i) \right) \setminus  B_{\mathfrak{C} \eta_{i,k}(x_i)}} \mathcal{G}_{i,k}^2 dv = o_\mathfrak{C}(1) \log \l. 
\end{equation}
In $\left(M_{i,k} \cap B_\delta(x_i)\cap B_{\mathfrak{C} \eta_{i,k}}(x_i)\right)  \setminus B_{\frac{1}{\l}}(x_i) $ we next have the following inequalities 
$$ \frac{1}{\l d^2(y,x_i)}\leq  \mathcal{F}_i \leq \frac{2}{\l d^2(y,x_i)}, \quad  |\n \mathcal{F}_i| \leq \frac{C}{\l d^3(y,x_i)}, \quad  |\mathcal{F}_k |   \leq \frac{C}{\l \eta_{i,k}^2}, \quad |\n \mathcal{F}_k | \leq \frac{C}{\l \eta_{i,k}^3}$$
in view of
$$d(y,x_k)\geq \left\{ \begin{array}{ll}d(x_i,x_k)-d(y,x_i) \geq \frac{1}{2}\eta_{i,k}&\hbox{if } \frac{1}{\lambda} \leq d(y,x_i)\leq \frac{1}{2}d(x_i,x_k)\\
d(y,x_i) \geq \frac{1}{2} \eta_{i,k}& \hbox{if } y \in M_{i,k}, \ d(y,x_i)\geq \frac{1}{2}\eta_{i,k}, \end{array} \right.$$
which imply 
$$ \mathcal{G}_{i,k}^2 \leq  C \left( \frac{d^{12}(y,x_i)}{\eta_{i,k}^{16}} + \frac{d^{16}(y,x_i)}{\eta_{i,k}^{20}} \right),$$ 
and therefore  
\begin{equation}\label{eq:intGik-2}
\int_{\left( M_{i,k} \cap B_\delta(x_i) \cap B_{\mathfrak{C}\eta_{i,k}}(x_i)\right)  \setminus B_{\frac{1}{\l}}(x_i) } \mathcal{G}_{i,k}^2 dv  \leq C \mathfrak{C}^{20}.
\end{equation}
Finally the estimate $\frac{|\n \mathcal{F}_i|}{\mathcal{F}_i}+\frac{|\n \mathcal{F}_k|}{\mathcal{F}_k} \leq C \l$ implies 
\begin{equation}\label{eq:intGik-1}
  \int_{M_{i,k} \cap B_{\frac{1}{\l}}(x_i)} \mathcal{G}_{i,k}^2 dv \leq C. 
\end{equation}
By first choosing $\mathfrak{C}$ and then $\lambda$ large, by \eqref{eq:intGik-3}-\eqref{eq:intGik-1} we have shown that $\int_{M_{i,k} \cap B_\delta(x_i)} \mathcal{G}_{i,k}^2 dv = o_\l(1) \log \l$. By the symmetry of $\mathcal{G}_{i,k}$, exchanging $i$ and $k$ we also have that 
$$\int_{M_{k,i} \cap B_{\frac{\delta}{2}}(x_i)} \mathcal{G}_{i,k}^2 dv \leq \int_{M_{k,i} \cap B_\delta(x_k)} \mathcal{G}_{k,i}^2 =o_\l(1) \log \l,$$
which combines with 
$$\int_{M \setminus B_{\frac{\d}{2}}(x_i)\cup B_{\frac{\d}{2}}(x_k)} \mathcal{G}_{i,k}^2 dv \leq C$$ 
to show that also in {\bf Case 2} there holds $\int \mathcal{G}_{i,k}^2 dv = o_\l(1) \log \l$. 
\end{pf}

\medskip The above results can be collected into the following proposition.

\

\begin{pro}\label{p:sublev}
Suppose that $\g_2, \g_3 > 0$,  $\int U dv \in  (8 k \g_2 \pi^2, 8 (k+1) \g_2 \pi^2)$, and let  $\var_{\l,\s}$ be defined as in \eqref{eq:pls}. Then, as $\l \to
+ \infty$ the following properties hold true

\

\noindent (i) $e^{4 \var_{\l,\s}} \rightharpoonup \s$ weakly in the
sense of distributions;

\

\noindent (ii) $F_\g (\var_{\l,\s}) \to - \infty$ uniformly in $\s
\in M_k$;

\

\noindent  (iii) if 
$\Psi_k$ is given by Proposition \ref{c:Psik} and if $\var_{\l,\s}$ is as in \eqref{eq:pls}, then for
$\l$ sufficiently large the map $\s \mapsto \Psi_k(\var_{\l,\s})$ is
homotopic to the identity on $M_k$.
\end{pro}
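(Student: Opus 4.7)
The argument would follow the strategy indicated at the start of Section~\ref{s:msub}, paralleling the $Q$-curvature treatment of~\cite{DM08}; the only genuinely novel ingredient is the energy estimate of Lemma~\ref{l:pff}, which handles the $III$--contribution.

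For~(i), a direct computation gives $e^{4\varphi_{\lambda,\sigma}(y)}=\sum_{i=1}^k t_i\,\mathcal{F}_i^4(y)$, where $\mathcal{F}_i(y)=\tfrac{2\lambda}{1+\lambda^2\chi_\delta^2(d(y,x_i))}$. For any $\phi\in C(M)$, the change of variables $y=\exp_{x_i}(u/\lambda)$ in normal coordinates at $x_i$ shows
$$
   \int \phi\,\mathcal{F}_i^4\,dv\;\xrightarrow[\lambda\to\infty]{}\; 32\pi^2\phi(x_i)\int_0^{\infty}\!\frac{r^3\,dr}{(1+r^2)^4}=\frac{8\pi^2}{3}\phi(x_i),
$$
uniformly in $x_i\in M$; summing over $i$ gives $e^{4\varphi_{\lambda,\sigma}}\,dv\rightharpoonup \tfrac{8\pi^2}{3}\sigma$ distributionally, uniformly in $\sigma\in M_k$, and the multiplicative constant can be absorbed into an additive shift of $\varphi_{\lambda,\sigma}$ leaving $F_\gamma$ invariant.

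For~(ii), decompose
$$
  F_\gamma(\varphi_{\lambda,\sigma})=\gamma_2\langle\varphi_{\lambda,\sigma},P\varphi_{\lambda,\sigma}\rangle+\gamma_3\,III(\varphi_{\lambda,\sigma})+4\!\int\! U\varphi_{\lambda,\sigma}\,dv-\Bigl(\!\int\! U\,dv\Bigr)\log\fint e^{4\varphi_{\lambda,\sigma}}\,dv.
$$
The three constituent estimates established inside the proof of Lemma~\ref{l:pff} (Paneitz form, $III$, and the centred linear piece $\int U(\varphi_{\lambda,\sigma}-\overline{\varphi}_{\lambda,\sigma})\,dv$) bound the sum of the first three terms by $(32k\pi^2\gamma_2+o_\delta(1))\log\lambda+o_\lambda(1)\log\lambda+4\overline{\varphi}_{\lambda,\sigma}\int U\,dv+C_\delta$. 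Since $\varphi_{\lambda,\sigma}$ is of order $+\log\lambda$ only on the balls $B_{1/\lambda}(x_i)$ (of total volume $O(\lambda^{-4})$) while it equals $-\log\lambda+O_\delta(1)$ on the bulk of $M$, one gets $\overline{\varphi}_{\lambda,\sigma}=-\log\lambda+O_\delta(1)$ and $\log\fint e^{4\varphi_{\lambda,\sigma}}\,dv=O(1)$ (the latter because $\int\mathcal{F}_i^4\,dv\to 8\pi^2/3$). Combining,
$$
  F_\gamma(\varphi_{\lambda,\sigma})\leq \Bigl(32\,k\,\pi^2\gamma_2-4\!\int\! U\,dv+o_\delta(1)\Bigr)\log\lambda+C_\delta,
$$
whose coefficient of $\log\lambda$ is strictly negative for $\delta$ small by the hypothesis $\int U\,dv>8k\gamma_2\pi^2$, yielding~(ii) uniformly in $\sigma\in M_k$.

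For~(iii), by~(ii) one picks $\Lambda_0$ so that $F_\gamma(\varphi_{\lambda,\sigma})\leq-\Xi$ for $\lambda\geq\Lambda_0$ and all $\sigma\in M_k$, with $\Xi$ as in Proposition~\ref{c:Psik}; the map $\mathcal{T}_\lambda(\sigma):=\Psi_k(\varphi_{\lambda,\sigma})$ is then a continuous self-map of $M_k$. By~(i) and the defining property of $\Psi_k$, $\mathcal{T}_\lambda(\sigma)\rightharpoonup\sigma$ weakly, uniformly in $\sigma\in M_k$; thus $\mathcal{T}_\lambda$ is $C^0$-close to $\mathrm{id}_{M_k}$ in any metrization of the weak topology on $M_k$ for $\lambda$ large, and the one-parameter family $s\mapsto\mathcal{T}_{\Lambda_0/(1-s)}$, extended by the identity at $s=1$, provides the required homotopy along the lines of~\cite{DM08}. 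The main difficulty of the whole scheme is of course the $\log\lambda$-control on $III(\varphi_{\lambda,\sigma})$ uniformly in $\sigma\in M_k$ (Lemma~\ref{l:pff}), which rests on a delicate pointwise and symmetrization analysis of the cross-terms between different bubbles, as carried out in the proof of that lemma.
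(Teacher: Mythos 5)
Your proposal is correct and follows essentially the route the paper intends, which simply collects Lemma \ref{l:pff}, Proposition \ref{c:Psik} and the scheme of \cite{DM08}: the bubble computation (with the harmless normalizing constant $8\pi^2/3$) gives (i), Lemma \ref{l:pff} together with your accounting of $\ov{\var}_{\l,\s}$ and of $\log\fint e^{4\var_{\l,\s}}\,dv$ and the hypothesis $\int U\,dv>8k\g_2\pi^2$ gives (ii), and the homotopy $s\mapsto\Psi_k(\var_{\ov{\l}/(1-s),\s})$, together with Proposition \ref{c:Psik}, gives (iii). The only (harmless) slip is that your decomposition of $F_\g$ counts the linear term $-4\g_3\int\var\,\Delta R\,dv$ twice, since it already sits inside $\g_3\,III$ and again inside $4\int U\var\,dv$; as $\int\Delta R\,dv=0$, the discrepancy is $4\g_3\int\Delta R\,(\var-\ov{\var})\,dv=o_\d(1)\log\l+C_\d$ and does not affect the conclusion.
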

\noindent We next introduce a variational scheme for obtaining existence of 
solutions of the Euler-Lagrange equation. Let $\hat{M}_k$ be the {\em topological cone} over $M_k$, which can be represented as
$\hat{M}_k = M_k \times [0,1]$ with $M_k \times \{0\}$
identified to a single point. Let first $\Xi $ be so large that
Proposition \ref{c:Psik} applies with $\frac \Xi 4$, and then let
$\ov{\l}$ be so large that $F_\g(\var_{\ov{\l},\s}) \leq - \Xi $
uniformly for $\s \in M_k$ (see Proposition \ref{p:sublev} $(ii)$). Fixing this value
of $\ov{\l}$, we define the family of maps 
\begin{equation}\label{eq:PiPi}
    \Pi_{\ov{\l}} = \left\{ \varpi : \hat{M}_k \to W^{2,2}(M)
 \; : \; \varpi \hbox{ is continuous and } \varpi(\cdot \times \{1\})
 = \var_{\ov{\l},\cdot} \hbox{ on } M_k \right\}.
\end{equation}

\begin{lem}\label{l:min-max}
 $\Pi_{\ov{\l}}$ is non-empty and moreover, letting
$$
  \ov{\Pi}_{\ov{\l}} = \inf_{\varpi \in \Pi_{\ov{\l}}}
  \; \sup_{m \in \hat{M}_k} F_\g(\varpi(m)), \qquad
  \hbox{ one has } \qquad \ov{\Pi}_{\ov{\l}} > - \frac
  \Xi 2.
$$

\end{lem}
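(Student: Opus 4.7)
\begin{pf}[Proof proposal]
The plan is to first exhibit an explicit element of $\Pi_{\ov{\l}}$ and then prove the lower bound for the min-max level via a contradiction argument based on the non-contractibility of $M_k$ (Lemma \ref{l:nonc}) together with the retraction $\Psi_k$ from Proposition \ref{c:Psik}.

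To show $\Pi_{\ov{\l}} \neq \emptyset$, I would set
$$ \varpi(\s, t) = t \, \var_{\ov{\l},\s}, \qquad (\s,t) \in M_k \times [0,1]. $$
At $t=0$ this sends everything to the zero function, so $\varpi$ is well-defined on the quotient $\hat{M}_k = M_k \times [0,1]/(M_k \times \{0\})$, while at $t=1$ it equals $\var_{\ov{\l}, \s}$ as required by \eqref{eq:PiPi}. Continuity in $W^{2,2}(M)$ follows from continuity of $\s \mapsto \var_{\ov{\l},\s}$, which in turn  reduces to standard estimates on \eqref{eq:pls} under convergence of probability measures in $M_k$ with the topology of weak convergence.

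For the estimate $\ov{\Pi}_{\ov{\l}} > - \Xi/2$, I would argue by contradiction. If $\ov{\Pi}_{\ov{\l}} \leq -\Xi/2$, then for any $\e \in (0, \Xi/4)$ there exists $\varpi \in \Pi_{\ov{\l}}$ with $\sup_{m \in \hat{M}_k} F_\g(\varpi(m)) < -\Xi/4$. Since $\Xi$ was chosen so that Proposition \ref{c:Psik} applies with the threshold $\Xi/4$, the composition
$$ \Psi_k \circ \varpi : \hat{M}_k \longrightarrow M_k $$
is a well-defined continuous map. Its restriction to $M_k \times \{1\}$ is precisely $\s \mapsto \Psi_k(\var_{\ov{\l},\s})$, which by Proposition \ref{p:sublev}(iii) is homotopic to the identity of $M_k$. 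Thus the identity of $M_k$ extends continuously to the cone $\hat{M}_k$, i.e.\ it is null-homotopic, contradicting the non-contractibility of $M_k$ established in Lemma \ref{l:nonc}. Hence $\ov{\Pi}_{\ov{\l}} > - \Xi/2$.

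The main subtlety is verifying that $\s \mapsto \var_{\ov{\l},\s}$ is continuous from $M_k$ (with the weak-$*$ topology inherited as a subset of $\mathcal{P}(M)$) to $W^{2,2}(M)$, given that $M_k$ is stratified and points $x_i$ may collide or coefficients $t_i$ may degenerate. However, formula \eqref{eq:pls} only depends on $\s$ through the measure $\sum t_i \d_{x_i}$ (and not on the labelling of the atoms), so the asserted continuity is a routine check analogous to the one performed for the $Q$-curvature problem in \cite{DM08}.
\end{pf}
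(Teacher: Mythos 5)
Your proposal is correct and follows essentially the same route as the paper: the explicit cone map $\ov{\varpi}(\s,t)=t\,\var_{\ov{\l},\s}$ for non-emptiness, and the contradiction via $\Psi_k\circ\varpi$ giving a null-homotopy of a map homotopic to the identity on $M_k$, against Lemma \ref{l:nonc} and Proposition \ref{p:sublev}(iii). The only cosmetic difference is the choice of intermediate level (the paper uses $-\tfrac38\Xi$ where you use $-\tfrac{\Xi}{4}$), which is immaterial since both lie in the sublevel where Proposition \ref{c:Psik} applies.
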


\begin{pf}
To show that  $\Pi_{\ov{\l}} \neq \emptyset$, it  suffices to consider 
the map
\begin{equation}\label{eq:ovPi}
  \ov{\varpi}( \sigma,t) = t \var_{\ov{\l},\sigma}, \qquad \qquad (\sigma,t)
  \in \hat{M}_k. 
\end{equation} 
Arguing  by contradiction, suppose that 
$\ov{\Pi}_{\ov{\l}} \leq  - \frac \Xi 2$. Then there would exist a map $\varpi
\in \Pi_{\ov{\l}}$ with $\sup_{m \in \hat{M}_k} F_\g(\varpi(m))
\leq - \frac 38 \Xi $.  Since by our choice of $\Xi $ Proposition \ref{c:Psik} applies
with $\frac \Xi 4$, writing $m = (\sigma, t)$, with $\sigma \in M_k$, the map
$$
  t \mapsto \Psi \circ \varpi(\cdot,t)
$$
realizes a homotopy in $M_k$ between $\Psi \circ
\var_{\ov{\l},\cdot}$ and a constant map. However this cannot be, as 
$M_k$ is non-contractible (see Lemma \ref{l:nonc}) and since
$\Psi \circ \var_{\ov{\l},\cdot}$ is homotopic to the identity on
$M_k$, by Proposition \ref{p:sublev} $(iii)$. Hence we deduce
$\ov{\Pi}_{\ov{\l}} > - \frac \Xi 2$.
\end{pf}

\

\noindent By the statement of Lemma \ref{l:min-max} and standard variational 
arguments, one can find a {\em Palais-Smale sequence} $(w_n)_n$ for 
$F_\g$ at level $\ov{\Pi}_{\ov{\l}}$, namely a sequence for which 
$$
  F_\g(w_n) \to \ov{\Pi}_{\ov{\l}}; \qquad \qquad 
  \nabla F_\g(w_n) \to 0. 
$$
Unfortunately it is not known whether Palais-Smale sequences converge. To show 
this property, from the fact that $w \mapsto e^{4w}$ is compact from 
$W^{2,2}(M)$ to $L^1(M)$, it would be sufficient to show that any Palais-Smale sequence 
is bounded  in $W^{2,2}$. 

This is in fact proven  indirectly, following an argument in \cite{str}, by 
making in the functional $F_\g$ the substitutions  $\int Q dv \mapsto t \int Q dv$, $\gamma_1 \mapsto t \gamma_1$, $\mu \mapsto t \mu$ and $II \ \mapsto \ II - \Theta (t-1) \gamma_2 \int |\n w|^2 dv$ for $t$ close to $1$, 
 where $\Theta$ 
 is a large positive constant ($\Theta$ can be taken zero if $P$ has no negative eigenvalues). 
We choose a small $t_0 > 0$, and allow $t$ to
vary  in the interval $[ 1 - t_0, 1 + t_0]$. 
We consider then the functional $F_\g$ for these values of $t$, 
denoting it by $(F_\g)_t$. If
$t_0$ is sufficiently small, the interval $\left[ (1 - t_0) \int U dv, (1 +
t_0) \int U  dv \right]$ will be compactly contained in $(8 k \g_2 \pi^2, 8 (k+1) \g_2 \pi^2)$.
Following the  previous estimates with minor changes, one easily checks
that the  min-max scheme applies uniformly for $t \in [ 1 -
t_0, 1 + t_0 ]$ and for $\ov{\l}$ sufficiently large. 
Precisely, given any large  $\Xi  > 0$, there exist $t_0$
sufficiently small and $\ov{\l}$ so large that for $t \in [1 -
t_0, 1 + t_0]$
$$    \sup_{m \in \partial
   \hat{M}_k} (F_\g)_t (\varpi(m)) < - 2 \, \Xi ; \quad
   \ov{\Pi}_{t} := \inf_{\varpi \in \Pi_{\ov{\l}}}
  \; \sup_{m \in \hat{M}_k} (F_\g)_t (\varpi(m)) > -
  \frac{\Xi }{2},$$
where $\Pi_{\ov{\l}}$ is defined in \eqref{eq:PiPi}. Moreover, using
for example the test map \eqref{eq:ovPi}, one shows that for
$t_0$ sufficiently small there exists a large constant $\ov{\Xi }$
such that
$$  \ov{\Pi}_{t} \leq \ov{\Xi } \qquad \qquad \hbox{ for every }
  t \in [1 - t_0, 1 + t_0].$$

\medskip 

\noindent If the above constant $\Theta$ is chosen large enough (compared to the 
negative values of the Paneitz operator), it is easy to show that 
$ t \mapsto \frac{\ov{\Pi}_t}{t} $ is
  non-increasing in  $[1 - t_0, 1 + t_0]$. 
From this we deduce that the function $t
\mapsto \frac{\ov{\Pi}_t}{t}$ is differentiable almost
everywhere, and we obtain the following corollary.

\begin{cor}\label{c:c}
Let $\ov{\l}$ and $t_0$ be as above, and let $\L
\subset [1 - t_0, 1 + t_0]$ be the (dense) set of $t$ for
which the function $\frac{\ov{\Pi}_t}{t}$ is differentiable.
Then for $t \in \L$ the functional $F_\g$ possesses a bounded
Palais-Smale sequence $(w_l)_l$ at level $\ov{\Pi}_{t}$, weakly converging 
to a solution of 
$$  \mathcal{N} (w)  + 2 \g_2 \, \Theta \, (t-1) \, \Delta w + t \, U = t \, \mu \frac{e^{4w}}{\int e^{4w} dv}. $$
\end{cor}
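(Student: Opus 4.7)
The approach is the classical Struwe monotonicity trick adapted to the quasilinear setting. Since all the substitutions listed before Corollary \ref{c:c} are linear in $t$, the modified functional admits the decomposition $(F_\g)_t(w) = G_0(w) + t \, G_1(w)$, where
\begin{eqnarray*}
G_1(w) & = & \gamma_1 \Big[ 4\int w|W|^2 dv - \log\fint e^{4w} dv \int |W|^2 dv \Big] \\
& & + \gamma_2 \Big[ 4\int Q w \, dv - \int Q \, dv \log\fint e^{4w} dv \Big] - \mu \log \int e^{4w} dv - \Theta \gamma_2^2 \int |\nabla w|^2 dv.
\end{eqnarray*}
The crucial feature is the gradient term $-\Theta \gamma_2^2 \int|\nabla w|^2 dv$: for large $\Theta$, a two-sided bound on $G_1(w)$ will force a bound on $\int |\nabla w|^2 dv$.

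The main step is the extraction of near-optimal elements. Fix $t \in \Lambda$ and pick sequences $t_k^\pm$ with $t_k^- \nearrow t$ and $t_k^+ \searrow t$. For each $k$, choose $\varpi_k^\pm \in \Pi_{\ov{\l}}$ with $\sup_m (F_\g)_{t_k^\pm}(\varpi_k^\pm(m)) \leq \ov{\Pi}_{t_k^\pm} + (t_k^\pm - t)^2$, and choose $m_k^\pm$ near-maximizing $(F_\g)_t(\varpi_k^\pm(\cdot))$, so that $(F_\g)_t(\varpi_k^\pm(m_k^\pm)) \geq \ov{\Pi}_t - (t_k^\pm - t)^2$. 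Setting $w_k^\pm := \varpi_k^\pm(m_k^\pm)$, the affine structure in $t$ and these two-sided estimates give
$$(t_k^\pm - t) \, G_1(w_k^\pm) \; = \; (F_\g)_{t_k^\pm}(w_k^\pm) - (F_\g)_t(w_k^\pm) \; \leq \; \ov{\Pi}_{t_k^\pm} - \ov{\Pi}_t + 2(t_k^\pm - t)^2.$$
Dividing by $t_k^+ - t > 0$ and $t_k^- - t < 0$ respectively and invoking the differentiability of $\ov{\Pi}_t/t$ (which yields differentiability of $\ov{\Pi}_t$), we obtain $G_1(w_k^+) \leq D_t + o(1)$ and $G_1(w_k^-) \geq D_t - o(1)$, so each sequence is bounded on one side. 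Running the two arguments in parallel and mixing the conclusions (or, more simply, applying the one-sided bound that controls $\Theta \gamma_2^2 \int|\nabla w_k|^2 dv$ from above) will provide a gradient bound on one of the two families.

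Once $\int |\nabla w_k|^2 dv$ is bounded, the upper bound on $(F_\g)_t(w_k)$ (which is implicit in $(F_\g)_t(w_k)$ being close to $\ov{\Pi}_t$) together with positivity of the quadratic form $\gamma_2 \int (\Delta w)^2 dv + 12\gamma_3 \int (\Delta w + |\nabla w|^2)^2 dv$ (squares completion, as in Section \ref{linear}) and the Moser-Trudinger inequality of Theorem \ref{t:sharp-ineq} yield a uniform $W^{2,2}$-bound on $w_k - \ov{w}_k$; translating by the average, we may normalize $\ov{w}_k = 0$. An Ekeland-type argument applied to the finite-dimensional mountain-pass profile $m \mapsto (F_\g)_t(\varpi_k(m))$ near the near-maximum point $m_k$ then perturbs $\varpi_k$ so that $\nabla (F_\g)_t(w_k) \to 0$, producing the desired bounded Palais-Smale sequence at level $\ov{\Pi}_t$. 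Weak convergence in $W^{2,2}$, combined with the compactness $w \mapsto e^{4w}$ from $W^{2,2}(M)$ to $L^1(M)$ (Adams' inequality), passes the limit through the Euler-Lagrange equation.

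The main technical obstacle is the derivation of the $W^{2,2}$-bound from the gradient bound: unlike in the linear Paneitz setting of \cite{DM08}, here the operator $\mathcal{N}$ is quasilinear with mixed-order quartic terms in $\nabla w$, so the elliptic a-priori estimate has to absorb the quartic gradient contribution. The positive-definiteness granted by $\gamma_2, \gamma_3 > 0$ combined with $\frac{\gamma_2}{\gamma_3} > \frac{3}{2}$ makes this work, and the coefficient $\Theta$ can be tuned to dominate any negative eigenvalues of the Paneitz form in $G_0$.
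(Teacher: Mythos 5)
You follow the same route as the paper, whose one-line proof simply invokes Lemma \ref{l:min-max} for the existence of the Palais--Smale sequence and defers the boundedness to Struwe's monotonicity trick ``as in \cite{DJLW2}, Lemma 3.2''. Your extraction of near-optimal maps for nearby parameters and the resulting one-sided bounds on the $t$-derivative part $G_1$ at differentiability points of $\ov{\Pi}_t/t$ are correct (modulo bookkeeping: the term $-\mu\log\int e^{4w}dv$ in your $G_1$ is spurious, the logarithmic terms being already contained in the $\g_1,\g_2$ brackets), and the lower bound $G_1(w_k^-)\ge \tfrac{d}{dt}\ov{\Pi}_t-o(1)$ does give the gradient bound, via Jensen and Poincar\'e after normalizing $\ov{w}_k=0$.

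The genuine gap is the step ``gradient bound $+$ level bound $+$ Moser--Trudinger $\Rightarrow$ $W^{2,2}$ bound''. Theorem \ref{t:sharp-ineq} is not applicable here, since it requires $\int U\,dv\le 8\pi^2\g_2$, and inequality \eqref{eq:ineq-CY} controls the term $-t\int U dv\,\log\fint e^{4w}dv$ only by $\tfrac{t\int U dv}{8\pi^2}\int(\D w)^2dv$, whose coefficient exceeds $\g_2$ precisely in the regime $\int U dv\in\left(8k\g_2\pi^2,8(k+1)\g_2\pi^2\right)$, $k\ge 1$, where the min-max scheme is needed; the quartic term $12\g_3\int(\D w+|\n w|^2)^2dv$ cannot rescue this, as it degenerates on bubbles. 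Concretely, the test functions $\var_{\l,\s}$ of \eqref{eq:pls} have $\int|\n \var_{\l,\s}|^2dv$ bounded uniformly in $\l$, while $\int(\D \var_{\l,\s})^2dv\to\infty$ and $F_\g(\var_{\l,\s})\to-\infty$; hence no inequality of the type ``$(F_\g)_t\ge c\int(\D w)^2dv-C$ on gradient-bounded sets'' holds, and a bounded min-max level together with a gradient bound does not control $\int(\D w_k)^2dv$. The boundedness must instead be read off from your own inequalities, as in \cite{DJLW2}: in the decomposition $(F_\g)_t=G_0+tG_1$ all exponential/logarithmic terms sit in $G_1$, so combining $G_1(w_k^-)\ge \tfrac{d}{dt}\ov{\Pi}_t-o(1)$ with $(F_\g)_{t_k^-}(w_k^-)\le \ov{\Pi}_{t_k^-}+o(1)$ yields $G_0(w_k^-)\le C$; since $G_0=\g_2\langle Pw,w\rangle+\g_3\,III(w)+\Theta\g_2^2\int|\n w|^2dv$ up to terms linear in $w$, it is coercive on $\{\ov{w}=0\}$ for $\Theta$ large (squares completion with $\g_2/\g_3>3/2$ plus Poincar\'e), and this gives the $W^{2,2}$ bound with no Moser--Trudinger input at all. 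A smaller point, left implicit by the paper as well: passing to the limit in the quasilinear terms of $\mathcal{N}$ under weak $W^{2,2}$ convergence is not a consequence of the compactness of $w\mapsto e^{4w}$ alone.
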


\begin{pf}
The existence of a Palais-Smale sequence $(w_l)_l$ follows from
Lemma \ref{l:min-max}, and the boundedness is proved exactly as in
\cite{DJLW2}, Lemma 3.2.
\end{pf}

\

\noindent  We can finally prove our second main result. 

\

\begin{pfn} {\sc of Theorem \ref{t:ex}.} We assume that $\g_2, \g_3 > 0$: obvious 
changes have to be made for opposite signs. 
From the above result we obtain a sequence $t_n \to 1$ and a sequence 
$w_n$ solving 
$$
 \mathcal{N} (w_n)  + 2 \g_2 \, \Theta \, (t_n-1) \, \Delta w_n + t_n U = t_n \mu \frac{e^{4w_n}}{\int e^{4w_n} dv}, 
$$
which can be chosen to satisfy $\int e^{4w_n} dv = 1$ for all $n$. Since the extra term $ 2 \g_2 \, \Theta \, t_n \, \Delta w_n$ does 
not affect the analysis in Theorem \ref{p:minimal-blow-up}, we can then pass to the limit using 
assumption $\int U dv  \notin   8 \pi^2 \gamma_2 \mathbb{N}$. 
This concludes the proof. 
\end{pfn}

\section{Appendix}\label{s:app}

In this appendix we collect a commutator estimate, useful in Section \ref{linear}, and 
a Pohozaev-type identity that is used in Section \ref{blowup}. 

\medskip \noindent Given $Q \in L^r(M, TM)$ and $\delta>0$, define $S^x$ as
\begin{equation*}
\begin{array}{cccl} S^x: & L^r (M, TM) & \to & L^{\frac{r}{1-x}}(M,TM)\\
& F & \to & S^x F=(\frac{\|F \|_r^2+\|Q\|_r^2}{\delta^2+|F|^2+|Q|^2})^{\frac{x}{2}}  F. \end{array} 
\end{equation*}
We have the following result: 
\begin{thm}  \label{Hodge}
Let $r>1$, $0<\rho <\min\{1,r-1\}$ and $\Lambda: L^s(M, TM) \to L^s(M, TM)$, $\frac{r}{1+\rho} \leq s \leq \frac{r}{1-\rho}$, be a linear operator so that
$$K_0=\sup_{\frac{r}{1+\rho} \leq s \leq \frac{r}{1-\rho}}\|\Lambda\|_{\mathcal L(L^s)}<+\infty.$$
There exists $K>0$ so that
\begin{equation} \label{1421}
\|\Lambda(S^x F)-S^x (\Lambda F)\|_{\frac{r}{1-x}} \leq 
 K |x|  \left( \delta^2+\|F\|_r^2+\|Q\|_r^2 \right)^{\frac{\rho}{2}} \|F\|_r^{1-\rho}
\end{equation} 
for all $|x|\leq \rho$, $\delta>0$ and $Q \in L^r(M, TM)$.
\end{thm}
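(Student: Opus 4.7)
The proof proceeds by Stein-style complex interpolation combined with a Schwarz-type lemma that exploits the cancellation of the commutator at the origin. The first move is to extend the parameter $x$ to a complex strip by setting
$$S^z F := \Bigl(\frac{\|F\|_r^2+\|Q\|_r^2}{\delta^2+|F|^2+|Q|^2}\Bigr)^{z/2} F,$$
defined via the standard complex power of the positive scalar $A/B$, and to consider the operator-valued analytic family $T^z F := \Lambda(S^z F) - S^z(\Lambda F)$ on $\{z : |\Re z|\le\rho\}$. The crucial structural observation is that $T^0 F \equiv 0$, since $S^0=I$; it is this vanishing at the origin which will ultimately produce the linear factor $|x|$ on the right-hand side of \eqref{1421}.

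To set up the interpolation, I will pair $T^z F$ against the Stein analytic family $G_z := |G|^{(r-1+z)/(r-1)}\mathrm{sgn}(G)$, indexed by bounded test functions $G$ with compact support. A direct calculation shows $\|G_z\|_{r/(r-1+\Re z)} = \|G\|_{r'}^{r'(r-1+\Re z)/r}$, so the renormalized scalar function $\tilde h(z) := \langle T^z F, G_z\rangle / \|G\|_{r'}^{r'(r-1+z)/r}$ is entire in $z$ (by analyticity of both $\Phi(z) := (A/B)^{z/2}$ and $G_z$), vanishes at $z=0$, and satisfies $|\tilde h(z)| \le \|T^z F\|_{r/(1-\Re z)}$ by H\"older's inequality. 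On the imaginary axis $\Re z=0$ the identity $|S^{it}G|=|G|$ yields $\|T^{it}F\|_r \le 2K_0\|F\|_r$. On the vertical lines $\Re z=\pm\rho$ the pointwise bound $|F|^2 \le \delta^2+|F|^2+|Q|^2$ yields $\|S^{\pm\rho+it} F\|_{r/(1\mp\rho)} \le A^{\pm\rho/2}\|F\|_r^{1\mp\rho}$ with $A=\|F\|_r^2+\|Q\|_r^2$, hence $\|\Lambda(S^{\pm\rho+it} F)\|_{r/(1\mp\rho)} \le K_0 A^{\pm\rho/2}\|F\|_r^{1\mp\rho}$. Combining these edge estimates with a careful Stein duality argument that recovers the commutator cancellation in the target space, one obtains the uniform strip bound
$$|\tilde h(z)| \le M := C(r,\rho,K_0)\,(\delta^2+\|F\|_r^2+\|Q\|_r^2)^{\rho/2}\|F\|_r^{1-\rho}.$$

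The conclusion follows by conformally mapping the strip to the unit disk via $\psi(z) := \tanh(\pi z/(4\rho))$, which fixes the origin. The function $\tilde h \circ \psi^{-1}$ is then analytic on the unit disk, bounded by $M$, and vanishes at $0$, so Schwarz's lemma gives $|\tilde h(z)| \le M\,|\tanh(\pi z/(4\rho))|$, which for real $z=x$ with $|x|\le\rho$ yields $|\tilde h(x)| \le C M|x|/\rho$. Taking the supremum over all admissible test functions $G$ and invoking the duality $L^{r/(1-x)} = (L^{r/(r-1+x)})^*$ recovers \eqref{1421} with $K = C/\rho$.

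The principal technical obstacle is the uniform bound $|\tilde h(z)| \le M$ on the strip, specifically on the edges $\Re z = \pm\rho$: the two pieces $\Lambda(S^z F)$ and $S^z(\Lambda F)$ of the commutator cannot be controlled separately in the target space $L^{r/(1\mp\rho)}$, since $\Lambda F$ is only known to lie in $L^r$ and not in the larger-exponent Lebesgue class. The needed cancellation must be extracted through the Stein pairing itself, exploiting the full uniform $L^s$-boundedness of $\Lambda$ across the entire interpolation range $[r/(1+\rho), r/(1-\rho)]$ rather than any single edge. This is the classical mechanism underlying the grand-Lebesgue commutator estimates of Iwaniec-Sbordone.
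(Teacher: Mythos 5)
Your skeleton (complex extension of $S^x$, pairing with a Stein-type dual analytic family, vanishing of the pairing at $z=0$, Schwarz's lemma, then duality) is in fact the same as the paper's, but the proof as written has a genuine gap at its central step: the uniform strip bound $|\tilde h(z)|\le M$ is never established. You defer it to ``a careful Stein duality argument that recovers the commutator cancellation in the target space'', on the grounds that $S^z(\Lambda F)$ cannot be bounded separately because $\Lambda F$ is only in $L^r$. That obstacle does not exist: $S^x$ is a nonlinear map whose weight is built from its own argument, so $S^x(\Lambda F)=\bigl(\tfrac{\|\Lambda F\|_r^2+\|Q\|_r^2}{\delta^2+|\Lambda F|^2+|Q|^2}\bigr)^{x/2}\Lambda F$, with $|\Lambda F|^2$ in the denominator. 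Hence the very same pointwise/H\"older estimate you use for $S^zF$ applies verbatim with $F$ replaced by $\Lambda F$, and $\|\Lambda F\|_r\le K_0\|F\|_r$ closes the bound. This is exactly what the paper does: it proves $\|R_zF\|_{r/(1-x)}\le \|F\|_r^{1-\rho}\bigl(\delta^2|M|^{2/r}+\|F\|_r^2+\|Q\|_r^2\bigr)^{\rho/2}$ for every $F\in L^r$ and every $z$ in the strip (not only on the edges), applies it to both $F$ and $\Lambda F$, invokes the uniform bound $K_0$ for $\Lambda$ on $L^{r/(1-x)}$, and then uses only the cancellation $\phi(0)=0$ together with Schwarz's lemma on $B_\rho(0)$. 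So the ``principal technical obstacle'' you identify is a misreading of the operator, and because you leave that step unproved, the core estimate of your argument is an assertion rather than a proof.

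There is also a concrete error in the edge estimate at $\Re z=-\rho$: with $A=\|F\|_r^2+\|Q\|_r^2$, the pointwise inequality $|F|^2\le\delta^2+|F|^2+|Q|^2$ gives $|S^{-\rho+it}F|=A^{-\rho/2}\bigl(\delta^2+|F|^2+|Q|^2\bigr)^{\rho/2}|F|\ \ge\ A^{-\rho/2}|F|^{1+\rho}$, i.e.\ the inequality goes the wrong way, and the claimed bound $\|S^{-\rho+it}F\|_{r/(1+\rho)}\le A^{-\rho/2}\|F\|_r^{1+\rho}$ is false in general (take $\delta$ large). The correct estimate uses H\"older with exponents $\tfrac{1+\rho}{\rho}$ and $1+\rho$, yielding $\|S^{-\rho+it}F\|_{r/(1+\rho)}\le A^{-\rho/2}\,\|\delta^2+|F|^2+|Q|^2\|_{r/2}^{\rho/2}\,\|F\|_r\le\bigl(\tfrac{\delta^2|M|^{2/r}+A}{A}\bigr)^{\rho/2}\|F\|_r$, which is why the paper's constant carries the factor $|M|^{2/r}$. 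With these two repairs (bounding both halves of the commutator separately throughout the strip, and fixing the left edge), your Schwarz-lemma and duality steps go through and essentially reproduce the paper's proof; the remaining differences are cosmetic (the paper applies Schwarz directly on the disk $B_\rho(0)\subset T$ instead of conformally mapping the strip, and uses the isometric dual family $Q_zG=(|G|/\|G\|_q)^{\bar z/(r-1)}G$ in place of your $G_z$).
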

\begin{pf} Let $T=\{z=x+iy: \; |x| \leq \rho \}$ and $r_x=\frac{r}{1-x}$, $q_x=\frac{r}{r-1+x}$ be conjugate exponents. Set 
\begin{eqnarray*}
&& R_z: F \in L^r(M,TM+iTM) \to 
R_z F=(\frac{\|F\|_r^2+\|Q\|_r^2}{\delta^2+|F|^2+|Q|^2})^{\frac{z}{2}}  F \in L^{r_x}(M,TM+iTM)\\
&& Q_z: G \in L^{q}(M,TM+iTM) \to 
Q_z G=(\frac{|G|}{\|G\|_{q}})^{\frac{\bar z}{r-1}}  G \in L^{q_x}(M,TM+iTM)
\end{eqnarray*}
for all $z \in T$, where $q=\frac{r}{r-1}$. The map $Q_z$ satisfies $\|Q_z G\|_{q_x}=\|G \|_q$ and is invertible with inverse $(Q_z)^{-1}H=(\frac{|H|}{\|H\|_{q_x}})^{-\frac{q_x \bar z}{r}} H$. Given $F,G \in L^r(M,TM+iTM) $ define the map $\phi:T \to \mathbb C$ as
$$\phi(z)=\int \hbox{Re }\langle \Lambda (R_z F)-R_z (\Lambda F), \overline{Q_z G} \rangle dv.$$
Notice that $\phi(z)$ is a well defined holomorphic function in $T$ in view of $r_x \in [\frac{r}{1+\rho},\frac{r}{1-\rho}]$. Since by H\"older's estimate there holds
\begin{eqnarray*}
\|R_z F\|_{r_x}& =&(\|F\|_r^2+\|Q\|_r^2)^{\frac{x}{2}}\|  (\delta^2+|F|^2+|Q|^2)^{-\frac{x}{2}}|F|\|_{r_x}\\
&\leq& 
 (\|F\|_r^2+\|Q\|_r^2)^{\frac{x}{2}} \times \left\{ \begin{array}{ll}
\|F\|_r \|  (\delta^2+|F|^2+|Q|^2)\|_{\frac{r}{2}}^{-\frac{x}{2}}   & \hbox{if }x<0\\
\|F\|_r^{1-x}  &\hbox{if }x>0, \end{array} \right. 
\end{eqnarray*}
we have that
\begin{eqnarray*}
\|R_z F\|_{r_x}& \leq&  \left\{ \begin{array}{ll}
 [\frac{\delta^2 |M|^{\frac{2}{r}}}{\|F\|_r^2+\|Q\|_r^2}+1]^{-\frac{x}{2}} \|F\|_r  & \hbox{if }x<0\\
(1+\frac{\|Q\|_r^2}{\|F\|_r^2})^{\frac{x}{2}} \|F\|_r  &\hbox{if }x>0 \end{array} \right. \leq   \|F\|_r^{1-\rho}
\left(\delta^2 |M|^{\frac{2}{r}}+\|F\|_r^2 +\|Q\|_r^2 \right)^{\frac{\rho}{2}}.
\end{eqnarray*}
Hence we can deduce the following estimate on $\phi$:
$$|\phi(z)|\leq 2 K_0 c_0^{\frac{\rho}{2}}  \left( \delta^2+\|F\|_r^2 +\|Q\|_r^2 \right)^{\frac{\rho}{2}} \|F\|_r^{1-\rho}
\|G\|_q,$$
where $c_0=\max\{1,K_0^{-2},|M|^{\frac{2}{r}},|M|^{\frac{2}{r}} K_0^{-2}\}$. Since $\phi(0)=0$, Schwartz's lemma on $B_\rho(0) \subset T$ gives that
$$|\phi(z)|\leq \frac{2K_0 c_0^{\frac{\rho}{2}}}{\rho}    \left(\delta^2+\|F\|_r^2 +\|Q\|_r^2\right)^{\frac{\rho}{2}} \|F\|_r^{1-\rho}
\|G\|_q  |z|,$$
and then
\begin{eqnarray*}
\|\Lambda (R_z F)-R_z (\Lambda F)\|_{r_x}&=& \sup_{ \|H\|_{q_x}\leq 1} |\int \hbox{Re }\langle \Lambda (R_z F)-R_z (\Lambda F), \overline{H} \rangle dv|\\
&=& \sup_{ \|G\|_q\leq 1 } |\int \hbox{Re }\langle \Lambda (R_z F)-R_z (\Lambda F), \overline{Q_z G} \rangle dv| \\
&\leq& \frac{2K_0 c_0^{\frac{\rho}{2}}} {\rho} \left(\delta^2+\|F\|_r^2+\|Q\|_r^2 \right)^{\frac{\rho}{2}} \|F\|_r^{1-\rho} |z|.
\end{eqnarray*}
Setting $K=\frac{2K_0}{\rho} \max\{1,K_0^{-2},|M|^{\frac{2}{r}},|M|^{\frac{2}{r}} K_0^{-2}\}^{\frac{\rho}{2}}$, we have established the validity of \eqref{1421} for all $|x|\leq \rho$ in view of $R_x=S^x$ . 
\end{pf}

\noindent Notice that \eqref{0945} follows by Theorem \ref{Hodge} applied with $\Lambda=\hbox{Id}-\mathcal K$, $F=\nabla p$, $Q=\nabla q$, $x=4\epsilon$ and $r=4(1-\epsilon)$ thanks to \eqref{1116}. We next prove a Pohozaev identity, useful to characterize volume quantization in Theorem \ref{p:minimal-blow-up}.

\begin{pro}\label{p:poho}
{ Let $p \in M$ and let $\Omega \subseteq M$ be contained in a normal 
neighbourhood of $p$. }
Suppose  $u$ solves
\begin{equation}\label{eq:eq-poho}
 \mathcal{N}_{g} (u) + \tilde U= 
  \mu \, e^{4 u} \qquad \quad \hbox{ in } \Omega. 
\end{equation}
Let $(x^i)_i$ be a system of geodesic coordinates centred at $p$, and consider in these coordinates a  
vector field $a = a^i \frac{\pa}{\pa x^i}$ with constant components $(a^i)_i$. 
Then   the following identities hold 
\begin{eqnarray} \label{eq:poho-1} \nonumber
  { \mathcal{B}_g(p,\Omega,u) } & = & - \mu \int_{\Omega} e^{4u} (1 + O(|x|^2)) dv + \frac{\mu}{4} \oint_{\pa \Omega} e^{4 u} x^i \nu_i d \s+O( \int_{\Omega}  |\nabla u|  (|x| + |\nabla u|)dv)
  \\ & + & 
{    O\left( \int_{\Omega}  |x|  (|\nabla^2 u| \,  |\nabla u| + |\nabla u|^3) dv  +\int_{\Omega}  |x|^2 (|\nabla^2 u|^2 + |\nabla u|^4 ) dv  \right) }
\end{eqnarray}
and
\begin{eqnarray} \label{eq:poho-2} \nonumber
  { \mathcal{B}_g(p,\Omega,a,u) } & = &\frac{\mu}{4} \oint_{\pa \Omega} e^{4 u} a^i \nu_i d \s   - \mu \int_{\Omega} e^{4u} O(|x| |a|) dv +O( \int_{\Omega}  |x||\nabla u|  (1 + |a||\nabla u|)dv) \\ 
  & + & 
     O \left(\int_{\Omega}  |a|  (|\nabla^2 u| \,  |\nabla u| + |\nabla u|^3) dv
     + \int_{\Omega} |x| |a| (|\nabla^2 u|^2 + |\nabla u|^4 )  dv \right) ,
\end{eqnarray}
where 
{ \begin{eqnarray} \label{eq:poho-12} \nonumber
 \mathcal{B}_g(p,\Omega,u)   & = & \left( \frac{\gamma_2 }{2}+ 6 \gamma_3  \right)\oint_{\pa \Omega} ( x^i u_{;i} \frac{\pa \Delta u}{\pa \nu} - \Delta u \frac{\pa (x^i u_{;i})}{\pa \nu} + 
  \frac 12  (\Delta u)^2 x^j \nu_j)    d \sigma  
    \\  &  - &  12 \gamma_3 \oint_{\pa \Omega} (|\nabla u|^2 u_{;k} \nu^k x^j u_{;j} 
    - \frac 14 |\nabla u|^4 x^j \nu_j) d \sigma  \\ 
    &  + &  6 \gamma_3 \oint_{\pa \Omega} \left[ x^i u_{;i} \left( \frac{\pa}{\pa \nu} |\n u|^2 - 2 \Delta u \frac{\pa u}{\pa \nu}  \right) 
    + |\n u|^2 \left( x^i \nu_i \Delta u -   \frac{\pa u}{\pa \nu}  - \n^2 u [x, \nu] \right)\right] d \s \nonumber
\end{eqnarray} }
and 
{ \begin{eqnarray} \label{eq:poho-22} \nonumber
\mathcal{B}_g(p,\Omega,a,u)   & = & \left( \frac{\gamma_2}{2} + 6 \gamma_3  \right)\oint_{\pa \Omega} ( a^i u_{;i} \frac{\pa \Delta u}{\pa \nu} - \Delta u \frac{\pa (a^i u_{;i})}{\pa \nu} + 
  \frac 12  (\Delta u)^2 a^j \nu_j)    d \sigma  
    \\  &  - &  12 \gamma_3 \oint_{\pa \Omega} (|\nabla u|^2 u_{;k} \nu^k a^j u_{;j} 
    - \frac 14 |\nabla u|^4 a^j \nu_j) d \sigma \\ 
    &  + &  6 \gamma_3 \oint_{\pa \Omega} \left[ a^i u_{;i} \left( \frac{\pa}{\pa \nu} |\n u|^2 - 2 \Delta u \frac{\pa u}{\pa \nu}  \right) 
    + |\n u|^2 \left( a^i \nu_i \Delta u - \n^2 u [a, \nu] \right)\right] d \s. \nonumber 
\end{eqnarray} } 
\end{pro}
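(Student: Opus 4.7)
The plan is to test equation \eqref{eq:eq-poho} against the multiplier $\phi=x^i u_{;i}$ (respectively $\phi=a^i u_{;i}$), integrate over $\Omega$, and move every derivative off $\phi$ by successive integrations by parts. In the $g$-normal coordinates $(x^i)$ at $p$ one has the expansions $g_{ij}=\delta_{ij}+O(|x|^2)$, $\Gamma^k_{ij}=O(|x|)$, $R,\ \mathrm{Ric}=O(1)$ and $\mathrm{div}_g(x^i\partial_i)=4+O(|x|^2)$, $\mathrm{div}_g(a)=O(|x||a|)$. All geometric corrections arising from these expansions will produce contributions of the form collected in the claimed error terms.

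First I would handle the right-hand side: $\mu\int_\Omega e^{4u}\phi\,dv=\tfrac{\mu}{4}\int_\Omega \langle\nabla e^{4u},X\rangle dv$, which after one integration by parts equals $-\tfrac{\mu}{4}\int_\Omega e^{4u}\mathrm{div}_g(X)dv+\tfrac{\mu}{4}\oint_{\partial\Omega}e^{4u}X^i\nu_i\,d\sigma$; inserting $\mathrm{div}_g(X)=4+O(|x|^2)$ for $X=x^i\partial_i$ yields the leading $-\mu\int e^{4u}dv$ plus an $O(|x|^2)$ error, and for $X=a$ the volume term is absorbed into $O(\int_\Omega e^{4u}|x||a|\,dv)$. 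The curvature lower-order pieces $\gamma_2\,\mathrm{div}[\mathrm{Ric}(\cdot,\nabla u)]$ and $(2\gamma_3-\tfrac{\gamma_2}{3})\mathrm{div}(R\nabla u)$ and the smooth term $\tilde U$ tested against $\phi$ give contributions bounded by $O(\int_\Omega(|x|+|a|)|\nabla u|\,dv)$ and minor boundary integrals that I group into $\mathcal{B}_g$ or absorb into the error.

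The substantive step is treating the principal part $\mathcal{N}_g(u)$, which after the identity $\Delta u+|\nabla u|^2=e^{-u}\Delta_{\tilde g}(e^u)$ is a combination of $\tfrac{\gamma_2+12\gamma_3}{2}\Delta^2 u$, $6\gamma_3\Delta(|\nabla u|^2)$, and $-12\gamma_3\,\mathrm{div}[(\Delta u+|\nabla u|^2)\nabla u]$. For the bilaplacian I use the classical identity $\int_\Omega\Delta^2 u\,Xu\,dv=\oint_{\partial\Omega}\bigl(Xu\,\partial_\nu\Delta u-\Delta u\,\partial_\nu(Xu)+\tfrac12(\Delta u)^2 X^j\nu_j\bigr)d\sigma$ in the Euclidean model, with remainders of order $|x|^2(|\nabla^2u|^2+|\nabla u|^2)$ and $|x|(|\nabla^3 u||\nabla u|+|\nabla^2u|^2)$ that—after one further integration by parts moving derivatives off $u$—reduce to the error class $\int_\Omega|x|(|\nabla^2 u||\nabla u|+|\nabla u|^3)dv+\int_\Omega|x|^2(|\nabla^2 u|^2+|\nabla u|^4)dv$. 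For $6\gamma_3\Delta(|\nabla u|^2)$ I similarly obtain boundary integrals involving $|\nabla u|^2$ and $\partial_\nu(|\nabla u|^2)$ that, combined with the bilaplacian boundary, assemble the full $(\tfrac{\gamma_2}{2}+6\gamma_3)$ and $6\gamma_3$ boundary lines in \eqref{eq:poho-12}. For the quasilinear piece I integrate by parts once to obtain $12\gamma_3\int_\Omega(\Delta u+|\nabla u|^2)\langle\nabla u,\nabla(Xu)\rangle dv$ minus $12\gamma_3\oint(\Delta u+|\nabla u|^2)\partial_\nu u\,Xu\,d\sigma$; expanding $\langle\nabla u,\nabla(Xu)\rangle=|\nabla u|^2+\tfrac12 X(|\nabla u|^2)+O(|x||\nabla u|^2)$ and integrating by parts the $X(|\nabla u|^2)$ factor produces the quartic boundary line $-12\gamma_3\oint(|\nabla u|^2 u_{;k}\nu^k X^j u_{;j}-\tfrac14|\nabla u|^4 X^j\nu_j)d\sigma$ together with an interior term that cancels the $|\nabla u|^4$ content produced by the $6\gamma_3\Delta(|\nabla u|^2)$ step.

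The main obstacle is bookkeeping: extracting the precise coefficients of each boundary monomial in $\mathcal{B}_g(p,\Omega,u)$ requires carefully combining the Pohozaev boundary from the bilaplacian, the quadratic gradient boundary from $\Delta(|\nabla u|^2)$, and the cubic/quartic gradient boundaries from the quasilinear divergence, and simultaneously confirming that every interior leftover (both the Riemannian correction from $g_{ij}=\delta_{ij}+O(|x|^2)$ and the mismatch between $\Delta(Xu)$ and $X(\Delta u)$) has the right order in $|x|$ and in derivatives of $u$ to fall into one of the stated error classes. The identity \eqref{eq:poho-2} is proven identically, using a constant multiplier $a$ in place of $X=x^i\partial_i$; since $\nabla_g a=O(|x||a|)$ in normal coordinates, one incurs analogous errors with $|x|$ replaced by $|x||a|$ and the leading interior $-\mu\int e^{4u}dv$ disappears because $\mathrm{div}_g(a)=O(|x||a|)$, matching the form displayed in \eqref{eq:poho-2}.
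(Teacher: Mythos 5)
Your proposal follows essentially the same route as the paper's proof: multiply the equation by $x^i u_{;i}$ (resp.\ $a^i u_{;i}$), integrate by parts the bi-Laplacian with coefficient $\frac{\gamma_2}{2}+6\gamma_3$, the terms $6\gamma_3\Delta(|\nabla u|^2)-12\gamma_3\,\mathrm{div}[(\Delta u+|\nabla u|^2)\nabla u]$ and the exponential right-hand side, and use the normal-coordinate expansions $g_{ij}=\delta_{ij}+O(|x|^2)$, $x^i_{\ ;j}=\delta^i_j+O(|x|^2)$, $a^i_{\ ;j}=O(|x||a|)$ to push every interior leftover into the stated error classes, exactly as in the paper, so the argument is correct in substance. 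Two small points of bookkeeping: the curvature terms $\gamma_2\,\mathrm{div}[\mathrm{Ric}(\cdot,\nabla u)]$ and $(2\gamma_3-\frac{\gamma_2}{3})\,\mathrm{div}(R\nabla u)$ should be left in nondivergence form and estimated pointwise by $O(|\nabla^2u|+|\nabla u|)$ rather than integrated by parts (otherwise one creates boundary integrals that belong neither to $\mathcal{B}_g$ nor to the purely interior error terms), and the expansion $\langle\nabla u,\nabla(x^i u_{;i})\rangle=|\nabla u|^2+\tfrac12 x^i(|\nabla u|^2)_{;i}+O(|x|^2|\nabla u|^2)$ holds with $O(|x|^2)$, not merely $O(|x|)$, which is what makes the resulting remainders fit the error classes of \eqref{eq:poho-1}--\eqref{eq:poho-2}.
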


\begin{pf}
 Multiply \eqref{eq:eq-poho} by $x^i u_{;i}$ and integrate by parts: 
starting with the bi-Laplacian of $u$ we find  
$$
  \int_{\Omega} x^i u_{;i} \Delta^2 u \, dv = \int_{\Omega} (x^i u_{;ij}^{\; \; \; \, j} + 2 x^i_{\, ;j} u_{;i}^{\;\;\; j} + x^{i \; \; j}_{;j} u_{;i}) \Delta u \, dv 
  + \oint_{\pa \Omega} ( x^i u_{;i} \frac{\pa \Delta u}{\pa \nu} - \Delta u \frac{\pa (x^i u_{;i})}{\pa \nu} ) 
  d \sigma. 
$$
Using the fact that in normal coordinates $g_{ij} = \delta_{ij} + O(|x|^2)$ one has that 
$$
  x^j_{\, ;k} = \delta^j_k + O(|x|^2); \qquad x^{j \; \; k}_{;k} = O(|x|); \qquad 
  u_{;jk}^{\; \; \; \, k} = (\Delta u)_j + O(|\nabla u|). 
$$
From these we deduce that the L.H.S. in the above formula becomes 
$$
  2 \int_{\Omega} (\Delta u)^2 dv
 + \int_{\Omega} \Delta u \, x^j (\Delta u)_{;j} dv + \int_{\Omega} \left( |x|^2 |\nabla^2 u|^2 + |x|  |\nabla^2 u| |\nabla u|  \right) dv. 
$$
Integrating by parts the second term, the whole expression transforms into 
$$ \int_{\Omega} x^i u_{;i} \Delta^2 u \, dv =
  \oint_{\pa \Omega} ( x^i u_{;i} \frac{\pa \Delta u}{\pa \nu} - \Delta u \frac{\pa (x^i u_{;i})}{\pa \nu} + 
  \frac 12  (\Delta u)^2 x^j \nu_j)    d \sigma +  \int_{\Omega} \left( |x|^2 |\nabla^2 u|^2 + |x|  |\nabla^2 u| |\nabla u|  \right) dv. 
$$
Similarly, we obtain that 
$$
  \int_{\Omega} {\rm div} (|\nabla u|^2 \nabla u) x^j u_{;j} dv = \oint_{\pa \Omega} (|\nabla u|^2 u_{;k} \nu^k x^j u_{;j} 
  - \frac 14 |\nabla u|^4 x^j \nu_j) d \sigma +  \int_{\Omega}  O(|x|^2 |\nabla u|^4) dv.
$$
On the other hand, we can also multiply the equation by $a^i u_{;i}$ and using the relations 
$$
  a^j_{\, ;k} = O(|x| |a|); \qquad \qquad a^j_{\, ;kk} = O(|a|)
$$
we find that 
$$
\int_{\Omega} a^i u_{;i} \Delta^2 u dv = \oint_{\pa \Omega} ( a^i u_{;i} \frac{\pa \Delta u}{\pa \nu} - \Delta u \frac{\pa (a^i u_{;i})}{\pa \nu} + 
  \frac 12  (\Delta u)^2 a^j \nu_j)    d \sigma  + \int_{\Omega} \left( |x| |a| |\nabla^2 u|^2 + |a|  |\nabla^2 u| |\nabla u|  \right) dv$$
and
$$
  \int_{\Omega} {\rm div} (|\nabla u|^2 \nabla u) a^j u_{;j} dv = \oint_{\pa \Omega} (|\nabla u|^2 u_{;k} \nu^k a^j u_{;j}  
  - \frac 14 |\nabla u|^4 a^j \nu_j) d \sigma +  \int_{\Omega} O( |x| |a| |\nabla u|^4) dv. 
$$
Analogously, we have the following two formulas 
\begin{eqnarray*} 
& & \int_{\Omega} x^i u_{;i} {\rm div} (\nabla |\nabla u|^2 - 2 \Delta u \nabla u) dv  = \int_{\Omega} (|x| |\n u|^3 + |x|^2 |\n u|^2 |\n^2 u|) dv \\ 
& + & \oint_{\pa \Omega} \left[ x^i u_{;i} \left( \frac{\pa}{\pa \nu} |\n u|^2 - 2 \Delta u \frac{\pa u}{\pa \nu}  \right) 
+ |\n u|^2 \left( x^i \nu_i \Delta u -  \frac{\pa u}{\pa \nu}  - \n^2 u [x, \nu] \right)\right] d \s
\end{eqnarray*}
and
\begin{eqnarray*} 
& & \int_{\Omega} a^i u_{;i} {\rm div} (\nabla |\nabla u|^2 - 2 \Delta u \nabla u) dv  = \int_{\Omega} (|a| |\n u| |\n^2 u| + |a| |x| |\n u|^2 |\n^2 u|) dv \\ 
& + & \oint_{\pa \Omega} \left[ a^i u_{;i} \left( \frac{\pa}{\pa \nu} |\n u|^2 - 2 \Delta u \frac{\pa u}{\pa \nu}  \right) 
+ |\n u|^2 \left( a^i \nu_i \Delta u  - \n^2 u [a, \nu] \right)\right] d \s. 
\end{eqnarray*}
Finally, integrating by parts the exponential terms we find 
$$
  \int_{\Omega} \mu e^{4u} x^i u_{;i} dv = \frac{1}{4} \mu \oint_{\pa \Omega} x^i \nu_i e^{4u} d \s
  - \mu \int_{\Omega} e^{4u} (1 + O(|x|^2)) dv
$$
and
$$
  \int_{\Omega} \mu e^{4u} a^i u_{;i} dv = \frac{1}{4} \mu \oint_{\pa \Omega} a^i \nu_i e^{4u} d \s
  - \mu \int_{\Omega} e^{4u}  O(|x| |a|) dv. 
$$
Putting together all the above formulas, recalling the expression of the Paneitz operator and taking into account the lower-order terms, we obtain the conclusion. 
\end{pf}

\bibliography{LogDet_references-29-11-18}

\begin{thebibliography}{10}

\bibitem{Adams}
David~R. Adams.
\newblock A sharp inequality of {J}. {M}oser for higher order derivatives.
\newblock {\em Ann. of Math. (2)}, 128(2):385--398, 1988.

\bibitem{ARS}
Adimurthi, Fr\'{e}d\'{e}ric Robert, and Michael Struwe.
\newblock Concentration phenomena for {L}iouville's equation in dimension four.
\newblock {\em J. Eur. Math. Soc. (JEMS)}, 8(2):171--180, 2006.

\bibitem{Agm}
Shmuel Agmon.
\newblock The {$L_{p}$} approach to the {D}irichlet problem. {I}. {R}egularity
  theorems.
\newblock {\em Ann. Scuola Norm. Sup. Pisa (3)}, 13:405--448, 1959.

\bibitem{Aub}
Thierry Aubin.
\newblock Probl\`emes isop\'erim\'etriques et espaces de {S}obolev.
\newblock {\em J. Differential Geometry}, 11(4):573--598, 1976.

\bibitem{Aubin}
Thierry Aubin.
\newblock Meilleures constantes dans le th\'eor\`eme d'inclusion de {S}obolev
  et un th\'eor\`eme de {F}redholm non lin\'eaire pour la transformation
  conforme de la courbure scalaire.
\newblock {\em J. Funct. Anal.}, 32(2):148--174, 1979.

\bibitem{BaTa}
D.~Bartolucci and G.~Tarantello.
\newblock Liouville type equations with singular data and their applications to
  periodic multivortices for the electroweak theory.
\newblock {\em Comm. Math. Phys.}, 229(1):3--47, 2002.

\bibitem{BBGGPV}
Philippe B\'{e}nilan, Lucio Boccardo, Thierry Gallou\"{e}t, Ron Gariepy, Michel
  Pierre, and Juan~Luis V\'{a}zquez.
\newblock An {$L^1$}-theory of existence and uniqueness of solutions of
  nonlinear elliptic equations.
\newblock {\em Ann. Scuola Norm. Sup. Pisa Cl. Sci. (4)}, 22(2):241--273, 1995.

\bibitem{BoccGa}
L.~Boccardo and T.~Gallou\"et.
\newblock Nonlinear elliptic equations with right-hand side measures.
\newblock {\em Comm. Partial Differential Equations}, 17(3-4):641--655, 1992.

\bibitem{BCY}
Thomas~P. Branson, Sun-Yung~A. Chang, and Paul~C. Yang.
\newblock Estimates and extremals for zeta function determinants on
  four-manifolds.
\newblock {\em Comm. Math. Phys.}, 149(2):241--262, 1992.

\bibitem{BO}
Thomas~P. Branson and Bent {$\O$}rsted.
\newblock Explicit functional determinants in four dimensions.
\newblock {\em Proc. Amer. Math. Soc.}, 113(3):669--682, 1991.

\bibitem{BrMe}
Haim Brezis and Frank Merle.
\newblock Uniform estimates and blow-up behavior for solutions of {$-\Delta
  u=V(x)e^u$} in two dimensions.
\newblock {\em Comm. Partial Differential Equations}, 16(8-9):1223--1253, 1991.

\bibitem{CGYAJM}
Sun-Yung~A. Chang, Matthew~J. Gursky, and Paul~C. Yang.
\newblock Regularity of a fourth order nonlinear {P}{D}{E} with critical
  exponent.
\newblock {\em Amer. J. Math.}, 121(2):215--257, 1999.

\bibitem{ChangYangAnnals}
Sun-Yung~A. Chang and Paul~C. Yang.
\newblock Extremal metrics of zeta function determinants on $4$-manifolds.
\newblock {\em Ann. of Math. (2)}, 142(1):171--212, 1995.

\bibitem{CY90}
Sun-Yung~A. Chang and Paul C.-P. Yang.
\newblock Isospectral conformal metrics on {$3$}-manifolds.
\newblock {\em J. Amer. Math. Soc.}, 3(1):117--145, 1990.

\bibitem{ChLi}
Chiun-Chuan Chen and Chang-Shou Lin.
\newblock Sharp estimates for solutions of multi-bubbles in compact {R}iemann
  surfaces.
\newblock {\em Comm. Pure Appl. Math.}, 55(6):728--771, 2002.

\bibitem{CL91}
Wen~Xiong Chen and Congming Li.
\newblock Prescribing {G}aussian curvatures on surfaces with conical
  singularities.
\newblock {\em J. Geom. Anal.}, 1(4):359--372, 1991.

\bibitem{Connes94}
Alain Connes.
\newblock {\em Noncommutative geometry}.
\newblock Academic Press, Inc., San Diego, CA, 1994.

\bibitem{DJLW}
Weiyue Ding, J\"{u}rgen Jost, Jiayu Li, and Guofang Wang.
\newblock The differential equation {$\Delta u=8\pi-8\pi he^u$} on a compact
  {R}iemann surface.
\newblock {\em Asian J. Math.}, 1(2):230--248, 1997.

\bibitem{DJLW2}
Weiyue Ding, J\"{u}rgen Jost, Jiayu Li, and Guofang Wang.
\newblock Existence results for mean field equations.
\newblock {\em Ann. Inst. H. Poincar\'{e} Anal. Non Lin\'{e}aire},
  16(5):653--666, 1999.

\bibitem{DM08}
Zindine Djadli and Andrea Malchiodi.
\newblock Existence of conformal metrics with constant {$Q$}-curvature.
\newblock {\em Ann. of Math. (2)}, 168(3):813--858, 2008.

\bibitem{DHM1}
Georg Dolzmann, Norbert Hungerb\"{u}hler, and Stefan M\"{u}ller.
\newblock Non-linear elliptic systems with measure-valued right hand side.
\newblock {\em Math. Z.}, 226(4):545--574, 1997.

\bibitem{DHM2}
Georg Dolzmann, Norbert Hungerb\"{u}hler, and Stefan M\"{u}ller.
\newblock Uniqueness and maximal regularity for nonlinear elliptic systems of
  {$n$}-{L}aplace type with measure valued right hand side.
\newblock {\em J. Reine Angew. Math.}, 520:1--35, 2000.

\bibitem{DR06}
O.~Druet and F.~Robert.
\newblock Bubbling phenomena for fourth-order four-dimensional {PDE}s with
  exponential growth.
\newblock {\em Proc. Amer. Math. Soc.}, 134(3):897--908, 2006.

\bibitem{EsMo}
P.~Esposito and F.~Morlando.
\newblock On a quasilinear mean field equation with an exponential
  nonlinearity.
\newblock {\em J. Math. Pures Appl. (9)}, 104(2):354--382, 2015.

\bibitem{Esp2018}
Pierpaolo Esposito.
\newblock A classification result for the quasi-linear {L}iouville equation.
\newblock {\em Ann. Inst. H. Poincar\'{e} Anal. Non Lin\'{e}aire},
  35(3):781--801, 2018.

\bibitem{Fon}
Luigi Fontana.
\newblock Sharp borderline {S}obolev inequalities on compact {R}iemannian
  manifolds.
\newblock {\em Comment. Math. Helv.}, 68(3):415--454, 1993.

\bibitem{GIS}
Luigi Greco, Tadeusz Iwaniec, and Carlo Sbordone.
\newblock Inverting the {$p$}-harmonic operator.
\newblock {\em Manuscripta Math.}, 92(2):249--258, 1997.

\bibitem{GuMa12}
Matthew Gursky and Andrea Malchiodi.
\newblock Non-uniqueness results for critical metrics of regularized
  determinants in four dimensions.
\newblock {\em Comm. Math. Phys.}, 315(1):1--37, 2012.

\bibitem{GurClass}
Matthew~J. Gursky.
\newblock Uniqueness of the functional determinant.
\newblock {\em Comm. Math. Phys.}, 189(3):655--665, 1997.

\bibitem{Gur98}
Matthew~J. Gursky.
\newblock The {W}eyl functional, de {R}ham cohomology, and
  {K}\"{a}hler-{E}instein metrics.
\newblock {\em Ann. of Math. (2)}, 148(1):315--337, 1998.

\bibitem{Gursky1}
Matthew~J. Gursky.
\newblock The principal eigenvalue of a conformally invariant differential
  operator, with an application to semilinear elliptic {P}{D}{E}.
\newblock {\em Comm. Math. Phys.}, 207(1):131--143, 1999.

\bibitem{IwSb}
T.~Iwaniec and C.~Sbordone.
\newblock Weak minima of variational integrals.
\newblock {\em J. Reine Angew. Math.}, 454:143--161, 1994.

\bibitem{Iw}
Tadeusz Iwaniec.
\newblock {$p$}-harmonic tensors and quasiregular mappings.
\newblock {\em Ann. of Math. (2)}, 136(3):589--624, 1992.

\bibitem{IwMa}
Tadeusz Iwaniec and Gaven Martin.
\newblock Riesz transforms and related singular integrals.
\newblock {\em J. Reine Angew. Math.}, 473:25--57, 1996.

\bibitem{VeKi86}
Satyanad Kichenassamy and Laurent V\'eron.
\newblock Singular solutions of the {$p$}-{L}aplace equation.
\newblock {\em Math. Ann.}, 275(4):599--615, 1986.

\bibitem{LiSha}
Yan~Yan Li and Itai Shafrir.
\newblock Blow-up analysis for solutions of {$-\Delta u=Ve^u$} in dimension
  two.
\newblock {\em Indiana Univ. Math. J.}, 43(4):1255--1270, 1994.

\bibitem{LiNg}
Yanyan Li and Luc Nguyen.
\newblock To appear.

\bibitem{LiWe}
Chang-Shou Lin and Juncheng Wei.
\newblock Sharp estimates for bubbling solutions of a fourth order mean field
  equation.
\newblock {\em Ann. Sc. Norm. Super. Pisa Cl. Sci. (5)}, 6(4):599--630, 2007.

\bibitem{LWW}
Changshou Lin, Juncheng Wei, and Liping Wang.
\newblock Topological degree for solutions of fourth order mean field
  equations.
\newblock {\em Math. Z.}, 268(3-4):675--705, 2011.

\bibitem{Mal06}
Andrea Malchiodi.
\newblock Compactness of solutions to some geometric fourth-order equations.
\newblock {\em J. Reine Angew. Math.}, 594:137--174, 2006.

\bibitem{MP}
S.~Minakshisundaram and \AA~. Pleijel.
\newblock Some properties of the eigenfunctions of the {L}aplace-operator on
  {R}iemannian manifolds.
\newblock {\em Canadian J. Math.}, 1:242--256, 1949.

\bibitem{Mol12}
Niels~Martin M\"oller.
\newblock Extremal metrics for spectral functions of {D}irac operators in even
  and odd dimensions.
\newblock {\em Adv. Math.}, 229(2):1001--1046, 2012.

\bibitem{Moser}
J.~Moser.
\newblock On a nonlinear problem in differential geometry.
\newblock pages 273--280, 1973.

\bibitem{NolTar}
Margherita Nolasco and Gabriella Tarantello.
\newblock On a sharp {S}obolev-type inequality on two-dimensional compact
  manifolds.
\newblock {\em Arch. Ration. Mech. Anal.}, 145(2):161--195, 1998.

\bibitem{OPS2}
B.~Osgood, R.~Phillips, and P.~Sarnak.
\newblock Compact isospectral sets of surfaces.
\newblock {\em J. Funct. Anal.}, 80(1):212--234, 1988.

\bibitem{OPS1}
B.~Osgood, R.~Phillips, and P.~Sarnak.
\newblock Extremals of determinants of {L}aplacians.
\newblock {\em J. Funct. Anal.}, 80(1):148--211, 1988.

\bibitem{Polyakov}
Alexander~M. Polyakov.
\newblock Quantum geometry of fermionic strings.
\newblock {\em Phys. Lett. {B}}, 103(3):211--213, 1981.

\bibitem{RS}
D.~B. Ray and I.~M. Singer.
\newblock {$R$}-torsion and the {L}aplacian on {R}iemannian manifolds.
\newblock {\em Advances in Math.}, 7:145--210, 1971.

\bibitem{Rob}
Fr\'{e}d\'{e}ric Robert.
\newblock Quantization effects for a fourth-order equation of exponential
  growth in dimension 4.
\newblock {\em Proc. Roy. Soc. Edinburgh Sect. A}, 137(3):531--553, 2007.

\bibitem{RoWe}
Fr\'{e}d\'{e}ric Robert and Juncheng Wei.
\newblock Asymptotic behavior of a fourth order mean field equation with
  {D}irichlet boundary condition.
\newblock {\em Indiana Univ. Math. J.}, 57(5):2039--2060, 2008.

\bibitem{Serrin-1}
James Serrin.
\newblock Local behavior of solutions of quasi-linear equations.
\newblock {\em Acta Math.}, 111:247--302, 1964.

\bibitem{Serrin-2}
James Serrin.
\newblock Isolated singularities of solutions of quasi-linear equations.
\newblock {\em Acta Math.}, 113:219--240, 1965.

\bibitem{str}
Michael Struwe.
\newblock The existence of surfaces of constant mean curvature with free
  boundaries.
\newblock {\em Acta Math.}, 160(1-2):19--64, 1988.

\bibitem{Tal}
Giorgio Talenti.
\newblock Best constant in {S}obolev inequality.
\newblock {\em Ann. Mat. Pura Appl. (4)}, 110:353--372, 1976.

\bibitem{UVMRL}
Karen~K. Uhlenbeck and Jeff~A. Viaclovsky.
\newblock Regularity of weak solutions to critical exponent variational
  equations.
\newblock {\em Math. Res. Lett.}, 7(5-6):651--656, 2000.

\bibitem{Wei}
Juncheng Wei.
\newblock Asymptotic behavior of a nonlinear fourth order eigenvalue problem.
\newblock {\em Comm. Partial Differential Equations}, 21(9-10):1451--1467,
  1996.

\end{thebibliography}

\bibliographystyle{plain}

\end{document}